\newtheorem{thm}{Theorem}[section]
\newtheorem{lem}[thm]{Lemma}
\newtheorem{propos}[thm]{Proposition}
\newtheorem{lemma}[thm]{Lemma}
\newtheorem{corollary}[thm]{Corollary}
\newtheorem{invariant}[thm]{Invariant}
\theoremstyle{remark}
\newtheorem{example}{Example}
\newtheorem{remark}[thm]{Remark}
\newcommand{\Sym}[1]{\mathfrak{S}_{#1}}
\newcommand{\Jump}{\mathbb{J}}
\newcommand{\Bend}{\mathbb{B}}
\newcommand{\Numberb}{\mathfrak{b}}
\newcommand{\CT}{\mathcal{A}}
\newcommand{\trokowska}{the first named author\xspace}
\newcommand{\sniady}{the second named author\xspace}
\newcommand{\Sniady}{The second named author\xspace}
\newcommand{\Root}{\alpha}
\DeclareMathOperator{\Ch}{Ch}
\begin{document}

\graphicspath{{FIGURES}}

\title[Bijection between trees and factorizations]{Bijection between trees \\
    in Stanley character formula \\ and factorizations of a cycle}

\author{Karolina Trokowska}
\email{karolina5284@wp.pl}

\author[Piotr {\'S}niady]{Piotr {\'S}niady} 
\address{Institute of Mathematics,
     Polish Academy of Sciences,
        {\'S}niadeckich~8, 00-656 Warszawa, Poland}
\address{Max-Planck-Institut für Mathematik - Bonn, Vivatsgasse 7, 53111 Bonn,
    Germany}

\email{psniady@impan.pl}

\subjclass[2020]{%
    Primary: 
        05A19; 	      %
    Secondary: 
    		05C05.     %
        }

\begin{abstract}
Stanley and F\'eray gave a formula for the irreducible character of the
symmetric group related to a \emph{multi-rectangular Young diagram}. This
formula shows that the character is a polynomial in the multi-rectangular
coordinates and gives an explicit combinatorial interpretation for its
coefficients in terms of counting certain decorated maps (i.e.,~graphs drawn on
surfaces). In the current paper we concentrate on the coefficients of the
top-degree monomials in the Stanley character polynomial, which corresponds to
counting certain decorated plane trees. We give an explicit bijection between
such trees and minimal factorizations of a cycle.
\end{abstract}

\maketitle

\footnotetext{An abridged, 12-page version of this paper was published in the
    proceedings of \emph{The 33rd International Conference on Formal Power Series
        and Algebraic Combinatorics} (FPSAC 2021) \cite{Wojtyniak2021}. At the time of
    the FPSAC submission \trokowska of the current paper used a different surname.}

\section{Introduction}

\subsection{Normalized characters and Stanley polynomials}

For a Young diagram $\lambda$ with $N=|\lambda|$ boxes and a partition
$\pi\vdash k$ we denote by
\[ \Ch_\pi(\lambda) = \begin{cases}
    \underbrace{N(N-1)\cdots(N-k+1)}_{k \text{ factors}}\  \frac{\chi^{\lambda}\left(\pi\cup
    1^{N-k}\right)}{\chi^{\lambda} \left(1^N\right)} & \text{for } k\leq N,
\\ 0 & \text{otherwise}
\end{cases}
\]
the \emph{normalized irreducible character of the symmetric group}, where
$\chi^\lambda(\rho)$ denotes the value of the usual irreducible character of
the symmetric group that corresponds to the Young diagram $\lambda$, evaluated
on any permutation with the cycle decomposition given by the partition $\rho$.
This choice of the normalization is very natural; see for example
\cite{Ivanov:1999ur,Biane:2003vd}. One of the goals of the \emph{asymptotic
    representation theory} is to understand the behavior of such normalized
characters in the scaling when the partition $\pi$ is fixed and the number of
the boxes of the Young diagram $\lambda$ tends to infinity.

For a pair of sequences of non-negative integers
$\mathbf{p}=(p_1,\dots,p_\ell)$ and $\mathbf{q}=(q_1,\dots,q_\ell)$ such that
$q_1\geq \cdots \geq q_\ell$ we consider the \emph{multi-rectangular Young
    diagram $\mathbf{p}\times \mathbf{q}$}; see~\cref{fig:multi}. Stanley
\cite{Stanley2003/04,Stanley2006} initiated investigation of the normalized
characters evaluated on such multi-rectangular Young diagrams and proved that
\begin{equation}
    \label{eq:char-multi}
    (\mathbf{p},\mathbf{q}) \mapsto \Ch_\pi\big(\mathbf{p}\times \mathbf{q}  \big) 
\end{equation}
is a polynomial (called now \emph{the Stanley character polynomial}) in the
variables $p_1,\dots,p_\ell,\allowbreak q_1,\dots,q_\ell$.

\begin{figure}
    \begin{tikzpicture}[scale=0.9]
        \draw (0,0) grid (5,2);
        \draw[ultra thick] (0,0) rectangle (5,2);
        \draw (0,2) grid (4,5); 
        \draw[ultra thick] (0,2) rectangle (4,5);
        \draw (0,5) grid (2,6); 
        \draw[ultra thick] (0,5) rectangle (2,6);
        \draw [<->,blue,very thin] (5.4,0) -- (5.4,2) node [midway,fill=white,inner sep=2pt] {\textcolor{blue}{$p_1$}};
        \draw [<->,blue,very thin] (0,0.3) -- (5,0.3) node [midway,fill=white,inner sep=2pt] {\textcolor{blue}{$q_1$}};
        \draw [<->,blue,very thin] (4.4,2) -- (4.4,5) node [midway,fill=white,inner sep=2pt] {\textcolor{blue}{$p_2$}};
        \draw [<->,blue,very thin] (0,2.3) -- (4,2.3) node [midway,fill=white,inner sep=2pt] {\textcolor{blue}{$q_2$}};
        \draw [<->,blue,very thin] (2.4,5) -- (2.4,6) node [midway,fill=white,inner sep=2pt] {\textcolor{blue}{$p_3$}};
        \draw [<->,blue,very thin] (0,5.3) -- (2,5.3) node [midway,fill=white,inner sep=2pt] {\textcolor{blue}{$q_3$}};
        
    \end{tikzpicture}
    \caption{Multi-rectangular Young diagram $\mathbf{p}\times \mathbf{q}=(2,3,1) \times (5,4,2)$.}
    \label{fig:multi}
\end{figure}
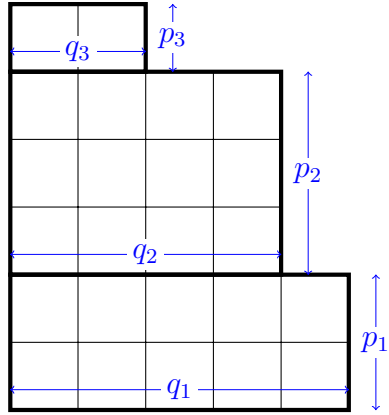

\begin{example}
    \label{example:stanley}
We will concentrate
on the special case when the partition $\pi=(k)$ consists of a single part; in
this case we use a simplified notation $\Ch_k= \Ch_k \big(\mathbf{p}\times \mathbf{q} 
\big) = \Ch_{(k)} \big(\mathbf{p}\times \mathbf{q}  \big)$.
If the number of the rectangles is equal to $\ell=2$ the first Stanley polynomials 
of this flavor are given by
\newcommand{\variables}{}  %
\begin{align*} 
\Ch_1 \variables  & = p_1q_1 + p_2q_2,\\[1ex]
\Ch_2\variables  & = -p_1^2 q_1 + p_1 q_1^2 - 2p_1 p_2 q_2 - p_2^2 q_2 + p_2 q_2^2,\\[1ex]
\Ch_3\variables  & = p_1^3 q_1 - 3 p_1^2 q_1^2 + 3 p_1^2 p_2 q_2 + p_1 q_1^3 - 3 p_1 q_1 p_2 q_2 + 3 p_1 p_2^2 q_2 \\ &   
- 3 p_1 p_2 q_2^2
  + p_2^3 q_2 - 3 p_2^2 q_2^2 + p_2 q_2^3 + p_1 q_1 + p_2 q_2
,\\[1ex] 
\Ch_4\variables  & = -p_1^4 q_1 + 6 p_1^3 q_1^2 - 4 p_1^3 p_2 q_2 - 6 p_1^2 q_1^3 + 12 p_1^2 q_1 p_2 q_2 - 6 p_1^2 p_2^2 q_2\\ 
&   +  6 p_1^2 p_2 q_2^2 + p_1 q_1^4 - 4 p_1 q_1^2 p_2 q_2 + 4 p_1 q_1 p_2^2 q_2 - 4 p_1 q_1 p_2 q_2^2 - 4 p_1 p_2^3 q_2\\ & 
 + 14 p_1 p_2^2 q_2^2 - 4 p_1 p_2 q_2^3 - p_2^4 q_2 + 6 p_2^3 q_2^2 - 6 p_2^2 q_2^3 + p_2 q_2^4 - 5 p_1^2 q_1\\
&   + 5 p_1 q_1^2 - 10 p_1 p_2 q_2 - 5 p_2^2 q_2 + 5 p_2 q_2^2.
\end{align*}
\end{example}

Stanley also gave a conjectural formula (proved later for the top-degree part
by Rattan \cite{Rattan:2008wg} and in the general case by F\'eray
\cite{Feray2010}) that gives a combinatorial interpretation to the coefficients
of the polynomial \eqref{eq:char-multi} in terms of certain \emph{maps} (i.e.,
graphs drawn on surfaces). Stanley also explained how investigation of its
coefficients may shed some light on the \emph{Kerov positivity conjecture};
see \cite{Sniady2016} for more context.

Despite recent progress in this field (for the proof of the Kerov positivity
conjecture see \cite{Feray2009,Dolega2010}) there are several other positivity
conjectures related to the normalized characters $\Ch_\pi$ that remain open
(see \cite[Conjecture 2.4]{Goulden2007} and \cite{Lassalle2008}) and suggest
the existence of some additional hidden combinatorial structures behind such
characters. We expect that such positivity problems are more amenable to
bijective methods (such as the ones from \cite{Chapuy2009}) and the current
article is the first step in this direction.

\medskip

As we already mentioned, we will concentrate on the special case when the
partition $\pi=(k)$ consists of a single part. In this case the degree of the
Stanley polynomial \eqref{eq:char-multi} turns out to be equal to $k+1$. We
will also concentrate on the combinatorial interpretation of the coefficients
of the Stanley polynomial \eqref{eq:char-multi} standing at monomials of this
maximal degree $k+1$; they turn out to be related to maps of genus zero, i.e.,
\emph{plane trees}. Nevertheless, the methods that we present in the current
paper for this special case are applicable in much bigger generality and in a
forthcoming paper we discuss the applications to maps with higher genera.

\subsection{Stanley trees}
\label{sec:trees}

\begin{figure}
    {\includegraphics[clip, trim=1.7cm 1.3cm 5.5cm 2.2cm,angle=-90, width=0.55\textwidth]{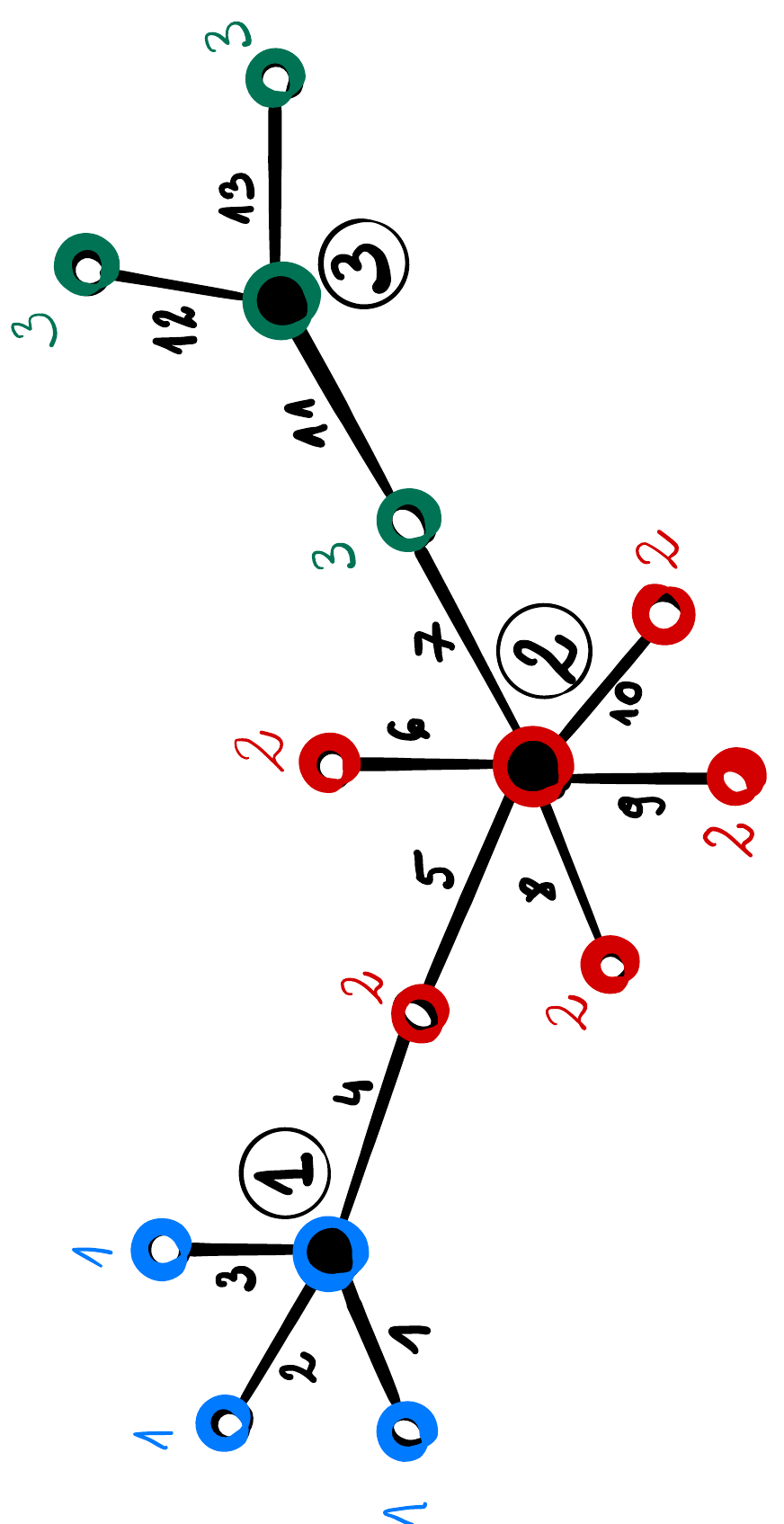}}
    
\caption{An example of a Stanley tree of type $(3,5,3)$. The circled numbers
    indicate the labels of the black vertices. The black numbers indicate the
    labels of the edges. The colors (blue for $1$, red for $2$, green for $3$)
    indicate the values of the function $f$ on white vertices.} \label{fig:plot1}
\end{figure}

Let $T$ be a \emph{bicolored} plane tree, i.e., a plane tree with each vertex
painted black or white and with edges connecting the vertices of opposite
colors. We assume that the tree has $k$ edges labeled with the numbers
$1,\dots,k$. We also assume that it has $n$ black vertices labeled with the
numbers $1,\dots,n$. The white vertices are not labeled. Being \emph{a plane
    tree} means that the set of edges surrounding any given vertex is equipped with
a cyclic order related to visiting the edges in the counterclockwise order. In
our context the structure of the plane tree can be encoded by a pair of
permutations $(\sigma_1,\sigma_2)$ with $\sigma_1,\sigma_2\in\Sym{k}$ such that
the cycles of $\sigma_1$ (respectively, the cycles of $\sigma_2$) correspond to
labels of the edges surrounding white (respectively, black) vertices. We define
the function $f$ that to each white vertex associates the maximum of the labels
of its black neighbors. We will say that $T$ is a \emph{Stanley tree of type}
    \[ (b_1,\dots,b_n):=\Big( \big| f^{-1}(1) \big|, \dots, \big|f^{-1}(n) \big| \Big); \]
in other words the \emph{type} gives the information about the number of the
white vertices for which the function $f$ takes a specified value.
\cref{fig:plot1} gives an example of a Stanley tree of type $(3,5,3)$.

Since for a tree the total number of the black and the white vertices 
is equal to the number of the edges plus $1$, it follows that 
\begin{equation}
    \label{eq:sumofb}
     b_1+\cdots+b_n + n = k+1. 
\end{equation}
Note that the definition of \emph{a Stanley tree of a given type} depends implicitly 
on the value of~$k$; in the following we will always assume that $k$ is given by~\eqref{eq:sumofb}.

By $\mathcal{T}_{b_1,\dots,b_n}$ we denote the set of Stanley trees of a specific type $(b_1,\dots,b_n)$.

\subsection{Coefficients of the top-degree $\mathbf{p}$-square-free monomials}

It turns out that in the analysis of the Stanley polynomials it is enough to restrict
attention to \emph{the $\mathbf{p}$-square-free monomials}, i.e., the monomials of the
form 
\begin{equation}
    \label{eq:monomial}
    p_1 \cdots p_n q_1^{b_1} \cdots q_n^{b_n} 
\end{equation}
with integers $b_1,\dots,b_n\geq 0$; see \cite[Section~4]{Dolega2010} and
\cite{Sniady2016} for a short overview. The following lemma is a reformulation
of a result of Rattan \cite{Rattan:2008wg}; it gives a combinatorial
interpretation to the coefficients standing at these $\mathbf{p}$-square-free
monomials that are of top-degree in the special case of the Stanley
polynomials $\Ch_k$ that correspond to a partition with a single part.

\begin{lem}
    \label{lem:coeff_stanley}
    For all integers $b_1,\dots,b_n\geq 0$ such that 
    \begin{equation}
        \label{eq:sum-of-b}
        b_1+\cdots+b_n+n=k+1
    \end{equation} 
the $\mathbf{p}$-square-free coefficient of the Stanley character polynomial is given
by
\begin{equation}
    \label{eq:stanley-feray} \left[p_1 \cdots p_n q_1^{b_1} \cdots q_n^{b_n}
\right]   \Ch_k\big(\mathbf{p}\times \mathbf{q}  \big) = 
\pm \frac{1}{(k-1)!} \left| \mathcal{T}_{b_1,\dots,b_n} \right|.
\end{equation}
\end{lem}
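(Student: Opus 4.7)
The strategy is to obtain this statement as a specialization of the Stanley--F\'eray map formula for $\Ch_k(\mathbf{p}\times\mathbf{q})$, restricted to its top-degree, $\mathbf{p}$-square-free part. Recall that this formula expresses $\Ch_k(\mathbf{p}\times\mathbf{q})$ as a signed sum indexed by pairs of permutations $(\sigma_1,\sigma_2)\in\Sym{k}\times\Sym{k}$ satisfying $\sigma_1\sigma_2=c_k$ for a fixed $k$-cycle $c_k$, together with compatible colorings $(f_1,f_2)$ of the cycles of $\sigma_1$ (the white vertices) and of $\sigma_2$ (the black vertices) by positive integers. Each such datum contributes, up to a sign $(-1)^{k-|C(\sigma_1)|}$, a monomial of degree $|C(\sigma_1)|+|C(\sigma_2)|$ in the $p$'s and $q$'s.

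First I would isolate the top-degree terms. By the Euler formula for bipartite maps one has $|C(\sigma_1)|+|C(\sigma_2)|\leq k+1$, with equality if and only if $(\sigma_1,\sigma_2)$ encodes a plane tree. Since the monomial in \eqref{eq:stanley-feray} has total degree $n+b_1+\cdots+b_n=k+1$ by \eqref{eq:sum-of-b}, only bipartite plane trees contribute; their $k$ edges are canonically labeled by $c_k$, and they are exactly the underlying combinatorial objects of \cref{sec:trees} before the black-vertex labels are fixed. Extracting the coefficient of $p_1\cdots p_n$ then forces the black-vertex coloring $f_2$ to be a bijection onto $\{1,\dots,n\}$, which is precisely the data of the black-vertex labels in a Stanley tree.

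The main technical step is the analysis of the admissible white-vertex colorings $f_1$. The Stanley--F\'eray compatibility condition $f_1(w)\geq f_2(b)$ on each edge is equivalent to $f_1(w)\geq f(w)$, where $f(w):=\max_{b\sim w}f_2(b)$ is exactly the function of the excerpt. Extracting the coefficient of $q_1^{b_1}\cdots q_n^{b_n}$ in the generating function $\prod_w\sum_{j\geq f(w)}q_j$ a priori involves many colorings $f_1\geq f$; the heart of the proof is to show that, after summing over all trees and applying an inclusion--exclusion governed by the Stanley--F\'eray sign $(-1)^{|C(\sigma_1)|}$ (which changes under the regroupings that merge or split white vertices of the same color), the contribution collapses to the single case $f_1\equiv f$. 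This collapse, rather than the algebra of the formula itself, is what I expect to be the main obstacle, since it amounts to a disguised cancellation between plane trees of the same genus but different vertex structure, and has to be tracked carefully.

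Finally I would read off the prefactors. Since for any plane tree of type $(b_1,\dots,b_n)$ one has $|C(\sigma_1)|=b_1+\cdots+b_n$ (the number of white vertices), the sign $(-1)^{k-|C(\sigma_1)|}=(-1)^{n-1}$ is constant on the support and produces the global $\pm$ of \eqref{eq:stanley-feray}. The prefactor $\frac{1}{(k-1)!}$ is the normalization constant in the Stanley--F\'eray formula reflecting the choice of a single $k$-cycle $c_k$ rather than an average over the $(k-1)!$ distinct $k$-cycles in $\Sym{k}$ that all yield the same count of trees. Combining these with the result of the previous paragraph gives precisely $\pm\frac{1}{(k-1)!}\,|\mathcal{T}_{b_1,\dots,b_n}|$.
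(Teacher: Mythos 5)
Your overall skeleton is the right one, and it is essentially how the paper regards this lemma (the paper does not prove it at all: it cites Rattan and F\'eray and merely explains the combinatorial reading of the coefficient in \cref{sec:stanley-trees-revisited}): specialize the Stanley--F\'eray formula, use the Euler-characteristic bound to see that for a $\mathbf{p}$-square-free monomial of total degree $k+1$ only pairs $(\sigma_1,\sigma_2)$ encoding plane trees can contribute (connectivity being automatic because the product is a full cycle), note that the sign is then the constant $(-1)^{n-1}$, and account for $(k-1)!$ by the fact that $\mathcal{T}_{b_1,\dots,b_n}$ puts no condition on the product $\sigma_1\sigma_2$ while the formula fixes it to be one specific long cycle.

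The genuine problem is your starting formula, and it propagates into the step you yourself call the heart of the proof. In the multirectangular specialization of the Stanley--F\'eray formula there is no free coloring $f_1$ of the white cycles subject to $f_1(w)\geq f(w)$: once the black cycles are colored by $f_2$, a white cycle $w$ contributes exactly the single variable $q_{f(w)}$ with $f(w)=\max_{b\sim w}f_2(b)$, because $q_{f(w)}$ is precisely the number of admissible columns for $w$ (the rows of $\mathbf{p}\times\mathbf{q}$ are grouped into blocks, the columns are not). With the formula stated correctly the coefficient extraction is immediate and no collapse of colorings is needed. With your version the claimed sign-governed inclusion--exclusion is not only unproven, it cannot work: for a fixed monomial $p_1\cdots p_nq_1^{b_1}\cdots q_n^{b_n}$ of degree $k+1$, each cycle contributes exactly one variable, so every contributing pair has $n$ black cycles and $b_1+\cdots+b_n$ white cycles; all such terms are trees carrying the same sign $(-1)^{n-1}$, and nothing can cancel. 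Concretely, your prescription fails already for $\Ch_1$ (the generating function $\prod_w\sum_{j\geq f(w)}q_j$ produces the term $p_1q_2$), and for $k=3$, $n=2$ it would give coefficient $\pm 6$ at $p_1p_2q_2^2$ (the three fixed-product trees of type $(0,2)$ plus the three of type $(1,1)$ with the max-$1$ white vertex recolored to $2$), whereas the true coefficient in $\Ch_3$ is $-3$. A smaller point: $1/(k-1)!$ is not a normalization constant inside the Stanley--F\'eray formula; it comes from the equidistribution, via conjugation (i.e.\ relabelling of the edges, which preserves the type), of Stanley trees over the $(k-1)!$ possible long cycles $\sigma_1\sigma_2$, and this should be stated and justified rather than asserted.
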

In order to be concise we will not discuss the sign on the right-hand side. The
above lemma is a special case of a general formula conjectured by Stanley
\cite[Conjecture~3]{Stanley2006} and proved by F\'eray \cite{Feray2010} and
therefore we refer to it as the Stanley--F\'eray character formula. This general
formula is applicable also when the assumption \eqref{eq:sum-of-b} is not
fulfilled; in this case on the right-hand side of \eqref{eq:stanley-feray} the
Stanley trees should be replaced by \emph{unicellular maps} with some additional
decorations; see \cref{sec:stanley-trees-revisited} for more details.

\medskip

There is another way of calculating the coefficient on the left-hand side
of~\eqref{eq:stanley-feray}; we shall review it in the following. The homogeneous
part of degree $k+1$ of the multivariate polynomial $\Ch_k\big(\mathbf{p}\times
\mathbf{q}  \big)$ (i.e., its homogeneous part of the top degree) is called \emph{the
    free cumulant} and is denoted by 
\mbox{$R_{k+1}\big(\mathbf{p}\times \mathbf{q} 
\big)$}. 

\begin{example}
    \label{ex:stanley2}
We continue with the notations from \cref{example:stanley}; it follows that 
\begin{align*} 
    R_2  & = p_1q_1 + p_2q_2,\\[1ex]
    R_3  & = -p_1^2 q_1 + p_1 q_1^2 - 2p_1 p_2 q_2 - p_2^2 q_2 + p_2 q_2^2,\\[1ex]
    R_4   & = p_1^3 q_1 - 3 p_1^2 q_1^2 + 3 p_1^2 p_2 q_2 + p_1 q_1^3 - 3 p_1 q_1 p_2 q_2 + 3 p_1 p_2^2 q_2 \\ &   
    - 3 p_1 p_2 q_2^2
    + p_2^3 q_2 - 3 p_2^2 q_2^2 + p_2 q_2^3 ,\\[1ex]
    R_5   & = -p_1^4 q_1 + 6 p_1^3 q_1^2 - 4 p_1^3 p_2 q_2 - 6 p_1^2 q_1^3 + 12 p_1^2 q_1 p_2 q_2 - 6 p_1^2 p_2^2 q_2\\ 
    &   +  6 p_1^2 p_2 q_2^2 + p_1 q_1^4 - 4 p_1 q_1^2 p_2 q_2 + 4 p_1 q_1 p_2^2 q_2 - 4 p_1 q_1 p_2 q_2^2 - 4 p_1 p_2^3 q_2\\ & 
    + 14 p_1 p_2^2 q_2^2 - 4 p_1 p_2 q_2^3 - p_2^4 q_2 + 6 p_2^3 q_2^2 - 6 p_2^2 q_2^3 + p_2 q_2^4.
\end{align*}
\end{example}

Free cumulants were first defined in the context of Voiculescu's free
probability theory and the random matrix theory (see \cite[Sections~1.3 and
3.4]{Dolega2010} for references); in the context of the representation theory of
the symmetric groups they were introduced in the fundamental work of Biane
\cite{Biane1998}. From the defining property of the free cumulant it follows that
\begin{equation}
    \label{eq:squarefree}
    \left[p_1 \cdots p_n q_1^{b_1} \cdots q_n^{b_n} \right]   \Ch_k\big(\mathbf{p}\times \mathbf{q}  \big) 
=
\left[p_1 \cdots p_n q_1^{b_1} \cdots q_n^{b_n} \right]   
R_{k+1}\big(\mathbf{p}\times \mathbf{q}  \big),
\end{equation}
provided that \eqref{eq:sum-of-b} holds true.

Dołęga, F\'eray and \sniady \cite[Section~3.2]{Dolega2010} introduced another
convenient family $S_2,S_3,\dots$ of functions on the set of Young diagrams
that has the property that for any \emph{strictly positive} exponents
$b_1,\dots,b_n\geq 1$ the coefficient of the corresponding
$\mathbf{p}$-square-free monomial \eqref{eq:monomial} in any finite product
\[
    S_2^{\alpha_2} S_3^{\alpha_3} \cdots = \left[ S_2\big(\mathbf{p}\times
\mathbf{q}  \big) \right]^{\alpha_2} \left[ S_3\big(\mathbf{p}\times
\mathbf{q}  \big) \right]^{\alpha_3} \cdots
\] 
(for any sequence of integers $\alpha_2,\alpha_3,\ldots\geq 0$ such that only
finitely many of its entries are non-zero) takes a particularly simple form,
cf.~\cite[Theorem~4.2]{Dolega2010}. On the other hand, the free cumulant
$R_{k+1}$ can be written as an explicit polynomial in the functions
$S_2,S_3,\dots$, cf.~\cite[Proposition~2.2]{Dolega2010}. By combining these two
results it follows that for any \emph{strictly positive} integers
$b_1,\dots,b_n\geq 1$ such that \eqref{eq:sum-of-b} holds true, the right-hand
side of \eqref{eq:squarefree} is equal to
\begin{equation}
    \label{eq:squarefree-2}
     \left[p_1 \cdots p_n q_1^{b_1} \cdots q_n^{b_n} \right]   
R_{k+1}\big(\mathbf{p}\times \mathbf{q}  \big) = (-k)^{n-1}.
\end{equation}
An astute reader may verify that this result indeed holds true for the data
from \cref{ex:stanley2}, and also that the assumption that $b_1,\dots,b_n$ are
\emph{strictly} positive cannot be weakened.

By combining Equations \eqref{eq:stanley-feray}--\eqref{eq:squarefree-2} we
obtain the following result.
\begin{corollary}
    \label{cor:number-of-stanley-trees} For any strictly positive integers
$b_1,\dots,b_n\geq 1$ such that \eqref{eq:sum-of-b} holds true, the number
of the Stanley trees of type $(b_1,\dots,b_n)$ is equal to
\begin{equation}
    \label{eq:magic-number}
     \left| \mathcal{T}_{b_1,\dots,b_n} \right| = (k-1)!\  k^{n-1}.
\end{equation} 
\end{corollary}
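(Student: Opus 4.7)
The plan is to read off the corollary as an immediate consequence of the three numerical identities that were assembled in the preceding discussion, so no new combinatorial argument is required. I would first invoke \cref{lem:coeff_stanley}, which under the hypothesis \eqref{eq:sum-of-b} identifies the $\mathbf{p}$-square-free coefficient of the Stanley character polynomial with the cardinality of $\mathcal{T}_{b_1,\dots,b_n}$ up to a sign and a factor of $(k-1)!$, namely
\[
\left[p_1 \cdots p_n q_1^{b_1} \cdots q_n^{b_n}\right] \Ch_k\big(\mathbf{p}\times \mathbf{q}\big) = \pm \frac{1}{(k-1)!} \left| \mathcal{T}_{b_1,\dots,b_n} \right|.
\]

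Next, I would use the fact that $R_{k+1}$ is by definition the top-degree homogeneous part of $\Ch_k$. Since the monomial $p_1 \cdots p_n q_1^{b_1} \cdots q_n^{b_n}$ has total degree $n + (b_1+\cdots+b_n) = k+1$ by \eqref{eq:sum-of-b}, it lives in the top-degree part, so the equality \eqref{eq:squarefree} applies and lets me rewrite the left-hand side in terms of the coefficient of the same monomial in $R_{k+1}$. At this point I would apply \eqref{eq:squarefree-2}, which uses the hypothesis that each $b_i$ is \emph{strictly} positive and evaluates this coefficient as $(-k)^{n-1}$. Combining the two expressions yields
\[
\pm \frac{1}{(k-1)!} \left| \mathcal{T}_{b_1,\dots,b_n} \right| = (-k)^{n-1},
\]
and taking absolute values (using that the left-hand cardinality is non-negative) gives the claimed formula $|\mathcal{T}_{b_1,\dots,b_n}| = (k-1)!\, k^{n-1}$.

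There is essentially no combinatorial obstacle to overcome in this argument, as all the ingredients are already in place; the one point that deserves a brief comment is the need to verify the hypothesis of \eqref{eq:squarefree-2}, in particular the strict positivity of every $b_i$, which matches the hypothesis of the corollary exactly, and the remark that the sign ambiguity in \cref{lem:coeff_stanley} is harmless here because we only care about the cardinality.
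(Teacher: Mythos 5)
Your argument is correct and is exactly the route the paper takes: the corollary is stated as an immediate consequence of combining \eqref{eq:stanley-feray}, \eqref{eq:squarefree}, and \eqref{eq:squarefree-2}, with the sign absorbed by taking cardinalities. Your extra remarks on checking the degree condition and the strict positivity hypothesis are exactly the right points to verify.
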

\begin{remark}
If some of the entries of the sequence $b_1,\dots,b_n$ are equal to
zero then the number of the Stanley trees of type $(b_1,\dots,b_n)$ takes a more
complicated form that can be extracted by an application of
\cite[Lemma~4.5]{Dolega2010}.
\end{remark}

\subsection{The main result: a bijective proof}
\label{sec:main-result-bijective-proof}

The above sketch of the proof of \cref{cor:number-of-stanley-trees} has a
disadvantage of being purely algebraic. The main result of the current paper
(see \cref{thm:thm1} below) is its new, bijective proof. We are not aware of
previous bijective proofs of this result in the literature.

\medskip

Clearly, the right-hand side of \eqref{eq:magic-number} can be interpreted as
the cardinality of some very simple combinatorial sets, such as the Cartesian
product of the set of long cycles in the symmetric group $\Sym{k}$ with the
Cartesian power $\{1,\dots,k\}^{n-1}$. We are not aware of a direct bijection
between $\mathcal{T}_{b_1,\dots,b_n}$ and some Cartesian product of this
flavor. Also, and more importantly, such Cartesian products do not generalize
well to the context of the maps with higher genus, which will be discussed in
\cref{sec:outlook}.

For these reasons,  as the first step towards the bijective proof we will look
for another natural class of combinatorial objects the cardinality of which is
given by the right-hand side of \eqref{eq:magic-number}.

\subsection{Minimal factorizations of long cycles}
\label{sec:factorizations}

We fix an integer $k\geq 1$ and denote by $\Sym{k}$ the corresponding symmetric
group. For a permutation $\pi\in \Sym{k}$ we define its \emph{norm} $\| \pi \|$
as the minimal number of factors necessary to write $\pi$ as the product of
transpositions. (The name \emph{length} is more common in the literature, but
it is confusing in the context of the phrase \emph{cycle of a given length};
see below.) We say that a permutation $\pi\in\Sym{k}$ is a \emph{cycle of
    length $\ell$} if it is of the form $\pi=(x_1,\dots,x_\ell)$. One can prove
that in this case the \emph{norm} of the cycle $\|\pi \|=\ell-1$ is equal to
its \emph{length} less~$1$.

\medskip

Let $a_1,\dots ,a_n\geq 2$ be integers. We say that a tuple
$(\sigma_1,\dots,\sigma_n)$ is a \emph{factorization of a long cycle of type
    $(a_1,\dots,a_n)$} if $\sigma_1,\dots,\sigma_n\in\Sym{k}$ are such that the
product $\sigma_1 \cdots \sigma_n$ is a cycle of length $k$ and $\sigma_i$ is a
cycle of length $a_i$ for each choice of $i\in\{1,\dots,n\}$.

By the triangle inequality, 
\[  \sum_{i=1}^n (a_i-1) =  \sum_{i=1}^n \| \sigma_i \| \geq \| \sigma_1 \cdots \sigma_n \| = k-1.\]
In the current paper we concentrate on \emph{minimal factorizations}, which
correspond to the special case when the above inequality becomes saturated and
\begin{equation}
    \label{eq:sum-of-a}
     \sum_{i=1}^{n}(a_i-1) = k-1. 
\end{equation}
By $\mathcal{C}_{a_1,\dots ,a_n}$ we denote the set of such minimal
factorizations of a long cycle of type $(a_1,\dots ,a_n)$. The name
\emph{minimal factorization} is motivated by the special case when the lengths
$a_1=\cdots=a_n=2$ are all equal to $2$ and hence $\pi=\sigma_1 \cdots 
\sigma_{n}$ is a factorization into a product of transpositions; then
\eqref{eq:sum-of-a} is satisfied if and only if $n$, the number of factors,
takes the smallest possible value.

\medskip

Biane \cite{Biane1996} extended the previous result of D\'enes \cite{Denes:1959uv}
and proved that the number of minimal factorizations $\sigma=\sigma_1\dots
\sigma_n$ of a \emph{fixed} cycle~$\sigma$ of length $k$ into a product of cycles
of lengths $a_1,\dots ,a_n \geq 2$ for which \eqref{eq:sum-of-a} is fulfilled is equal
to $k^{n-1}$. Since in the symmetric group $\Sym{k}$ there are $(k-1)!$ such
cycles $\sigma$ of length $k$, it follows that the total number of the minimal
factorizations of a long cycle of type $(a_1,\dots,a_n)$ is equal to
\begin{equation}
    \label{eq:formula-for-c}
     \left| \mathcal{C}_{a_1,\dots ,a_n} \right| = (k-1)! \ k^{n-1} 
\end{equation}
and therefore coincides with the right-hand side of \eqref{eq:magic-number}. Our
new proof of \cref{cor:number-of-stanley-trees} will be based on an explicit
bijection between the set $\mathcal{T}_{b_1,\dots ,b_n}$ of Stanley trees of some
specified type and the set $\mathcal{C}_{a_1,\dots ,a_n}$ of minimal
factorizations of a long cycle of some specified type; see \cref{thm:thm1} for
more details.

\medskip

As a side remark we note that the bijective proof of \eqref{eq:formula-for-c}
provided by Biane \cite{Biane1996} can be used to construct an explicit
bijection between $ \mathcal{C}_{a_1,\dots ,a_n}$ and the Cartesian product that we
mentioned in \cref{sec:main-result-bijective-proof}. By combining Biane's
bijection with the one provided by \cref{thm:thm1} one could get a (very
complicated) bijection between $\mathcal{T}_{b_1,\dots,b_n}$ and the Cartesian
product from \cref{sec:main-result-bijective-proof}.

\subsection{Outlook: permutations,  plane trees, maps} 
\label{sec:outlook}

For simplicity, in the current paper we consider only \emph{the first-order}
asymptotics of the character \eqref{eq:char-multi} of the symmetric group on a
cycle $\pi=(k)$, which corresponds to the coefficients of the Stanley
polynomial \eqref{eq:stanley-feray} appearing at the top-degree monomials
\eqref{eq:sum-of-b}. In the light of the aforementioned open problems that
concern the fine structure of the symmetric group characters $\Ch_k$ evaluated
on a cycle (see \cite[Conjecture~2.4]{Goulden2007} and \cite{Lassalle2008}) it
would be interesting to extend the results of the current paper to the
coefficients of \emph{general} \mbox{$\mathbf{p}$-square-free} monomials of the
Stanley character polynomial. We will keep this wider perspective in mind in
what follows.

Each of the two sets $\mathcal{T}_{b_1,\dots ,b_n}$ and
$\mathcal{C}_{a_1,\dots,a_n}$ that appear in our main bijection has an
algebraic facet and a geometric facet. In the following we will revisit the
links between these facets. These geometric facets will be essential for the
bijection that is the main result of the current paper.

\subsubsection{Stanley trees, revisited}
\label{sec:stanley-trees-revisited}

The general form of the Stanley--F\'eray character formula (see
\cite[Conjecture~3]{Stanley2006} and F\'eray \cite{Feray2010}) gives an
explicit combinatorial interpretation to the coefficient of an \emph{arbitrary}
monomial in the Stanley polynomial $ \Ch_k \big(\mathbf{p}\times \mathbf{q} 
\big) $, nevertheless it seems that in applications only
\mbox{$\mathbf{p}$-square-free} monomials are really useful; see
\cite[Section~4]{Dolega2010} and \cite{Sniady2016}. It turns out that in
general the coefficient
\begin{equation}
    \label{eq:coefficient}
      \left[p_1 \cdots p_n q_1^{b_1} \cdots q_n^{b_n} \right]   \Ch_k\big(\mathbf{p}\times \mathbf{q}  \big) 
\end{equation}
is equal (up to the $\pm$ sign) to the number of triples 
$(\sigma_1,\sigma_2,f_2)$ such that:
\begin{itemize}
    \item $\sigma_1,\sigma_2\in\Sym{k}$ are permutations with the property that their product
    \[ \sigma_1 \sigma_2 =(1,2,\dots,k) \]
    is a specific cycle of length $k$; 
    
    \item $f_2$ is a bijection between the set of cycles of the permutation
$\sigma_2$ and the set $\{1,\dots,n\}$ (we can think that $f_2$ is a \emph{labeling}
of the cycles of $\sigma_2$);
    
    \item we define the function $f_1$ on the set of cycles of the permutation $\sigma_1$ by setting
    \begin{multline*} f_1(c_1) = \max \big\{ f_2(c_2) : \text{$c_2$ is a cycle of $\sigma_2$} \\
        \text{such that the cycles $c_1$ and $c_2$ are \emph{not} disjoint} \big\} 
         \quad \text{if $c_1$ is a cycle of $\sigma_1$;}
    \end{multline*}
    we require that for each $i\in\{1,\dots,n\}$ the cardinality of its preimage
is given by the appropriate exponent of the variable $q_i$ in the monomial:
    \[ \big| f_1^{-1}(i) \big| = b_i.\]
\end{itemize}

To this algebraic object $(\sigma_1,\sigma_2,f_2)$ one can associate a geometric
counterpart, namely a \emph{bicolored map}. More specifically, it is a graph
drawn on an oriented surface with the edges labeled with the elements of the set
$\{1,\dots,k\}$. Each white vertex (respectively, each black vertex) corresponds
to some cycle of the permutation $\sigma_1$ (respectively, to some cycle of the
permutation $\sigma_2$) so that the counterclockwise cyclic order of the edges
around the vertex coincides with the cyclic order of the elements of the set
$\{1,\dots,k\}$ that are permuted by the cycle; see
\cite[Section~6.4]{Sniady:2013tb}. 

We assume that the surface on which the graph is drawn is \emph{minimal},
i.e.,~after cutting the surface along the edges, each connected component is
homeomorphic to a disc; we call such connected components \emph{faces} of the
map. The product $\sigma_1 \sigma_2=(1,2,\dots,k)$ consists of a single cycle
which geometrically means that our map has exactly one face; in other words it
is \emph{unicellular}.

The bijection $f_2$ geometrically means that the black vertices of our map are
labeled by the elements of the set $\{1,\dots,n\}$. Using such a geometric
viewpoint, $f_1$ becomes a function on the set of white vertices that to a given
white vertex associates the maximum of the labels (given by $f_2$) of its
neighboring black vertices.

\medskip

By counting the white and the black vertices it follows that the total number of
the vertices is equal to
\[  b_1+\cdots+b_n+ n. \]
A simple argument based on the Euler characteristic shows that for a
unicellular map this number of vertices is bounded from above by $k+1$ (which
is the number of the edges plus one) and the inequality becomes saturated
(i.e., the equality \eqref{eq:sum-of-b} holds true) if and only if the surface
has genus zero, i.e., it is homeomorphic to a sphere.

In the following we consider the case when \eqref{eq:sum-of-b} indeed holds
true. It is conceptually simpler to consider such a map drawn on the sphere as
drawn on the plane; being unicellular corresponds to the map being a tree. It
follows that in this case the geometric object associated above to the triple
$(\sigma_1,\sigma_2,f_2)$ that contributes to the coefficient
\eqref{eq:coefficient} coincides with the Stanley tree of type
$(b_1,\dots,b_n)$.

\medskip

The above discussion motivates the notion of the Stanley trees and shows which
more general geometric objects should be investigated in order to study more
refined asymptotics of the characters of the symmetric groups.

\subsubsection{Minimal factorizations}
\label{sec:geometry-minimal}

The geometric object that can be associated to a minimal factorization
$\sigma_1,\dots,\sigma_n\in\Sym{k}$ of a long cycle of type $(a_1,\dots,a_n)$
is a graph with $k$ white vertices (labeled with the elements of the set
$\{1,\dots,k\}$) and $n$ black vertices (labeled with the elements of the set
$\{1,\dots,n\}$). We connect the black vertex $i$ with the white vertices
$\sigma_{i,1},\dots,\sigma_{i,a_i}$ that correspond to the elements of the
cycle $\sigma_i=(\sigma_{i,1},\dots,\sigma_{i,a_i})$. This graph is clearly
connected, it has $k+n$ vertices and it has $a_1+\cdots+a_n$ edges; from the
minimality assumption \eqref{eq:sum-of-a} it follows that the graph is, in
fact, a tree. We may encode the cycles $\sigma_{1},\dots,\sigma_n$ by drawing
the tree on the plane in such a way that the counterclockwise order of the
white vertices surrounding a given black vertex $i$ corresponds to the cycle
$\sigma_i$; see \cref{fig:minimal-factorization} for an example. On the other
hand, we have a freedom of choosing the cyclic order of the edges around the
white vertices. In this way a minimal factorization of a long cycle can be
encoded (in a non-unique way) by a plane tree with labeled white vertices and
labeled black vertices. Later on we will remove this ambiguity by choosing the
cyclic order around the white vertices in some canonical way.

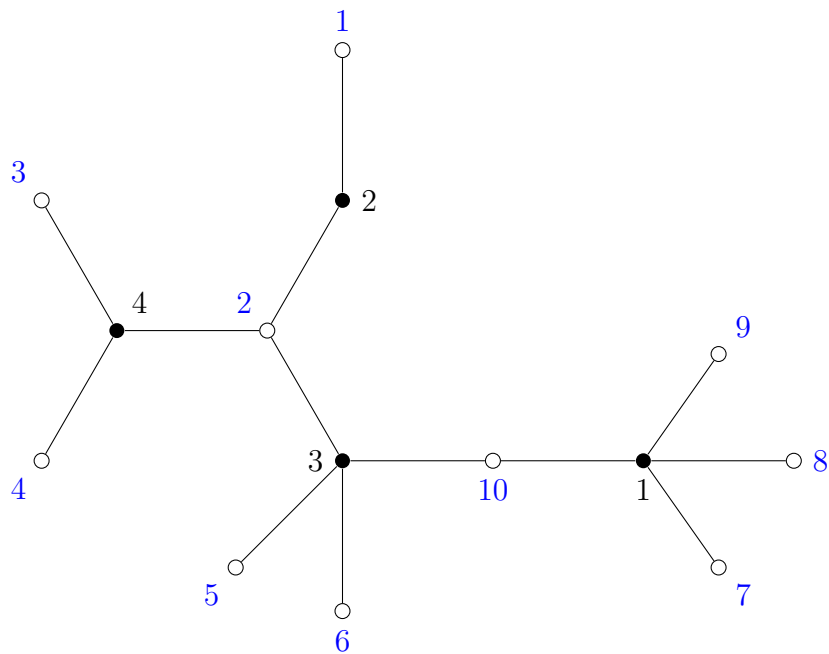
\begin{figure}[t]
\begin{tikzpicture}[scale=2]
    
    \node[circle, draw=black, inner sep=2pt,label={[label distance=0ex]120:$\textcolor{blue}{2}$}] (white2) at (0, 0) {};

    \node[circle, fill=black, inner sep=2pt,label={[label distance=0ex]60:$4$}] (black4) at ++(-1, 0) {};
    \draw (white2) -- (black4);
    
    \node[circle, draw=black, inner sep=2pt,label={[label distance=0ex]120:$\textcolor{blue}{3}$}] (white3) at  (-1.5,0.866) {}; 
    \node[circle, draw=black, inner sep=2pt,label={[label distance=0ex]-120:$\textcolor{blue}{4}$}] (white4) at  (-1.5,-0.866) {}; 
    
    \draw (white3) -- (black4);
    \draw (white4) -- (black4);
    
    \node[circle, fill=black, inner sep=2pt,label={[label distance=0ex]0:$2$}] (black2) at ++(0.5, 0.866) {};
    \node[circle, draw=black, inner sep=2pt,label={[label distance=0ex]90:$\textcolor{blue}{1}$}] (white1) at  (0.5,1.866) {}; 
    
    \draw (white1) -- (black2) -- (white2);
    
    \node[circle, fill=black, inner sep=2pt,label={[label distance=0ex]180:$3$}] (black3) at ++(0.5, -0.866) {};
    \node[circle, draw=black, inner sep=2pt,label={[label distance=0ex]-90:$\textcolor{blue}{6}$}] (white6) at  (0.5,-1.866) {}; 
    \node[circle, draw=black, inner sep=2pt,label={[label distance=0ex]-130:$\textcolor{blue}{5}$}] (white5) at  (0.5-0.71,-0.866-0.71) {}; 
    
    \draw (white6) -- (black3) -- (white5);
    \draw (white2) -- (black3);
    
    \node[circle, draw=black, inner sep=2pt,label={[label distance=0ex]-90:$\textcolor{blue}{10}$}] (white10) at  (0.5+1, -0.866) {}; 
    \node[circle, fill=black, inner sep=2pt,label={[label distance=0ex]-90:$1$}] (black1) at (0.5+2, -0.866) {};
    
    \node[circle, draw=black, inner sep=2pt,label={[label distance=0ex]-45:$\textcolor{blue}{7}$}] (white7) at  (0.5+2+0.5, -0.866-0.71) {}; 
    \node[circle, draw=black, inner sep=2pt,label={[label distance=0ex]0:$\textcolor{blue}{8}$}] (white8) at  (0.5+2+1, -0.866) {}; 
    \node[circle, draw=black, inner sep=2pt,label={[label distance=0ex]45:$\textcolor{blue}{9}$}] (white9) at  (0.5+2+0.5, -0.866+0.71) {}; 
    
    \draw (black3) -- (white10) -- (black1) -- (white7);
    \draw (white8) -- (black1) -- (white9);
    
\end{tikzpicture}
    
    \caption{One of the plane trees that correspond to the minimal
    factorization $\sigma_1 \cdots \sigma_n= \sigma_1 \sigma_2 \sigma_3
    \sigma_4 = (1,2,\dots,10)\in\Sym{k}$ of a long cycle with
    $\sigma_1=(7,8,9,10)$, $\sigma_2=(1,2)$, $\sigma_3=(2,5,6,10)$,
    $\sigma_4=(2,3,4)$, $k=10$ and $n=4$. The black vertices correspond to the
    cycles $\sigma_1,\dots,\sigma_4$. The white vertices correspond to the
    elements of the set $\{1,\dots,10\}$ on which acts the symmetric group
    $\Sym{10}$. The cyclic order of the edges around the black vertices is
    determined by the cycles $\sigma_1,\dots,\sigma_4$. The cyclic order of the
    edges around the white vertices is arbitrary.}
\label{fig:minimal-factorization}
\end{figure}

The original permutation $\sigma_i$ can be recovered from the tree by reading
the counterclockwise cyclic order of the labels of the neighbors surrounding
the black vertex $i$; see \cref{fig:minimal-factorization}.

\subsection{Overview of the paper} 

In \cref{sec:algorithm} we state our main result (\cref{thm:thm1}) about
existence of a bijection between certain sets of minimal factorizations and
Stanley trees; the bijection itself is constructed in
\cref{sec:first,sec:second,sec:spine,sec:rib}. It is quite surprising that a
bare-boned description of the bijection (without the proof of its correctness)
is quite short, nevertheless this algorithm creates a quite complex dynamics,
as can be seen by the length of the description of the inverse map.
Additionally, in \cref{sec:correctness-spine,sec:correctness-rib} we prove that
this algorithm is well defined.

As the first step towards the proof of \cref{thm:thm1},
in \cref{sec:proof} we show \cref{lm:lem1}, which states 
that the output of our algorithm indeed is a Stanley tree of a specific type.

\cref{sec:alternative} contains an alternative description of the bijection.

In \cref{sec:inverse} we construct the inverse map.

Finally, in \cref{sec:conclusion} we complete the proof of \cref{thm:thm1}.

\pagebreak

\section{The main result: bijection between Stanley trees \\ and minimal factorizations of long cycles}
\label{sec:algorithm}

The following is the main result of the current paper.

\begin{thm}
    \label{thm:thm1}
Let $n\geq 2$ and $b_1,\dots,b_n\geq 1$ be integers.
We define the integers $a_1,\dots,a_n$ by
\begin{equation}
    \label{eq:a-and-b}
     a_i = \begin{cases} 
              b_i+1 & \text{if } i \in \{1,n\}, \\
              b_i+2 & \text{otherwise}.
           \end{cases} 
\end{equation} 
Then the algorithm $\CT$ presented below gives a bijection between the set 
 $\mathcal{C}_{a_1,\dots ,a_n}$ of minimal factorizations (see \cref{sec:factorizations})
and the set $\mathcal{T}_{b_1,\dots ,b_n}$ of Stanley trees
(see \cref{sec:trees}).
\end{thm}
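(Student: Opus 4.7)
The plan is to establish the bijection in four stages, after first verifying that the domain and codomain of $\CT$ are size-compatible. Plugging \eqref{eq:a-and-b} into the left-hand side of \eqref{eq:sum-of-a} gives
\[
\sum_{i=1}^{n}(a_i-1) = (b_1+b_n) + \sum_{i=2}^{n-1}(b_i+1) = \sum_{i=1}^{n} b_i + (n-2) = (k+1-n)+(n-2) = k-1,
\]
where the third equality uses \eqref{eq:sumofb}. Hence $\mathcal{C}_{a_1,\dots,a_n}$ is a bona fide set of minimal factorizations, and by Biane's formula \eqref{eq:formula-for-c} together with \cref{cor:number-of-stanley-trees} both sets have cardinality $(k-1)!\,k^{n-1}$. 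In principle this makes it enough to verify injectivity of $\CT$, but for the bijective proof one wants an explicit inverse.

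The first substantive stage is to show that $\CT$ is well-defined: every step of the algorithm terminates and maintains the combinatorial invariants it is supposed to maintain (that the intermediate object is a labeled plane forest of a prescribed type, that the ``active'' edge still exists, that the progress parameter strictly decreases, etc.). I would isolate a loop invariant for each of the two main subroutines and verify it by induction on the step count; this is the content of \cref{sec:correctness-spine,sec:correctness-rib}.

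The second stage, and the one I expect to be the main obstacle, is to prove that $\CT(F)\in\mathcal{T}_{b_1,\dots,b_n}$ for every $F\in\mathcal{C}_{a_1,\dots,a_n}$. That the output is a bicolored labeled plane tree with $k$ edges and $n$ black vertices will follow from stage one; the non-trivial assertion is the constraint $|f^{-1}(i)|=b_i$ on the type function $f$. This is a global combinatorial property that must be extracted from purely local updates. The asymmetric form of \eqref{eq:a-and-b}, which singles out $i\in\{1,n\}$, strongly hints that the algorithm first builds a ``spine'' between the cycles $\sigma_1$ and $\sigma_n$, each contributing $b_i$ white vertices on which $f$ takes its value, and then attaches ``ribs'' for the interior cycles $\sigma_2,\dots,\sigma_{n-1}$, each contributing $b_i+1$ incidences with white vertices exactly one of which is absorbed by a higher-labeled neighbor. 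Proving this balance is \cref{lm:lem1}.

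The third stage constructs $\CT^{-1}:\mathcal{T}_{b_1,\dots,b_n}\to\mathcal{C}_{a_1,\dots,a_n}$ by running the transformation backwards: one reads a canonical cyclic order of edges around each white vertex from the labels and from the type function $f$, and then reads off the cycles $\sigma_1,\dots,\sigma_n$ as the counterclockwise orders of the white neighbors around the labeled black vertices, as in \cref{sec:geometry-minimal}. The fourth and final stage checks $\CT\circ\CT^{-1}=\mathrm{id}$; by the matching cardinalities this automatically yields $\CT^{-1}\circ\CT=\mathrm{id}$ as well. The tightness of the count $(k-1)!\,k^{n-1}$ leaves no slack, so any miscount in stage two or any gap in the inversion in stage three would immediately surface as a contradiction in stage four, making that cross-check a useful sanity condition along the way.
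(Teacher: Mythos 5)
Your outline reproduces the paper's architecture quite faithfully: stage one is \cref{lem:invariants-ok} and \cref{prop:invariants2-ok}, stage two is exactly \cref{lm:lem1} (and your reading of the asymmetry in \eqref{eq:a-and-b} -- the endpoints of the spine lose one attracted white vertex, the interior black vertices lose two -- is precisely how the proof of \cref{lm:lem1} balances the type), and stages three and four correspond to \cref{sec:inverse,sec:conclusion}. One difference in the endgame is worth noting. The paper verifies $\CT^{-1}\circ\CT=\operatorname{id}$ on $\mathcal{C}_{a_1,\dots,a_n}$ (so $\CT^{-1}$ only ever needs to be defined on the image of $\CT$, where structural facts such as \cref{lem:artificial-or-organic} are available because the tree is known to be an output of the algorithm), and then gets surjectivity from the cardinality identity $(k-1)!\,k^{n-1}$. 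You instead propose to verify $\CT\circ\CT^{-1}=\operatorname{id}$ on all of $\mathcal{T}_{b_1,\dots,b_n}$ and recover injectivity from the cardinalities; this is logically sound, but it forces you to show that the reconstruction is well defined on an \emph{arbitrary} Stanley tree and that its output is a genuine minimal factorization -- exactly the step the paper labels ``more challenging'' and deliberately omits. Since you already invoke the cardinality count, flipping to the paper's direction costs nothing and avoids that extra burden.

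The one place where your sketch underestimates the difficulty is stage three itself. Reading off $\sigma_i$ as the counterclockwise order of the (white) neighbors of the black vertex $i$ inverts only the first step of the algorithm, i.e.\ the passage from the factorization to the intermediate tree $T_1$ of \cref{sec:tt0}. In the actual output $T_2$ the cycle $\sigma_B$ is \emph{not} locally visible at the black vertex $B$: the bend and jump operations delete and relabel edges, so one label of each cycle is missing from the edges incident to $B$, the white vertices are fewer and unlabeled, and some of them are ``artificial''. Recovering the factorization requires the artificial/organic dichotomy (\cref{lem:artificial-or-organic}), the unfolding of clusters via the greedy increasing-subsequence reconstruction of \cref{sec:greedy,sec:recovering}, the missing-label recovery of \cref{sec:labels}, and a separate treatment near the spine (\cref{sec:anatomy-spine,sec:spine-neighborhood}); this is the bulk of the work in \cref{sec:alternative,sec:inverse} and is not addressed by ``reading a canonical cyclic order from the labels and from $f$''. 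So the plan is the right one, but the inverse construction cannot be dismissed in one sentence -- it, together with the proof that it really undoes $\CT$, is where most of the argument lives.
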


Note that both the notion of a Stanley tree as well as the notion of the minimal
factorization implicitly depend on the value of $k$ given, respectively, by
\eqref{eq:sumofb} and \eqref{eq:sum-of-a}. In our context, when
\eqref{eq:a-and-b} holds true, these two values of $k$ coincide.

\subsection{The first step of the algorithm $\CT$: from a factorization to a tree with repeated edge labels}
\label{sec:first}

In the first step of our algorithm $\CT$ to a given minimal
factorization $(\sigma_1,\dots,\sigma_n)\in \mathcal{C}_{a_1,\dots ,a_n}$ 
we will associate a bicolored plane tree $T_1$ with
labeled black vertices and labeled edges. The remaining part of the current
section is devoted to the details of this construction.

\subsubsection{The tree $T_0$}
\label{sec:tt0}

Just like in \cref{sec:geometry-minimal}
we start by creating a graph $T_0$ with $n$ black vertices labeled $1,\dots,n$ and
with $k$ white vertices labeled $1,\dots,k$, where $k$ is given by~\eqref{eq:sum-of-a}.
Each black vertex $i$
corresponds to the cycle $\sigma_i=(\sigma_{i,1}, \dots, \sigma_{i,a_i})$ and
so we connect the black vertex $i$ with the white vertices $\sigma_{i,1},
\dots, \sigma_{i,a_i}$. 
By the same argument as in \cref{sec:geometry-minimal} this graph is, in fact, a tree.

In order to give this tree  the structure of a \emph{plane tree} we need to
specify the cyclic order of the edges around each vertex. Just like in
\cref{sec:geometry-minimal} we declare that going counterclockwise around the
black vertex~$i$ the cyclic order of the labels of the white neighbors should
correspond to the cyclic order $\sigma_{i,1}, \dots, \sigma_{i,a_i}$. The
cyclic order around the white vertices is more involved and we present it in
the following.

\medskip

The path between the two black vertices with the labels $1$ and $n$ will be
called \emph{the spine}; on \cref{subfig:ex1} it is drawn as the horizontal red path. 
There will be
two separate rules that determine the cyclic order of the edges around a given
white vertex, depending whether the vertex belongs to the spine or not.

For each white vertex that is \emph{not} on the spine we declare that going
counterclockwise around it, the labels of its black neighbors should be arranged in
the increasing way (for example, the neighbors of the white vertex $6$ on
\cref{subfig:ex1} listed in the counterclockwise order are $3,4,6$). 

\begin{figure}
    {\includegraphics[clip, trim=0cm 9cm 0cm 0cm, width=0.5\textwidth]{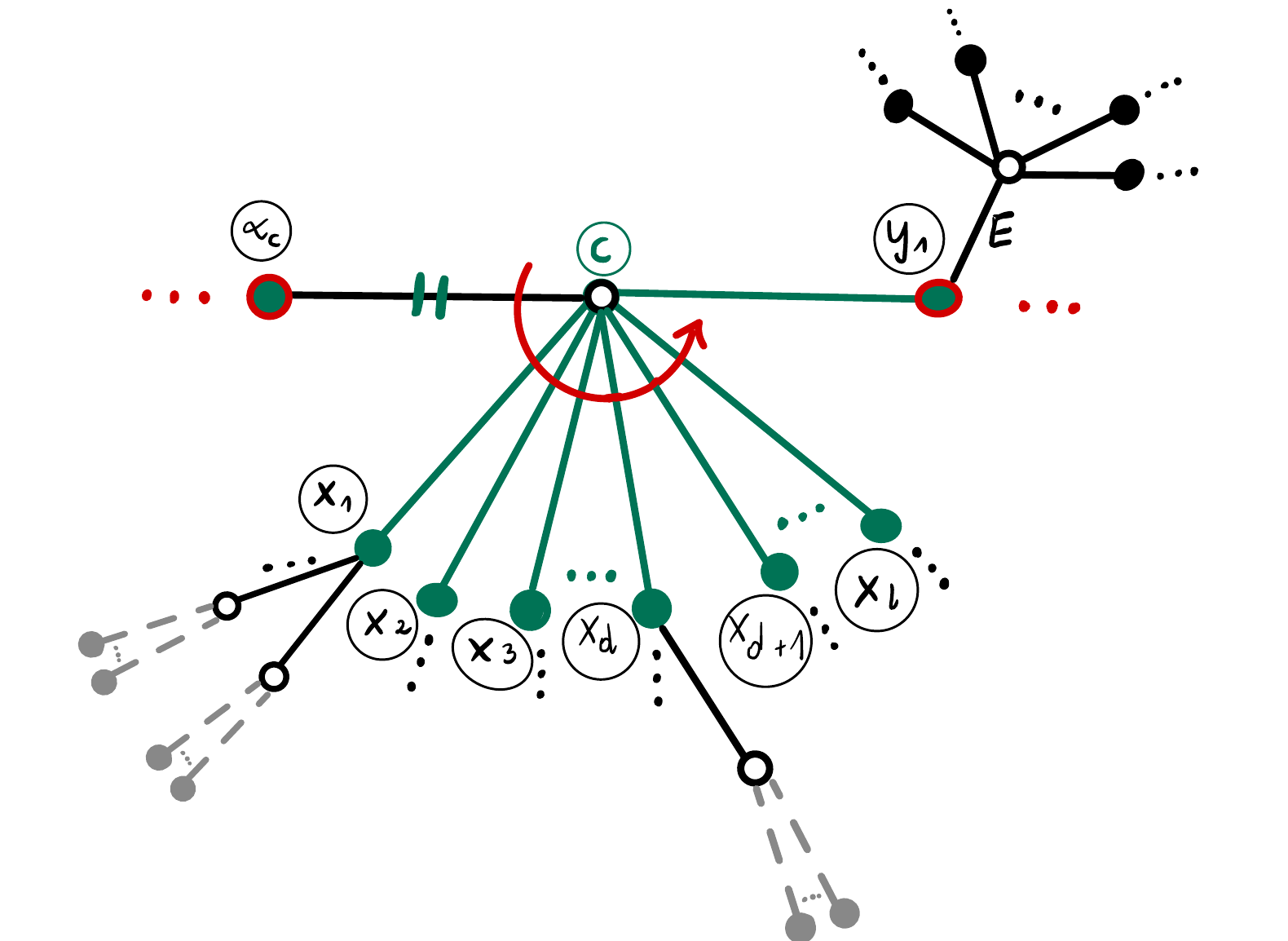}}
    
\caption{The structure of a white spine vertex $c$ in the plane tree $T_0$. The
    labels of the black spine vertices $\alpha_c,y_1\in\{1,\dots,n\}$ fulfill
    $\alpha_c< y_1$. There are $l\geq 0$ non-spine neighbors of the vertex $c$,
    denoted $x_1,\dots,x_l$. They are all placed (in the counterclockwise cyclic
    order) after $\alpha_c$ and before $y_1$. It~may happen that $l=0$ and there
    are no non-spine neighbors of $c$. For the details see \cref{sec:tt0}.}

\label{fig:untouched0}
\end{figure}

For each white vertex $c$ that belongs to the spine there are exactly two black
neighbors that belong to the spine; we denote their labels by $\Root_c$ and
$y_1$ with $\Root_c<y_1$; see \cref{fig:untouched0}. Going counterclockwise
around $c$, all non-spine edges should be inserted after $\Root_c$ and
before~$y_1$. Their order is determined by the requirement that---after
neglecting the vertex~$y_1$---the cyclic counterclockwise order of the
remaining vertices should be increasing. For example, for the white vertex $1$
on \cref{subfig:ex1} we have $\Root_1=7$, $y_1=11$ and the counterclockwise
cyclic order of the non-$y_1$ black neighbors is $4,7,13$.

\medskip

\begin{figure}
    \subfloat[]{\label{subfig:ex1}
    \includegraphics[width=0.45\textwidth]{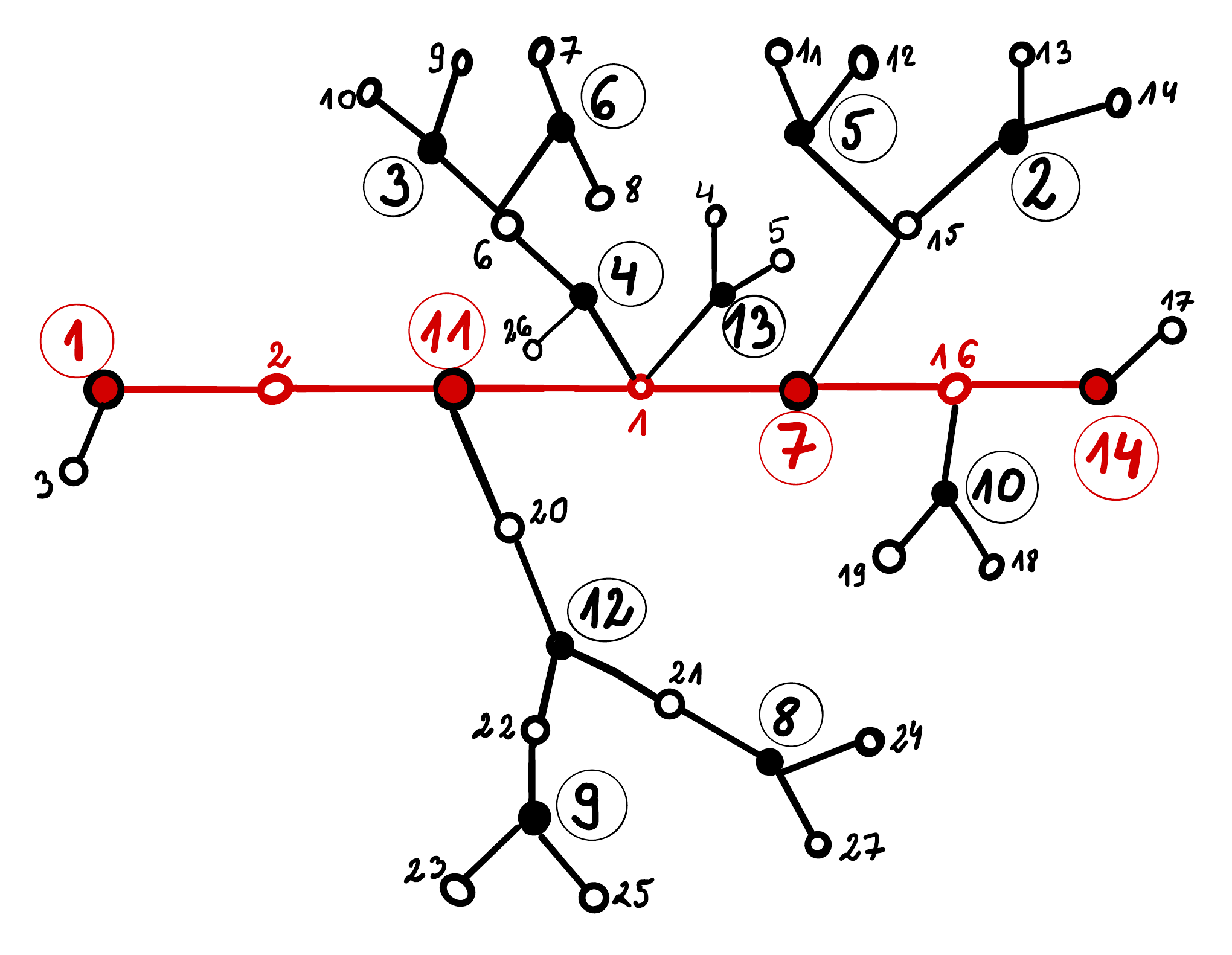}} \quad
\subfloat[]{\label{subfig:ex2}
    \includegraphics[width=0.45\textwidth]{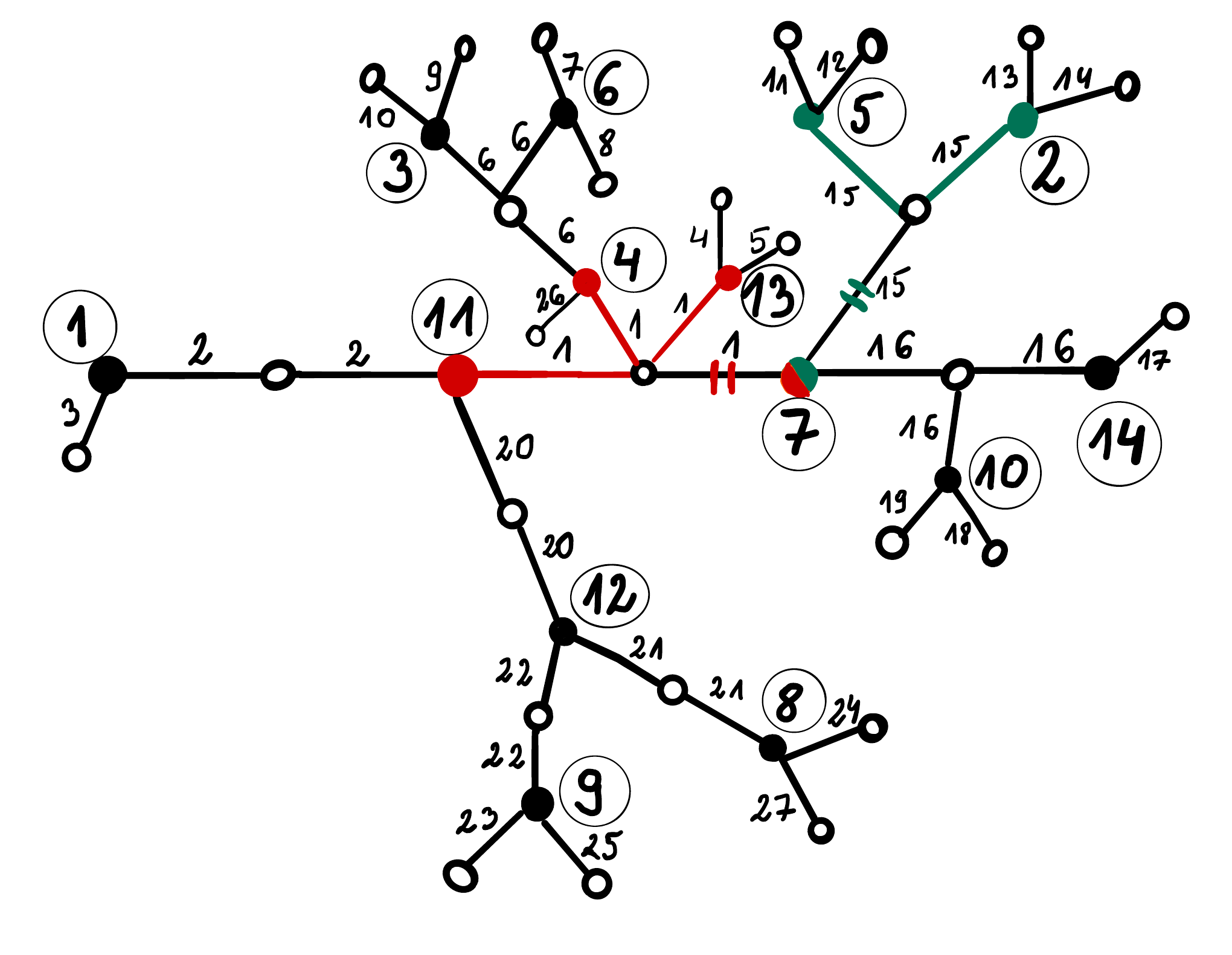}}
\caption{\protect\subref{subfig:ex1} The output $T_0$ of the first step of
    the algorithm $\CT$ applied to the minimal factorization
    \eqref{eq:example}. The spine is the horizontal red path between the
    vertices $1$ and $14$. 
    \protect\subref{subfig:ex2} The tree $T_1$ with some
    sample clusters highlighted. The red color indicates the cluster $1$, while
    the green indicates the cluster  $15$. The double transverse lines identify
    the roots of the respective clusters.} 
\label{fig:ex12}
\end{figure}

For example, \cref{subfig:ex1} gives the tree $T_0$ that corresponds to 
\[ n=14,\quad 
k=27,\qquad a_1=a_{14}=2,\quad a_2=\cdots=a_{13}=3\] 
and the minimal factorization $(\sigma_1,\dots,\sigma_{14})\in
\mathcal{C}_{2,3^{12},2}$ 
\begin{multline}
    \label{eq:example}
    \sigma_1=(2,3), \quad 
    \sigma_2=(13,15,14), \quad
    \sigma_3=(6,9,10),\quad
    \sigma_4=(1,6,26), \\
    \sigma_5=(11,15,12), \quad
    \sigma_6=(6,8,7), \quad
    \sigma_7=(1,16,15) \quad
    \sigma_8=(21,27,24), \\
    \sigma_9=(22,23,25), \quad
    \sigma_{10}=(16,19,18), \quad
    \sigma_{11}=(2,20,1),\\
    \sigma_{12}=(20,22,21), \quad
    \sigma_{13}=(1,5,4), \quad
    \sigma_{14}=(16,17).
\end{multline}

We denote by $T_1$ the tree $T_0$ in which each edge is labeled by its white
endpoint and then all labels of the white vertices are removed; see
\cref{subfig:ex2,fig:ex2B}. The tree $T_1$ is the output of the first step of the
algorithm $\CT$.

\subsubsection{Information about the initial tree $T_1$}

In  the current section we will define certain sets and functions that describe
the shape of the initial tree~$T_1$. In the language of programmers: we will
create variables $B_c$, $\Root_c$, $\mathcal{B}$, and $\mathfrak{C}$ that will
not change their values during the execution of the algorithm~$\CT$.

\medskip

For $c \in\{1,\dots,k\}$  by \emph{the cluster $c$} we mean the set of edges
that carry the label $c$, together with their black endpoints. We will also
say that $c$ is the label of this cluster. All edges in a given cluster have
the same white endpoint; this property will be preserved by the action of our
algorithm. This common white vertex will be called \emph{the center} of the
cluster. For example, in \cref{subfig:ex2} the cluster $1$ is drawn in red,
while the cluster $15$ is drawn in green. We denote by
$B_c\subseteq\{1,\dots,n\}$ the set of labels of the black vertices in the
cluster $c$ in the tree~$T_1$. 

Each cluster may contain either zero, one, or two black spine vertices.
A cluster is called a \emph{spine cluster} if it contains exactly two black spine
vertices. The set of labels of such spine clusters will be denoted by 
$\mathfrak{C}\subseteq\{1,\dots,k\}$.

\begin{figure}
    \centering \includegraphics[width=0.8\textwidth]{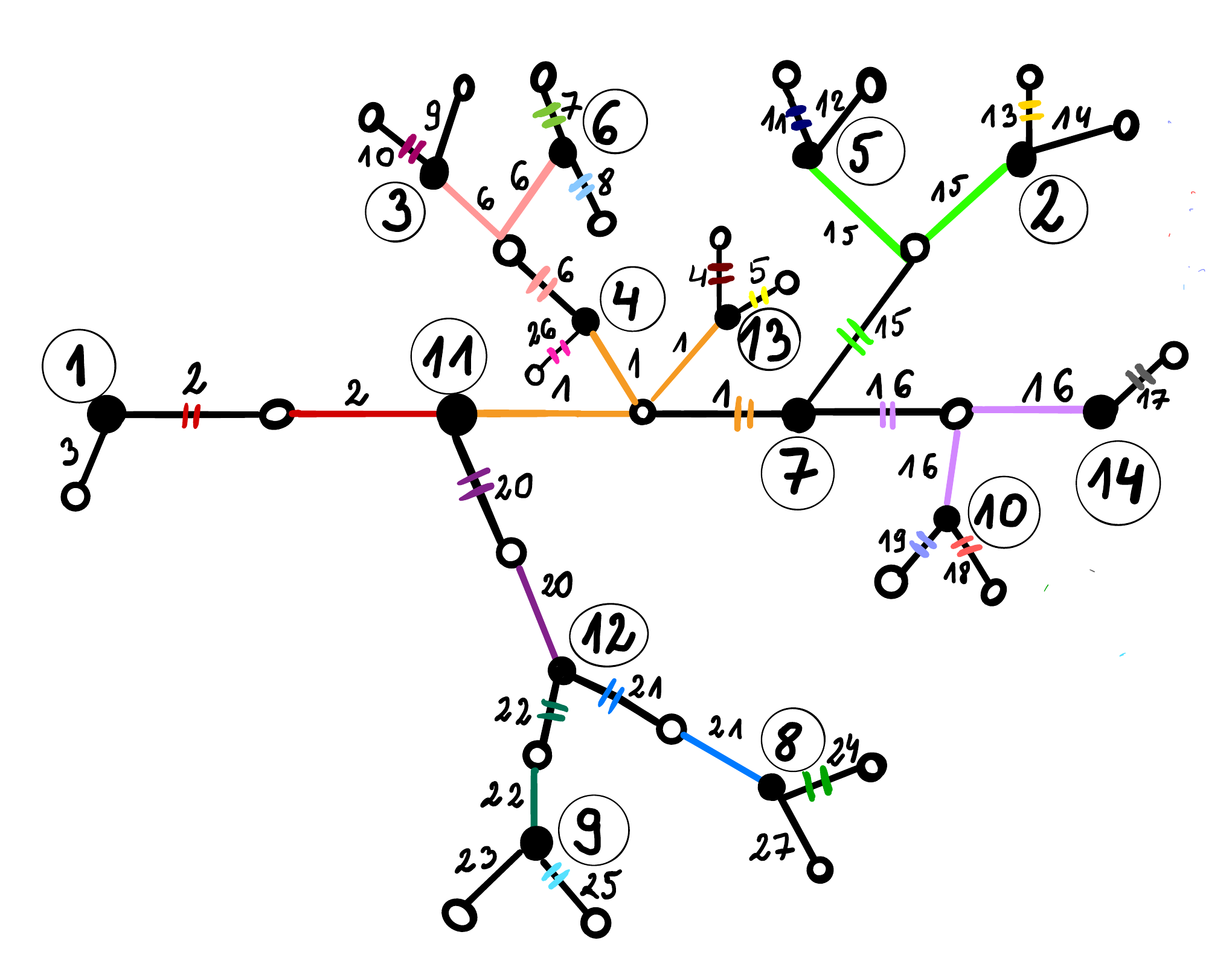} 
    
    \caption{The tree $T_1$ that is the starting point of the second step of
    the algorithm $\CT$ for the minimal factorization \eqref{eq:example}.
    Essentially this is an enlarged version of \cref{subfig:ex2} with some
    additional highlights. For the sake of clarity we paint the non-root edges
    of each cluster in one color, while the root edge is marked with two
    transverse lines of the same color.}
    
    \label{fig:ex2B}
\end{figure}

We orient the non-spine edges of the tree so that the arrows point towards the
spine. The \emph{root of a non-spine cluster} is defined as the unique edge
outgoing from the center. The \emph{root of a spine cluster} is defined as the
edge with the smallest label of the black endpoint among the two spine edges in
the cluster; with the notations of \cref{fig:untouched0} it is the edge between
the vertices $c$ and $\alpha_c$. For example, in \cref{fig:ex2B} the root of a
cluster is marked with two transverse lines. Heuristically, our strategy will
be to keep removing the edges from the cluster; the root is the unique cluster
edge that will remain at the end. The notion of the root will not be used in the
description of the algorithm $\CT$, nevertheless it will be a convenient tool
for proving later its correctness.

The black end of the root of a cluster $c \in\{1,\dots,k\}$ in the tree $T_1$
will be called \emph{the anchor} of the cluster $c$. Its label will be denoted
by $\Root_c\in\{1,\dots,n\}$. By definition, the label of the anchor will not
change during the execution of the algorithm. However, it may happen at later
steps that the anchor (i.e.,~the black vertex that carries the label
$\alpha_c$) is no longer one of the endpoints of the root. When it does not
lead to confusions we will identify the anchor (understood as a black vertex)
with its label $\alpha_c$.

By $\mathcal{B}\subseteq\{1,\dots,n\}$ we denote the set of the
labels of black spine vertices. 

A cluster is called a \emph{leaf} if it contains exactly one edge. 
For example, in \cref{subfig:ex2,fig:ex2B} the cluster $10$ is a leaf.

\medskip

For the example from \cref{subfig:ex2,fig:ex2B} we have:
\begin{multline}
    \label{eq:step}
    \mathcal{B}=\{1,7,11,14\}, \quad 
    \mathfrak{C}=\{1,2,16\}, \\
    B_1=\{4,7,11,13\}, \quad
    B_2=\{1,11\}, \quad 
    B_6=\{3,4,6\}, \quad 
    B_{15}=\{2,5,7\}, \\ 
\shoveright{ 	B_{16}=\{7,10,14\}, \quad
	B_{20}=\{11,12\}, \quad
    B_{21}=\{8,12\}, \quad
    B_{22}=\{9,12\},}\\
\shoveleft{    \Root_1=7, \quad
    \Root_2=1, \quad  
    \Root_6=4, \quad
    \Root_{15}=7,}  \\
    \Root_{16}=7, \quad
    \Root_{20}=11, \quad
    \Root_{21}=12, \quad
    \Root_{22}=12. 
\end{multline}
We did not list the values of $B_i$ and $\Root_i$ for white vertices $i$ that
are non-spine leaves because these values will not be used by our algorithm.

\medskip

Recall that we orient the non-spine edges of the tree so that the arrows point
towards the spine. The set of non-spine clusters is partially ordered by the
orientations of the edges as follows: a~cluster $c_1$ is a predecessor of a
cluster $c_2$ if the path from the spine to the center of the cluster $c_2$
passes through the center of the cluster~$c_1$. This partial order can be
extended to a linear order (not necessarily in a unique way). Let $\Sigma$ be
the sequence of the clusters that are non-spine and non-leaf, arranged
according to this linear order. For example, for the tree $T_1$ shown on
\cref{subfig:ex1} we can choose $\Sigma=(6,15,20,21,22)$.
\label{text:definiton-of-Sigma}

\begin{figure}
    \centering {\includegraphics[clip, trim=3cm 0cm 0cm 5cm,angle=-90,width=0.4\textwidth]{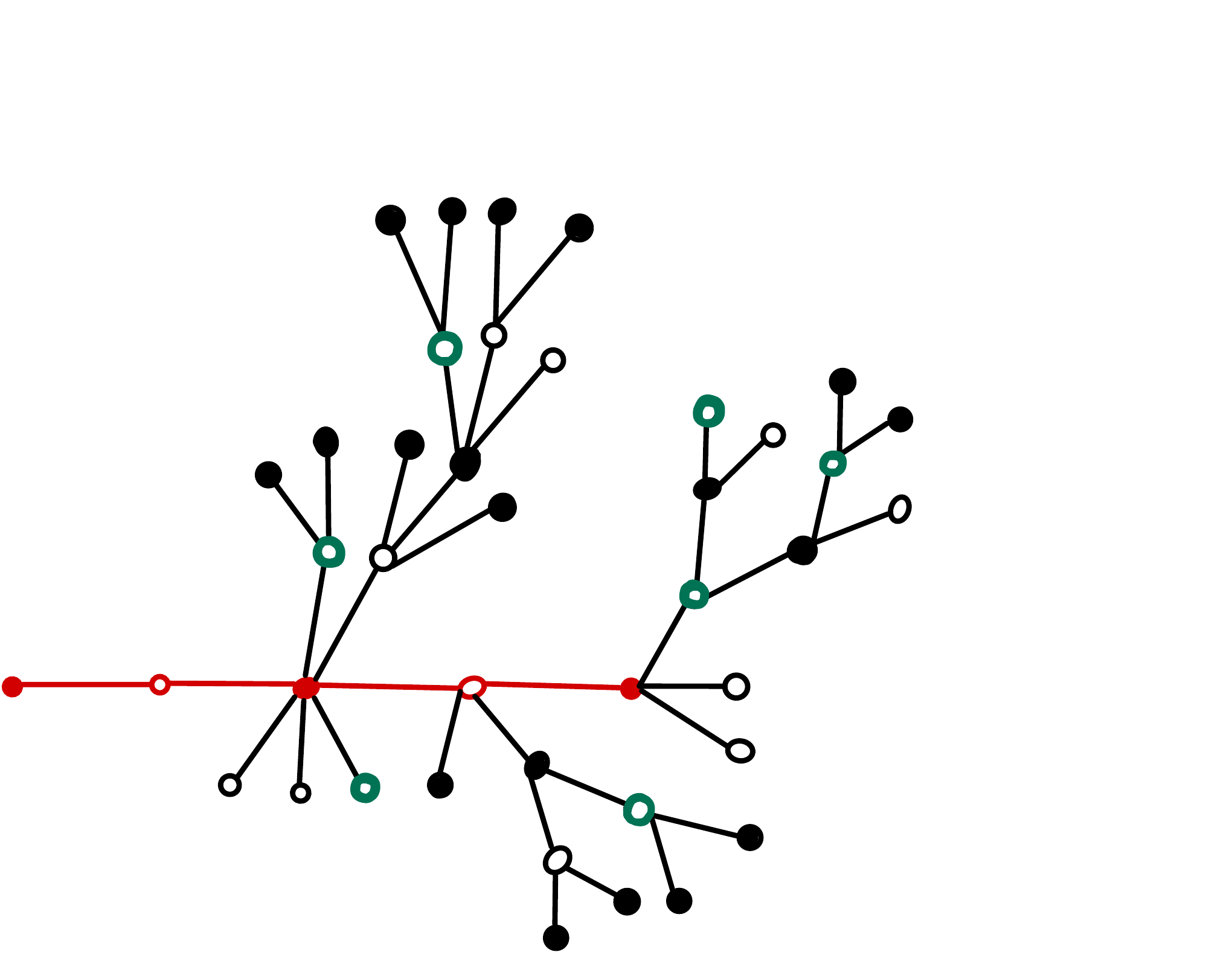}}   
    \caption{The tree $T_1$ shown without the labels. The red edges indicate the spine. 
        The thick green empty circles indicate white leftist vertices.}
    \label{fig:left}
\end{figure}

The aforementioned orientations of the edges allow us also to use some
vocabulary taken from the theory of \emph{rooted trees}, at least for the
vertices which are away from the spine. For example, the \emph{parent} of a
non-spine vertex $v$ is the unique vertex $w$ such that there is an oriented
edge from $v$ to $w$. The terms such as \emph{children} and \emph{descendants}
of a non-spine vertex are defined accordingly. 

Let $c_1,c_2$ be clusters in the tree $T_1$ and let $v_1,v_2$ be their centers.
We say that \emph{the cluster $c_2$ is a child of the cluster $c_1$} or, in
other words, \emph{the cluster $c_1$ is the parent of the cluster $c_2$} if
there exists a black vertex $w$ that is a child of the vertex $v_1$ and such
that $v_2$ is a child of $w$, i.e.,~$v_2$ is a grandchild of $v_1$.

In the plane tree $T_1$ we can split the set of white non-spine vertices into
the set of \emph{leftist} vertices and the set of \emph{non-leftist} vertices.
A white non-spine vertex is called \emph{leftist} in the tree $T_1$ if it is a
leftmost child of its parent when viewed from the point of view of the parent.
Note that a black spine vertex may have \emph{two} leftmost children located on
either side of the spine. For example, for the tree $T_1$ shown on
\cref{fig:left}, the white leftist vertices are drawn as thick green empty
circles. We say that a cluster in $T_1$ is \emph{leftist} if its center is a
leftist vertex.

\subsection{The second step of the algorithm $\CT$: 
    from a tree with repeated edge labels to a tree with unique edge labels}
\label{sec:second}

The starting point of the second step of our algorithm $\CT$ is the
bicolored plane tree~$T_1$ with black vertices labeled $1,\dots,n$ and with edges
labeled with the numbers $1,\dots,k$; note that the edge labels are repeated.
Our goal in this second step of the algorithm is to transform the tree so that
the edge labels are not repeated.

We declare that at the beginning all clusters of the tree $T_1$ are
\emph{untouched}. Also, each non-spine black vertex is declared to be
\emph{untouched}. On the other hand, each black spine vertex is declared to be
\emph{touched}.

The second step of the algorithm will consist of two parts: firstly we apply
\emph{the spine treatment} (\cref{sec:spine}), then we apply \emph{the rib
    treatment} (\cref{sec:rib}). In fact, these two parts are very similar: each of
them consists of an external loop that has a nested internal loop; one could
merge these two parts and regard them as an instance of a single external loop
that treats the spine vertices and the non-spine vertices in a slightly
different way.

\medskip

During the action of the forthcoming algorithm some edges will be removed from
each cluster. The root of the cluster may change during the execution of the
algorithm. Also, as we already mentioned, the anchor of a cluster may no longer
be the black endpoint of the root. On the positive side, the following
invariant guarantees that some properties of a cluster will persist (the proof
is postponed to \cref{lem:invariants-ok} and \cref{prop:invariants2-ok}).

\begin{invariant}
   \label{invariant}
    At each step of the algorithm and for each cluster $c$ the following properties hold true:
    \begin{enumerate}[label=(I\arabic*)]
 \item the edges of the cluster $c$ have a common white endpoint (called, as before, 
 		\emph{the center} of the cluster),       
 \item the anchor of $c$ is a black vertex that is connected by
    an edge with the center of $c$,
   \item the root of $c$ is one of the edges that form the cluster $c$,
   \item\label{invariant:4} if the black endpoint of the root of $c$ 
   is not equal to the anchor $\Root_c$  
   then this black endpoint is touched. 
\end{enumerate}
\end{invariant}

Be advised that distinct clusters may at later stages of the algorithm share
the same center. 

The algorithm will be described in terms of two operations, called \emph{bend}
and \emph{jump}; we present them in the following. Each of them decreases the
number of the white vertices by~$1$, as well as decreases the number of edges
by~$1$. The edge that disappears has a repeated label, in this way the set of
the edge labels remains unchanged.

\subsection{The building blocks of the second step: bend $\Bend_{x,y}$}

\begin{figure}
    \centering \subfloat[]{\label{subfig:exA} {\includegraphics[clip,trim=0cm
        1.9cm 3.5cm 1.5cm,width=0.45\textwidth]{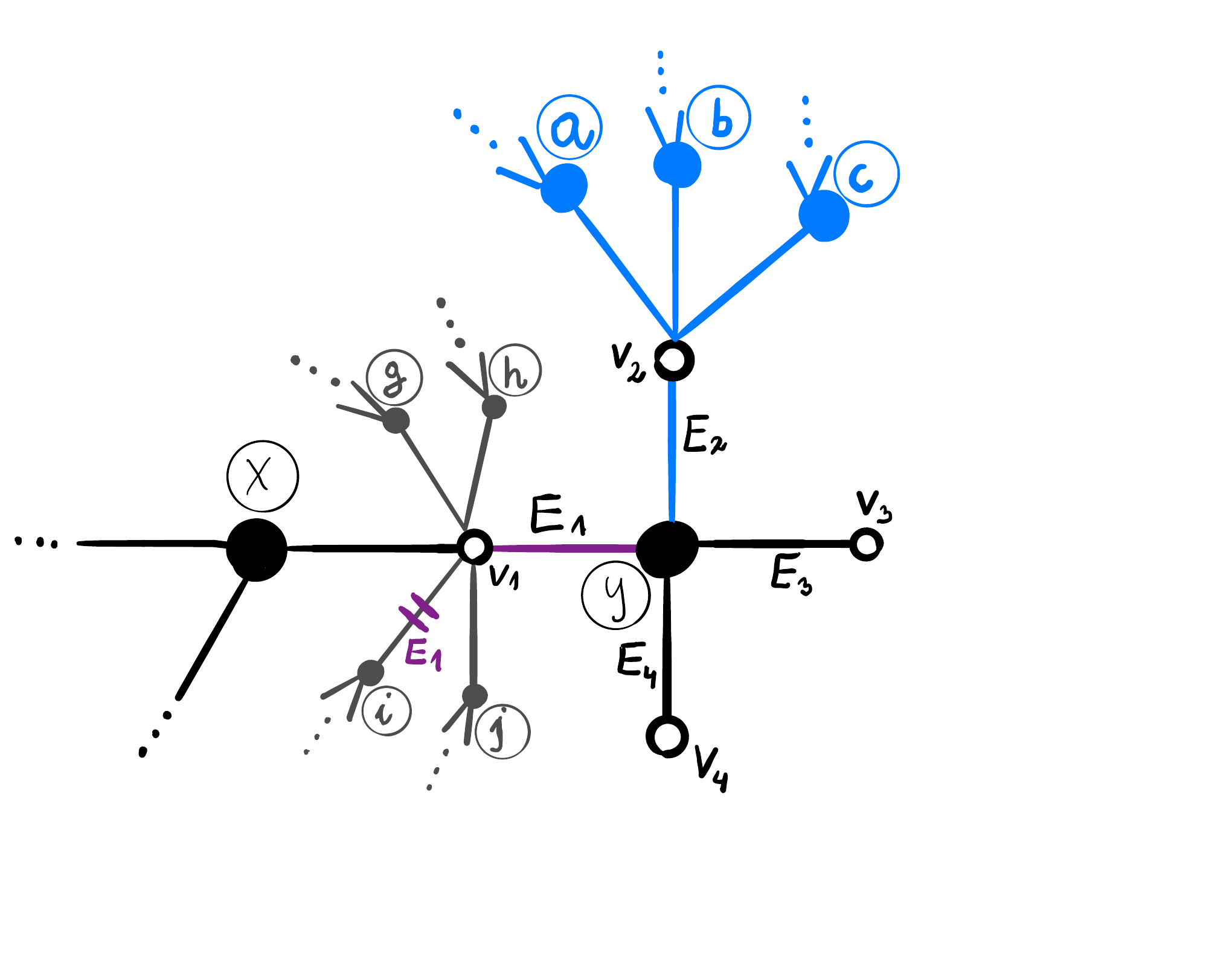}} } \quad
\subfloat[]{\label{subfig:exB} {\includegraphics[clip,trim=0cm 2.5cm 3.5cm
        0.9cm, width=0.45\textwidth]{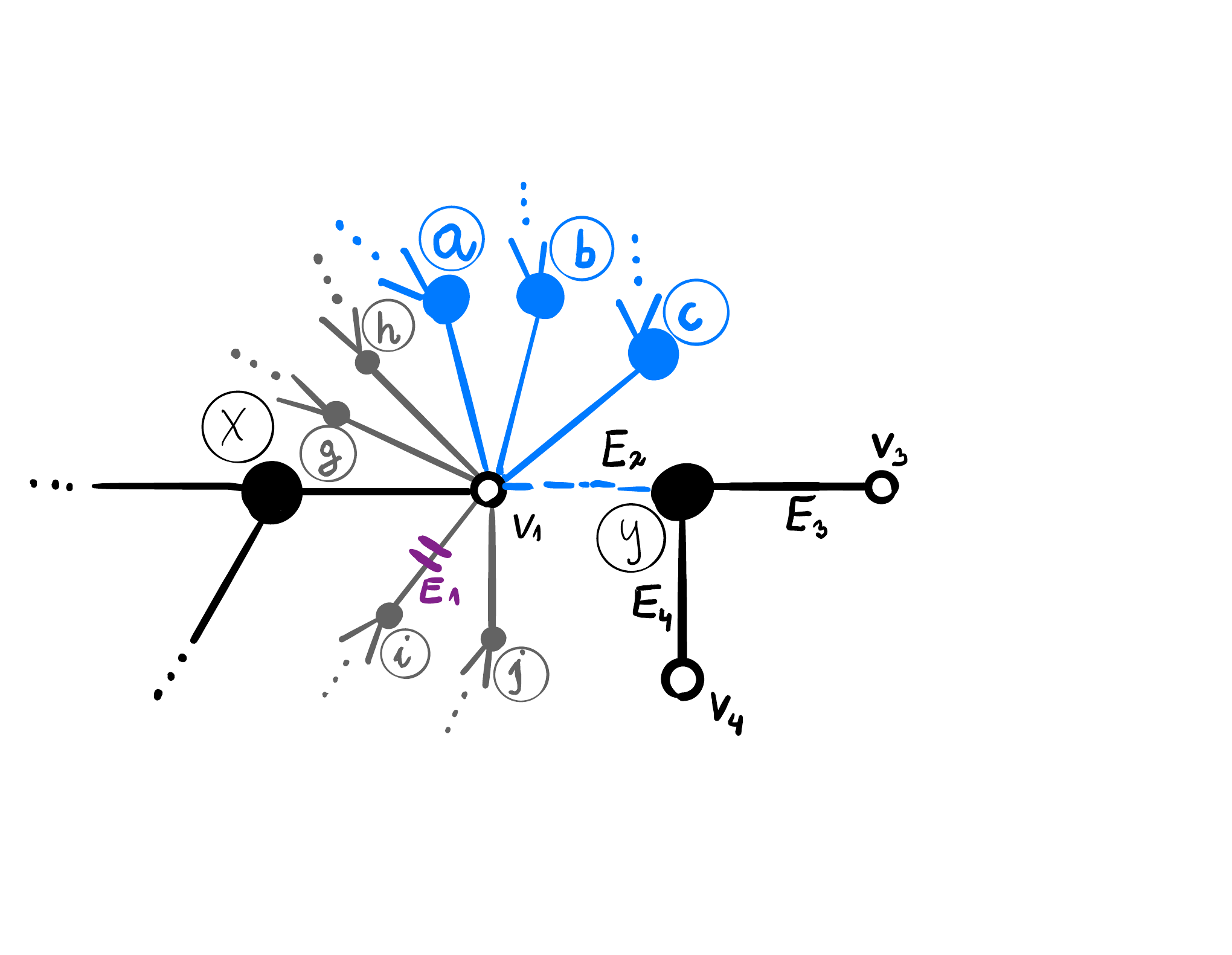}} }
\caption{\protect\subref{subfig:exA} The initial configuration of the tree
    before the bend operation $\Bend_{x,y}$ was applied. The vertex $y$ belongs to
    the cluster $E_1$.  The center of $E_1$ is equal to $v_1$. The anchor of $E_1$
    is equal to $x$. The black endpoint of the root of $E_1$ is equal to $i$. We
    assume that $y\notin\{x,i\}$. The neighbors of~$y$, listed in the clockwise
    order, are: $v_1,v_2,\dots,v_d$, and the corresponding edges connecting $y$
    with the neighbors are: $E_1,\dots,E_d$. For a complete list of notations and
    assumptions see \cref{sec:assumptions-e}.
    \newline
    \protect\subref{subfig:exB} The output of $\Bend_{x,y}$. The edge
    connecting $v_2$ with $y$ was rotated counterclockwise so that the vertex
    $v_2$ was merged with $v_1$. For a detailed description see
    \cref{sec:output-of-bend}.}

\label{fig:ex}
\end{figure}

\subsubsection{Assumptions about the input of\/ $\Bend_{x,y}$.}
\label{sec:assumptions-e}

We list below the assumptions about the input of the operation \emph{bend}.
We also introduce some notations.

\begin{enumerate}[label=(B\arabic*)]
    \item \label{AE1} 
The operation $\Bend_{x,y}$ takes as an input a bicolored tree $T$ that is
assumed to be as in \cref{invariant}, together with a choice of two distinct
black vertices $x$, $y$. We assume that there is a cluster $E_1$ such that $x$
is the anchor of $E_1$ and $y$ belongs to $E_1$; see \cref{subfig:exA}. We
denote the center of $E_1$ by~$v_1$.

\item
\label{AE2}
We denote by $i$ the black endpoint of the root of the cluster $E_1$.
We assume that $i\neq y$.

\item \label{AE3} 
We also assume that the black vertex $y$ has degree $d\geq 2$; we denote the
edges around the vertex $y$ by $E_1,\dots, E_d$ (going clockwise, starting from
the edge $E_1$ between $v_1$ and $y$). We denote the white endpoint of the edge
$E_i$ by $v_i$.
\end{enumerate}

\subsubsection{The output of $\Bend_{x,y}$} 
\label{sec:output-of-bend}

The bend operation can be thought of as a  counterclockwise rotation of the
edge~$E_2$ around the vertex $y$ so that it is merged with the edge $E_1$; a
more formal description of the output of $\Bend_{x,y}$ is given as follows.

We remove the edge between the vertices $y$ and $v_2$. The label of the edge
between $v_1$ and $y$ is changed to~$E_2$. If the removed edge was the root of
the cluster~$E_2$, the aforementioned edge between $v_1$ and $y$ becomes the
new root of the cluster $E_2$.

Then we merge the vertex $v_2$ with the vertex $v_1$ in such
a way that going clockwise around $v_1$ the newly attached edges are
immediately before the edge $E_2$ (these edges are marked blue on
\cref{fig:ex}).

From the following on we declare that the cluster $E_1$ is \emph{touched}
and also the black vertex $y$ is \emph{touched}.
It is easy to check that the output tree still fulfills the properties from \cref{invariant}.

\medskip

We will say that some operation on the tree \emph{separates the root and the
    anchor in a cluster~$c$} if (i) \emph{before} this operation was applied the
anchor of $c$ was the black endpoint of the root of $c$, and (ii) \emph{after}
this operation is performed this is no longer the case. It is easy to check
that the following simple lemma holds true.

\begin{lemma}
    \label{lem:pomoc1}
The bend operation does not separate the root and the anchor in any cluster.
\end{lemma}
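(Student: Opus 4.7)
The plan is to do a short case analysis over the clusters whose data could conceivably be altered by $\Bend_{x,y}$ and to show that in each case either the root itself does not move or, if it does, its black endpoint is preserved. The only edge-set changes performed by $\Bend_{x,y}$ are: the edge $(y,v_2)$ is deleted, the edge between $v_1$ and $y$ is relabeled from $E_1$ to $E_2$, and the white vertex $v_2$ is merged into $v_1$ (which affects no labels, only incidences). So any cluster whose list of edges and whose distribution of black-endpoint labels are both untouched is automatically safe.

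First I would handle the trivial clusters. For every cluster distinct from $E_1$ and $E_2$, no edge of that cluster gains or loses an endpoint label: if the cluster was centered at $v_2$, its edges simply have their common white endpoint renamed to $v_1$; all black endpoints and all edge labels are unchanged. Hence its root is the same edge, with the same black endpoint, and its anchor is (by definition) a fixed label. So the root--anchor pairing is preserved. Next, for cluster $E_1$: its root is the edge between $v_1$ and $i$ with $i\neq y$ by assumption \ref{AE2}, and this edge is not among the edges touched by $\Bend_{x,y}$ (the only edge incident to $v_1$ whose label changes is the one incident to $y$). Hence the root of $E_1$ is literally the same edge after the bend, with the same black endpoint.

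The only nontrivial case is cluster $E_2$. Its edge set changes: $(y,v_2)$ disappears and the edge $(v_1,y)$, now bearing the label $E_2$, joins it. If the old root of $E_2$ was \emph{not} the edge $(y,v_2)$, then the root (being one of the surviving edges of $E_2$) is unchanged, and we are done as above. If the old root of $E_2$ \emph{was} the edge $(y,v_2)$, then its black endpoint was $y$; by the prescription in \cref{sec:output-of-bend} the new root of $E_2$ becomes the edge $(v_1,y)$, whose black endpoint is again $y$. So if the anchor $\Root_{E_2}$ coincided with the black endpoint of the old root (that is, with $y$), then it still coincides with the black endpoint of the new root. In particular, no separation of root and anchor is created.

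I don't foresee any real obstacle here: the statement is genuinely local, the bend only perturbs two clusters ($E_1$ and $E_2$), and in each of those the matching is either trivially preserved (for $E_1$, because the root edge is untouched) or preserved because the rerouting happens around the same black vertex $y$ (for $E_2$). The only subtlety worth pointing out is that merging $v_2$ into $v_1$ does not hurt the invariant, because the center of a cluster is a white vertex while the anchor is a black vertex; the identity of the black endpoint of every cluster's root is what matters, and that identity is preserved.
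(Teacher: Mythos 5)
Your proof is correct: the bend only alters the edge sets of the two clusters $E_1$ and $E_2$, and your case analysis (root of $E_1$ untouched since $i\neq y$; root of $E_2$ either untouched or transferred to the new edge with the same black endpoint $y$; all other clusters affected only by renaming the white center) is exactly the local check the paper has in mind. The paper itself states this lemma as "easy to check" without writing out a proof, so your argument simply supplies the intended verification.
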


\subsection{The building blocks of the second step: jump $\Jump_{x,y}$}
\label{sec:jump}

\begin{figure}
    \centering \subfloat[]{\label{subfig:jumpA}
    {\includegraphics[width=0.4\textwidth]{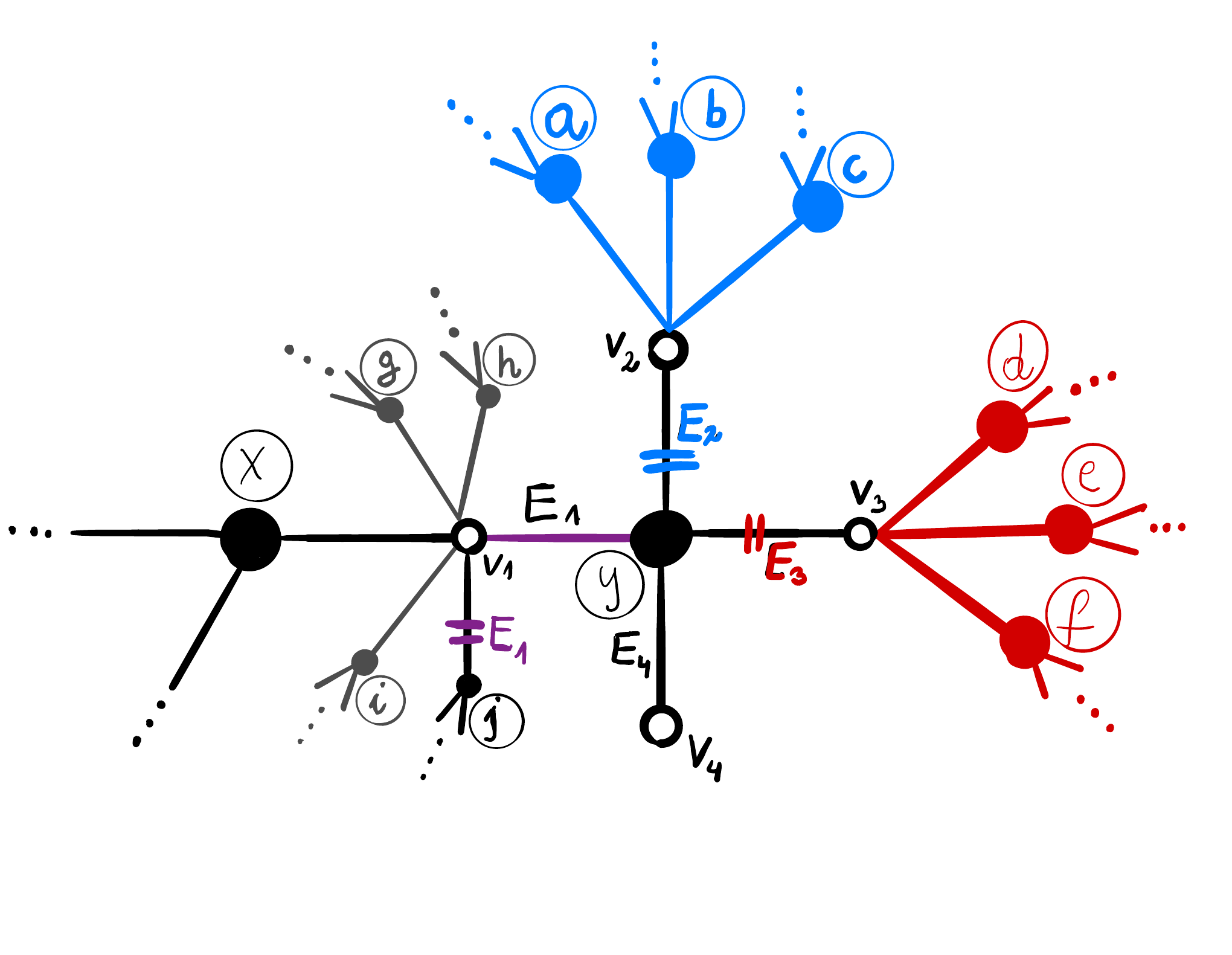}}  } \quad
\subfloat[]{\label{subfig:jumpB}
    {\includegraphics[width=0.4\textwidth]{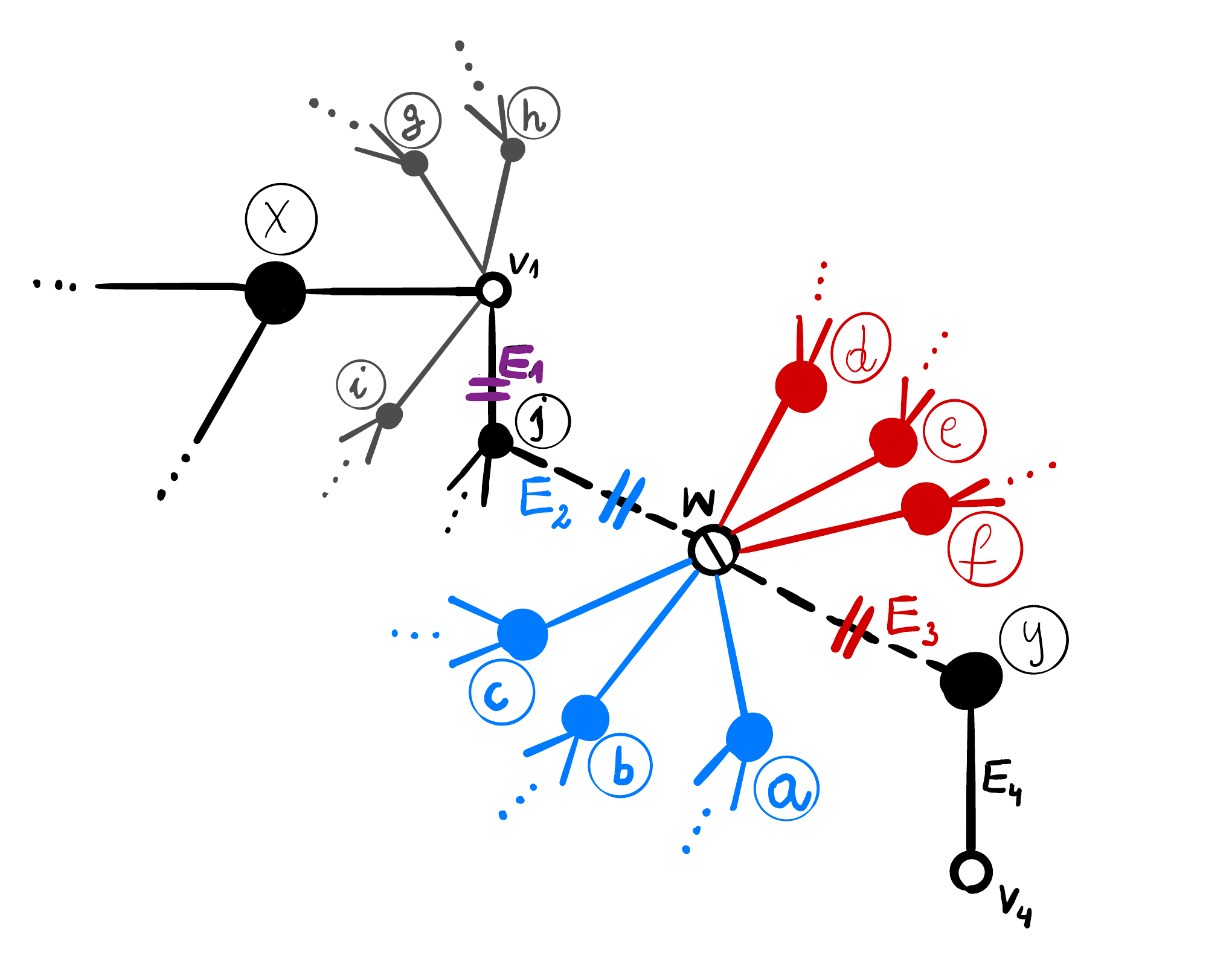}}}
\caption{\protect\subref{subfig:jumpA} The initial configuration of the
    tree before the jump operation~$\Jump_{x,y}$ was applied. The black
    vertex $y$ belong to the cluster $E_1$. The center of $E_1$ is
    equal to $v_1$, the anchor of this cluster is equal to $x$. The vertex $j$
    is the first immediately after $y$ if we visit the neighbors of $v_1$ in
    the clockwise order; we assume that $j$ is the black endpoint of the root
    of $E_1$. The neighbors of~$y$ listed in the clockwise order are:
    $v_1,v_2,\dots, v_d$. This figure depicts the special case when $j\neq x$. 
    In this case the assumption \ref{invariant:4} applied to the cluster $E_1$ 
    guarantees that the vertex $j$ is touched.  
    For the full list of assumptions and notations see
    \cref{sec:assumptions-j}. 
    \newline
    \protect\subref{subfig:jumpB}~The output of
    $\Jump_{x,y}$. The edges connecting $y$ with $v_1,v_2,v_3$ were removed.
    The vertices $v_2$ and $v_3$ were replaced by a new vertex $w$. The vertex
    $w$ is connected by new edges to $j$ and $y$. In a typical application, in the initial
    configuration in the cluster $E_2$ the anchor as well as the black endpoint
    of the root are both equal to $y$; then in the output the anchor of $E_2$
    is no longer the endpoint of the root. For a detailed description see
    \cref{sec:output-of-jump}.} \label{fig:jump}

    \centering \subfloat[]{\label{subfig:scjumpA}
    {\includegraphics[width=0.4\textwidth]{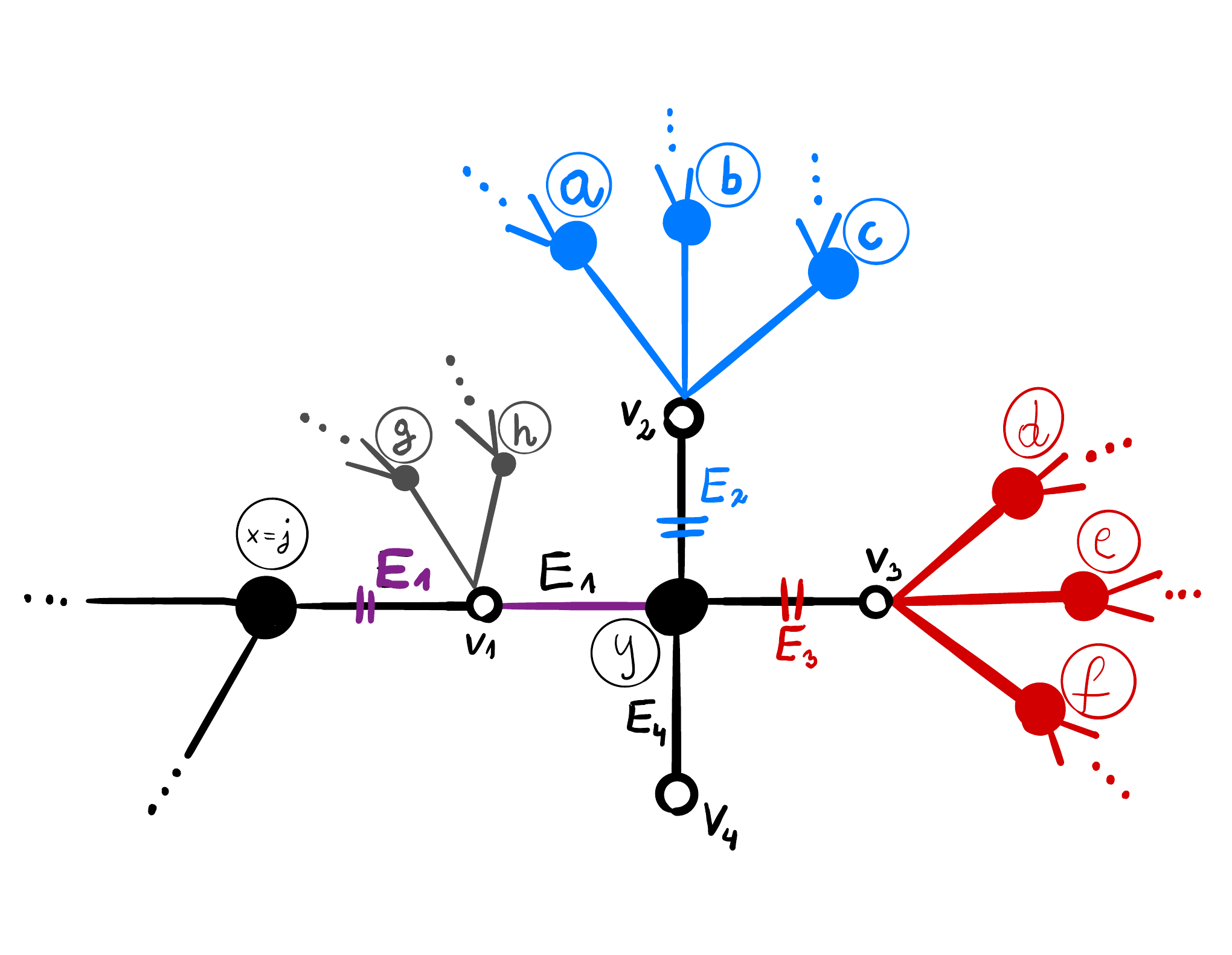}}  } \quad
\subfloat[]{\label{subfig:scjumpB}
    {\includegraphics[width=0.4\textwidth]{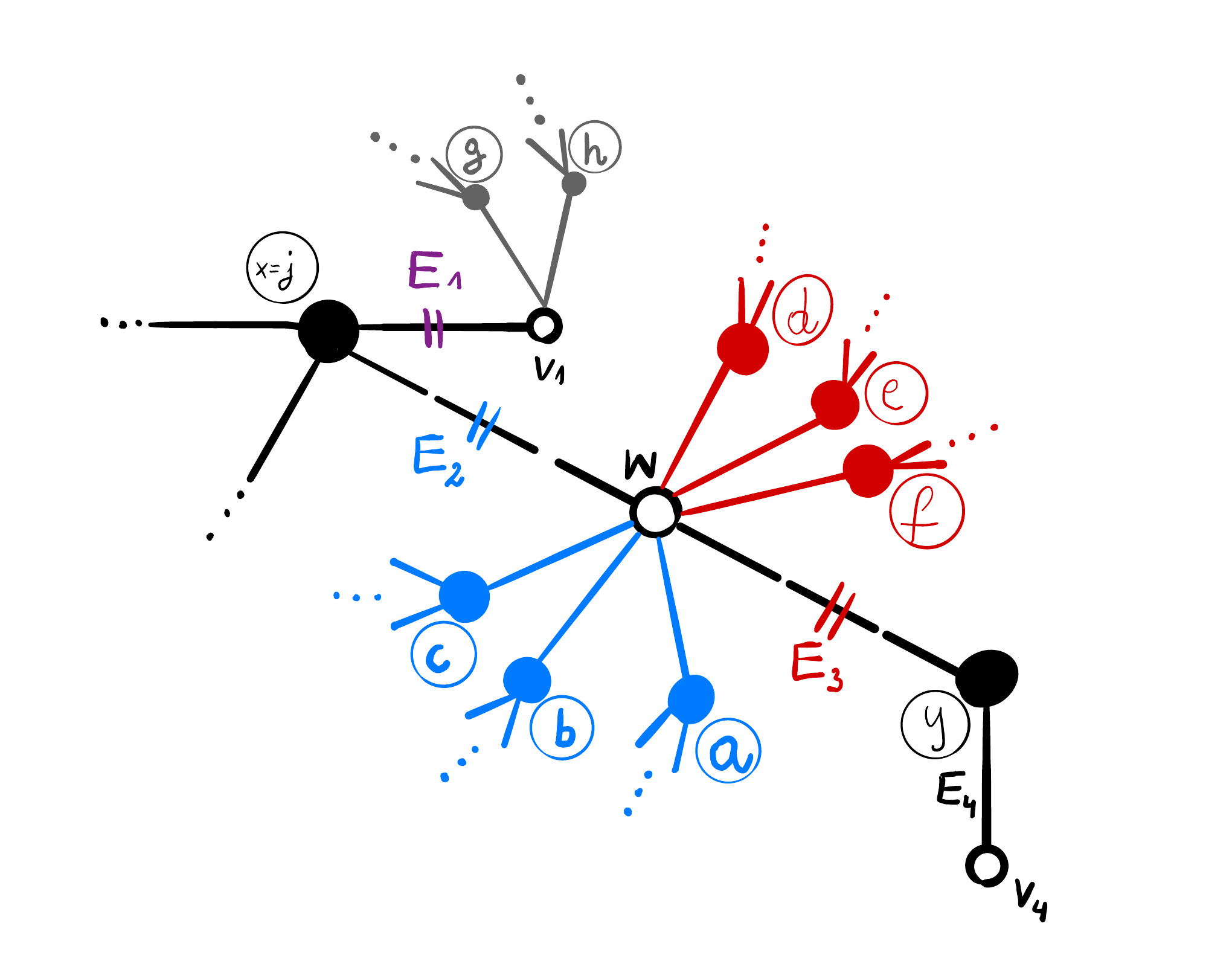}}}
\caption{\protect\subref{subfig:scjumpA} The initial configuration of the
    tree before the jump operation $\Jump_{x,y}$ was applied in the special
    case when $j=x$. For the notations see the caption of
    \protect\cref{fig:jump}. The vertex $x=j$ is assumed to be touched.
    \newline \protect\subref{subfig:scjumpB}~The output of $\Jump_{x,y}$ in the
    special case when $j=x$.} \label{fig:scjump}
\end{figure}

\subsubsection{Assumptions about the input of $\Jump_{x,y}$.}
\label{sec:assumptions-j}

We list below the assumptions about the input of the operation \emph{jump}. We
also introduce some notations. 

\begin{enumerate}[label=(J\arabic*)]
    \item \label{AJ1} The operation $\Jump_{x,y}$ takes as an input a bicolored
tree $T$ that is assumed to be as in \cref{invariant}, together with a
choice of two black vertices $x$, $y$. We assume that there is a cluster
$E_1$ such that $x$ is the anchor of $E_1$ and $y$ belongs to $E_1$; see
\mbox{\cref{subfig:jumpA,subfig:scjumpA}}. We denote by $v_1$ the center of
the cluster $E_1$. We also assume that the labels of the vertices
$x,y\in\{1,\dots,n\}$ fulfill $x<y$.

    \item \label{AJ2} We denote by $j$ the black neighbor of $v_1$ that---going
clockwise around the vertex~$v_1$---is immediately after $y$ (note that it
may happen that $j=x$; see \cref{subfig:scjumpA}). We assume that $j$ is
the black endpoint of the root of the cluster $E_1$; see
\cref{subfig:jumpA,subfig:scjumpA}.

    \item \label{AJ3}
We also assume that the black vertex $y$ has degree $d\geq 3$; we denote the
edges around the vertex $y$ by $E_1,\dots, E_d$ (going clockwise, starting from
the edge $E_1$). For $i\in\{1,\dots,d\}$ we denote the white
endpoint of the edge $E_i$ by $v_i$.

    \item \label{AJ4}
We assume that the vertex $x$ is touched.
\end{enumerate}

\subsubsection{The output of $\Jump_{x,y}$}
\label{sec:output-of-jump}

The output of $\Jump_{x,y}$ is defined as follows. We remove the three edges
connecting $y$ with the three vertices $v_1,v_2,v_3$. We create a new white
vertex denoted~$w$; this vertex is said to be \emph{artificial}; we will use
this notion later in the analysis of the algorithm.

We connect $w$ to the vertex~$j$ by a new edge that we label $E_2$; more
specifically, going clockwise around $j$ the newly created edge $E_2$ is
immediately after the edge $E_1$. This newly created edge replaces the removed
edge between $y$ and $v_2$, so if this removed edge was the root of the cluster
$E_2$, we declare that the new edge $E_2$ becomes now the new root of the
cluster $E_2$.

We also connect the new vertex $w$ to the vertex $y$ by a new edge that we
label $E_3$; the position of the edge $E_3$ in the vertex $y$ replaces the
three edges that were removed from~$y$. Again, this newly created edge replaces
the removed edge between $y$ and $v_3$, so if this removed edge was the root of
the cluster $E_3$, we declare that the new edge $E_3$ becomes now the new root
of the cluster $E_3$.

We merge the vertices $v_2$ and $v_3$ with the vertex $w$. More specifically,
the clockwise cyclic order of the edges around the vertex $w$ is as follows:
the edge $E_2$, then the edges from the vertex $V_3$ (listed in the clockwise
order starting from the removed edge $E_3$; on \cref{fig:jump,fig:scjump} these
edges are marked red), the edge $E_3$, then the edges from the vertex $v_2$
(listed in the clockwise order starting from the removed edge $E_2$; on
\cref{fig:jump,fig:scjump} these edges are marked blue); see
\cref{subfig:jumpB,subfig:scjumpB}.

From the following on we declare that the cluster $E_1$ is \emph{touched} and
also the black vertex $y$ is \emph{touched}.

\medskip

It is easy to check that the following simple lemma holds true.
\begin{lemma}\label{lem:pomoc2}
The jump operation separates the root and the anchor only in (at most) a single
cluster. This potentially exceptional cluster is the one that with the
notations of \cref{fig:jump,fig:scjump} is denoted by $E_2$, and this
separation occurs if and only if in the initial configuration both the anchor
of $E_2$ as well as the black endpoint of the root of $E_2$ are equal to $y$.
\end{lemma}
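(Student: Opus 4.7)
The plan is to argue by a case analysis over the clusters that can possibly be affected by $\Jump_{x,y}$. The only edges that are removed are the three edges $E_1, E_2, E_3$ incident to $y$, and the only new edges are the two edges connecting $w$ to $j$ (relabeled $E_2$) and to $y$ (relabeled $E_3$). Any cluster disjoint from $\{E_1, E_2, E_3\}$ has its edge set left untouched, so its root is unchanged; its anchor is a fixed label throughout the algorithm, so the relationship between the anchor and the black endpoint of the root cannot be disturbed. Thus the only candidates for separation are the clusters $E_1$, $E_2$, and $E_3$ themselves.

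For cluster $E_1$, the assumption \ref{AJ2} asserts that the black endpoint of the root of $E_1$ is $j$, not $y$. Since $j \neq y$, the edge connecting $j$ to $v_1$—which is the root of $E_1$—is not among the edges removed by $\Jump_{x,y}$. Consequently the root of $E_1$, and in particular its black endpoint, is unchanged, so no separation occurs in $E_1$. For cluster $E_3$, the removed edge between $y$ and $v_3$ is replaced by a new edge connecting $w$ to $y$; should the removed edge have been the root of $E_3$, the new edge inherits the status of root, and in either case the black endpoint of the root remains $y$. Again no separation occurs.

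The only interesting case is cluster $E_2$. Here the removed edge between $y$ and $v_2$ is replaced by the new edge between $j$ and $w$, and if the removed edge was the root of $E_2$, the new edge assumes the role of root. Since $j \neq y$, this replacement changes the black endpoint of the root from $y$ to $j$; if on the other hand the root of $E_2$ was some other edge at $v_2$, it survives in place (its white endpoint being merely relabeled to $w$), so the black endpoint of the root is preserved. Combining these observations, the black endpoint of the root of $E_2$ changes if and only if before the operation $y$ was the black endpoint of the root of $E_2$. Since the anchor label $\Root_{E_2}$ is invariant, separation in $E_2$ therefore occurs precisely when the anchor of $E_2$ was $y$ and, simultaneously, $y$ was the black endpoint of the root of $E_2$, which is the condition stated in the lemma.

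The only potential subtlety is verifying that no other cluster is stealthily affected—specifically, clusters centered at $v_2$ or $v_3$ other than $E_2, E_3$. But the edges of such clusters are left in place; only their common white endpoint is renamed from $v_2$ (or $v_3$) to $w$. Since roots and anchors are determined by edge identities and by fixed labels on black vertices, these renamings do not modify the relationship under consideration, which finishes the proof.
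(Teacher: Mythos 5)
Your case analysis is correct, and it is essentially the verification the paper has in mind: the paper gives no argument at all for this lemma (it is dismissed with ``it is easy to check''), and your observation that only the clusters $E_1,E_2,E_3$ can be affected, combined with \ref{AJ2} forcing the root of $E_1$ to survive and the root-reassignment rules for $E_2$ and $E_3$, is exactly the intended check. One cosmetic slip: in the $E_3$ case the black endpoint of the root ``remains unchanged'' rather than ``remains $y$'' when the root was not the removed edge, and you implicitly use $j\neq y$, which holds because $v_1$ is joined to $y$ by a single edge in a tree, so the next edge clockwise around $v_1$ ends at a different black vertex; neither point affects the validity of the argument.
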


In fact, we will use the jump operation only in the context when it indeed
separates the root and the anchor in $E_2$. In this context $E_2$ will turn out
to be a leftist cluster.

\medskip

It is easy to check that the output tree still fulfills the properties from
\cref{invariant}. The only slightly challenging part of the proof concerns
verifying that the condition \ref{invariant:4} holds true for the cluster $E_2$
if the separation of the root and the anchor occurs. For this purpose we need
to show that vertex $j$ is touched. In the case when $j\neq x$ we use the
assumption that \ref{invariant:4} for the cluster $E_1$ was valid in the
initial configuration of the tree and hence $j$ was touched, as required. In
the remaining case when $j=x$ we use assumption \ref{AJ4}.

\subsection{The spine treatment}
\label{sec:spine}

\begin{figure}
    \centering
    \subfloat[]{\label{subfig:untouched1}
        \includegraphics[clip,trim=0cm 8cm 0cm 0cm,width=0.45\textwidth]{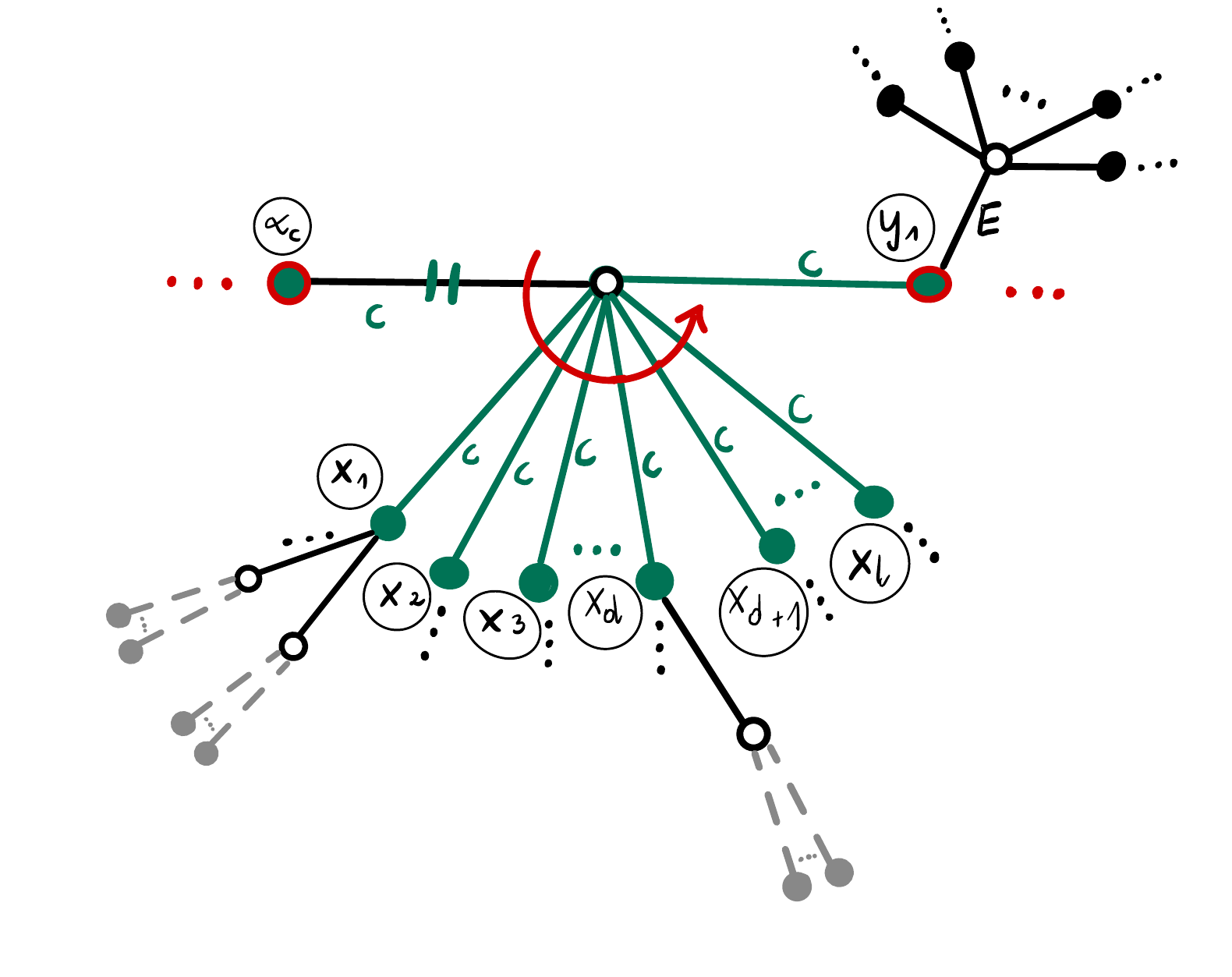}}
    \quad
    \subfloat[]{\label{subfig:untouched2}
        \includegraphics[clip,trim=0cm 8cm 0cm 0cm,width=0.45\textwidth]{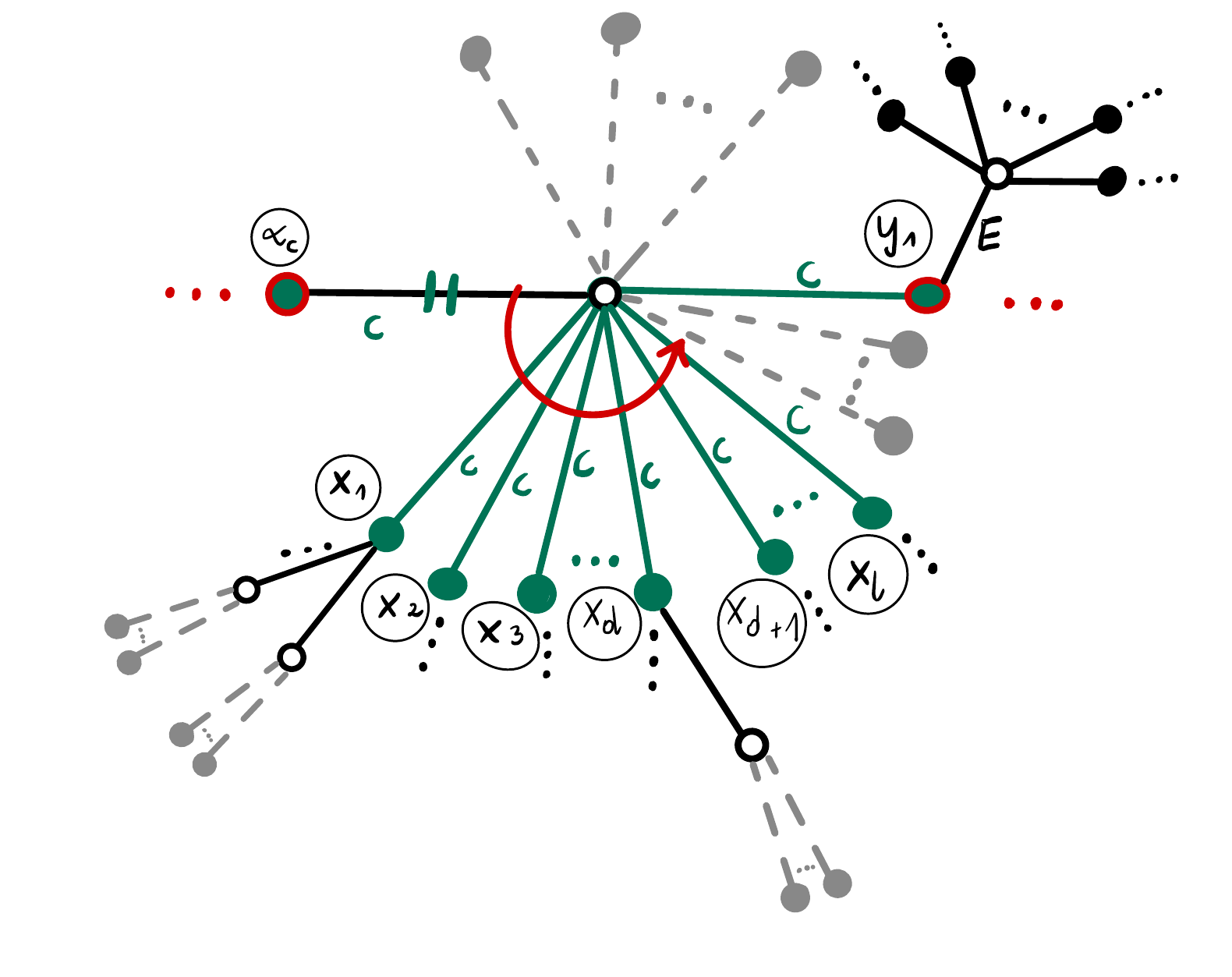}}
    \caption{
        \protect\subref{subfig:untouched1} A spine cluster $c$ of the tree $T_1$ corresponding to 
        the part of the tree $T_0$ on \cref{fig:untouched0}.
		The black vertices $\Root_c,y_1$ with $\Root_c<y_1$ are spine vertices. 
		The vertex $\Root_c$ is the anchor of the cluster $c$ as well as the black endpoint
		of the root of the cluster $c$. 
        The non-spine vertices in the cluster $c$ are denoted by $x_1,\dots,x_l$ with $l\geq 0$. 
        If $l>0$, we denote by $x_{d}=\min B_c\setminus\{\Root_c,y_1\}$ 
        the vertex with the minimal label among non-spine vertices in the cluster.
        \protect\subref{subfig:untouched2}~The structure of the cluster $c$ 
        at a later stage of the algorithm as long as it remains untouched. New edges attached 
        to the center of $c$ might have been created on either side of the edge connecting 
        the center with $y_1$. 
        See \cref{lem:invariants-ok} \ref{item:zal2} for details.}

    \label{fig:untouched}  
\end{figure}

For each spine cluster $C\in \mathfrak{C}$ we apply the following procedure
(the final output will not depend on the order in which we choose the clusters
from $\mathfrak{C}$). In the language of programmers we run the \emph{external
    loop} (or the \emph{main loop}) over the variable~$C$. Since the spine in the
tree $T_1$ is a path, the intersection $B_C\cap \mathcal{B}$ corresponds to the
labels of the two black spine vertices of the cluster $C$ in the tree $T_1$.
One of these two vertices is the anchor $\Root_C$ of the cluster, we denote the
other one by $y_1$; in this way $\Root_C<y_1$. As we will prove later (see
\cref{lem:invariants-ok}, property \ref{item:zal2}), at the time of the
execution of this particular loop iteration, the spine cluster $C$ is of the
form shown on \cref{subfig:untouched2}.

We run the following \emph{internal loop} over the variable $y\in
B_C\setminus\{\Root_C\}$ (with the ascending order). If  $y=y_1$ or if the
vertex labels $y,\Root_C\in\{1,\dots,n\}$ fulfill $y<\Root_C$ then we apply
$\Bend_{\Root_C,y}$;  otherwise we apply $\Jump_{\Root_C,y}$.

\begin{example}
We continue the example from \cref{subfig:ex2}. We recall that $\mathfrak{C}=\{1,2,16\}$.

\begin{figure}
    \centering
    \subfloat[]{\label{subfig:ex3}
        \includegraphics[width=0.47\textwidth]{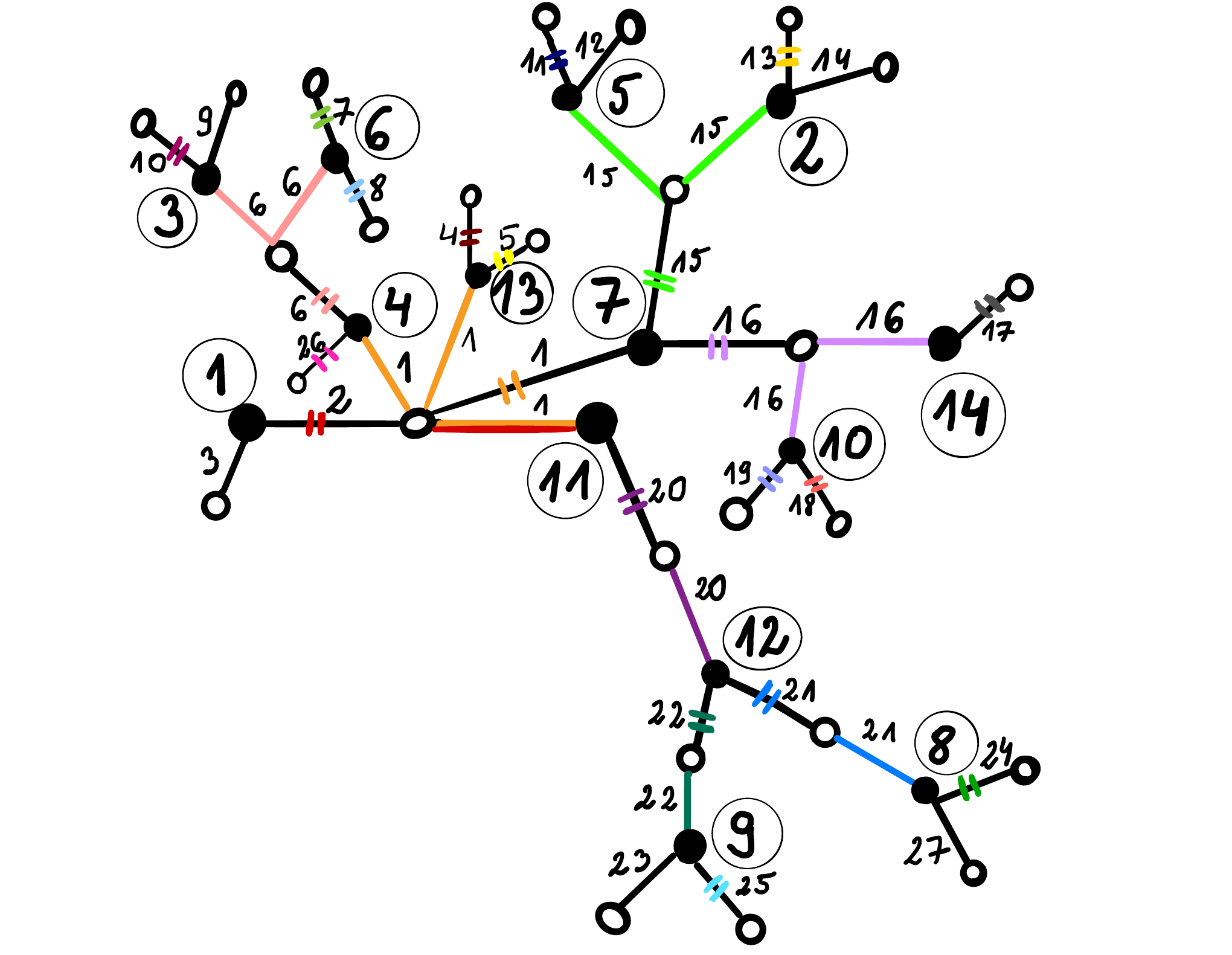}}
    \quad
    \subfloat[]{\label{subfig:ex4}
        \includegraphics[width=0.47\textwidth]{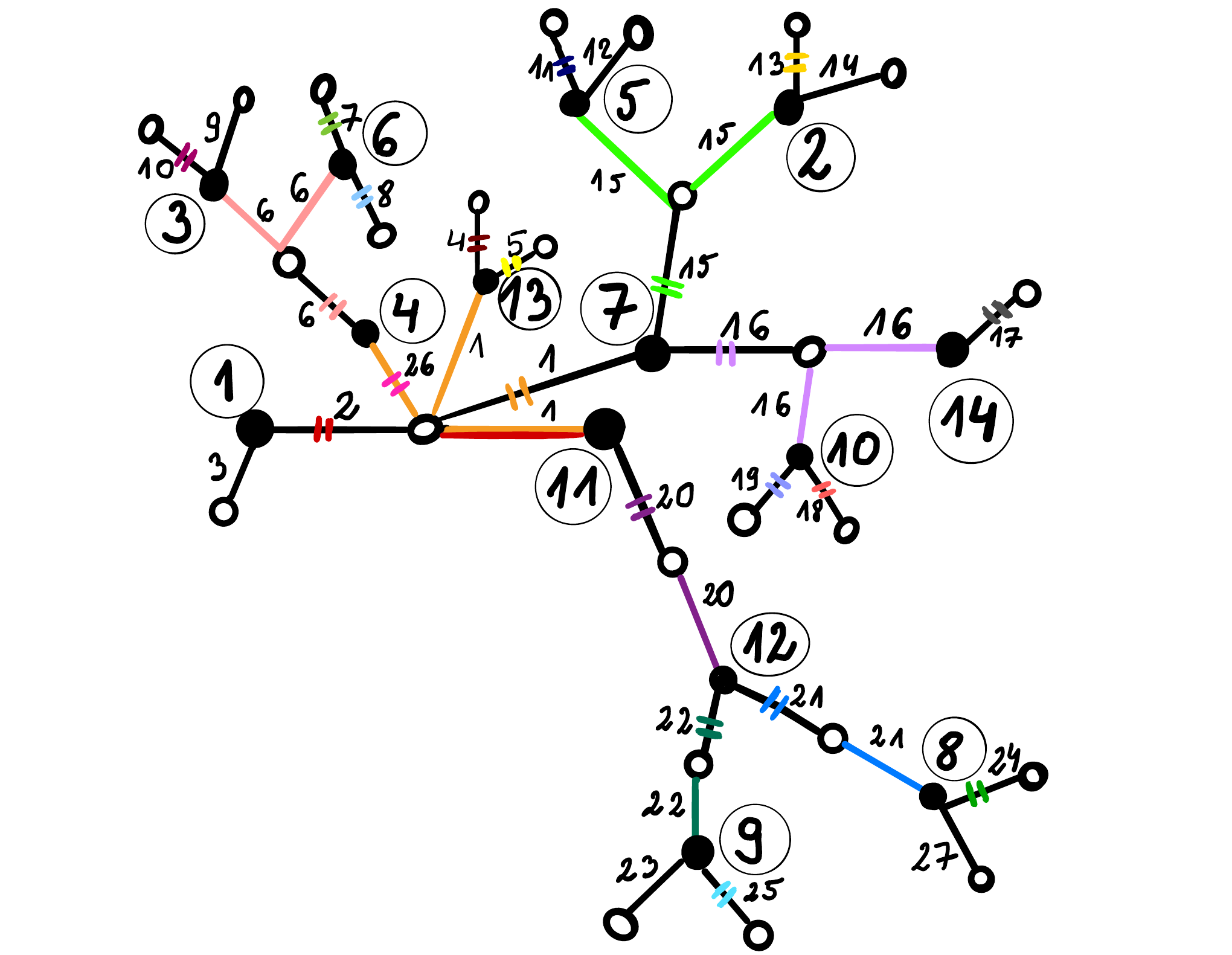}}
    \caption{
    \protect\subref{subfig:ex3} The output of $\Bend_{1,11}$ applied to the tree from \protect\cref{fig:ex2B}. 
    \newline
    \protect\subref{subfig:ex4} The output of $\Bend_{7,4}$ applied to the tree from \protect\subref{subfig:ex3}.}
    \label{fig:ex34}
\end{figure}

For $C=2$ we recall that $\Root_2=1$ so we have $y_1=11$. Since $B_2\setminus \{1\}=\{11\}$, the
internal loop is applied once with $y=11$. As a result we apply $\Bend_{1,11}$; see
\cref{subfig:ex3}.

\begin{figure}
    \centering
    \subfloat[]{\label{subfig:ex5}
        \includegraphics[width=0.47\textwidth]{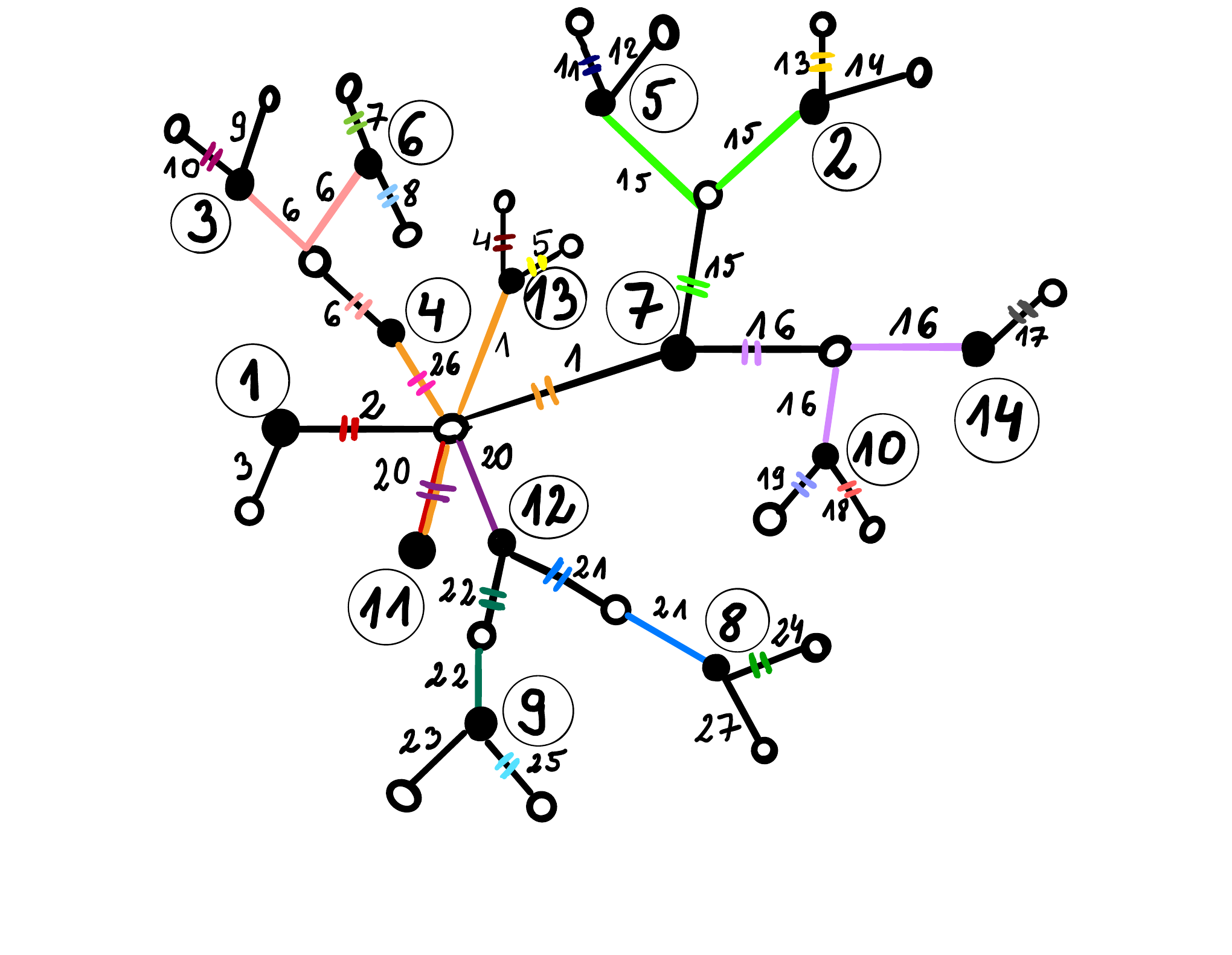}}
    \quad
    \subfloat[]{\label{subfig:ex6}
        \includegraphics[width=0.47\textwidth]{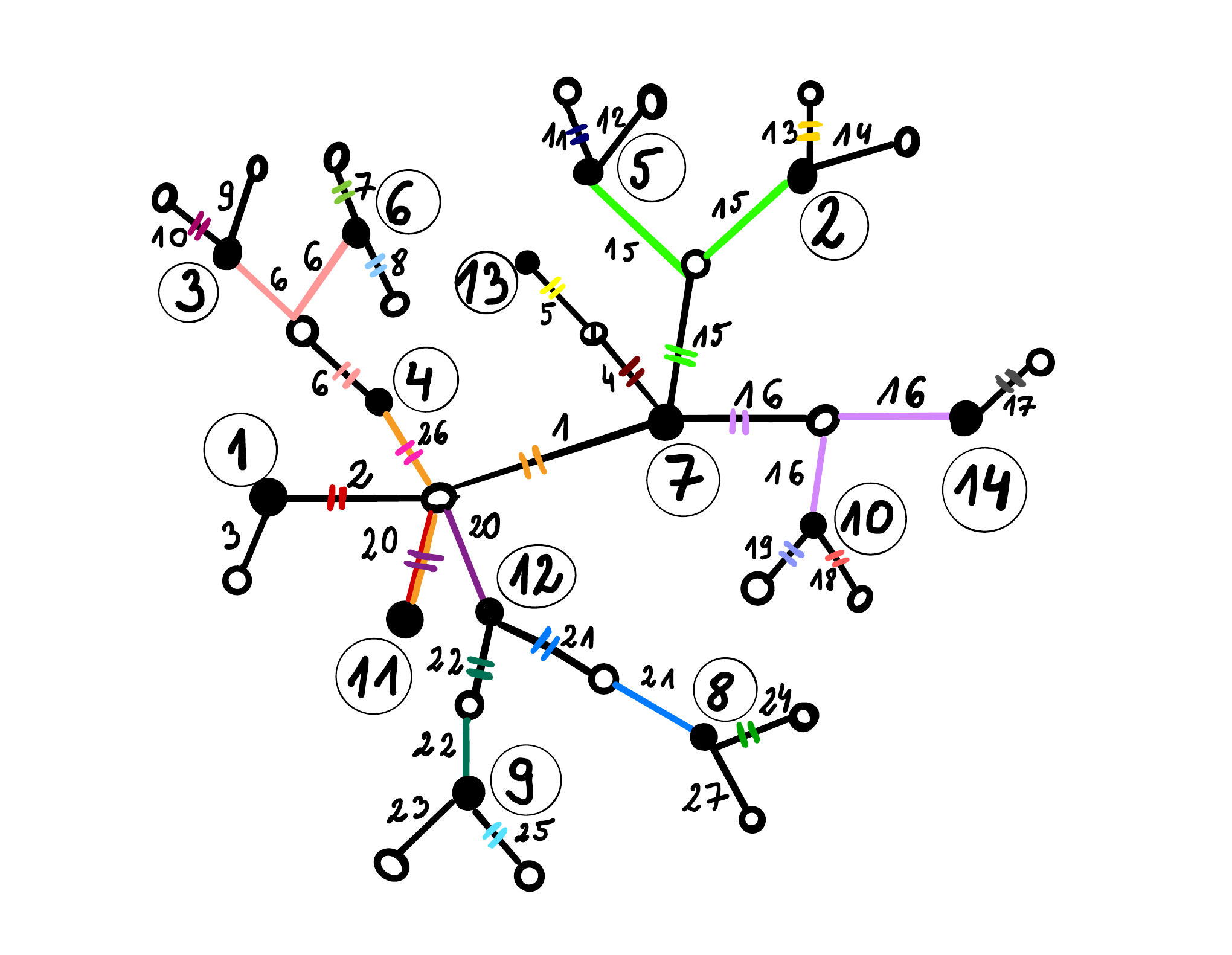}}
    \caption{
    \protect\subref{subfig:ex5} The output of $\Bend_{7,11}$ applied to the tree from \protect\cref{subfig:ex4}.
    \newline
     \protect\subref{subfig:ex6} The output of $\Jump_{7,13}$ applied to the tree from \protect\subref{subfig:ex5}.}
    \label{fig:ex56}
\end{figure}

For $C=1$ we recall that $\Root_1=7$ so we have $y_1=11$.
Since $B_1\setminus \{7\}=\{4,11,13\}$ the internal loop runs over:
$\bullet$ $y=4$
and  we apply $\Bend_{7,4}$,
see \cref{subfig:ex4};
$\bullet$~$y=11$ and we apply $\Bend_{7,11}$; see \cref{subfig:ex5};
$\bullet$~$y=13$ and we apply $\Jump_{7,13}$; see \cref{subfig:ex6}.

\begin{figure}
    \centering
    \subfloat[]{\label{subfig:ex7}
        \includegraphics[width=0.47\textwidth]{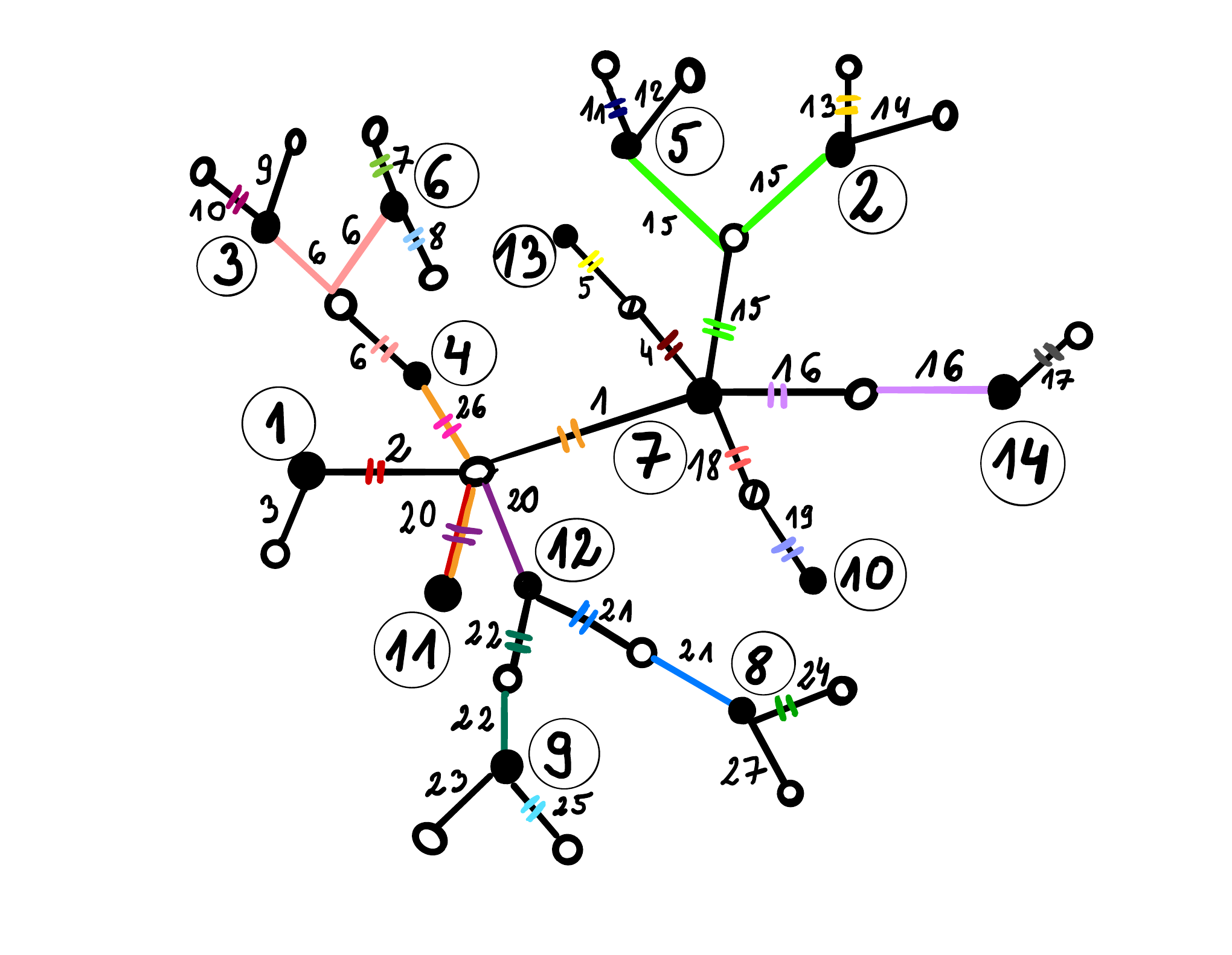}}
    \quad
    \subfloat[]{\label{subfig:ex8}
        \includegraphics[width=0.47\textwidth]{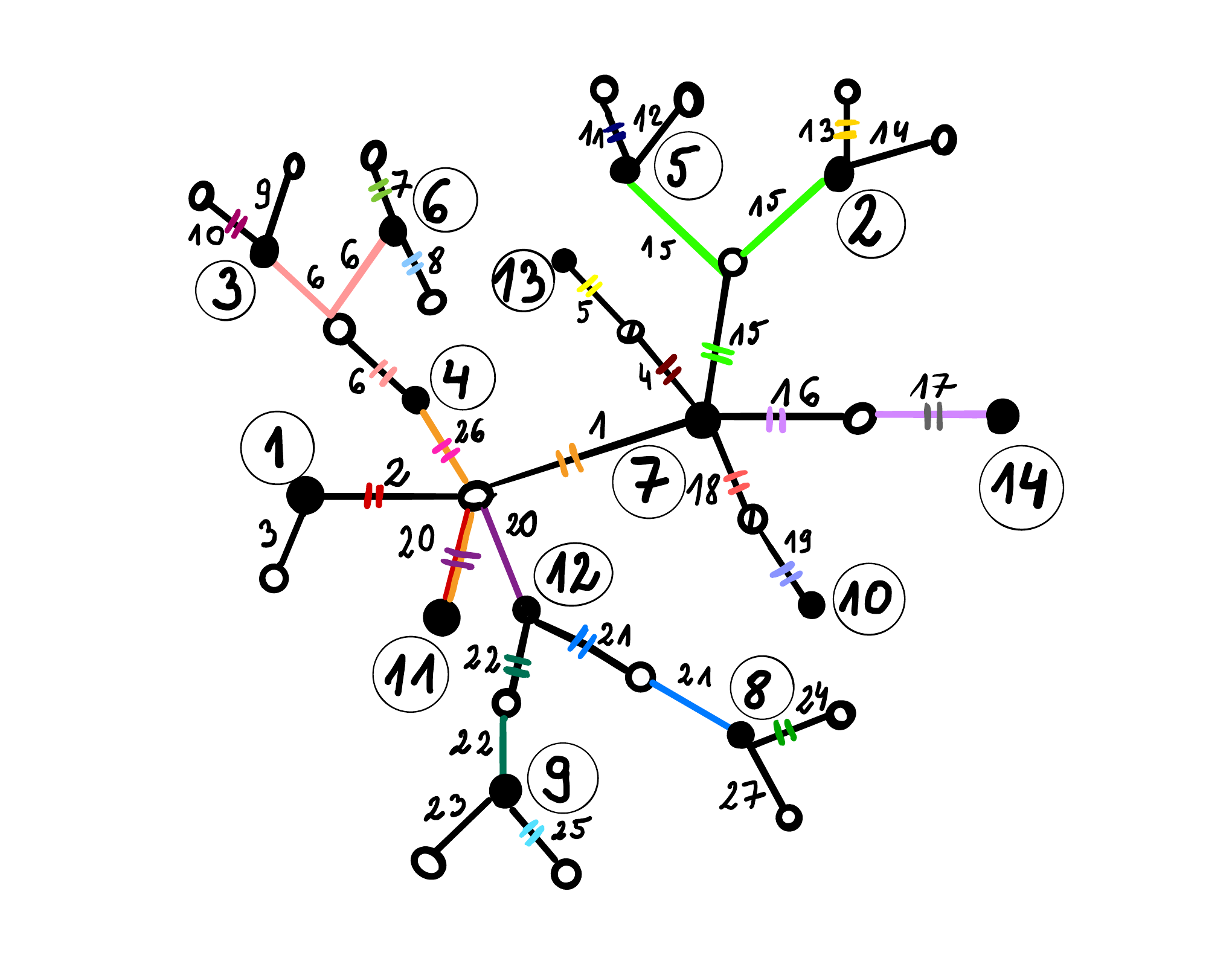}}
    \caption{
    \protect\subref{subfig:ex7} The output of $\Jump_{7,10}$ applied to the tree from \protect\cref{subfig:ex6}.
    \linebreak
     \protect\subref{subfig:ex8} The output of $\Bend_{7,14}$ applied to the tree from \protect\subref{subfig:ex7}.}
    \label{fig:ex78}  
\end{figure}

For $C=16$ we recall that $\Root_{16}=7$ so we have $y_1=14$. 
Since $B_{16}\setminus \{7\}=\{10,14\}$ the internal loop runs over:
$\bullet$~$y=10$ and we apply $\Jump_{7,10}$; see \cref{subfig:ex7};
$\bullet$~$y=14$ and we apply $\Bend_{7,14}$; see \cref{subfig:ex8}.

\end{example}

\subsection{Correctness of the spine treatment algorithm}
\label{sec:correctness-spine}

Before we present the remaining part of the algorithm (in \cref{sec:rib}) we will
show that the above spine treatment is well-defined in
the sense that the assumptions for the bend and jump operations
(\cref{sec:assumptions-e,sec:assumptions-j}) are indeed fulfilled during the spine
treatment. We will show the following stronger result.

\begin{propos}
    \label{lem:invariants-ok} 
    
    At each step of the spine treatment algorithm, the
current value of the tree~$T$ fulfills the following properties.
    \begin{enumerate}[label=(P\arabic*)]
        \item \label{item:zal1}
    The tree $T$
fulfills the properties described in \cref{invariant}. 

\item \label{item:zal2} 
Let $c$ be a spine cluster of the initial tree $T_1$; we
denote by $\Root_c,y_1$ the black spine vertices in this cluster with $\Root_c<y_1$.
Assume that the cluster $c$ is still untouched in the tree $T$.

\smallskip

Then the vertex $\Root_c$ is both the anchor of the cluster $c$ as well as the black
endpoint of the root of the cluster $c$.

\smallskip

Additionally, we compare:
\begin{enumerate}[label=(\roman*)]
    \item \label{item:cluster-i} the cyclic order of the black vertex labels of
the edges surrounding the center of the cluster $c$ in the original tree
$T_1$ (i.e.~the cyclic order of the black endpoints' labels of the cluster
$c$ in~$T_1$); see \cref{subfig:untouched1}
\end{enumerate}
with
\begin{enumerate}[resume,label=(\roman*)]
    \item \label{item:cluster-ii}
    the cyclic order of the black endpoints' labels of the edges surrounding
    the center of the cluster $c$ in the tree $T$; see \cref{subfig:untouched2}.
\end{enumerate}
Then \ref{item:cluster-ii} is obtained from \ref{item:cluster-i} by adding some
additional vertices that do not belong to the cluster $c$; these additional
vertices occur on either side of the edge connecting the center of the cluster
$c$ with $y_1$; see \cref{fig:untouched}.

\item \label{item:zalNEW} The set of clusters which are \emph{touched} coincides
with the set of values that the variable $C$ (in the external loop of the spine
treatment algorithm) took in the past. 

\item \label{item:zal3} 
For each \emph{bend} operation (respectively, for each \emph{jump} operation)
performed during the spine treatment algorithm the assumptions
\ref{AE1}--\ref{AE3} (respectively, the assumptions \ref{AJ1}--\ref{AJ4}) are
fulfilled; in this way each \emph{bend}/\emph{jump} operation is well-defined.

\end{enumerate}
\end{propos}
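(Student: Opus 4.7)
My plan is to prove the four properties simultaneously by induction on the number of bend/jump operations executed so far within the spine treatment. The base case $T = T_1$ is immediate: (P1) holds by the construction of $T_1$ in \cref{sec:tt0} together with the definitions of anchor and root; (P2) is the tautological equality of the two cyclic orders described in \ref{item:cluster-i} and \ref{item:cluster-ii}; (P3) and (P4) are vacuous since no iteration has been carried out yet. For the inductive step I would assume (P1)--(P4) hold for the current tree $T$ and verify each property after one further bend or jump.

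Three of the four properties are comparatively routine. Property (P1) follows from the direct checks announced after the definitions of the operations; the only slightly subtle point, \ref{invariant:4} for the cluster $E_2$ after a jump that separates root and anchor there, has already been settled in the discussion of \cref{sec:output-of-jump}. Property (P3) is immediate from the algorithm: each operation marks exactly one cluster (namely $E_1 = C$, the current external-loop variable) as touched, and $C$ is untouched before the iteration by the inductive hypothesis. Property (P4) reduces to verifying the preconditions of the next operation. Assumptions \ref{AE1}--\ref{AE2} and \ref{AJ1}--\ref{AJ2} follow by applying (P2) to the still-untouched cluster $C$, which identifies $\alpha_C$ as both its anchor and the black endpoint of its root. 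The degree hypotheses \ref{AE3} and \ref{AJ3} require tracking that $y$ starts with degree $a_y \geq 2$ in $T_1$ and decreases by at most one per bend (with $y$ in the second slot) or two per jump; a bookkeeping argument using (P3) bounds the number of such preceding operations. Condition \ref{AJ4} follows because $\alpha_C$ is a spine vertex, and all spine vertices are declared touched at the outset of \cref{sec:second}.

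The main obstacle is preserving property (P2). The concern is that bend merges the white vertex $v_2$ into $v_1$, and jump merges $v_2$ and $v_3$ into a new vertex $w$, so if the center of some other still-untouched spine cluster $c$ coincides with $v_2$ or $v_3$, that center migrates and inherits new neighbors that could, in principle, land anywhere in the cyclic order. I would handle this via an auxiliary lemma: whenever such a migration occurs, the shared black vertex $y$ of $v_1$ and $v_2$ (respectively $v_3$) must play the role of $y_1$ in the cluster $c$, i.e.\ it must be the spine vertex of $c$ opposite to $\alpha_c$. The argument rests on the bracketing of non-spine edges between the two spine edges at each white spine vertex, as prescribed in \cref{sec:tt0} and depicted in \cref{fig:untouched0}, combined with (P2) applied inductively to $c$. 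Once this lemma is in place, the clockwise insertion rules of \cref{sec:output-of-bend,sec:output-of-jump} place the newly attached edges of $c$ precisely next to the edge leading to $y_1$ in the cyclic order around its (possibly relocated) center, which is exactly the configuration required by (P2).
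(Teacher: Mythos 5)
Your overall plan (simultaneous induction, isolating the degree conditions, and recognizing that the delicate point is the migration of the center of an untouched spine cluster) is close in spirit to the paper's proof, but the auxiliary lemma on which you hang the preservation of \ref{item:zal2} is false. During the spine treatment the only operation that can move the center of another untouched spine cluster is the bend $\Bend_{\Root_C,y_1}$ of the current iteration (the jumps, and the bends with $y<\Root_C$, act on non-spine black vertices, whose white neighbors are never centers of spine clusters). In that bend the affected cluster $E$ is the spine cluster adjacent to $C$ across $y_1$, and the shared black vertex $y_1$ is \emph{not} forced to be the larger-label spine vertex of $E$: if the next black spine vertex beyond $y_1$ carries a larger label than $y_1$, then $y_1=\Root_E$ is the \emph{anchor} of $E$. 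Since the external loop may process $C$ before $E$, this configuration genuinely occurs with $E$ still untouched, so your lemma cannot be proved and the case it excludes is exactly the one needing care. In that case the removed edge is the root of $E$; one must observe that the root is transferred to the relabeled edge (whose black endpoint is still $\Root_E$, so ``anchor $=$ endpoint of root'' survives) and that the newly inserted edges land in the arc between the edge towards the larger spine vertex of $E$ and the $\Root_E$-edge, which is still inside the region allowed by \ref{item:zal2}. The paper's proof of the phase-\ref{item:etap3} step is precisely this two-case analysis (shared vertex $=$ anchor of $E$, or shared vertex $=$ larger spine vertex of $E$); your argument covers only the second case.

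A second, related weakness: your induction is over single bend/jump operations with the inductive invariant being exactly \ref{item:zal1}--\ref{item:zal3}, and you justify the preconditions \ref{AE1}--\ref{AE2}, \ref{AJ1}--\ref{AJ2} ``by applying \ref{item:zal2} to the still-untouched cluster $C$.'' But $C$ becomes touched after the very first operation of its iteration, so from then on \ref{item:zal2} says nothing about $C$, while the later operations of the same iteration still need precise structural facts about it -- in particular \ref{AJ2} demands that the black neighbor of the center immediately clockwise after $y$ be the black endpoint of the root. Your stated invariant is too weak to carry this; one must either strengthen it or, as the paper does, induct over iterations of the external loop and track the evolving shape of the current cluster through the four phases (\cref{subfig:untouched2,fig:untouched34,figure:untouched5-6,fig:untouched7}). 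The degree bookkeeping for \ref{AE3}, \ref{AJ3} and the treatment of \ref{AJ4}, \ref{item:zal1}, \ref{item:zalNEW} in your sketch are essentially the paper's arguments and are fine.
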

\begin{proof}
In the first part of the proof we will show that \ref{AE3}, \ref{AJ3} and \ref{AJ4} are
fulfilled in each step of the spine treatment. 

\medskip

Note that during the spine treatment algorithm we perform operations
$\Bend_{x,\cdot}$ and $\Jump_{x,\cdot}$ for $x\in\mathcal{B}$. From the very
beginning each black spine vertex is touched, so the assumption \ref{AJ4} is
fulfilled in each step of the spine treatment.

\medskip

In order to show \ref{AE3} and \ref{AJ3}
we will use the observation that when one of the operations
$\Bend_{\cdot,y}$ or $\Jump_{\cdot,y}$ is performed, the degree of each black
vertex that is different from $y$ increases or remains the same. 

Consider some black vertex $y$ of the initial tree $T_1$ that does not belong
to the spine. In this case the vertex $y$ belongs to at most one spine cluster,
so during the spine treatment algorithm we perform at most one operation of the
form $\Bend_{\cdot,y}$ or $\Jump_{\cdot,y}$. Therefore, the degree of $y$ (at
the time when this unique $\Bend_{\cdot,y} / \Jump_{\cdot,y}$ operation is
performed) is at least its degree in the initial tree $T_1$. The latter degree, by
construction, is equal to the length of the cycle $\sigma_y$ so it is equal to
the parameter $a_y$ that appears in the statement of \cref{thm:thm1}. By
\eqref{eq:a-and-b} we get $a_y=b_y+2\ge 3$, as required.

Consider now some black vertex $y$ of the initial tree $T_1$ that belongs to
the spine. In this case the vertex $y$ belongs to at most two spine clusters.
It follows that we perform no operations of the form $\Jump_{\cdot,y}$ at all,
and we perform at most two operations of the form~$\Bend_{\cdot,y}$. In the
case when we perform a single such an operation $\Bend_{\cdot,y}$, the
assumption \ref{AE3} is fulfilled because the degree of $y$ at the time when
this operation is performed is at least $a_y\ge b_y+1\geq 2$, by the same
argument as in the previous paragraph. The other case when two such bend
operations $\Bend_{\cdot,y}$ are performed can occur only if $y\notin\{1,n\}$
is not one of the endpoints of the spine; it follows therefore that the initial
degree of the vertex $y$ is equal to $a_y=b_y+2\ge 3$. The bend operation
$\Bend_{\cdot,y}$ decreases the degree of the black vertex $y$ by $1$.
Therefore, at the time when the first operation $\Bend_{\cdot,y}$ performed,
the degree of $y$ is at least $a_y\ge 3$ while when the second such an
operation is performed, the degree $y$ is at least $a_y-1\ge 2$, as required.
This concludes the proof that the assumptions \ref{AE3} and \ref{AJ3} are
fulfilled in each step of the spine treatment algorithm.

\bigskip

In the second part of the proof, in order to show
\ref{item:zal1}--\ref{item:zal3} we will use induction over the variable $C$ in
the main loop of the spine treatment algorithm. Our inductive assumption is
that at the time when the iteration of the main loop starts or ends, the
properties \ref{item:zal1}--\ref{item:zalNEW} are fulfilled.

\medskip

\emph{The induction base.} The input tree $T_1$ clearly fulfills the conditions 
\ref{item:zal1}--\ref{item:zalNEW}.

\smallskip

\label{page:inductive-starts} \emph{The inductive step.} We consider some
moment in the action of the spine treatment algorithm when a new iteration of
the external loop begins. By the inductive assumption the current value of the
tree $T$ fulfills the conditions \ref{item:zal1}--\ref{item:zalNEW}. We denote
by $C$ the current value of the variable over which the external loop runs. The
assumption \ref{item:zalNEW} implies that the cluster $C$ is untouched; it
follows that the assumption \ref{item:zal2} is applicable to this cluster and
hence the cluster $C$ has the form shown on \cref{subfig:untouched2}. We denote
by $x_1,\dots,x_l$ with $l\geq 0$ the non-spine vertices of the cluster listed
in the counterclockwise order, cf.~\cref{fig:untouched}. If $l>0$ we denote by
$x_d=\min B_C\setminus\{\Root_C,y_1\}$ the vertex with the minimal label among
non-spine black vertices in this cluster. In the special case when $l=0$ and
there are no non-spine vertices in the cluster $c$, the vertex $x_d$ is not
well-defined and the following analysis requires very minor adjustments.

\smallskip

We will follow the action of the internal loop (over the variable $y$) and we
will show that in each step the properties \ref{item:zal1}, \ref{item:zal2} and
\ref{item:zal3} are fulfilled. A straightforward analysis shows that the
variable $y$ in the internal loop will take the following values (listed in the
chronological order):
\[ \underbrace{x_{d}, x_{d+1}, \dots, x_l,}_{
 \substack{\text{phase \ref{item:etap1},} \\[1ex] \text{elements that are} \\ \text{ smaller than $\alpha_C$}   }}
    \underbrace{ x_1, x_2, \dots,  x_i, }_{
\substack{ \text{phase \ref{item:etap2},} \\[1ex] \text{elements that are} \\ \text{greater than $\alpha_C$ and smaller than $y_1$}}} 
    \underbrace{y_1,}_{\text{phase \ref{item:etap3}}} \qquad
    \underbrace{x_{i+1}, \dots, x_{d-1}}_{
        \substack{ \text{phase \ref{item:etap4},} \\[1ex] \text{elements that are} \\ \text{greater than $y_1$}}}
\]
(note that in the exceptional case when $d=1$ this list has a slightly
different form that also depends whether $x_1<\Root_C$ or not), therefore the
execution of the internal loop can be split into the following four phases:
\begin{enumerate}[label=(\Roman*)]
\item \label{item:etap1} some number of the bend operations of the form $\Bend_{\Root_C,\cdot}$,
\item \label{item:etap2} some number of jump operations $\Jump_{\Root_C,\cdot}$,
\item \label{item:etap3} one bend operation $\Bend_{\Root_C,y_1}$,
\item \label{item:etap4} some more jump operations $\Jump_{\Root_C,\cdot}$.
\end{enumerate}
For proving \ref{item:zal3} note that the conditions \ref{AE3}, \ref{AJ3} and \ref{AJ4} are
already proved; thus in order to show that all bend/jump operations are
well-defined, it is enough to show \ref{AE1} and \ref{AE2}, respectively
\ref{AJ1}  and \ref{AJ2}. In the following we will analyze the phases
\ref{item:etap1}--\ref{item:etap4} one by one and we will check that it is
indeed the case. The evolution of the cluster $C$ over time in these four phases
is shown on \cref{subfig:untouched2,fig:untouched34,figure:untouched5-6,fig:untouched7}
with $c:=C$.

\smallskip

\begin{figure}
    \centering
    \subfloat[]{\label{subfig:untouched3}
        {\includegraphics[clip,trim=0cm 10cm 0cm 0cm,width=0.45\textwidth]{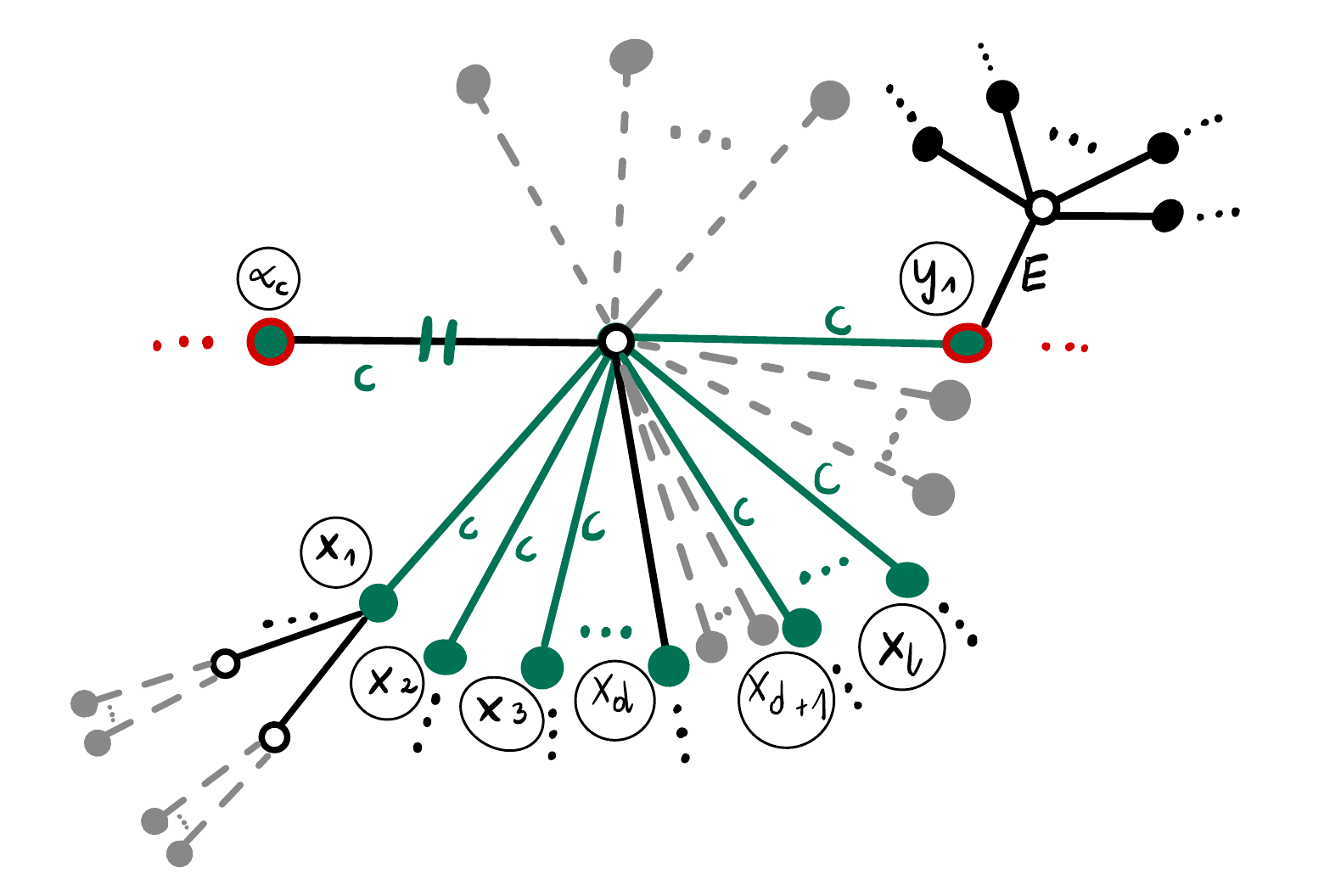}}}
    \quad
    \subfloat[]{\label{subfig:untouched4}
        {\includegraphics[clip,trim=0cm 10cm 0cm 0cm,width=0.45\textwidth]{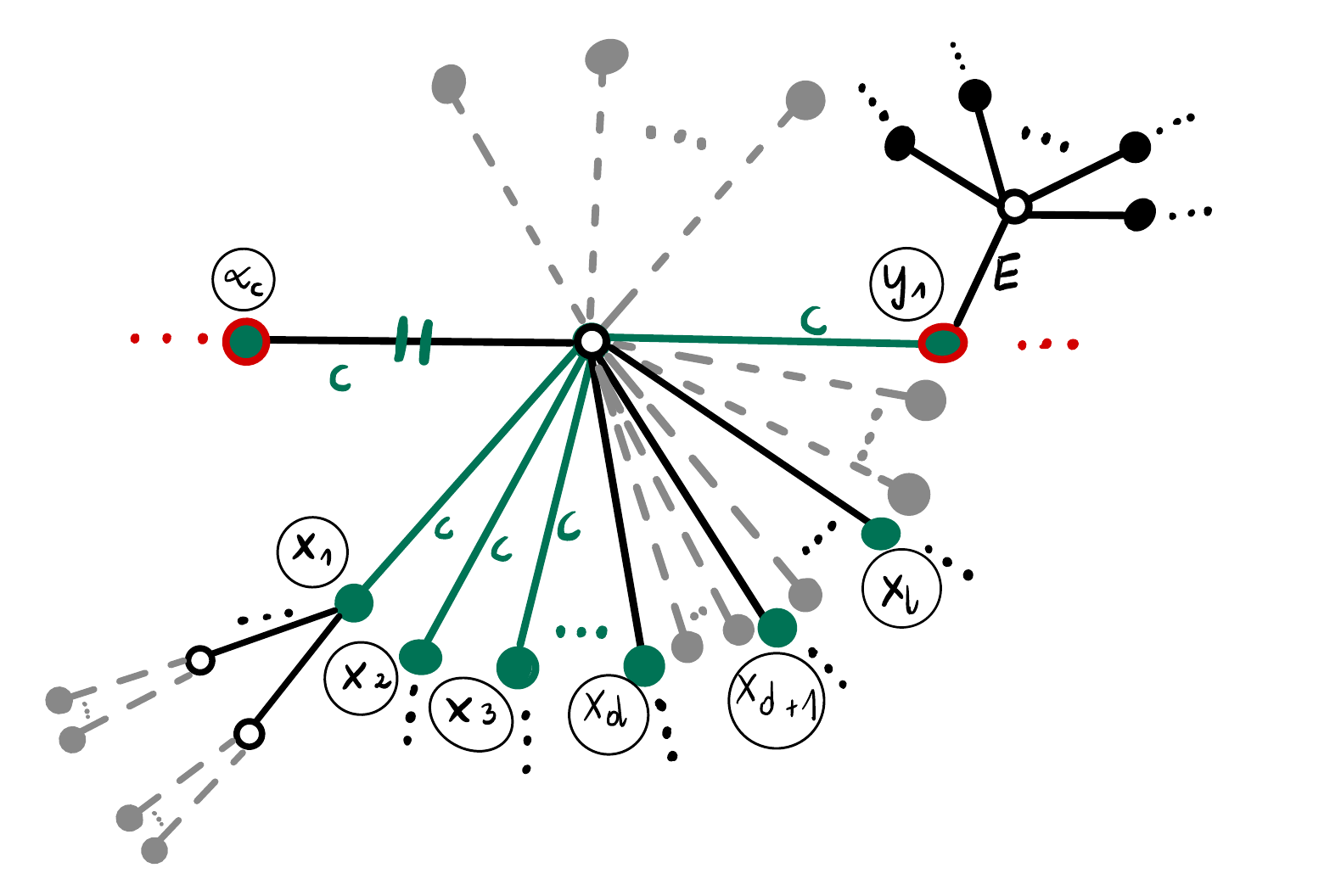}}}
    \caption{
        \protect\subref{subfig:untouched3} The structure of the spine cluster $c$
        from \cref{subfig:untouched2} after performing the first bend operation 
        in the phase \ref{item:etap1}, namely $\Bend_{\Root_c,x_{d}}$.
        This figure was obtained from
        \cref{subfig:untouched2} by adding some additional vertices that do not belong
        to the cluster $c$; going counterclockwise these additional vertices occur after 
        $x_{d}$ and before $x_{d+1}$. 
        \newline
        \protect\subref{subfig:untouched4} The structure of the cluster $c$ 
        after the completion of the phase~\ref{item:etap1}.
        This figure was obtained from
        \protect\subref{subfig:untouched3} by adding some additional vertices that do not belong
        to the cluster $c$; going counterclockwise these additional vertices occur after 
        $x_{d+i}$ and before $x_{d+i+1}$ for each $i\in\{1,\dots,l-d-1\}$,
        as well as after $x_{l}$ and before $y_1$.}
    \label{fig:untouched34}
\end{figure}

\begin{figure}                  
        \centering
    \subfloat[]{\label{subfig:untouched5}
        \includegraphics[clip,trim=0cm 11cm 0cm 0cm,width=0.45\textwidth]{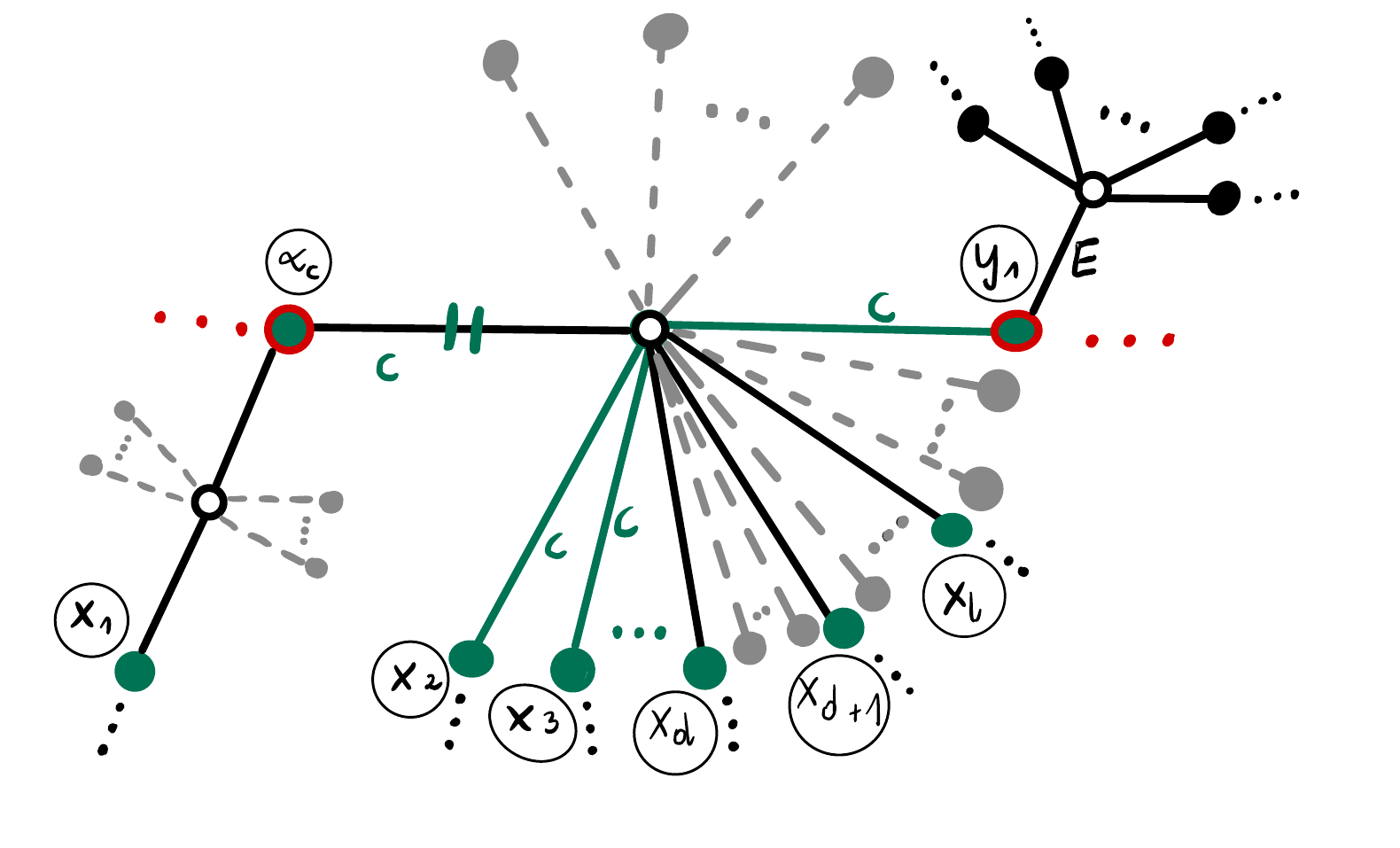}}
    \quad
    \subfloat[]{\label{subfig:untouched6}
        \includegraphics[clip,trim=0cm 11cm 0cm 0cm,width=0.45\textwidth]{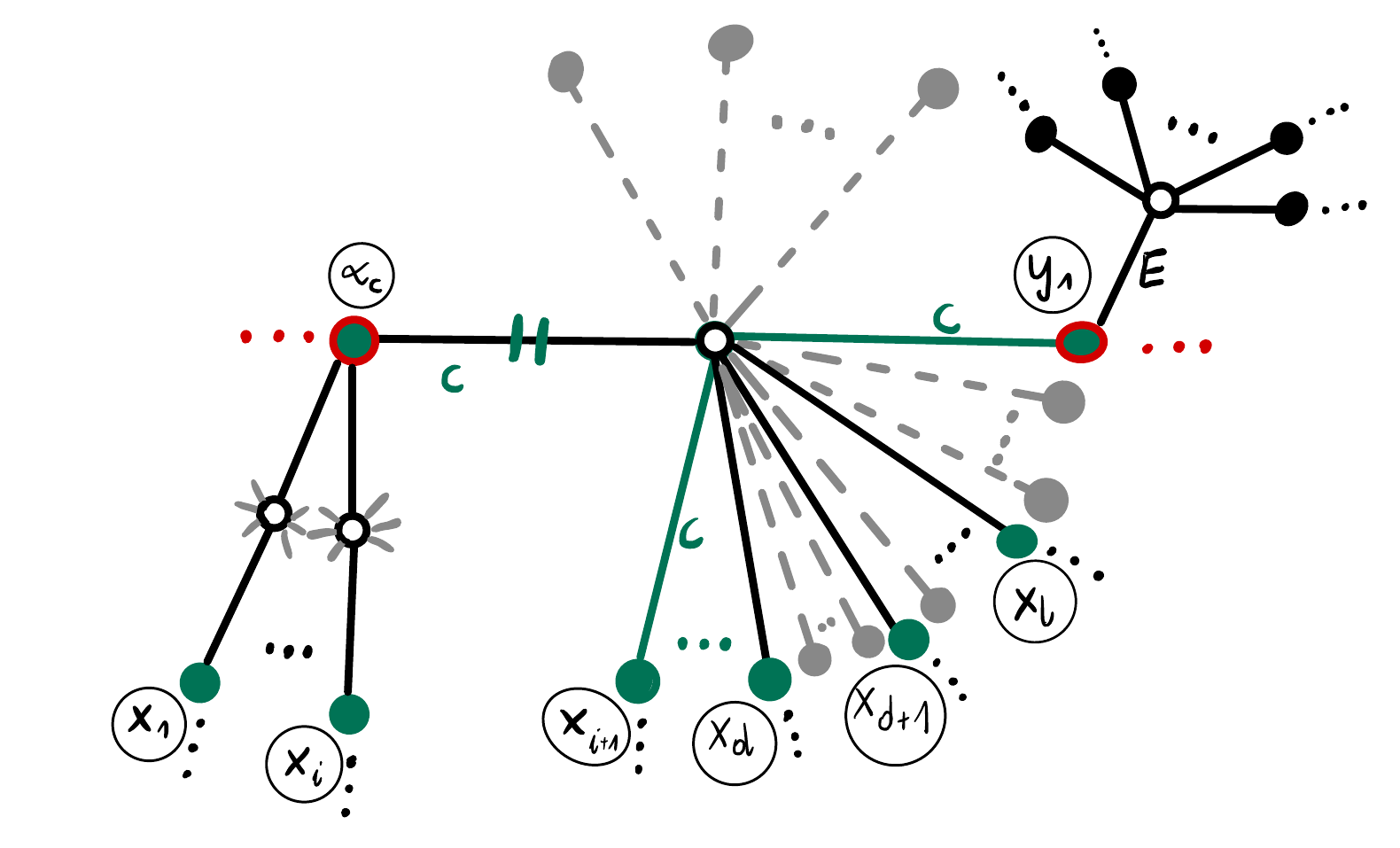}}
    \caption{
        \protect\subref{subfig:untouched5} The structure of the spine cluster $c$
        from \cref{subfig:untouched4} after performing the first jump operation 
        in the phase \ref{item:etap2}.
        This figure was obtained from
        \cref{subfig:untouched4} by removal of the edge $c$
         with its black endpoint $x_1$ from the center of the cluster $c$ 
         and by combining black $x_1$ and $\Root_c$ with a new white vertex.
         \newline
        \protect\subref{subfig:untouched6} The structure of the cluster $c$ from 
        \protect\subref{subfig:untouched5} after completion of the phase~\ref{item:etap2}.
         This figure is obtained from
         \protect\subref{subfig:untouched5} by removal of the edge $c$
         with its black endpoint $x_k$ from the center of the cluster $c$ 
         and by combining black $x_k$ and $\Root_c$ with a new white vertex 
         for each $k\in\{2,\dots,i\}$.}
     \label{figure:untouched5-6}
\end{figure}

We start with the first bend operation from the phase \ref{item:etap1}, namely
$\Bend_{\Root_c,x_{d}}$. (Note that in the exceptional case when $d=1$ and
$x_{1}>\Root_c$ the phase \ref{item:etap1} is empty and there is nothing to
prove.) The assumptions \ref{AE1}--\ref{AE2} are clearly fulfilled for this
first operation; see \cref{subfig:untouched2}. Looking at
\cref{subfig:untouched3}, it is easy to verify that the bend operation
preserves the properties from \cref{invariant}. We do not modify any other
spine clusters, so also the assumption \ref{item:zal2} is preserved for this
first bend operation.

It is easy to check that the above arguments remain valid also for the remaining bend
operations from the phase \ref{item:etap1}.

\smallskip

We move on to the first jump operation $\Jump_{\Root_C,x_1}$ from the phase
\ref{item:etap2}. The assumptions \ref{AJ1}--\ref{AJ2} are clearly fulfilled
for this operation $\Jump_{\Root_C,y}$; see \cref{subfig:untouched4}. (In the
exceptional case if (a) $d=1$ and $x_1<\Root_C$, or (b) $y_1<x_1$ the phase 
\ref{item:etap2} is empty and there is nothing to prove.)
We do not modify any other
spine clusters, so the assumption \ref{item:zal2} is still fulfilled. 

It easy to check that the above arguments remain valid also for the remaining jump
operations from the phase \ref{item:etap2}.

\smallskip

The phase \ref{item:etap3} consists of the single bend operation
$\Bend_{\Root_C,y_1}$, see \cref{subfig:untouched6}. Going counterclockwise
around $y_1$ we denote by $E$ the label of the edge that is immediately after
the edge $C$; see \cref{subfig:untouched6}. We remind that each bend operation
preserves the properties from \cref{invariant}. From \cref{fig:untouched7}, it
is easy to see that if the cluster $E$ is non-spine or spine and touched, then
we do not disturb any other  untouched spine clusters, so the assumption
\ref{item:zal2} is still fulfilled. If $E$ is an untouched spine cluster, then
either (a) the vertex $y$ is both the anchor of the cluster $E$ as well as the
black endpoint of the root of the cluster $E$, or (b) $y$ is the black spine
vertex of the cluster $E$ with the larger label. In both cases, the untouched
spine cluster $E$ still fulfills \ref{item:zal2}.

\smallskip

Finally, we move on to the phase \ref{item:etap4};
the arguments that we used in the phase \ref{item:etap2} are also applicable here.

\begin{figure}    
        \centering
        \includegraphics[clip,trim=0cm 11cm 0cm 0cm,width=0.6\textwidth]{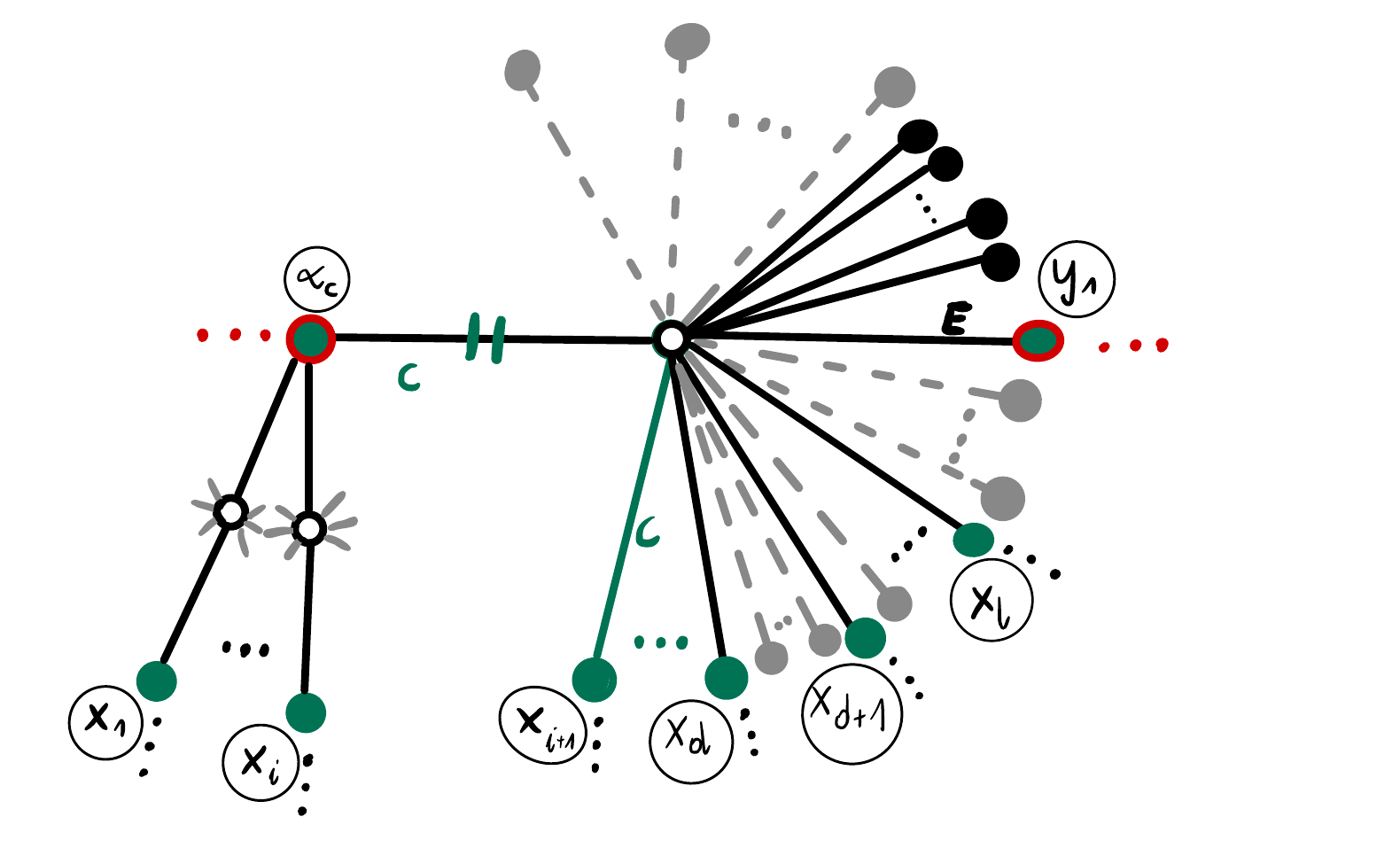}
    \caption{The structure of the cluster $c$ from 
    \cref{subfig:untouched6} after performing the phase \ref{item:etap3}.
    The \cref{fig:untouched7} is obtained from
    \cref{subfig:untouched6} by adding some additional vertices that do not belong
    to the cluster $c$; going counterclockwise these additional vertices occur after 
    $y_1$.}
    \label{fig:untouched7}  
\end{figure}

\smallskip

Note that during the execution of the internal loop we performed only
operations of the form $\Bend_{\Root_C,y}$ and $\Jump_{\Root_C,y}$ for $y\in
B_C$, so we touched exactly one cluster, namely~$C$. Therefore, in each step of
the internal loop the assumption \ref{item:zalNEW} is fulfilled. This completes
the proof of the inductive step.
\end{proof}

\label{page:inductive-ends}

\subsection{The rib treatment}
\label{sec:rib}

In the following we will use the word \emph{rib} as a synonym for \emph{non-spine}.

In the current section we continue the algorithm from \cref{sec:spine}.
For each successive cluster $C$ from the sequence $\Sigma$ we apply the following
procedure. In the language of programming we execute an external loop over the variable $C$.

We run the following internal loop over the variable $y\in B_C\setminus \{ \Root_c \}$ 
(with the ascending order):
if $y<\Root_c$ we apply $\Bend_{\Root_c,y}$; otherwise we apply  $\Jump_{\Root_c,y}$.

\medskip

\begin{example}
    
    We continue the example from \cref{subfig:ex8}. We recall that $\Sigma=(6,15,20,21,22)$.

    \begin{figure}
        \centering
        \subfloat[]{\label{subfig:ex9}
            \includegraphics[width=0.4\textwidth]{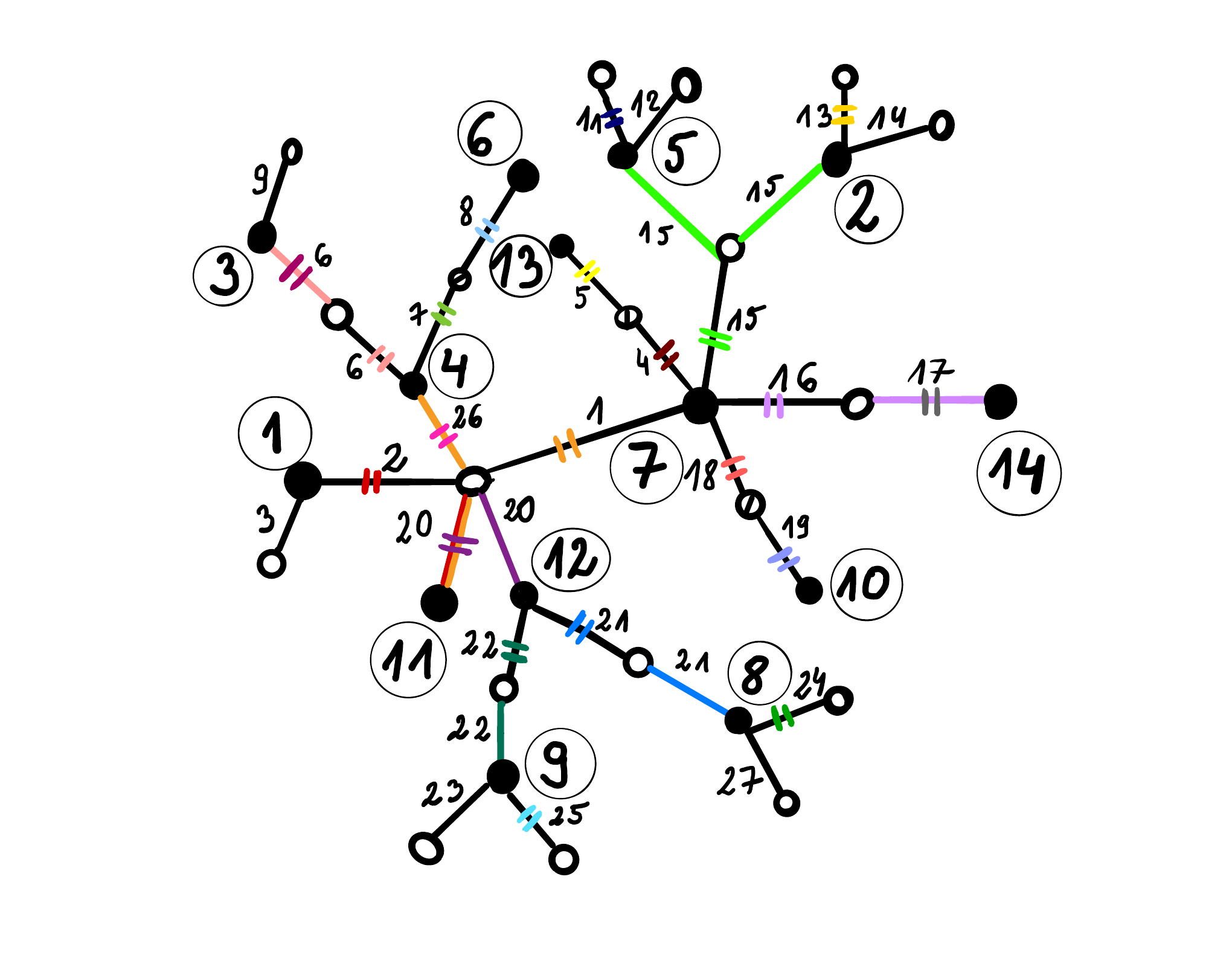}}
        \quad
        \subfloat[]{\label{subfig:ex10}
            \includegraphics[width=0.4\textwidth]{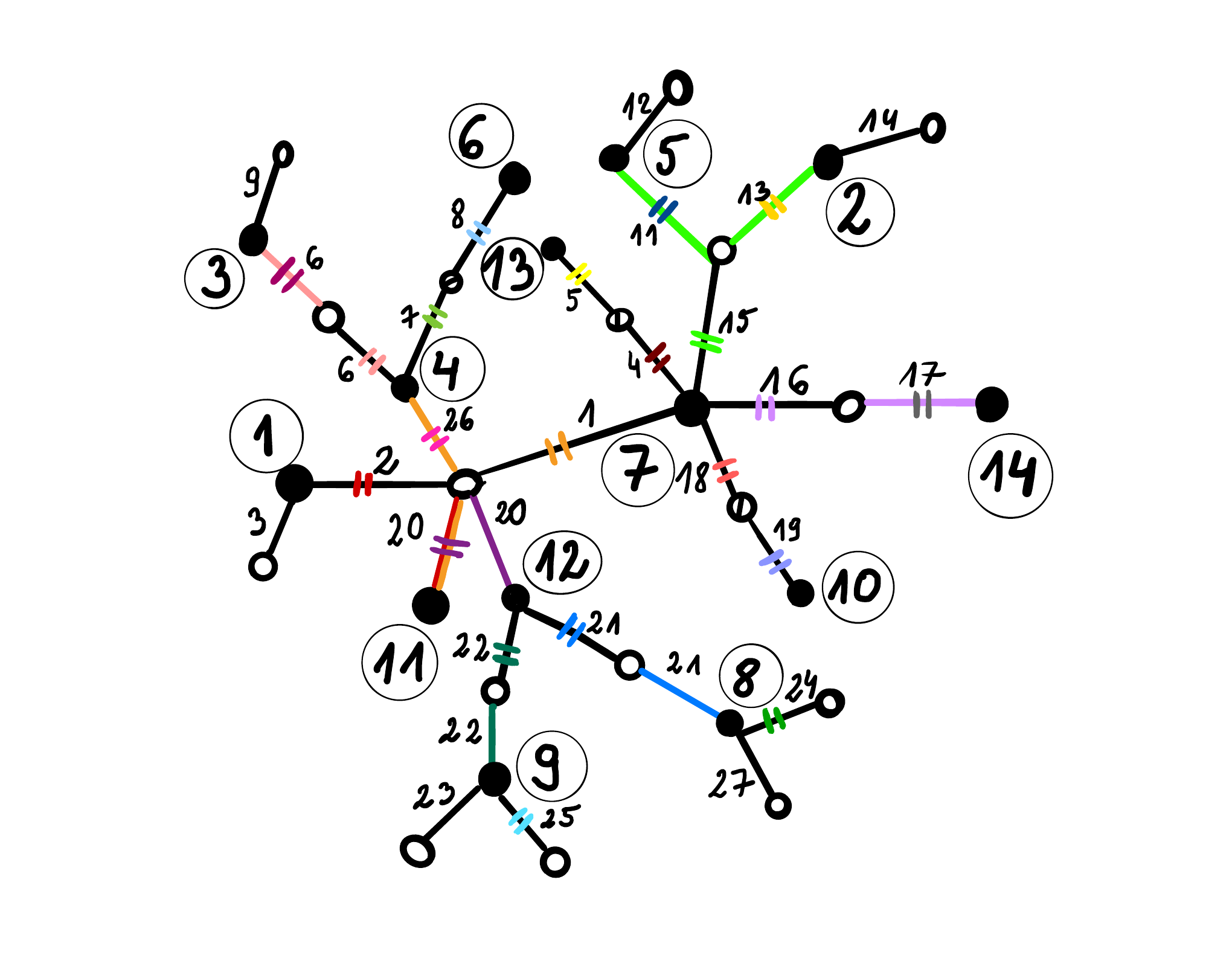}}
        \caption{
            \protect\subref{subfig:ex9} The outcome of applying two consecutive operations: 
            $\Bend_{4,3}$ and $\Jump_{4,6}$ to the tree from \protect\cref{subfig:ex8}.
            \newline
            \protect\subref{subfig:ex10} The outcome of applying two consecutive operations: 
            $\Bend_{7,2}$ and $\Bend_{7,5}$ to the tree from \protect\subref{subfig:ex9}.}
        \label{fig:ex910}
        \centering
        \subfloat[]{\label{subfig:ex11}
            \includegraphics[width=0.4\textwidth]{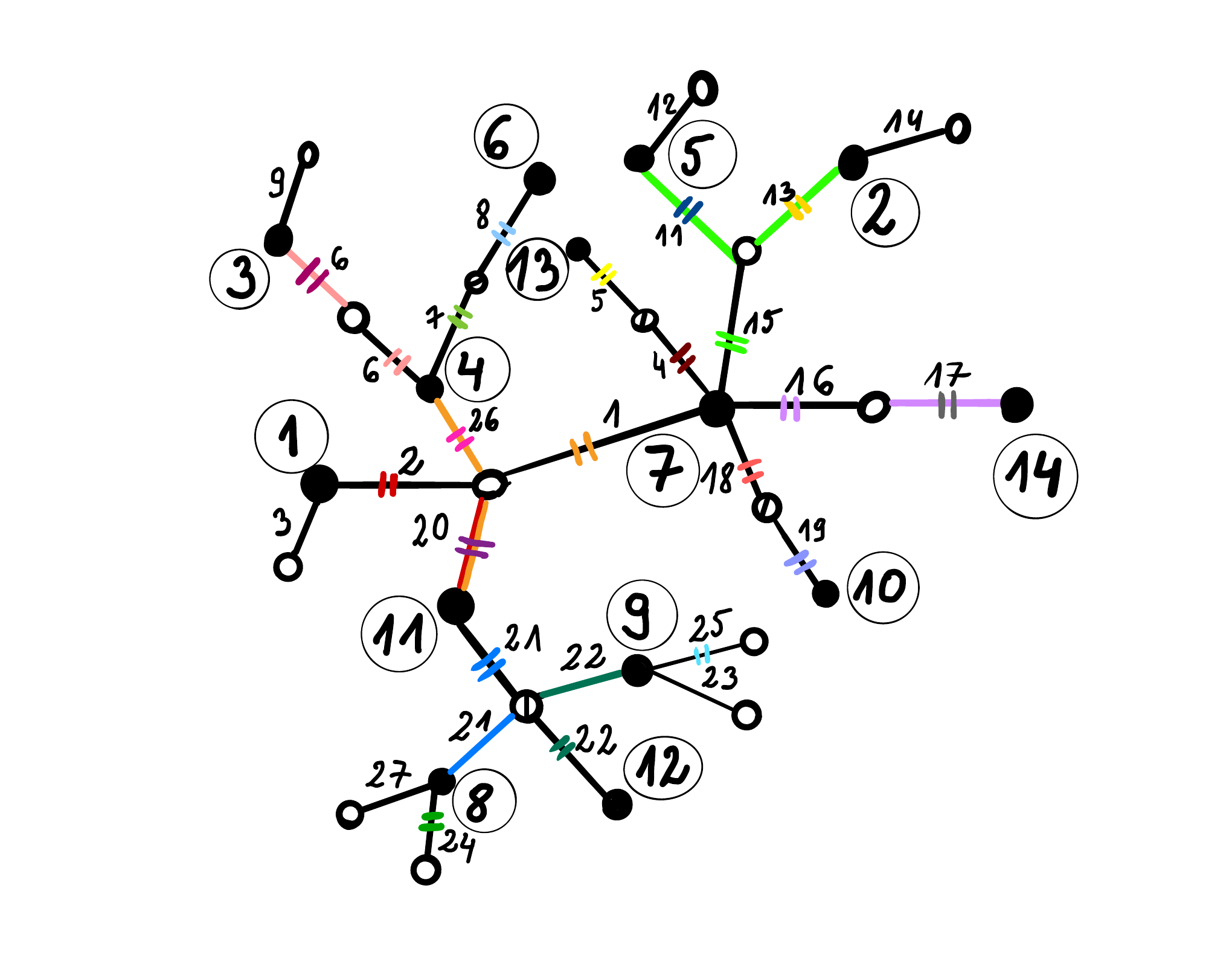}}
        \quad
        \subfloat[]{\label{subfig:ex12}
            \includegraphics[width=0.4\textwidth]{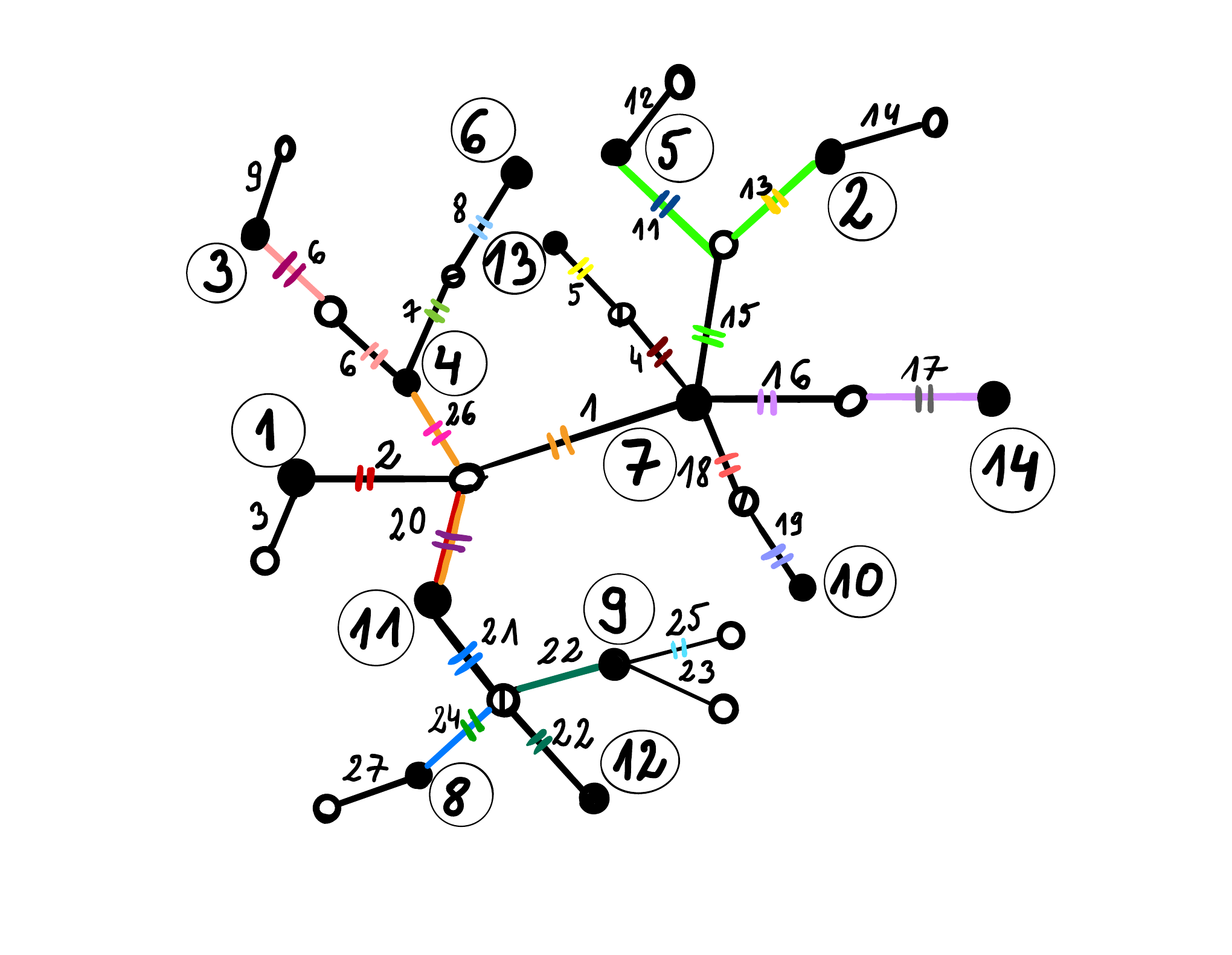}}
        \caption{
            \protect\subref{subfig:ex11} The output of $\Jump_{11,12}$ applied to the tree from \protect\cref{subfig:ex10}.
            \linebreak 
            \protect\subref{subfig:ex12} The output of $\Bend_{12,8}$ applied to the tree from \protect\subref{subfig:ex11}.}
        \label{fig:ex1112}
        \centering
        \subfloat[]{\label{subfig:ex13}
            \includegraphics[width=0.4\textwidth]{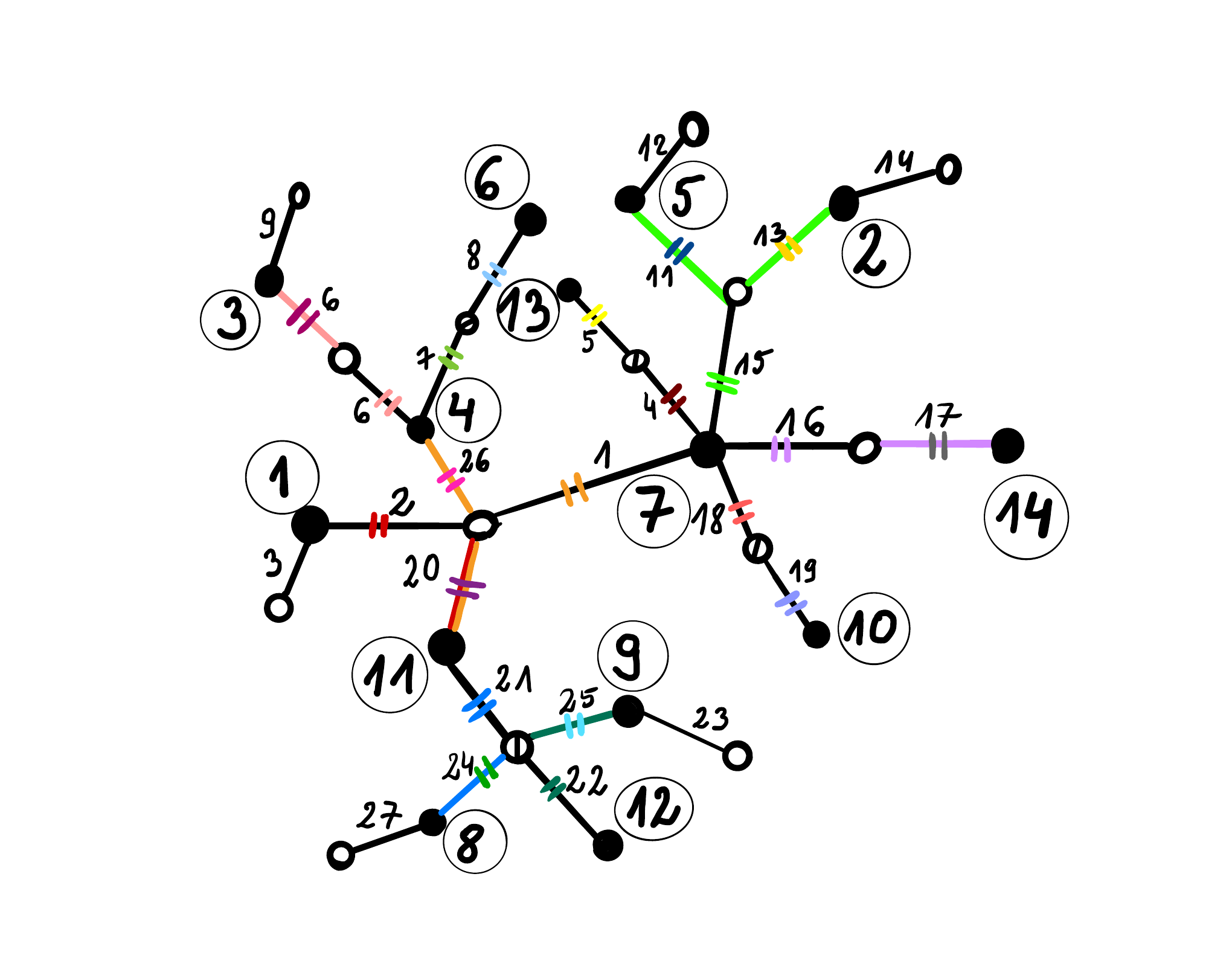}}
        \quad
        \subfloat[]{\label{subfig:ex14}
            \includegraphics[width=0.4\textwidth]{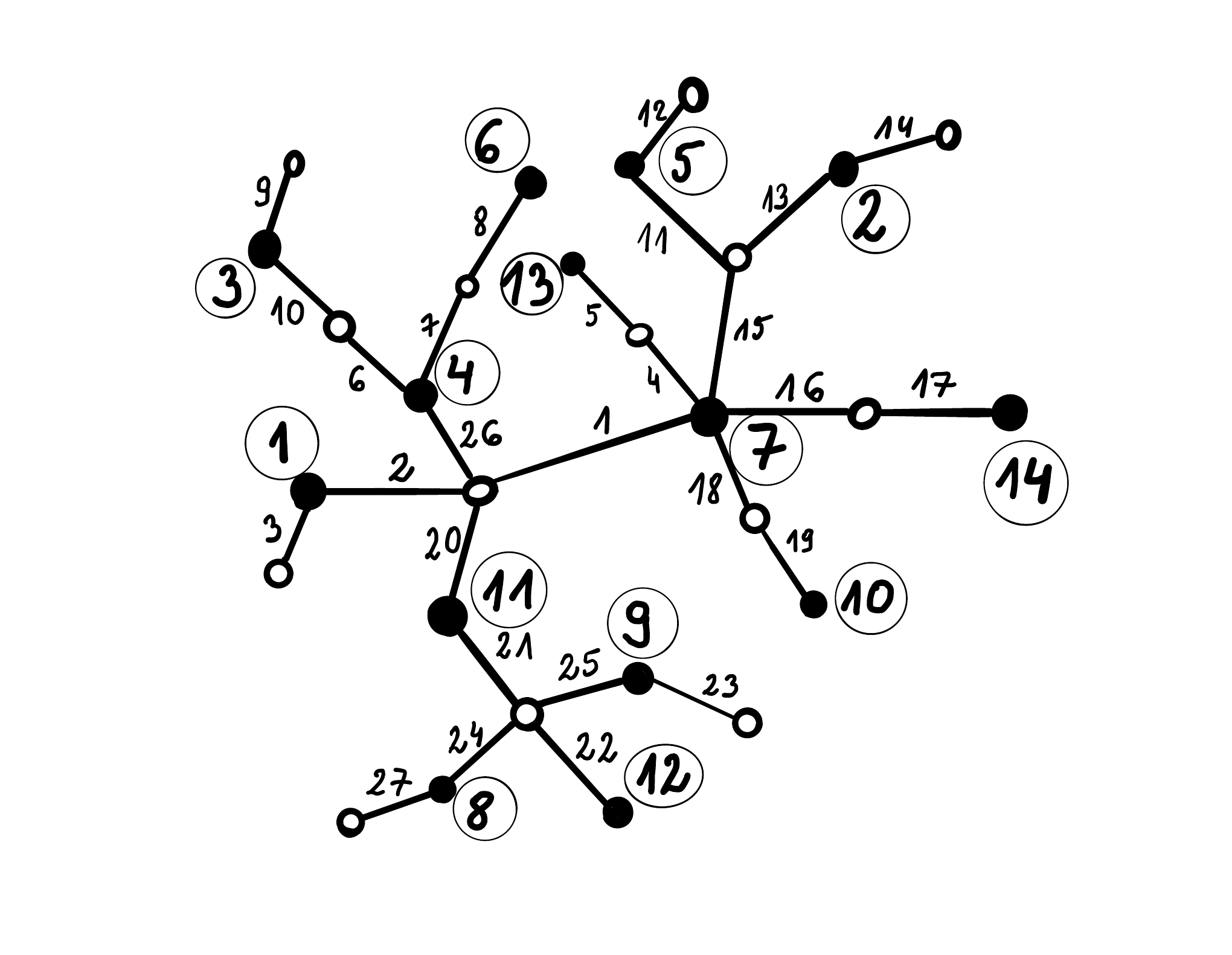}}
        \caption{
            \protect\subref{subfig:ex13} The output of $\Bend_{12,9}$ applied to the tree from \protect\cref{subfig:ex12}.
            \newline
             \protect\subref{subfig:ex14} The output of our algorithm $\CT$ applied to the minimal factorization \eqref{eq:example}. 
             It was created directly from the tree depicted on \protect\subref{subfig:ex1}.}
        \label{fig:ex1314}
    \end{figure}
    
    For $C=6$ we have $\Root_6=4$.
    Since $B_6\setminus \{4\}=\{3,6\}$ the loop runs over:
    $\bullet$~ $y=3$
    and  we apply $\Bend_{4,3}$; see \cref{subfig:ex9};
    $\bullet$ $y=6$ and we apply $\Jump_{4,6}$; see \cref{subfig:ex9}.
    
    For $C=15$ we have $\Root_{15}=7$.
    Since $B_{15}\setminus \{7\}=\{2,5\}$ the loop runs over:
    $\bullet$ $y=2$
    and  we apply $\Bend_{7,2}$; see \cref{subfig:ex10};
    $\bullet$ $y=5$ and we apply $\Bend_{7,5}$; see \cref{subfig:ex10}.

    For $C=20$ we have $\Root_{20}=11$. Since $B_{20}\setminus \{11\}=\{12\}$, the
    internal loop is applied once with $y=12$. As a result we apply $\Jump_{11,12}$; see
    \cref{subfig:ex11}.
    
    For $C=21$ we have $\Root_{21}=12$. Since $B_{21}\setminus \{12\}=\{8\}$, the
    internal loop is applied once with $y=8$. As a result we apply $\Bend_{12,8}$; see
    \cref{subfig:ex12}.

    For $C=22$ we have $\Root_{22}=12$. Since $B_{22}\setminus \{12\}=\{9\}$, the
    internal loop is applied once with $y=9$. As a result we apply $\Bend_{12,9}$; see
    \cref{subfig:ex13}.
    
    \cref{subfig:ex14} gives the output $T_2$ of our algorithm $\CT$ applied to
    the minimal factorization \eqref{eq:example}. The result is a Stanley tree of type $(1^{14})$.
    
\end{example}

\medskip

The resulting tree $T_2$ is the final output of our algorithm $\CT$. 
In \cref{sec:proof} we will show that it has the desired properties from \cref{thm:thm1}.

\subsection{Correctness of the rib treatment algorithm}
\label{sec:correctness-rib}

The main result of this section is \cref{prop:invariants2-ok}, which states
that the rib treatment presented in \cref{sec:rib} is well-defined in the sense
that the assumptions for the bend and jump operations
(\cref{sec:assumptions-e,sec:assumptions-j}) are indeed fulfilled during the
rib treatment.

\smallskip

For a black non-spine vertex $z$ by \emph{the branch defined by $z$} we mean
the edge outgoing from~$z$ in the direction of the spine together with all
edges and vertices that are its descendants. The branch defined by $z$ will
also be called \emph{the branch $z$}.

\pagebreak

\begin{lemma}
    \label{lm:lem2}
For each black non-spine vertex $z$ the branch $z$ does not change 
during the action of the algorithm $\CT$ 
until the vertex $z$ becomes touched.
\end{lemma}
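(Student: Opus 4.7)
The plan is to prove the lemma by induction on the number of operations performed by the algorithm $\CT$. The invariant I would maintain is that at every step, for every non-spine black vertex $z$ that is still untouched, branch~$z$ --- viewed as a labeled plane subtree (edge labels, black-vertex labels, and the inherited cyclic order at each white vertex of the subtree) --- agrees with its form in the initial tree $T_1$. The base case is vacuous, and the whole proof reduces to the inductive step: given that the invariant holds before an operation $\Bend_{x,y}$ or $\Jump_{x,y}$, verify that it still holds after, for any $z$ that remains untouched.

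The crux is a purely combinatorial claim: if $z$ is non-spine, black, and still untouched after the operation, then $y$ does not belong to branch~$z$. Since $y$ is touched and $z$ is not, certainly $y\neq z$, so the task is to exclude $y$ being a strict descendant of $z$. Suppose this fails. Then $y$ is non-spine, its parent white vertex $u_y$ is a (white) descendant of $z$ and hence non-spine, and consequently the parent cluster $P_y$ with center $u_y$ is a non-leaf rib cluster, so $P_y \in \Sigma$ and $y$ is touched during the processing of $P_y$ in the rib treatment. Applying the same analysis to $z$: the cluster $P_z$ with center $u_z$ is either a spine cluster (if $u_z$ is on the spine) or a non-leaf rib cluster in $\Sigma$. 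In both situations $P_z$ is processed strictly before $P_y$: spine clusters are processed before any rib cluster, and among rib clusters $P_z$ precedes $P_y$ in $\Sigma$ because $P_z$ is a predecessor of $P_y$ (the path from the spine to $u_y$ passes through $u_z$). Therefore $z$ would be touched before $y$, contradicting the hypothesis.

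With the key claim in hand, the remaining verification is local. I would check that because $y \notin$ branch~$z$, the operation cannot alter branch~$z$. Every edge removed or relabeled is incident to $y$, and such an edge lying in branch~$z$ would force $y$ itself into branch~$z$. The white vertices whose edges are relocated ($v_2$ in a bend; $v_2, v_3$ in a jump) cannot be white descendants of $z$, because by the inductive hypothesis all neighbors of such a white descendant lie in branch~$z$, which would again pull $y$ into branch~$z$. The only remaining possibility is $v_2 = u_z$: merging $u_z$ into $v_1$ transfers the single upward edge of $z$ to a new white vertex, but since white vertices are unlabeled, the edge keeps its label and its black endpoint $z$, and the subtree of descendants is untouched.

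The main obstacle I anticipate is precisely this last case, $v_2 = u_z$, where the parent white vertex of $z$ is physically merged away. One must articulate carefully that what remains still counts as ``the same branch~$z$'': the unique edge of branch~$z$ incident to $u_z$ is the upward edge of $z$, whose label and black endpoint are preserved by the merge, and since white vertices carry no label, the movement of the attachment point is invisible at the level of the labeled plane subtree.
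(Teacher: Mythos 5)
Your overall architecture (induction on the number of bend/jump operations, with the invariant that the branch of every still-untouched non-spine black vertex agrees with its form in $T_1$) matches the paper, and your key claim is sound: the argument that $y\in\text{branch }z$ would force the parent cluster of $z$ to be processed strictly before the parent cluster of $y$ (spine clusters before $\Sigma$, and predecessors earlier within $\Sigma$), touching $z$ too early, is essentially the same processing-order reasoning the paper uses to show that the whole cluster $E_1$ is not contained in the branch $z$.

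The gap is in the local verification of the jump. A jump does not only remove edges at $y$ and relocate $v_2,v_3$: it also \emph{creates} a new edge attached at the black vertex $j$, the endpoint of the current root of $E_1$ (see \cref{subfig:jumpB,subfig:scjumpB}), and this modification site is never examined in your check (``every edge removed or relabeled is incident to $y$'' silently ignores the created edges). Your key claim does exclude $j$ being a strict descendant of $z$ (then $v_1$, adjacent to $j$, would be a white descendant of $z$, and all its neighbors, including $y$, would lie in branch $z$), but it does \emph{not} exclude $j=z$: in that case the jump hangs the new white vertex $w$, together with the material merged from $v_2$ and $v_3$, below $z$, and branch $z$ genuinely changes. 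Ruling out $j=z$ is precisely where the paper invokes \cref{invariant}\,\ref{invariant:4}: if $j\neq x$, the black endpoint of the root differs from the anchor and is therefore touched; if $j=x$, one needs the separate fact that $x$ is touched (the paper proves this as a preliminary step of the same proof, tracing the earlier iteration for the parent cluster of $x$; your proposal never establishes it either). Either way $j$ is touched, hence $j\neq z$; without this ingredient your inductive step for the jump is incomplete. A minor further point: your ``only remaining possibility $v_2=u_z$'' discussion is phrased for the bend; in a jump the top edge of branch $z$ may instead hang at $v_3$ and be relocated to $w$, though the same label-and-endpoint-preservation argument covers it.
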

\begin{proof}
As the first step we notice that the algorithm $\CT$ (i.e., the combined the
spine part and the rib part) can be regarded as a sequence of jump and bend
operations. We will use the induction over the number of bend/jump operations
that have been performed so far.

\medskip

At the beginning of the algorithm there is nothing to prove.

\medskip

Let us take an arbitrary tree transformation $\mathbb{T}$ (with
$\mathbb{T}=\Bend_{x,y}$ or $\mathbb{T}=\Jump_{x,y}$ for some black vertices
$x,y$) in our algorithm; we denote by $T$ the value of the tree before the
transformation $\mathbb{T}$ was applied. Our inductive hypothesis is that (i)
the branch $z$ was unchanged before $\mathbb{T}$ was applied, and (ii) the
vertex $z$ was untouched before $\mathbb{T}$ was applied. Let $E_1$ be the
cluster defined in \ref{AE1} in \cref{sec:assumptions-e}, respectively in
\ref{AJ1} in \cref{sec:assumptions-j}, i.e., the cluster that contains the
vertices $x$ and $y$.

In the case when $z=y$ then after performing the transformation $\mathbb{T}$
the vertex $z$ becomes touched and there is nothing to prove.

We will show that $x$ is touched. In the case when the black vertex $x$ in the
original tree $T_1$ was a spine vertex there is nothing to prove. Consider now
the case when $x$ in $T_1$ was a non-spine vertex; in this case the operation
$\mathbb{T}$ is a part of the rib treatment algorithm. Let $c$ be the white
vertex in the original tree $T_1$ that is the parent of the vertex $x$; it
follows that in some moment before the operation $\mathbb{T}$ was applied, the
variable in the external loop (either in the spine treatment algorithm or in
the rib treatment algorithm) took the value $C=c$ and one of the operations:
$\Jump_{\Root_c,x}$ or $\Bend_{\Root_c,x}$ was applied; since this moment the
vertex $x$ was touched, as claimed. Since $z$ is assumed to be not touched, it
follows that $z\neq x$.

\smallskip

\begin{figure}    
    \centering \includegraphics[clip,trim=5cm 0cm 0cm
    6.5cm, angle=-90,width=0.45\textwidth]{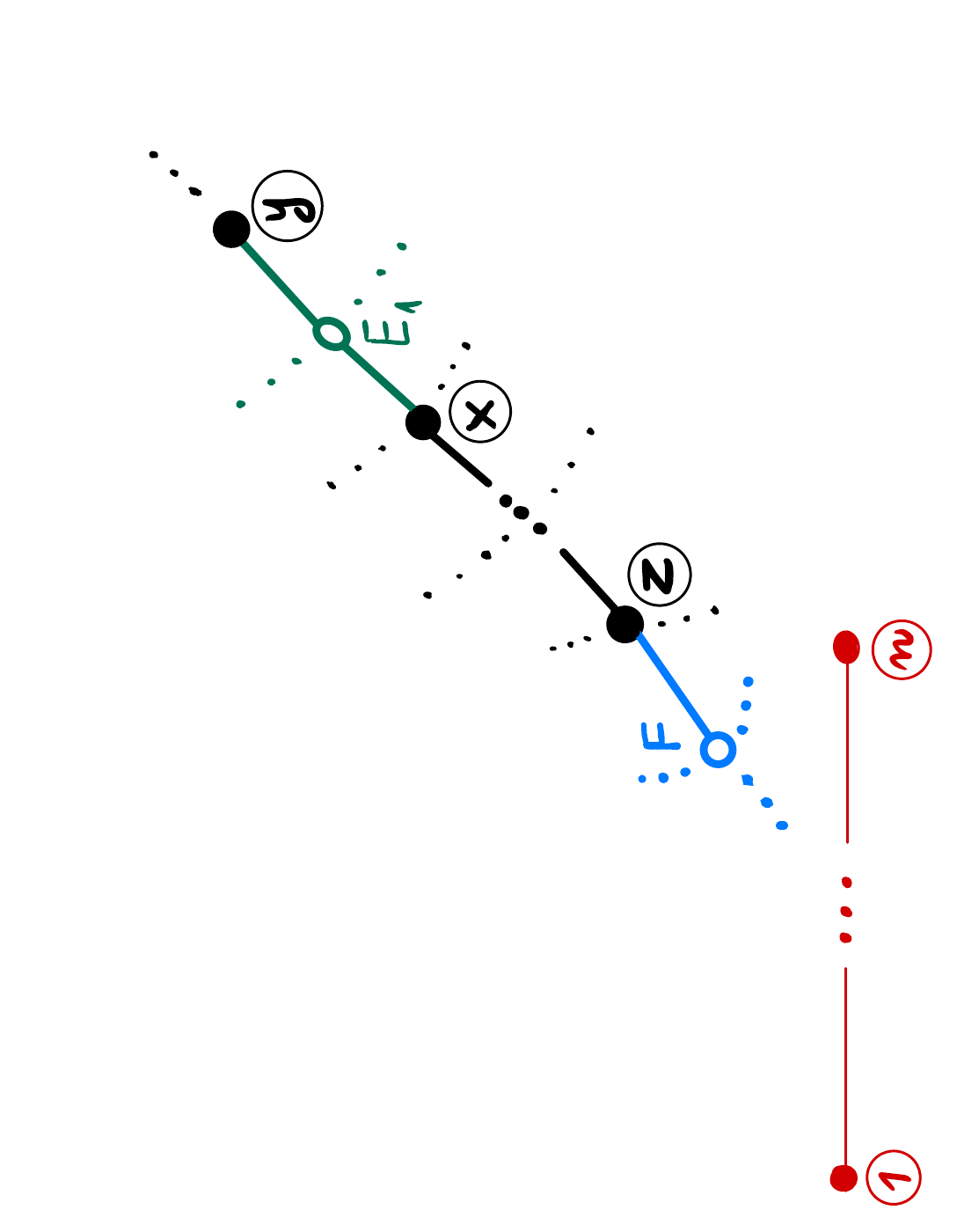} \caption{The hypothetical
        relative position of the cluster $E_1$ (green), the vertex $z$, the
        cluster $F$ (blue), and the spine (red) in the tree $T$ as long as the
        branch $z$ remains unchanged. We will prove that this configuration is \emph{not} possible.} 
        \label{fig:lemat2}
\end{figure}

The above discussion shows that we may assume that $z\notin\{x,y\}$. We denote
by $F$ the cluster in the tree $T_1$ that is defined by the edge outgoing from
the vertex $z$ in the direction of the spine; we recall that $E_1$ is the
cluster that contains the vertices $x$ and $y$. We will show that the cluster
$E_1$ is not contained in the branch $z$, i.e., the situation depicted on
\cref{fig:lemat2} is \emph{not} possible. By contradiction, suppose that the
cluster~$E_1$ is contained in the branch $z$. By the inductive assumption,
before $\mathbb{T}$ was applied, the branch~$z$ remained unchanged, so the
relative position of the spine, the vertex $z$ and the clusters $E_1$ and $F$
is the same in the initial tree $T_1$ and in the tree $T$; see
\cref{fig:lemat2}. It follows that $E_1$ is a non-spine cluster. Furthermore
either $F$ is a spine cluster, or $F$ is a non-spine cluster that is a
predecessor of $E_1$ in the sequence $\Sigma$. In particular, the
transformation $\mathbb{T}$ was performed during the rib treatment part of the
algorithm and the value of the variable~$C$ in the main loop at this moment was
equal to $C=E_1$. In one of the previous iterations of the main loop (either in
the spine treatment or in the rib treatment) the variable $C$ took the value
$C=F$; during this iteration of the loop the vertex $z$ became touched, which
contradicts the inductive hypothesis.

The above observation that $E_1$ is \emph{not} contained in the branch $z$
allows us to define \emph{the branch $z$} in an equivalent way by orienting all
edges of the tree towards the cluster $E_1$ and saying that the branch $z$
consists of the edge outgoing from $z$ in the direction of $E_1$ with all edges
and vertices that are its descendants. We will use this alternative definition
in the following.

\smallskip

In the case when $\mathbb{T}=\Bend_{x,y}$ (see \cref{subfig:exA}, where $z$ is
any black vertex different than $x$ and $y$; note that this black vertex may be
also in the part of the tree that was not shown), we can notice that the branch
$z$ still does not change after the application of $\mathbb{T}$; see
\cref{subfig:exB}, as required.

Consider the case when $\mathbb{T}=\Jump_{x,y}$ and (with the notations from
\cref{sec:assumptions-j}) $j=x$. See \cref{subfig:scjumpA}, where $z$ is any
black vertex different than $x$ and $y$. We can notice that the branch $z$
still does not change after the application of $\mathbb{T}$, as required; see
\cref{subfig:scjumpB}.

Consider now the remaining case when $\mathbb{T}=\Jump_{x,y}$ and $j\neq x$
(see \cref{fig:jump}). If $z\neq j$ then the branch $z$ still does not change
after the application of $\mathbb{T}$, as required. The case $z=j$ is is not
possible because \cref{invariant} \ref{invariant:4} applied to the cluster
$E_1$ implies that $z$ is touched which contradicts the inductive assumption.

This completes the proof of the inductive step.
\end{proof}

\pagebreak

\begin{propos}\label{prop:invariants2-ok}
For each operation \emph{bend} (respectively, \emph{jump}) performed during the
rib treatment algorithm the assumptions \ref{AE1}--\ref{AE3} (respectively, the
assumptions \ref{AJ1}--\ref{AJ4}) are fulfilled; in this way each
\emph{bend}/\emph{jump} operation is well-defined.
\end{propos}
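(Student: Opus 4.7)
The plan is to mirror the inductive structure used in the proof of \cref{lem:invariants-ok} for the spine treatment, extending it to the rib treatment. I would proceed by induction on the successive bend/jump operations performed during the rib treatment, maintaining at each step two properties: (P1) the current tree satisfies \cref{invariant}, and (P2) for every cluster $c$ whose predecessors in $\Sigma$ (together with all spine clusters) have already been fully processed and which has not itself been processed yet, the cluster $c$ still has its shape from $T_1$ in the sense that its center is connected to $\Root_c$ (the black endpoint of the root of $c$) and to the remaining black neighbors of that center in $T_1$, possibly with extra non-$c$ edges also attached to it. Preservation of (P1) and (P2) by each bend/jump would be checked by a case analysis analogous to the one in the proof of \cref{lem:invariants-ok}, the essential input being \cref{lm:lem2}: as long as the anchor of an untouched descendant cluster is untouched, its branch is frozen and the cluster keeps its initial shape.

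Granting (P1) and (P2), I would verify the required assumptions for a single iteration of the outer loop with variable $C \in \Sigma$. Property (P2) applied to $C$ at the start of the iteration yields the initial shape of $C$, from which \ref{AE1}, \ref{AJ1}, \ref{AE2} and \ref{AJ2} for the first operation of the inner loop follow by direct inspection. For the subsequent operations in the inner loop I would run an inner induction tracking the cyclic order around the center of $C$, checking that each successive bend/jump leaves $C$ with $\Root_C$ still the anchor and the black endpoint of its root, so that the next operation satisfies \ref{AE1}/\ref{AJ1} and \ref{AE2}/\ref{AJ2}; this inner analysis follows phases \ref{item:etap1}--\ref{item:etap4} of the proof of \cref{lem:invariants-ok}, except that the phase \ref{item:etap3} step is absent since $C$ is non-spine and there is no $y_1$. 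The strict inequality $x<y$ required by \ref{AJ1} is built into the design of the inner loop, which issues jumps only when $y>\Root_C$.

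For the degree conditions \ref{AE3} and \ref{AJ3}, every $y \in B_C \setminus \{\Root_C\}$ is a non-spine black vertex (a child of the center of $C$ in $T_1$), so $y \notin \{1,n\}$ and hence its initial degree in $T_1$ equals $a_y = b_y + 2 \geq 3$ by \eqref{eq:a-and-b}. At the moment $y$ is first used as the second argument of a bend or jump, $y$ has not yet been touched: the only clusters in $T_1$ containing $y$ are $C$ itself and the clusters centered at the white children of $y$, and all of the latter come after $C$ in $\Sigma$. By \cref{lm:lem2} the branch at $y$ is therefore unchanged, so its current degree equals $a_y \geq 3$. For \ref{AJ4}, I would show that $\Root_C$ is already touched at the start of the iteration: if $\Root_C$ is a spine vertex it is touched at initialization; otherwise, the white parent $u$ of $\Root_C$ in $T_1$ defines a cluster $U$ which is either a spine cluster or a non-spine non-leaf cluster preceding $C$ in $\Sigma$, so the corresponding operation $\Bend_{\Root_U,\Root_C}$ or $\Jump_{\Root_U,\Root_C}$ has already been executed and has marked $\Root_C$ as touched.

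The main obstacle lies in the verification of (P2) across the rib treatment: whereas in the spine treatment only spine clusters need be tracked, here the shape of every still-untouched non-spine cluster must be followed, and these shapes can be disturbed by operations performed when processing predecessor clusters. However, \cref{lm:lem2} does the heavy lifting: it guarantees that while the anchor of a cluster $c$ remains untouched the cluster retains its initial shape, and in the cases where the anchor has already been touched, the earlier operations only translate the center of $c$ (by merging $v_2$ vertices with $v_1$) without disturbing the cyclic order of its black neighbors.
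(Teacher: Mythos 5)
There is a genuine gap, and it sits exactly where the paper's proof does its real work. Your invariant (P2) asserts that every not-yet-processed cluster $c$ (all of whose predecessors have been processed) still has its $T_1$-shape with $\Root_c$ being the black endpoint of the root of $c$, and your inner induction likewise claims that each operation "leaves $C$ with $\Root_C$ still the anchor and the black endpoint of its root". This is false precisely when $C$ is a \emph{leftist} cluster whose parent-cluster iteration applied a jump $\Jump_{\Root_{C'},\Root_C}$: by \cref{lem:pomoc2} that jump \emph{separates} the root and the anchor in $C$ (with the notations of \cref{fig:jump,fig:scjump}, $C=E_2$ acquires the newly created edge towards $j$ as its root), so at the start of the iteration for $C$ the black endpoint of the root is a touched vertex $j\neq\Root_C$ that does not even belong to $B_C$. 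Your closing remark that when the anchor has been touched "the earlier operations only translate the center of $c$ (by merging $v_2$ vertices with $v_1$) without disturbing the cyclic order of its black neighbors" describes only the bend case; it misses this jump case entirely, and the algorithm does use jumps in exactly this separating situation. Consequently the inner-loop verification of \ref{AE2}/\ref{AJ2} cannot proceed as you describe for such clusters: there \ref{AJ2} must be checked with $j\neq x$ (the configuration of \cref{subfig:jumpA}, legitimized by \ref{invariant:4}), not with $j=x$ as your invariant would force.

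The paper's proof avoids this by not postulating a uniform shape: it uses \cref{lm:lem2} to freeze $C$ until the single parent-cluster operation that touches $\Root_C$, and then classifies the effect of that one operation, yielding exactly three possible shapes of $C$ at the start of its iteration (\cref{subfig:rib1,subfig:rib2,subfig:rib3} of \cref{fig:rib} — original shape, bend/second-leftmost-after-jump shape, and the leftist-after-jump shape with separated root), plus the analogous case split when $\Root_C$ is a spine vertex; the assumptions are then verified phase by phase in each shape. Your treatment of the degree conditions \ref{AE3}/\ref{AJ3} (every $y\in B_C\setminus\{\Root_C\}$ is never-spine, untouched until used, so by \cref{lm:lem2} its degree is still $a_y=b_y+2\ge 3$) and of \ref{AJ4} (the parent-cluster operation has already touched $\Root_C$) are fine and match the paper, but the proposal as written does not establish the proposition without repairing (P2) by incorporating the third, separated-root shape.
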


\begin{figure}
    \centering \subfloat[]{\label{subfig:rib1} \includegraphics[clip,trim=5cm
    0cm 0cm 9cm,angle=-90,width=0.3\textwidth]{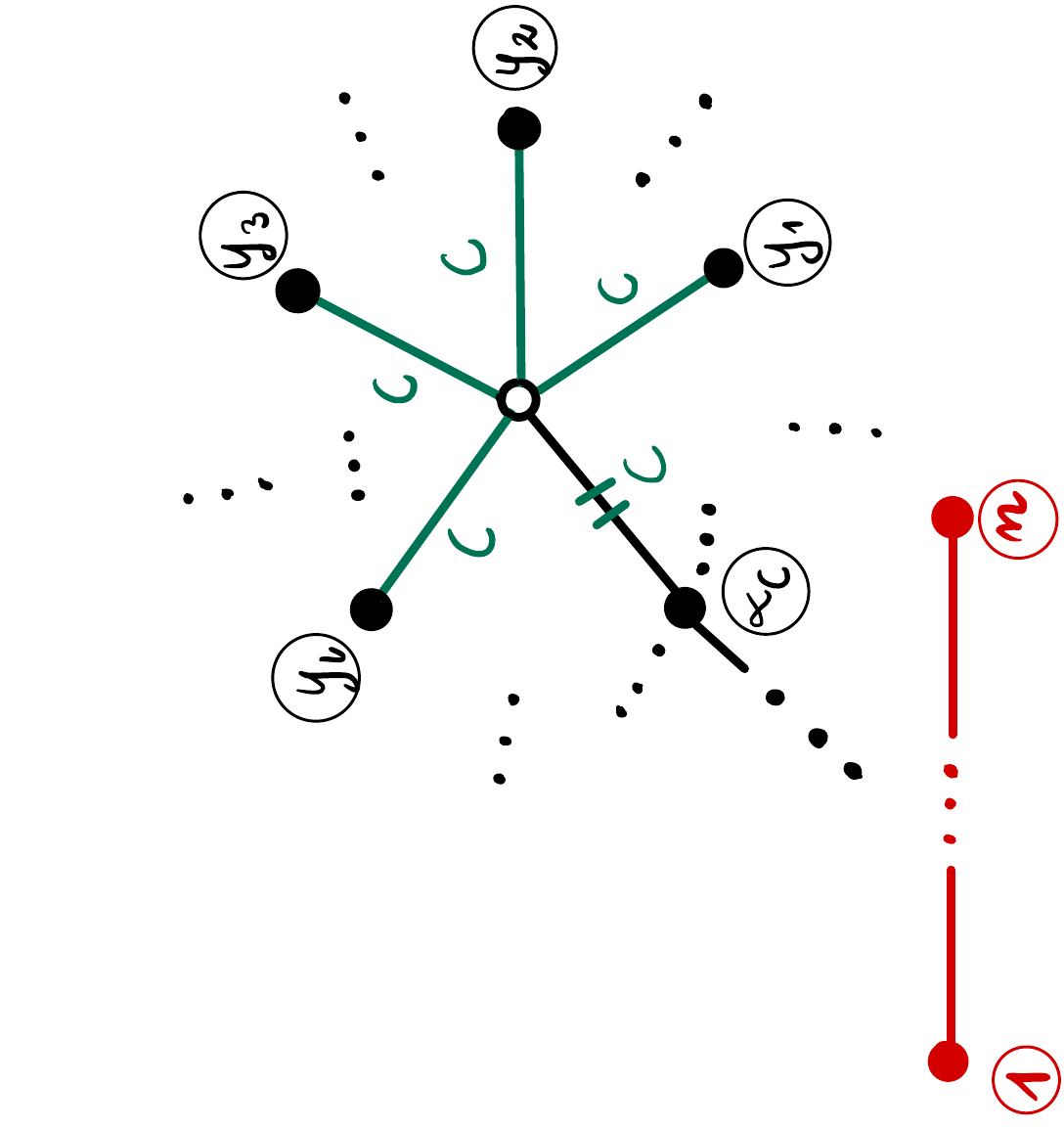}} \quad
\subfloat[]{\label{subfig:rib2} \includegraphics[clip,trim=5cm 0cm 0cm
    10cm,angle=-90,width=0.3\textwidth]{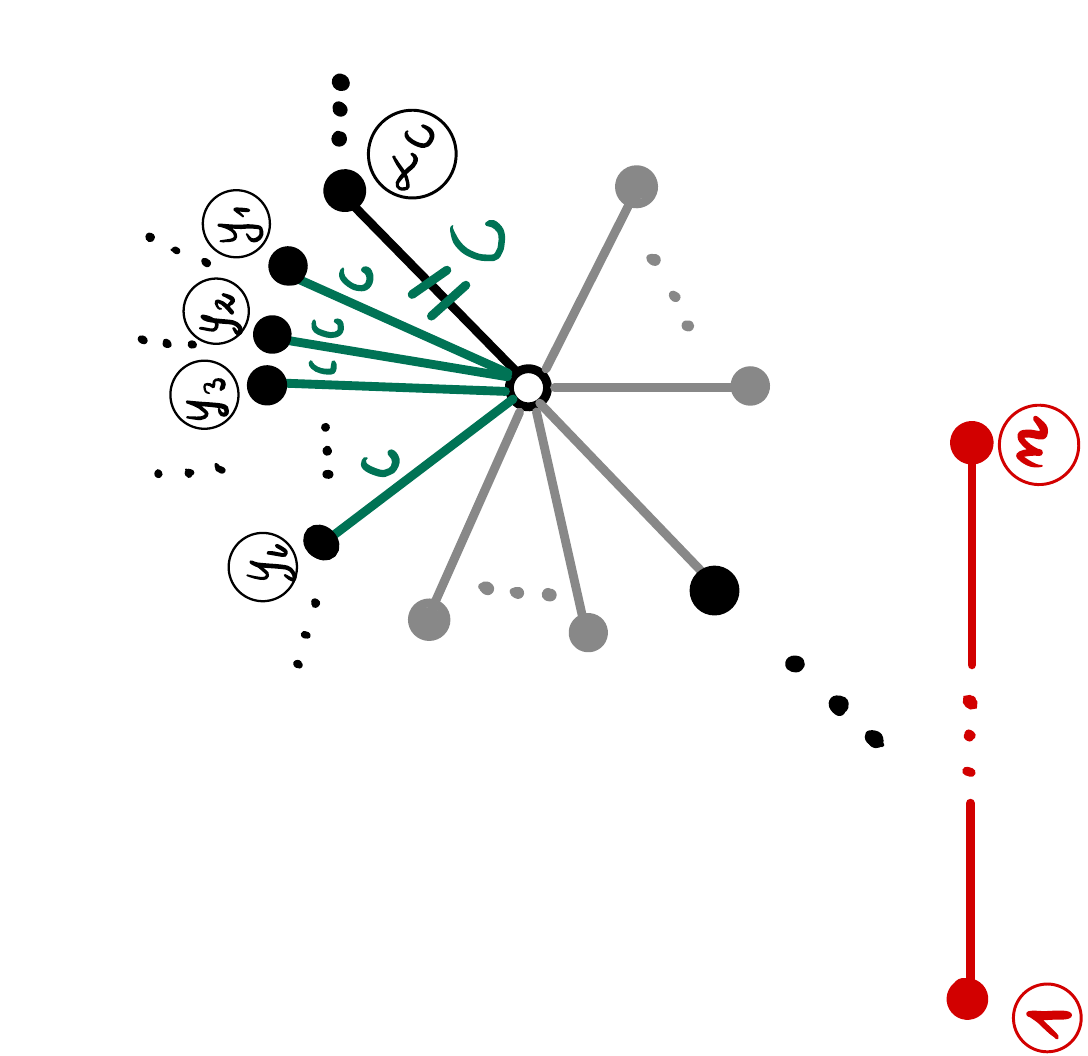}} \quad
\subfloat[]{\label{subfig:rib3} \includegraphics[clip,trim=5cm 0cm 0cm
    10cm,angle=-90,width=0.3\textwidth]{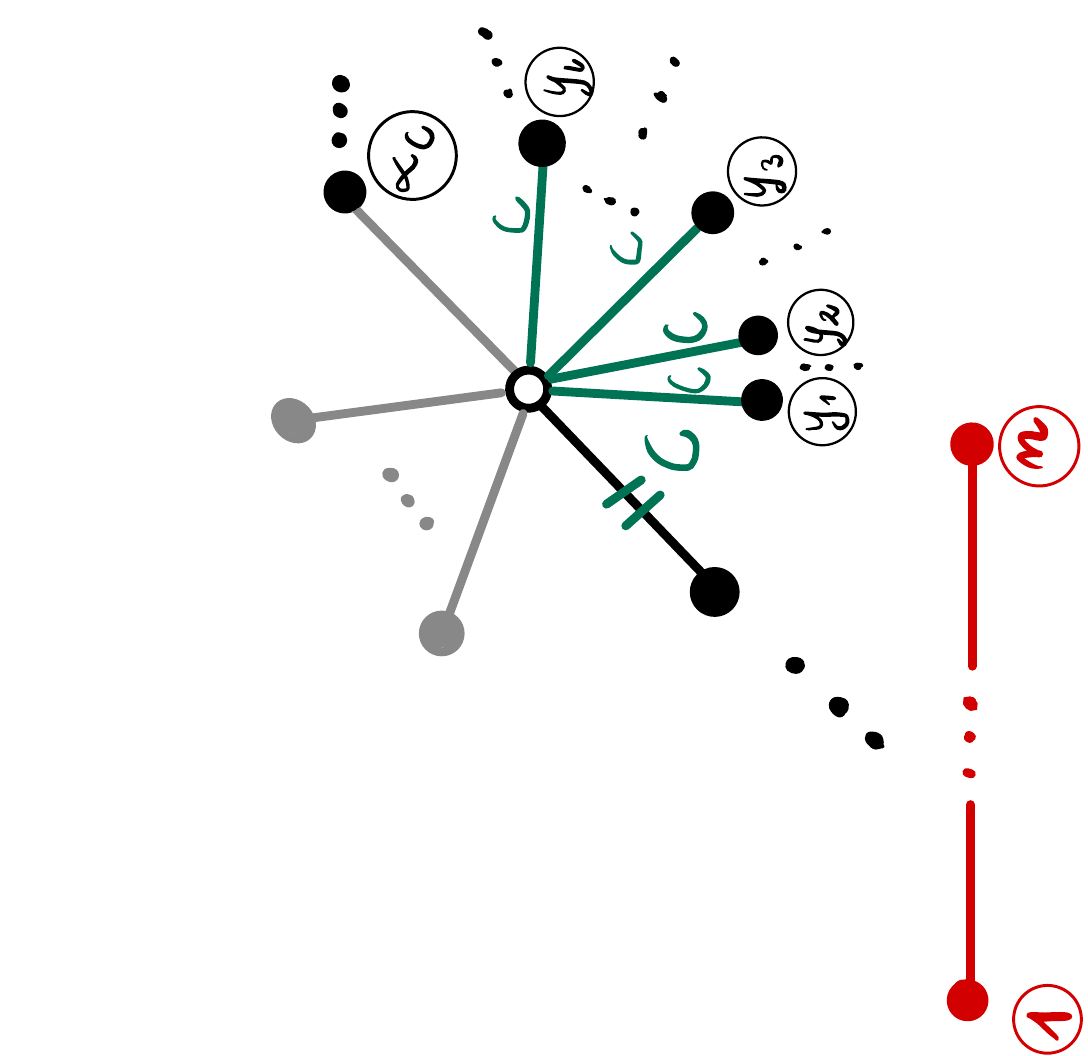}}
    \caption{ Three possible
    structures of the non-spine cluster $C$ at the beginning of the iteration
    of the external loop in the rib treatment algorithm. \newline
    \protect\subref{subfig:rib1} This structure coincides with the original
    structure of $C$ in the input tree~$T_1$. The edge outgoing from the center
    of the cluster $C$ in the direction of the spine is the root of the cluster
    $C$ and its black endpoint is the anchor~$\Root_C$ of the cluster. \newline
    \protect\subref{subfig:rib2}~This structure occurs as an outcome of either:
    (i) the bend operation in the parent cluster, provided that $C$ is the
    leftmost child, i.e.,~$C$ corresponds to the blue cluster $E_2$ 
    with the notations of \cref{fig:ex}, 
    or (ii) the jump operation in the parent cluster, provided that $C$ is the
    second leftmost child, i.e., $C$~corresponds to the red cluster $E_3$ with
    the notations from \cref{fig:jump,fig:scjump}. This structure was obtained
    from \protect\subref{subfig:rib1} by adding some additional vertices that
    do not belong to the cluster $C$; going clockwise these additional vertices
    occur after~$\Root_C$. The edge outgoing from the center of the cluster $C$
    in the direction of the spine does not belong to the cluster $C$. %
    \newline \protect\subref{subfig:rib3}~This structure occurs as an outcome
    of the jump operation in the parent cluster, provided that $C$ is the
    leftmost child, i.e., $C$ corresponds to $E_1$ with the notations from
    \cref{fig:jump,fig:scjump}. The root edge does not correspond to any of
    the edges that formed $C$ in the original tree $T_1$; in particular the
    black endpoint of the root does not belong to the set
    $B_C=\{\Root_C,y_1,\dots,y_l\}$. The root of the cluster $C$ is in the
    direction of the spine. More specifically, the counterclockwise cyclic
    order of the edges with their black endpoints around the center of cluster
    $C$ is as follows: the root of the cluster $C$, the edges that belong to
    cluster $C$ with consecutive black endpoints $y_1,\dots,y_l$, the edge
    that does not belong to the cluster $C$ with the black endpoint equal to
    the anchor $\Root_C$ and the remaining edges that do not belong to the
    cluster~$C$.} \label{fig:rib}
\end{figure}

\begin{proof}
We will go through some iteration of the external loop of the rib treatment for
some specific value of the variable $C$ (i.e., $C$ is a non-spine cluster) and we
will verify that all operations performed in this iteration are well-defined.

\medskip

\emph{Consider the case when the anchor $\Root_C$ is a non-spine vertex.} By
$C_1$ we denote the cluster that is the parent of the cluster $C$ in the tree
$T_1$. Therefore, the black vertex $\Root_C$ also belongs to the cluster $C_1$.
During some previous iteration of the main loop either during the spine
treatment or during the rib treatment (more specifically, this was the
iteration when the variable $C$ took the value $C_1$) we performed an operation
$\mathbb{T}$ with $\mathbb{T}=\Bend_{\Root_{C_1},\Root_{C}}$ or
$\mathbb{T}=\Jump_{\Root_{C_1},\Root_{C}}$. From the above \cref{lm:lem2} it
follows that until the operation $\mathbb{T}$ was performed, the branch defined
by the black vertex $\Root_C$ was unchanged, hence the cluster $C$ had the form
depicted on \cref{subfig:rib1}.

In the case when $\mathbb{T}=\Bend_{\Root_{C_1},\Root_C}$ is a bend operation,
after this operation $\mathbb{T}$ is applied the cluster $C$ 
either has the form depicted on \cref{subfig:rib2} (this happens if $C$ is leftist)
or it still has the form depicted on \cref{subfig:rib1} (otherwise).

In the case when $\mathbb{T}=\Jump_{\Root_{C_1},\Root_C}$ is a jump operation,
after this operation $\mathbb{T}$ is applied the cluster $C$ 
either has the form depicted on \cref{subfig:rib3} (this happens if $C$ is leftist),
or the form depicted on \cref{subfig:rib2} (this happens if $C$ 
is the second leftmost child)
or it still has the form depicted on \cref{subfig:rib1} (otherwise).

For each of the aforementioned three cases depicted on \cref{fig:rib} one can
go through the internal loop (in a manner similar to that from the proof of
\emph{the inductive step} on the pages
\pageref{page:inductive-starts}--\pageref{page:inductive-ends}, but simpler)
and to verify that the assumptions required by the bend/jump operations are
indeed fulfilled, as required.

\bigskip

\emph{Now, we assume that $\Root_C$ is a spine vertex.} In this case we have
two possible situations. The black vertex $\Root_C$ belongs to either one or
two spine clusters in the tree $T_1$.

\smallskip

Consider the case when the anchor $\Root_C$ belongs to two spine clusters in
the tree $T_1$, denoted by $C_1,C_2$. During some previous iteration of the
main loop of the spine treatment part (more specifically, this was the
iteration when the variable $C$ took the value $C_i$ for $i=1,2$) we performed
an operation $\mathbb{T}$ with
\begin{enumerate}[label=(\roman*)] 
\item 
\label{case:A}
$\mathbb{T}=\Bend_{\Root_{C_i},\Root_{C}}$ if $\Root_{C_i}<\Root_{C}$,
\item 
\label{case:B}
$\mathbb{T}=\Bend_{\Root_{C_i},\cdot}$ or $\mathbb{T}=\Jump_{\Root_{C_i},\cdot}$
if $\Root_{C_i}=\Root_{C}$.
\end{enumerate}

In the case \ref{case:A},  after this operation
$\mathbb{T}=\Bend_{\Root_{C_i},\Root_C}$ is applied, the cluster $C$ either has
the form depicted on \cref{subfig:rib2} (this happens if $C$ is leftist) or it
still has the form depicted on \cref{subfig:rib1} (otherwise). By checking
these two cases separately (the reasoning is fully analogous to the one
presented above) we see that during this external loop iteration for this
specific value of $C$ all the assumptions for the bend/jump operations are
fulfilled, as required.

In the case \ref{case:B},
after the operation $\mathbb{T}$ is applied the cluster $C$ 
still has the form depicted on \cref{subfig:rib1}.
Again, one can easily check that that during this loop iteration all the
assumptions for the bend/jump operations are fulfilled, as required.

\smallskip

In the case when the black vertex $\Root_C$ belongs to only one spine cluster in
the tree $T_1$ the reasoning is analogous to the one above and we skip the
details.
\end{proof}

As an extra bonus, the above proof shows that any non-spine cluster $C$
at a later stage of the algorithm (as long as it remains untouched) either has the form 
\subref{subfig:rib1}, \subref{subfig:rib2}, or \subref{subfig:rib3} on \cref{fig:rib}.

\section{The output of the bijection is a Stanley tree}
\label{sec:proof}

The following results are the first step towards the proof of \cref{thm:thm1}.

\begin{lem}
    \label{lem:what-is-j}     
Let $\Jump_{x,y}$ be one of the jump operations performed during the execution
of the algorithm $\CT$, and let $j$ be the corresponding black vertex that was
defined in the assumption \ref{AJ2} from \cref{sec:assumptions-j}; see
\cref{fig:jump,fig:scjump}. Then the label of the vertex $j$ is smaller than
the label of the vertex~$y$.
\end{lem}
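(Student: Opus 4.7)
The plan is to prove the stronger assertion $j\le x$ in the label ordering; since $x<y$ by assumption \ref{AJ1}, this immediately gives $j<y$. I would proceed by induction on the position of the current jump $\Jump_{x,y}$ in the chronological sequence of jumps performed by $\CT$.

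The easy case is when the black endpoint of the root of the cluster $E_1$ coincides with the anchor $x$. Then assumption \ref{AJ2} forces $j=x$, so $j\le x$ is trivial. I would argue that this case covers every spine-treatment jump: by \cref{lem:invariants-ok} combined with \cref{lem:pomoc1,lem:pomoc2} (bend never separates the root and anchor, and jump separates them only in $E_2$, never in $E_1=C$), the anchor $\alpha_C$ remains the black endpoint of the root of $C$ throughout $C$'s iteration. The same argument covers every rib-treatment jump whose cluster $C$ enters its iteration in form \subref{subfig:rib1} or \subref{subfig:rib2} of \cref{fig:rib}, since in those shapes the root endpoint equals $\alpha_C$ at the start and the same invariants preserve this during $C$'s iteration.

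The substantive case is a rib-treatment jump whose cluster $C:=E_1$ enters its iteration in form \subref{subfig:rib3} of \cref{fig:rib}. That form is produced only by an earlier jump $\Jump_{x',y'}$ in which $C$ played the role of the exceptional cluster $E_2$ of \cref{lem:pomoc2}; the lemma forces $\alpha_C=y'$ so that $y'=x$, and the earlier jump installs as the new root of $C$ the freshly created edge whose black endpoint is the vertex $j'$ of that earlier jump, so at the moment of the current jump $j=j'$. The inductive hypothesis applied to the earlier jump yields $j'\le x'$, and \ref{AJ1} applied to that earlier jump gives $x'<y'=x$; combining, $j=j'\le x'<x<y$, as required.

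The main technical obstacle will be confirming that the root of $C$ is actually preserved between the earlier jump that produced its form \subref{subfig:rib3} and the start of $C$'s own iteration --- i.e., that no intermediate bend or jump re-seats the black endpoint of the root of $C$ away from $j'$. I would handle this by a case-by-case inspection of how the bend and jump operations modify roots of clusters (a bend keeps the black endpoint of any moved root, while a jump changes the black endpoint of a root only in the cluster $E_2$ of that jump), combined with \cref{lm:lem2} and invariant \ref{invariant:4} to rule out any such interference with $C$ before its own turn in $\Sigma$.
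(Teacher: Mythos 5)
Your proposal is correct and takes essentially the same route as the paper: the paper also reduces the problem (via \cref{lem:invariants-ok}, \cref{lem:pomoc1,lem:pomoc2} and the three cluster forms of \cref{fig:rib}) to the single case where the root and anchor of $C$ were separated by the earlier jump $\Jump_{\Root_{C'},x}$ in the parent cluster, and then uses the identity $j(x,y)=j(\Root_{C'},x)$ recursively. Your chronological induction with the strengthened invariant $j\le x$ is just that recursion repackaged; the paper additionally unwinds it into an explicit decreasing-chain description of $j$ inside $T_1$, which it reuses later for the inverse map.
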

\begin{proof}
Our strategy is to explicitly pinpoint the vertex $j=j(x,y)$ 
in the original tree $T_1$.

\bigskip

\emph{We start with the case when the jump operation $\Jump_{x,y}$ was
    performed during the spine treatment} (this case holds true if and only if the
white vertex between $x$ and $y$ is a spine vertex). Let $C$ be the value of
the external loop variable at the time when $\Jump_{x,y}$ was performed. We
have shown in  \cref{lem:invariants-ok} that the anchor of the
cluster $C$ is the endpoint of the root of the cluster $C$ in each step of the
internal loop. Therefore $j=x$; see the case described on \cref{fig:scjump}.

\bigskip

\emph{Consider now the case when the jump operation $\Jump_{x,y}$ was performed
    during the rib treatment;} again let $C$ be the value of the external loop
variable at the time when $\Jump_{x,y}$ was performed; in particular $C$ is a
non-spine cluster.

If the cluster $C$ in the tree $T_1$ is non-leftist, from
\cref{lem:pomoc1,lem:pomoc2} we infer that no bend/jump operation separated the
anchor of the cluster $C$ with the root of this cluster. It follows therefore
that $j=x$; see the case described on \cref{fig:scjump}.

\smallskip

Consider now the case when the cluster $C$ in the tree $T_1$ is leftist. From
\cref{lem:pomoc1,lem:pomoc2} it follows that in order to check whether the
anchor of the cluster $C$ is separated from the root, we need to consider one
of the previous iterations of the main loop (in the spine treatment or the rib
treatment), namely the one for the cluster $C'$ that is the parent of~$C$. In
the case when $x<\Root_{C'}$, this previous iteration contained the bend
operation $\Bend_{\Root_{C'},x}$, which does not separate the anchor from the
root, so again $j=x$.

\medskip

The only challenging case is the one when $C$ is a leftist cluster and
$x>\Root_{C'}$ so that this previous iteration contains the jump operation
$\Jump_{\Root_{C'},x}$, which separated the anchor and the root in the cluster
$C$. Let us have a closer look on this previous jump operation
$\Jump_{\Root_{C'},x}$; it is depicted on \cref{fig:jump,fig:scjump} with the
blue cluster $E_2=C$ and the black cluster $E_1=C'$. Our desired value of
$j(x,y)$ is the black endpoint of the root of the cluster $C$; on
\cref{subfig:jumpB,subfig:scjumpB} this endpoint carries the label $j$. On the
other hand, the value of the variable $j=j(\Root_{C'},x)$ for the previous jump
operation $\Jump_{\Root_{C'},x}$ in the parent cluster is the black endpoint of
the root of the cluster $C'=E_1$; on \cref{subfig:jumpA,subfig:scjumpA} this
vertex carries the same label $j$. In this way we proved that our desired value
of
\[ j = j(x,y) = j(\alpha_{C'},x) \]
coincides with the value of $j$ for the jump operation $\Jump_{\Root_{C'},x}$
in the parent cluster.
It follows that the value of $j$
can be found by the following recursive algorithm.

\begin{figure}    
    \centering
    {\includegraphics[clip,trim=0cm 1.5cm 5cm 0cm,width=0.3\textwidth]{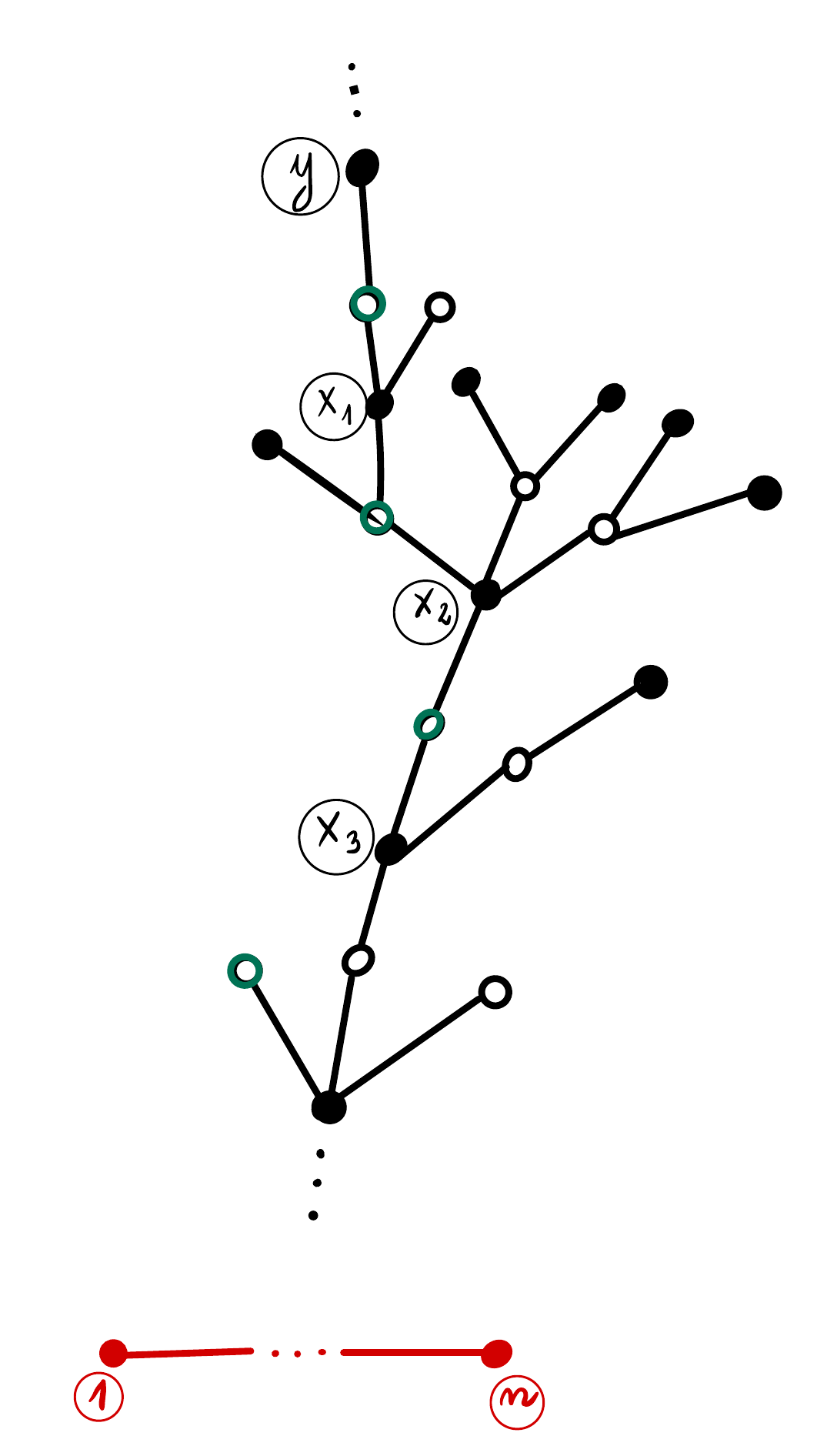}}
    \caption{The tree $T_1$ with black vertices $y,x_1,x_2,x_3$ 
    where a white leftist vertices are marked green.}
    \label{fig:lemat10}  
\end{figure}

\medskip

We traverse the tree $T_1$, starting from the black vertex $y$, %
always going towards the spine, as long as the following two conditions are
satisfied:
\begin{enumerate}[label=(C\arabic*)]
\item 
\label{entry:leftist}
we are allowed to enter a white vertex only if it is a leftist vertex;

\item 
\label{entry:smaller}
we are allowed to enter a black vertex only if its label is smaller than 
the label of the last visited black vertex so far.
\end{enumerate}
Additionally, if we just entered a black spine vertex, the algorithm
terminates. If we entered a white spine vertex $w$ it is not clear what it
means \emph{to move towards the spine}; we declare that we should move now
to the the root $\alpha_w$ of the cluster defined by $w$. We denote by
$z=(z_1,\dots,z_l)$ the labels of the visited black vertices. For the example
on \cref{fig:lemat10} if $x_3<x_2<x_1<y$ we have $z=(y,x_1,x_2,x_3)$. The label
$z_l$ of the last visited black vertex is the label of our searched black
vertex $j$. It is obvious now that $j< y$.
\end{proof}

\begin{propos}
    \label{lm:lem1}
    The output of the algorithm $\CT$ from \cref{sec:algorithm} 
    belongs to $\mathcal{T}_{b_1,\dots ,b_n}$.
\end{propos}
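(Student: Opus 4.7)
The plan is to verify separately that $T_2$ has the correct shape and that it satisfies the $f$-type condition of a Stanley tree of type $(b_1,\ldots,b_n)$.

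For the shape, I invoke \cref{lem:invariants-ok,prop:invariants2-ok}: every bend and jump is well-defined, preserves the bicolored plane tree structure, preserves the $n$ labeled black vertices, and decreases both the edge count and the white-vertex count by exactly one while preserving the edge label set $\{1,\ldots,k\}$. Since each cluster $c$ undergoes $|B_c|-1$ operations during its internal loop (zero for non-spine leaves, where $|B_c|=1$), the total number of operations is $\sum_c(|B_c|-1) = (k+n-1)-k = n-1$, using that $\sum_c|B_c|$ equals the number of edges of $T_1$. Consequently $T_2$ has $k$ edges and $k-n+1 = \sum_i b_i$ white vertices, matching \eqref{eq:sumofb}. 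Moreover each cluster $c$ terminates reduced to a single edge, namely its root, bearing the unique label $c$, so all $k$ edge labels of $T_2$ are distinct.

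For the type condition $|f^{-1}(i)|=b_i$, the approach is to track the statistic $g_i(T):=|\{w : f_T(w)=i\}|$ through the algorithm. A bend $\Bend_{x,y}$ absorbs $v_2$ into $v_1$; a short case analysis on whether $\max B(v_2)$ exceeds $\max B(v_1)$ yields that exactly one $g_j$ decreases by one, with $j = \min(\max B(v_1),\max B(v_2))$. A jump $\Jump_{x,y}$ destroys $v_2,v_3$ and creates an artificial vertex $w$ whose new $f$-value equals $\max(B(v_2)\cup B(v_3)\cup\{j\})$; here \cref{lem:what-is-j} yields $j<y$ while $y$ remains a neighbor of $w$ via the newly created edge, so $f(w)\geq y$ with the precise value dictated by the residual labels inherited from $v_2,v_3$.

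The hardest part will be aggregating these local changes into a clean global accounting that yields $g_i(T_2)=b_i$. The natural route is an iteration-by-iteration analysis over the external loop: for each cluster $C$, the contribution of its internal loop to the $\Delta g_i$'s can be read off from the four-phase decomposition \ref{item:etap1}--\ref{item:etap4} (for spine clusters, see \cref{fig:untouched}) or the analogous classification for non-spine clusters (see \cref{fig:rib}). Summing across all processed clusters should produce $g_i(T_2)-g_i(T_1) = b_i - |\{c : \max B_c = i\}|$. The most delicate accounting concerns leftist non-spine clusters whose anchor $\alpha_C$ equals the $y$-vertex of a prior jump in the parent cluster: by \cref{lem:pomoc2} this is the only mechanism by which the root and anchor of a cluster ever become separated, and hence the only way a final $f$-value can pick up a contribution from a $j$-vertex identified via \cref{lem:what-is-j} rather than from the original anchor set.
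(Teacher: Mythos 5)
Your first half (the count of bend/jump operations, hence $k$ distinct edge labels and $\sum_i b_i$ white vertices in $T_2$) is exactly the paper's argument and is fine. The gap is in the second half: the type condition $|f^{-1}(i)|=b_i$ is only planned, not proved. You propose to track the honest statistic $g_i(T)=|\{w: f_T(w)=i\}|$ operation by operation, but your local analysis is already incomplete: in a jump $\Jump_{x,y}$ the center $v_1$ of the cluster $E_1$ survives and \emph{loses} its black neighbor $y$, so $f(v_1)$ can drop; your description of the jump's effect (destroy $v_2,v_3$, create $w$) omits this, and it is precisely this loss that accounts for the required decrease associated with the vertex $y$. More importantly, you concede that ``the hardest part will be aggregating these local changes into a clean global accounting'' and only assert what the sum ``should produce''. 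That aggregation is the actual content of the statement; note also that $g_i(T_1)$ is \emph{not} $a_i$, so your proposed identity $g_i(T_2)-g_i(T_1)=b_i-|\{c:\max B_c=i\}|$ would require an independent computation of $g_i(T_1)$ and an exact bookkeeping of every increase and decrease of every $g_i$ across both loops, including interactions between clusters (separation of root and anchor, leftist children, etc.). None of this is carried out, so the proposal does not establish the proposition.

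For comparison, the paper deliberately avoids exact accounting. It introduces solid/dashed edges and the auxiliary statistic $\Numberb_i$ (the number of white vertices \emph{attracted} to the black vertex $i$), chosen so that (i) each $\Numberb_i$ is weakly decreasing along the algorithm (the only delicate point being that the artificial vertex $w$ created by a jump is not attracted to $j$, which is where \cref{lem:what-is-j} enters), (ii) $\Numberb_i$ starts at $a_i$ and provably drops by at least $2$ for $i\in\{2,\dots,n-1\}$ and by at least $1$ for $i\in\{1,n\}$ at identified moments, and (iii) in the output tree the attraction target of a white vertex coincides with its $f$-value. Summing the resulting inequalities $\Numberb_i^{\text{final}}\le a_i-2$ (resp.\ $a_i-1$) and comparing with the already-known number $k-(n-1)$ of white vertices of $T_2$ forces every inequality to be an equality, giving $\Numberb_i^{\text{final}}=b_i$ without ever computing the individual changes exactly. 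If you want to salvage your route, you would either need to complete the full exact accounting (including the $v_1$ effect in jumps and the cross-cluster interactions you flag at the end), or replace it by a monotone one-sided bound plus a saturation argument of this kind.
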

\begin{proof}

Recall that the integers $a_1,\dots,a_n$ are related to $b_1,\dots,b_n$ via
\eqref{eq:a-and-b}.

By construction, the initial tree $T_1$ has $n$ black vertices labeled with the
numbers $1,\dots,n$, and $k$ white vertices, and $\sum_i a_i=n+k-1$ edges
labeled with the numbers $1,\dots,k$ in such a way that each edge label is used
at least once, cf.~\eqref{eq:sum-of-a}. By counting the number of edges we
observe that
\[ \sum_{c=1}^k |B_c|=n+k-1.\]
Furthermore, if $c\in \{1,\dots,k\}\setminus (\mathfrak{C} \cup \Sigma)$, so
that the corresponding cluster is a leaf, then $|B_c|=1$. It follows that the
total number of \emph{jump}/\emph{bend} operations during the execution of our
algorithm $\CT$ is equal to
\begin{multline}\label{eq:operations}
\sum_{c\in \mathfrak{C}} (|B_c|-1)+\sum_{c\in \Sigma} (|B_c|-1)= \\
=\sum_{c\in \mathfrak{C}} (|B_c|-1)+\sum_{c\in \Sigma} (|B_c|-1)+
 \sum_{c\in \{1,\dots,k\}\setminus \{\mathfrak{C} \cup \Sigma\}} (|B_c|-1)=\\
=\left( \sum_{c=1}^k |B_c|\right) -k=n-1.
\end{multline}
After performing each bend/jump operation, the number of white vertices as well
as the number of edges decreases by $1$. Furthermore, the edge that disappears
has a repeated label, in this way the set of the edge labels remains unchanged
in each step.

It follows that the output $T_2$ is a bicolored plane tree with
$(n+k-1)-(n-1)=k$ edges labeled by numbers $1,\dots,k$ (so each label is used
exactly once) and with $k-(n-1)=\sum_{i=1}^{n}b_i$ white vertices,
cf.~\eqref{eq:sum-of-b}. To complete the proof we have to show that the tree
$T_2$ is a Stanley tree of type $(b_1,\dots,b_n)$.

\medskip

We will consider two types of edges: \emph{solid} and \emph{dashed}. At the
beginning in the plane tree $T_1$ each edge is declared to be \emph{solid}.
After performing the operation $\Bend_{x,y}$ the two edges that form the path
between $x$ and $y$ become \emph{dashed}; see \cref{subfig:exB}. After
performing the operation $\Jump_{x,y}$ the two edges that form the path between
$j$ and $y$ become \emph{dashed}; see \cref{subfig:jumpB,subfig:scjumpB}.

\label{page:belongs}

We say that a white white vertex $v$ \emph{is attracted} to a black vertex $i$
if one of the following two conditions holds true:
\begin{itemize}
\item the vertices $i$ and  $v$ are connected by a \emph{solid} edge,
\item the vertices $i$ and  $v$ are connected by a \emph{dashed} edge and 
\[ \max\big\{y:\emph{$y$ and  $v$ are connected by a dashed edge}\big\}=i. \]
\end{itemize} 
If $v$ is attracted to $i$, we will also say that \emph{the edge $e$ between
    $v$ and $i$ is attracted to $i$} or that \emph{$e$ is an attraction edge}.

For each $i\in\{1,\dots,n\}$ we define the variable $\Numberb_i$; in each step
of the algorithm this variable counts the number of white white vertices that
are attracted to the black vertex~$i$.

\medskip

By considering any bend/jump operation that is performed during the algorithm
$\CT$ it is easy to see that each of the variables
$\Numberb_1,\dots,\Numberb_n$ weakly decreases over time. The only difficulty
in the proof is to verify that during the jump operation the newly created
vertex~$w$ (with the notations from \cref{fig:jump,fig:scjump}) is not
attracted to the vertex $j$; this fact is a consequence of
\cref{lem:what-is-j}. Below we will show that for $i\in\{2,\dots,n-1\}$ the
variable~$\Numberb_i$ during the algorithm $\CT$ decreases at least by $2$,
while each of the variables $\Numberb_1$ and $\Numberb_n$ decreases at least by
$1$.

\begin{figure}
    \centering
    \subfloat[]{\label{subfig:exA1}
                {\includegraphics[clip,trim=1.5cm 0cm 1.9cm 3.5cm,angle=-90,width=0.4\textwidth]{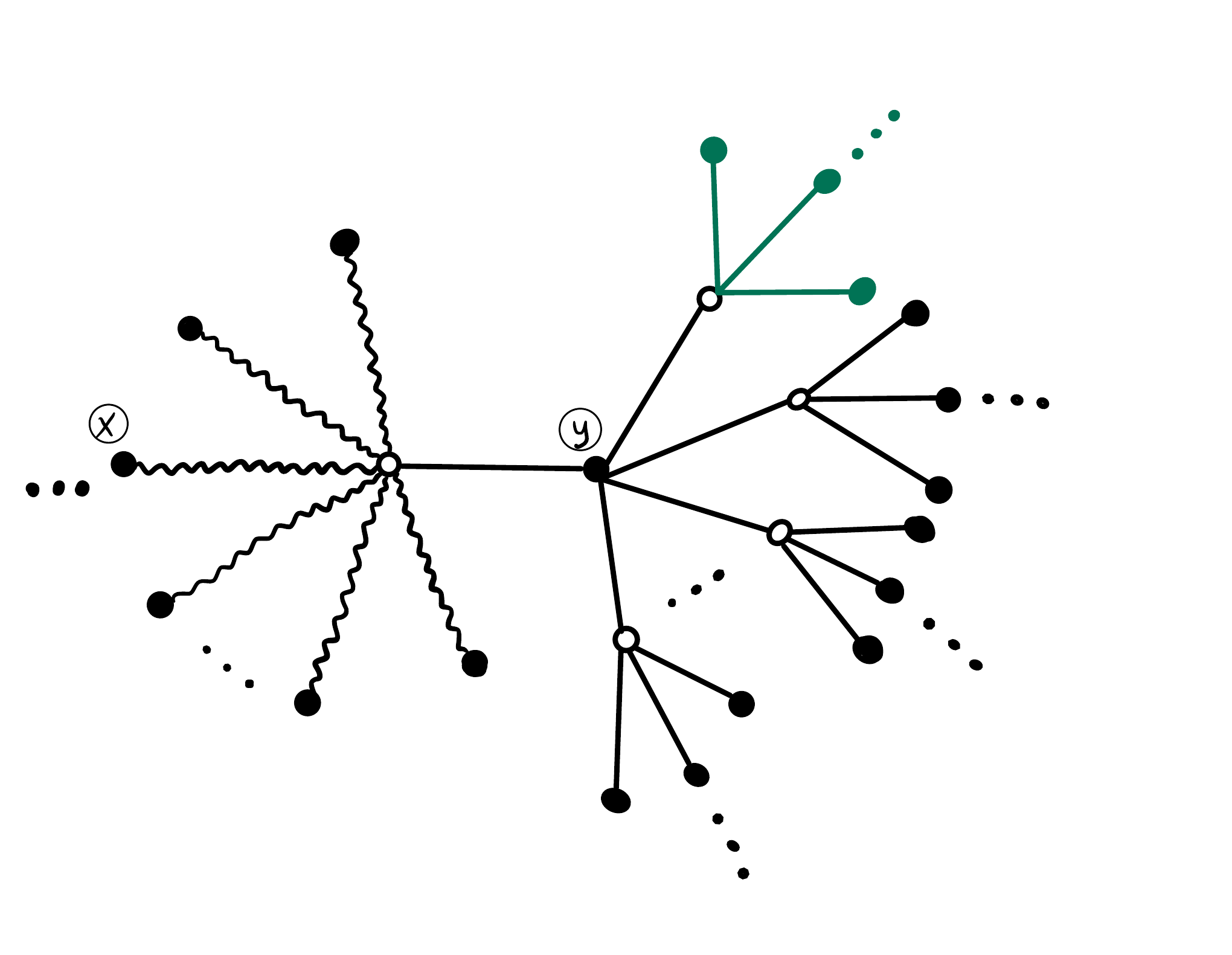}}  
    }
    \quad
    \subfloat[]{\label{subfig:exA2}
                {\includegraphics[clip,trim=3cm 0cm 0cm 3.5cm,angle=-90, width=0.4\textwidth]{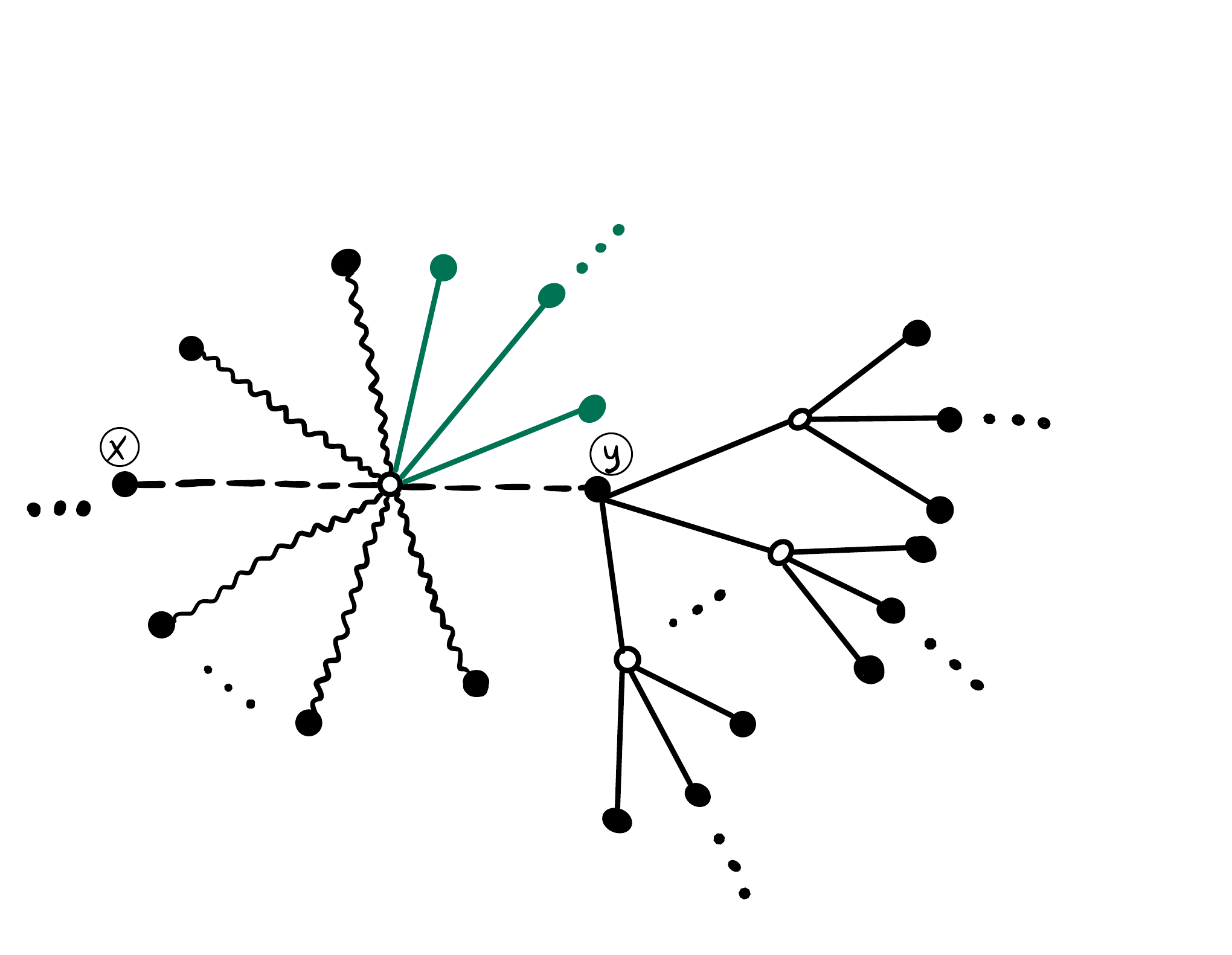}}
    }
    \caption{\protect\subref{subfig:exA1} The initial configuration of the tree. 
        Wavy edge means that this edge can be \emph{solid} or \emph{dashed}.
        \newline
        \protect\subref{subfig:exA2} The structure of the tree \protect\subref{subfig:exA1}
    after performing the operation $\Bend_{x,y}$ with $x>y$.}
    \label{fig:exA}

    \centering
    \subfloat[]{\label{subfig:jumpA1}
        {\includegraphics[angle=-90,width=0.4\textwidth]{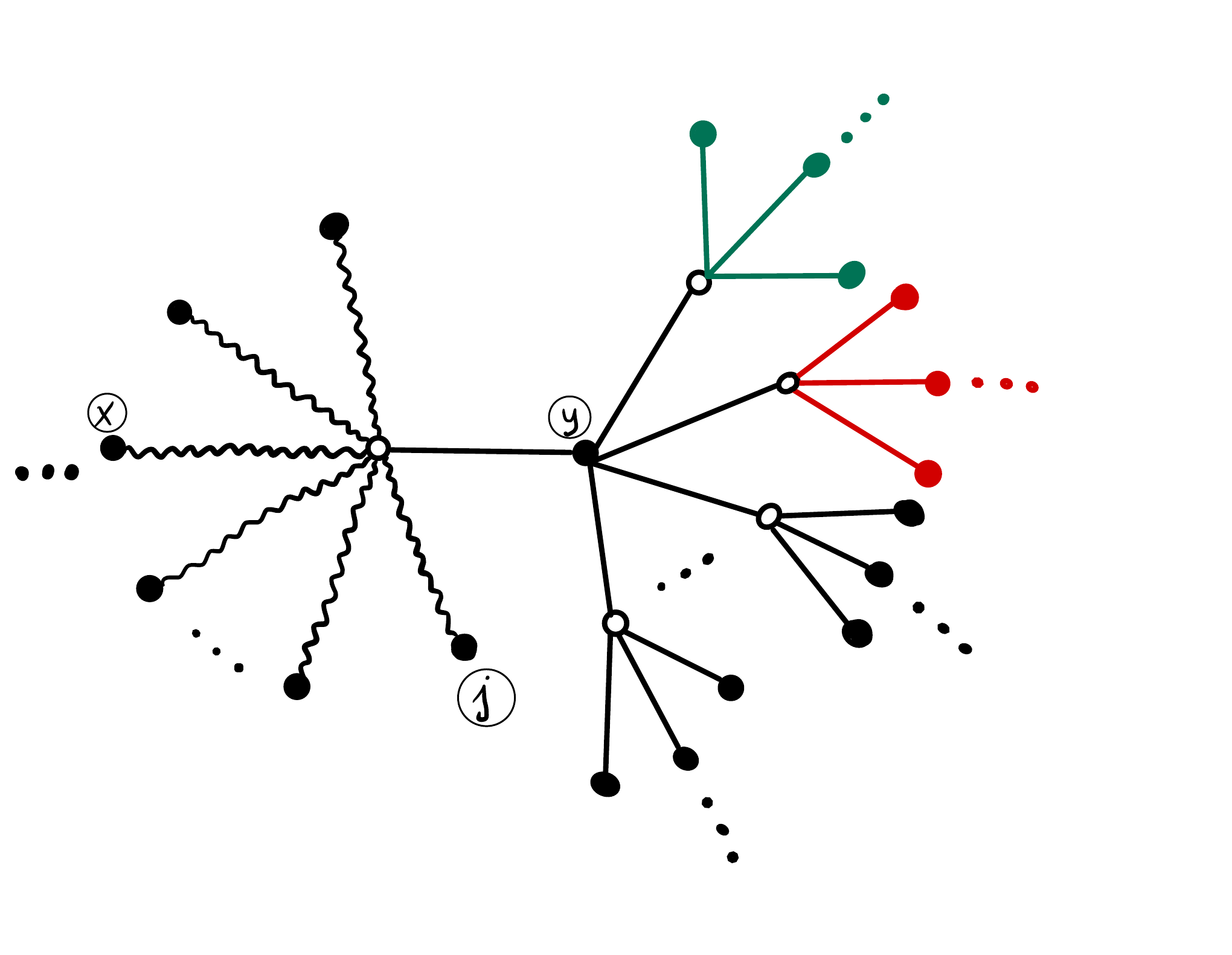}}  }
    \quad
    \subfloat[]{\label{subfig:jumpA2}
        {\includegraphics[angle=-90,width=0.4\textwidth]{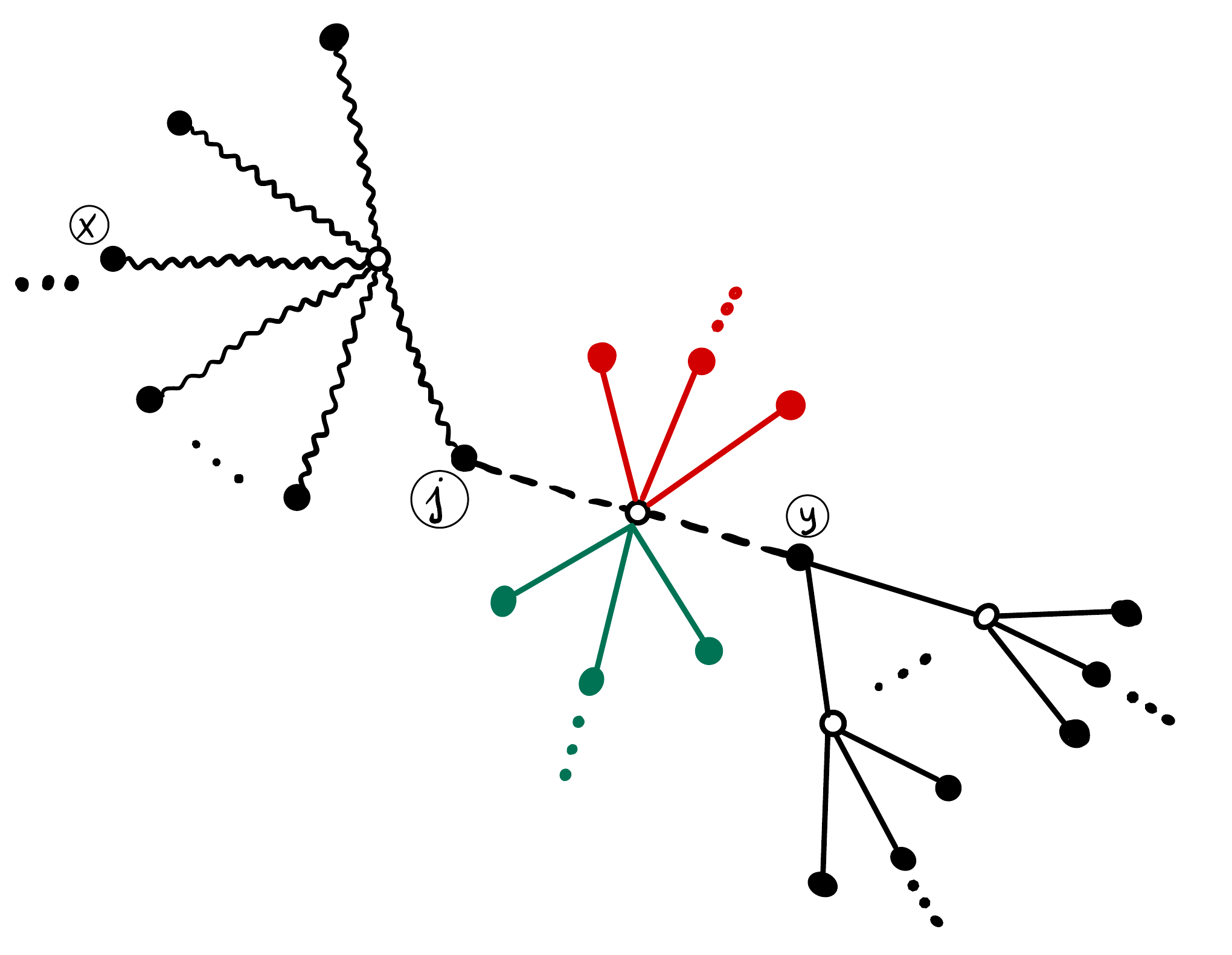}}}
    \caption{\protect\subref{subfig:jumpA1} The initial configuration of the tree.
         Wavy edge means that this edge can be \emph{solid} or \emph{dashed}. 
         We recall that $j< y$ by \cref{lem:what-is-j}.
         \newline
    \protect\subref{subfig:jumpA2} The structure of the tree \protect\subref{subfig:jumpA1}
    after performing the operation  $\Jump_{x,y}$ with $x<y$.}
    \label{fig:jumpA}
\end{figure}

\smallskip

\emph{Case 1: $i$ is a non-spine vertex.} For convenience we will denote the
non-spine black vertex~$i$ by the symbol $y$. There exists exactly one cluster $c\in
\{1,\dots,k\}$ in the direction of the spine in the plane tree $T_1$ such that
$y\in B_c$. In the iteration of the main loop (in the spine treatment or in the
rib treatment) corresponding to $C=c$ one of the following operations is applied:
either $\Bend_{\Root_c,y}$ (if $y<\Root_c$; see \cref{fig:exA}) or
$\Jump_{\Root_c,y}$ (otherwise; see \cref{fig:jumpA}). By \cref{lm:lem2} it
follows that at the time one of these operations is applied, the edges in the
branch $y$ are solid; on \cref{subfig:exA1,subfig:jumpA1} this branch is located
on the right-hand side and drawn with non-wavy lines. By looking on
\cref{fig:exA,fig:jumpA} we see that for either of these two operations 
$\Numberb_y^{\text{after}}=\Numberb_y^{\text{before}}-2$, as required.

\smallskip

\begin{figure}
    \centering
    \subfloat[]{\label{subfig:proofspineA}
                {\includegraphics[clip,trim=5.5cm 0cm 0cm 3.5cm,angle=-90,width=0.4\textwidth]{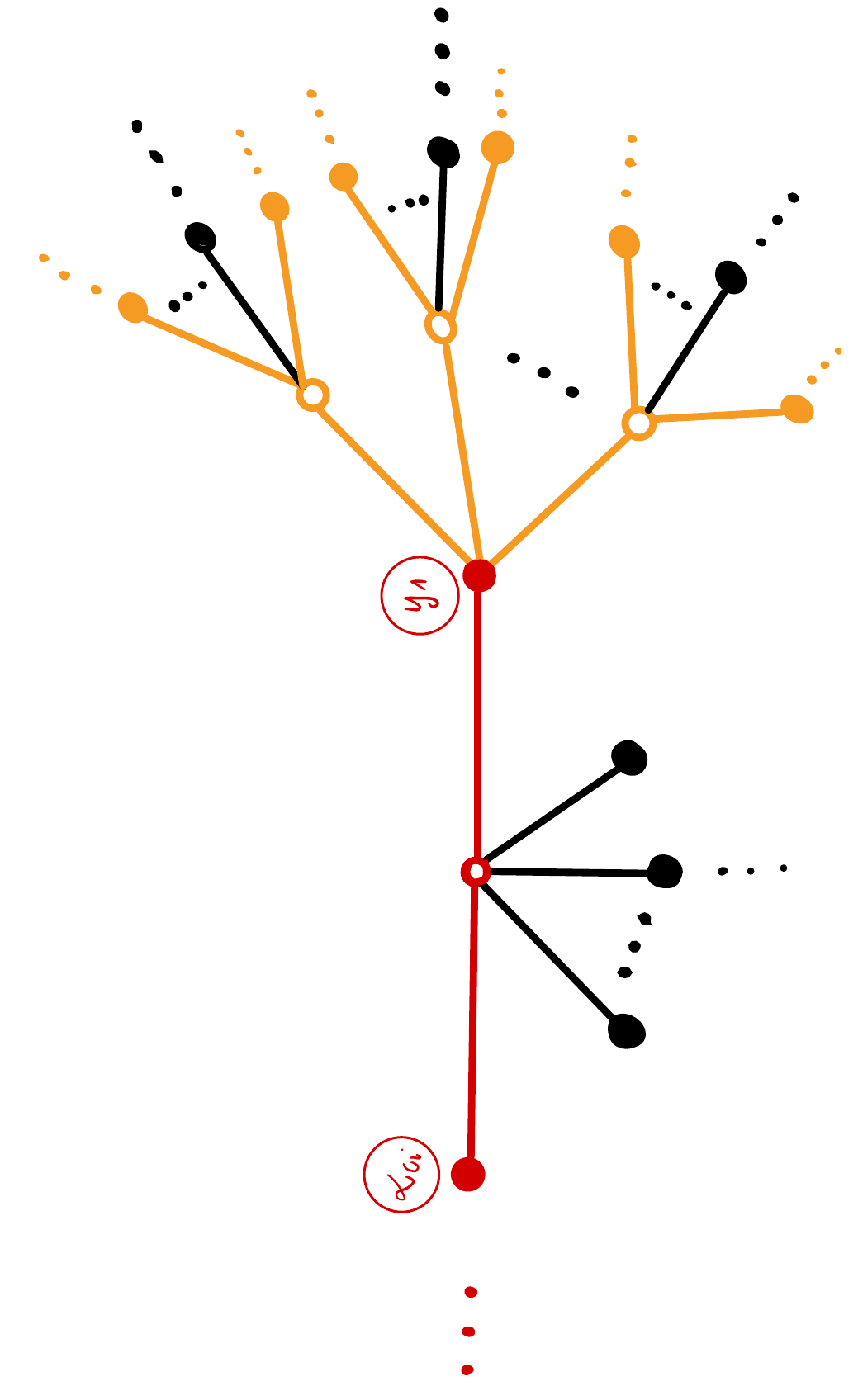}}  
    }
    \quad
    \subfloat[]{\label{subfig:proofspineB}
                {\includegraphics[clip,trim=6cm 0cm 0cm 3.5cm,angle=-90, width=0.4\textwidth]{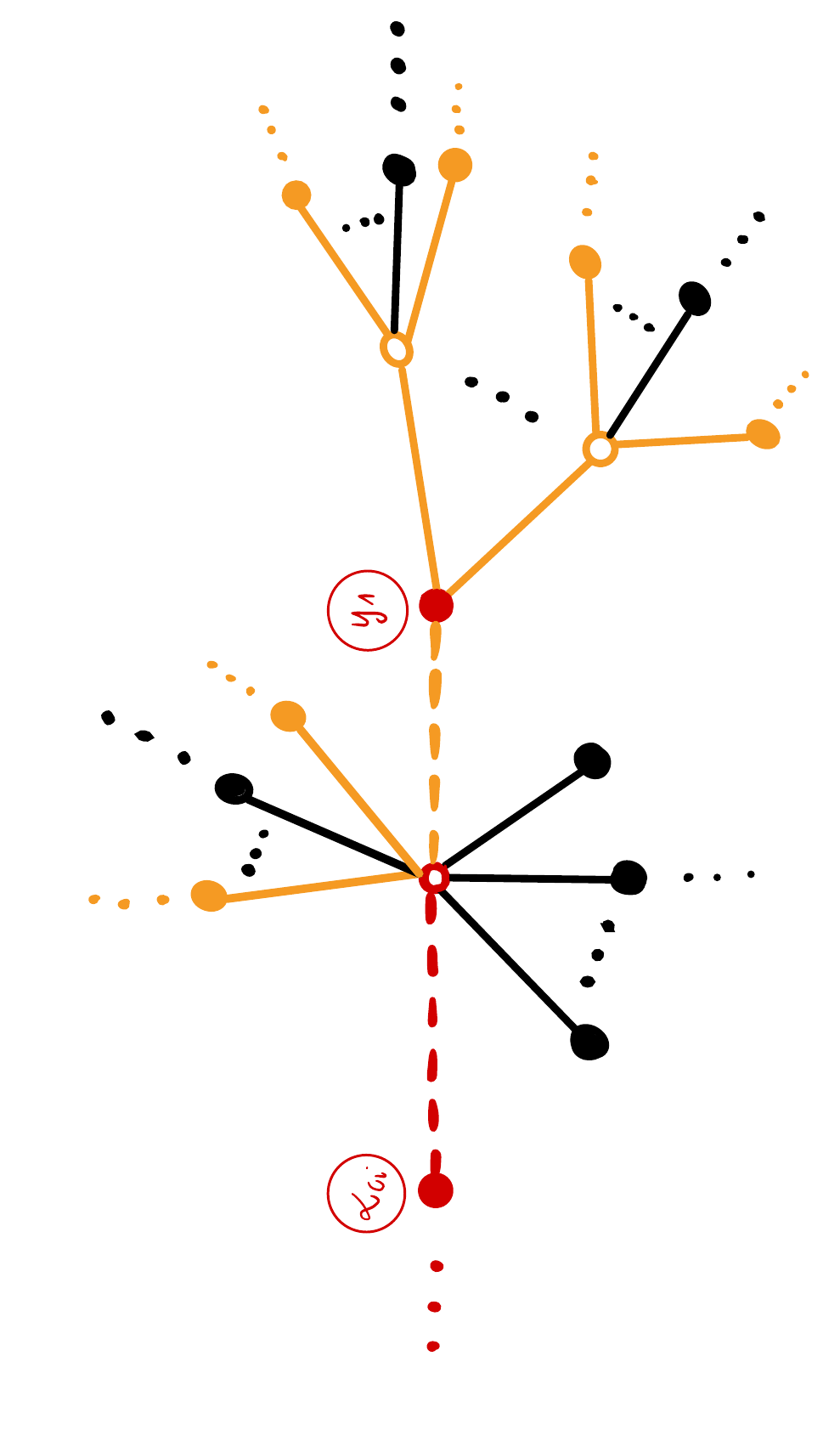}}
    }
    \caption{\protect\subref{subfig:proofspineA} The initial configuration of the tree $T_1$. 
    The black vertices $\Root_{c_i},y_1$ are spine and such that $\Root_{c_i}<y_1$.
    The orange edges mark one of the possible pathways of the spine.
    \newline
    \protect\subref{subfig:proofspineB} The structure of the tree \protect\subref{subfig:proofspineA}
    after performing the operation $\Bend_{\Root_{c_i},y_1}$.}
    \label{fig:proofspine}
\end{figure}

\emph{Case 2: $i$ is a spine vertex.} 
Note that in the spine treatment any two
bend operations commute. Furthermore, any jump operation performed in a given
spine cluster does not affect the remaining spine clusters. Therefore, the final
output does not depend on the order in which we choose the clusters from
$\mathfrak{C}$. For this reason we may start the analysis of the time evolution
of any spine cluster $C$ from the input tree $T_1$ where all edges are solid
(i.e., we may assume that the external loop is executed first for the cluster
$C$).

If $i\in \mathcal{B}\setminus \{1,n\}$ is not one of the endpoints of the spine, 
it belongs to exactly
two spine clusters $c_1,c_2$ in the plane tree $T_1$. Let us fix some
$j\in\{1,2\}$; with the notations of \cref{subfig:untouched1}, $i\in\{
\alpha_{c_j}, y_1 \}$ is one of the two spine vertices in the cluster $c_j$ and
$\Bend_{\alpha_{c_j},y_1}$ is one of the operations that are performed in the
iteration of the external loop in the spine treatment for $C=c_j$. From
\cref{fig:proofspine} it follows that for each spine vertex $v\in\{ \alpha_{c_j},
y_1 \}$ in this cluster $\Numberb_v^{\text{after}}=\Numberb_v^{\text{before}}-1$.
Since there are two choices for $j\in\{1,2\}$ it follows that the variable
$\Numberb_i$ decreases (at least) twice during the whole algorithm, as required.

An analogous argument shows that if $i\in\{1,n\}$ is one of the endpoints of the
spine then the variable $\Numberb_i$ decreases (at least) once during the whole
algorithm, as required.

\medskip

At the
beginning of the algorithm  $\Numberb_i^{\text{initial}}=a_i$ for each
$i\in\{1,\dots,n\}$.
The above discussion shows therefore that in the output tree $T_2$
\begin{equation}\label{eq:inequalities}
    \Numberb^{\text{final}}_1\le a_1-1,\quad \Numberb^{\text{final}}_2\le a_2-2,
    \quad \dots,\quad \Numberb^{\text{final}}_{n-1}\le a_{n-1}-2,
\quad \Numberb^{\text{final}}_n\le a_n-1.
\end{equation}
It follows  that
\[
\sum_{i=1}^{n} \Numberb^{\text{final}}_i \leq
\sum_{i=1}^{n} (a_i-1)-(n-2) =(k-1)-(n-2)=k-(n-1),
\]
where we used the relationship \eqref{eq:sum-of-a}. %
In each step of the algorithm each white vertex is attracted to at least one
black neighbor; it follows that the left-hand side of the above inequality is an
upper bound for the number of white vertices in the output tree $T_2$.

On the other hand, from the first part of the proof we know that the output tree
$T_2$ has exactly $k-(n-1)$ white vertices and the above inequality is saturated:
\begin{equation}\label{eq:magic-sum}
    \sum_{i=1}^{n} \Numberb^{\text{final}}_i=k-(n-1).
\end{equation}
It follows that each white vertex is attracted to exactly one black neighbor;
also the tree $T_2$ is a Stanley tree of type
$( \Numberb^{\text{final}}_1, \dots, \Numberb^{\text{final}}_n )$.
If at least one of the inequalities \eqref{eq:inequalities} 
was strict, this would contradict \eqref{eq:magic-sum}.
We proved in this way that $\Numberb^{\text{final}}_i=b_i$, which is given by
\eqref{eq:a-and-b},
which completes the proof.
\end{proof}

\begin{remark}
The algorithm has one random component in it, namely when we choose the
ordering for $\Sigma$ on Page~\pageref{text:definiton-of-Sigma}. In fact, this
choice of the ordering does not impact the outcome of the algorithm. Probably
the best way to prove it is to use the alternative description of the algorithm
which we provide in \cref{sec:alternative} which is fully deterministic and
does not refer to such choices.
\end{remark}

\section{Alternative description of the bijection}
\label{sec:alternative}

In the current section we will provide an alternative description of the
bijection $\CT$. This alternative viewpoint on $\CT$ will be essential for the
construction of the inverse map $\CT^{-1}$ in \cref{sec:inverse}.

Recall that the \emph{spine} in the output tree $T_2$ is the path connecting
the two black vertices with the labels $1$ and $n$. We orient the non-spine
edges of the tree $T_2$ in the direction of the spine. In this way we can view
the plane tree $T_2$ as a path (=the spine) to which there are attached plane
trees. With this perspective in mind, the output tree $T_2$ will turn out to be
uniquely determined by: the local information about the structure of white
non-spine vertices, the local information about the structure of black
non-spine vertices, and the information about the spine and its small
neighborhood. The aforementioned local information for a black non-spine vertex
$y$ is just the list of the labels of the edges connecting $y$ to its children.
For a white non-spine vertex the local information contains more data; see 
\cref{sec:direct-neighborhood-white}. This local information will be provided
separately for white non-spine vertices
(\crefrange{sec:organic-vs-artificial}{sec:children-artificial}) and for
certain black non-spine vertices (\cref{sec:children-of-black-neverspine}).
Finally, in \cref{sec:anatomy-spine,sec:spine-neighborhood} we will describe
the spine vertices of $T_2$ as well as its neighborhood.

\subsection{White vertices: organic versus artificial}
\label{sec:organic-vs-artificial}

Recall that in the description of the jump operation in \cref{sec:output-of-jump}
the newly created white vertex $w$ (see \cref{subfig:jumpB,subfig:scjumpB}) was
declared to be \emph{artificial}. Each white vertex that was \emph{not} created
in this way by some jump operation will be referred to as \emph{organic}.
More precisely, we declare that all white vertices in the initial tree $T_1$
are organic. 
The bend operation can be viewed as merging two white vertices
(with the notations of \cref{subfig:exA} these are the vertices $v_1$ and $v_2$);
it turns out that for each bend operation that is performed during the execution
of the algorithm the vertex $v_2$ is organic. We declare that the vertex created
by merging $v_1$ and $v_2$ is organic (respectively, artificial) if and only if
$v_1$ was organic (respectively, artificial).

\subsection{Direct neighborhood of a white non-spine vertex}
\label{sec:direct-neighborhood-white}

For any non-spine white vertex~$w$ of $T_2$ we will describe  \emph{the direct
    neighborhood of $w$} that is defined as:
\begin{itemize}
    \item the labels of the children of $w$, together with the labels of the
corresponding edges, and
    \item the label of the \emph{parental edge of $w$}, defined as the edge
that connects $w$ with its parent (however, we are not interested in the
label of this parent).
\end{itemize}
We will describe such direct neighborhoods separately for white non-spine
organic vertices and for white non-spine artificial vertices.

\subsection{Direct neighborhood of white non-spine organic vertices in the output tree $T_2$}
\label{sec:children-of-organic}

Our starting point is the tree $T_1$.  Our goal in this section is to find all
non-spine organic white vertices in $T_2$ and then for each such a vertex to
describe its direct neighborhood. For this reason we disregard now all
artificial white vertices as well as the information about the children of
the black vertices.

\subsubsection{The first pruning} 
\label{sec:first-pruning} 

With this quite narrow perspective in mind, each jump operation (with the
notations from \cref{fig:jump,fig:scjump}) can be seen as removal of the edge
$E_1$ between $y$ and $v_1$ as well as removal of the white vertices $v_2$ and
$v_3$. The remaining children of $y$ (i.e.,~the white vertices $v_4,v_5,\dots$)
remain intact, but we are not interested in keeping track of the parents of
white vertices (we are interested in keeping track of just the label of the
parental edge for a white vertex). Still keeping our narrow perspective in
mind, it follows that the application of all jump operations in the rib
treatment part of the algorithm (\cref{sec:rib}) is equivalent to the following
\emph{first pruning procedure}:
\begin{quotation}
\emph{for each non-spine white vertex $v_1$ and its child $y$ that carries a bigger label
    than its grandparent (i.e.,~$y>\Root_{v_1}$) 
we remove:}
\begin{itemize}
    \item \emph{the vertex $y$ together with the edge that points towards its
    parent,}
    
    \item \emph{the two leftmost children of $y$ (with the notations of
\cref{fig:jump,fig:scjump} they correspond to $v_2$ and $v_3$) together their
children, and the edges that connect them.}
\end{itemize}  
\end{quotation}
The side effect of the removal of the vertex $y$ is that each of the edges
$E_4,E_5,\dots$ (which formerly connected $y$ to its non-two-leftmost children)
has only one endpoint, namely the white one. 

\begin{remark}
    \label{rem:niceorder} Later on we will use the following observation: our
initial choice of the cyclic order of the edges around white vertices
(\cref{sec:tt0}) implies that after this pruning, for each  white non-spine
vertex $w$ the labels of its remaining children are arranged in an
increasing way from right to left; furthermore, each of these labels is
smaller than the label of the parent of $w$, i.e., $\Root_w$.
\end{remark}

\subsubsection{The second pruning}
\label{sec:second-pruning}
If an edge between a non-spine white vertex $w$ and its child $b$ still remains
after the above pruning procedure, this means that during the execution of the
algorithm $\CT$ a bend operation $\Bend_{\Root_w,b}$ was performed.
With the notations of \cref{subfig:exA}
this operation does not affect the white vertices $v_3,v_4,\dots$ (i.e., the
children of the vertex $y=b$ except for the leftmost child). With our narrow
perspective in mind, we are not interested in keeping track of the parent of
these vertices, which motivates the following \emph{second pruning procedure}: 
\begin{quotation} 
    \emph{we disconnect each black non-spine vertex $b$ from the edges connecting $b$
    with each of its children, with the exception of the leftmost child.}
\end{quotation}
Again, as an outcome of this disconnection there are some edges that have only
one endpoint, the white one.

Note that since in the initial tree
$T_0$ the degree of each black vertex was at least $2$, in the outcome of the
second pruning each black non-spine vertex has degree equal exactly to~$2$.
Our analysis of the impact of the bend operation $\Bend_{\Root_w,b}$ is not 
complete yet and will be continued in a moment.

\subsubsection{Folding}
\label{sec:folding}

After performing the above two pruning operations, the tree splits
into a number of connected components.
Let $T$ be one of these connected components that are disjoint with the spine;
it is an oriented tree, which has a white vertex $v$ as the
root. Additionally, this root $v$ has a special edge (\emph{the parental edge})
that is pointing in the former direction of the spine; this edge has only one
endpoint.

Consider some black vertex $b\in T$. Its degree is equal to~$2$ and the
aforementioned bend operation $\Bend_{\Root_w,b}$ can be seen as a
counterclockwise rotation of the edge that connects $b$ with its only child
towards the edge that connects $b$ with its parent so that these two edges are
merged into a single edge (see \cref{fig:bending2}). The label of the merged
edge is declared to be the label of the edge that formerly connected $b$ with
its child, with the notations of \cref{fig:bending2} this label is equal to
$e$. As a result of the edge merging, a pair of white vertices -- the child of
$b$, and the parent of $b$ -- is merged into a single white vertex and the
vertex $b$ becomes a leaf. After the above \emph{folding procedure} is applied
iteratively to all black vertices in $T$,  the connected component $T$ becomes
a single white vertex connected to a number of black vertices, and together
with the parental edge of the root. For an example see \cref{fig:bending}.

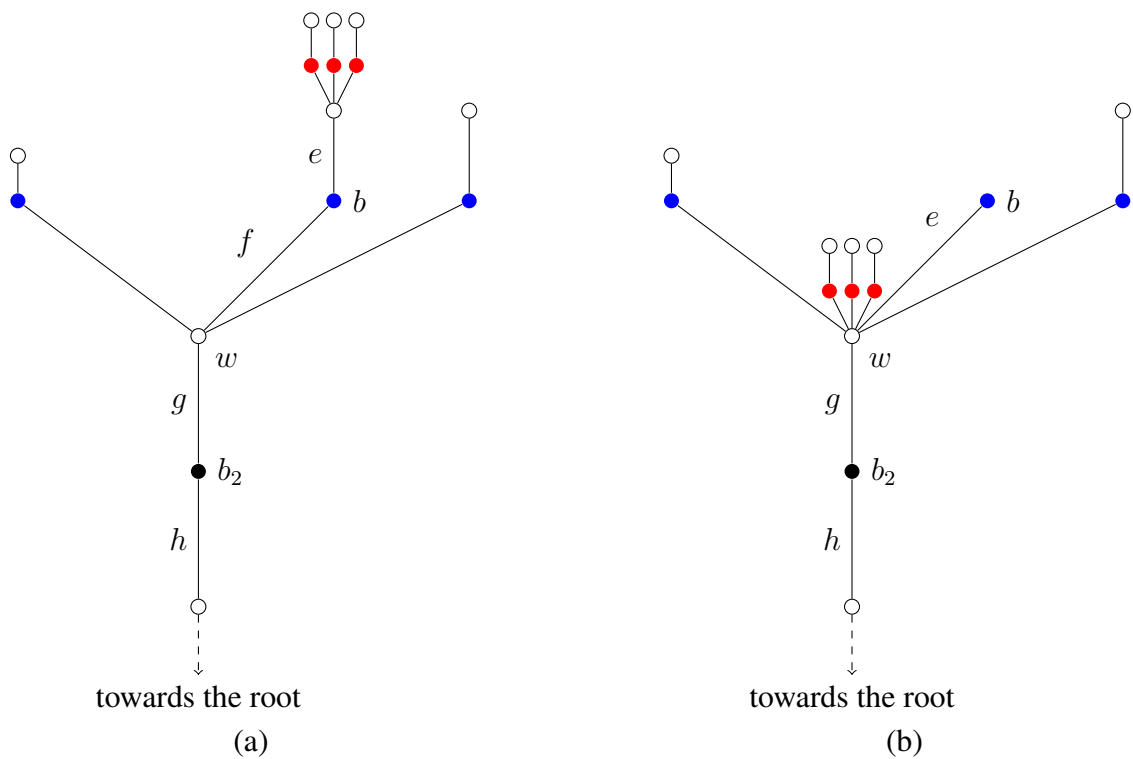
\begin{figure}
    \centering
    \subfloat[]{\begin{tikzpicture}[scale=0.6]
            \node (root) at (0,-4) {towards the root};
            \node[circle, draw=black, inner sep=2pt] (white1) at (0, -2) {}; 
            
            \node[circle, fill=black, inner sep=2pt,label={[label distance=0ex]0:$b_2$}] (black1) at (0, 1) {};
            
            \node[circle, draw=black, inner sep=2pt,label={[label distance=0ex]-45:$w$}] (white2) at (0, 4) {};

            \node[circle, fill=blue, inner sep=2pt]  (branchA1) at (-4, 7) {};
            \node[circle, draw=black, inner sep=2pt] (branchA2) at (-4, 8) {};          
            
            \node[circle, fill=blue, inner sep=2pt,label={[label distance=0ex]0:$b$}]  (branchB1) at (3, 7) {};
            \node[circle, draw=black, inner sep=2pt] (branchB2) at (3, 9) {};          
            
            \node[circle, fill=blue, inner sep=2pt]  (branchC1) at (6, 7) {};
            \node[circle, draw=black, inner sep=2pt] (branchC2) at (6, 9) {};

            \draw (branchB2) 
            ++(-0.5,1) node[circle, fill=red, inner sep=2pt] (red1) {}
            ++(0,1) node[circle, draw=black, inner sep=2pt] (white-away1) {};
            \draw (branchB2) -- (red1) -- (white-away1);
            
            \draw (branchB2) 
            ++(0,1) node[circle, fill=red, inner sep=2pt] (red1) {}
            ++(0,1) node[circle, draw=black, inner sep=2pt] (white-away1) {};
            \draw (branchB2) -- (red1) -- (white-away1);
            
            \draw (branchB2) 
            ++(0.5,1) node[circle, fill=red, inner sep=2pt] (red1) {}
            ++(0,1) node[circle, draw=black, inner sep=2pt] (white-away1) {};
            \draw (branchB2) -- (red1) -- (white-away1);
            
            \draw (white1) edge["$h$"] (black1)
            (black1) edge["$g$"] (white2)
            (white2) -- (branchA1) -- (branchA2);   
            
            \draw (white2) edge["$f$"] (branchB1)
            (branchB1) edge["$e$"]  (branchB2);
            \draw (white2) -- (branchC1) -- (branchC2);
            
            \draw[->, dashed] (white1) -- (root);
            
        \end{tikzpicture}
        \label{subfig:bending2in}
    }
    \hfill
    \subfloat[]{\begin{tikzpicture}[scale=0.6]
            \node (root) at (0,-4) {towards the root};
            \node[circle, draw=black, inner sep=2pt] (white1) at (0, -2) {}; 
            
            \node[circle, fill=black, inner sep=2pt,label={[label distance=0ex]0:$b_2$}] (black1) at (0, 1) {};
            
            \node[circle, draw=black, inner sep=2pt,label={[label distance=0ex]-45:$w$}] (white2) at (0, 4) {};

            \node[circle, fill=blue, inner sep=2pt]  (branchA1) at (-4, 7) {};
            \node[circle, draw=black, inner sep=2pt] (branchA2) at (-4, 8) {};          
            
            \node[circle, fill=blue, inner sep=2pt,label={[label distance=0ex]0:$b$}]  (branchB1) at (3, 7) {};

            \node[circle, fill=blue, inner sep=2pt]  (branchC1) at (6, 7) {};
            \node[circle, draw=black, inner sep=2pt] (branchC2) at (6, 9) {};

            \draw (white2) 
            ++(-0.5,1) node[circle, fill=red, inner sep=2pt] (red1) {}
            ++(0,1) node[circle, draw=black, inner sep=2pt] (white-away1) {};
            \draw (white2) -- (red1) -- (white-away1);
            
            \draw (white2) 
            ++(0,1) node[circle, fill=red, inner sep=2pt] (red1) {}
            ++(0,1) node[circle, draw=black, inner sep=2pt] (white-away1) {};
            \draw (white2) -- (red1) -- (white-away1);
            
            \draw (white2) 
            ++(0.5,1) node[circle, fill=red, inner sep=2pt] (red1) {}
            ++(0,1) node[circle, draw=black, inner sep=2pt] (white-away1) {};
            \draw (white2) -- (red1) -- (white-away1);
            
            \draw (white1) edge["$h$"] (black1)
            (black1) edge["$g$"] (white2)
            (white2) -- (branchA1) -- (branchA2);   
            
            \draw (white2) edge[near end,"$e$"] (branchB1);
            \draw (white2) -- (branchC1) -- (branchC2);  
            
            \draw[->, dashed] (white1) -- (root);
                     
        \end{tikzpicture}
        \label{subfig:bending2out}
    }

    \caption{\protect\subref{subfig:bending2in} The initial configuration of a
    part of the tree~$T$.  Additional labels on this figure will be explained
    and used in \cref{sec:labels}. 
    \newline 
    \protect\subref{subfig:bending2out} The outcome of folding at the
    vertex $b$.} \label{fig:bending2}
\end{figure}

\begin{figure}
    \centering \subfloat[]{\label{subfig:bendingin} {\includegraphics[clip,
        trim=0.5cm 5cm 1cm 2cm,width=0.4\textwidth]{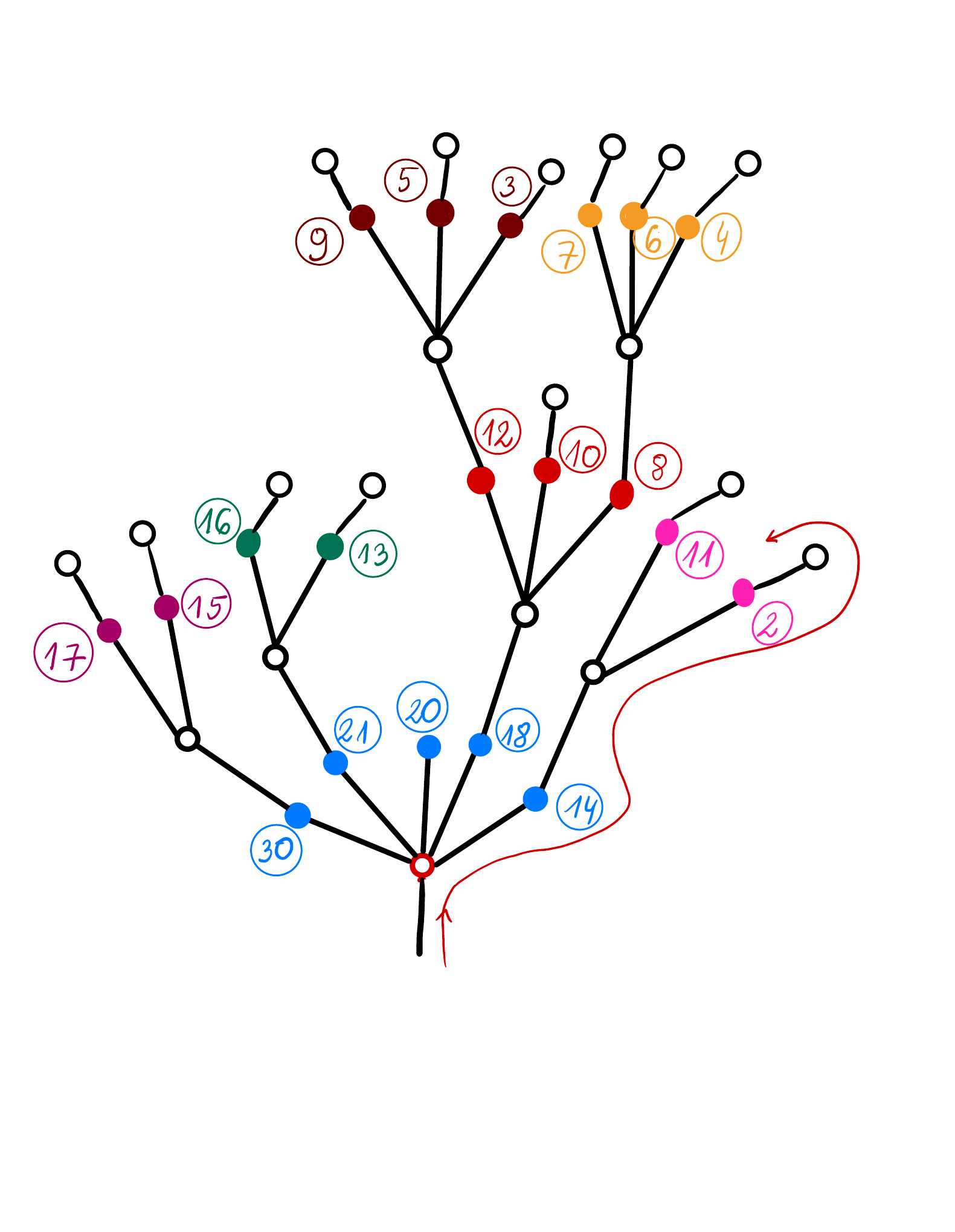}}} \quad
\subfloat[]{\label{subfig:bendingout} {\includegraphics[clip, trim=1.5cm 6cm
        2cm 4cm,width=0.4\textwidth]{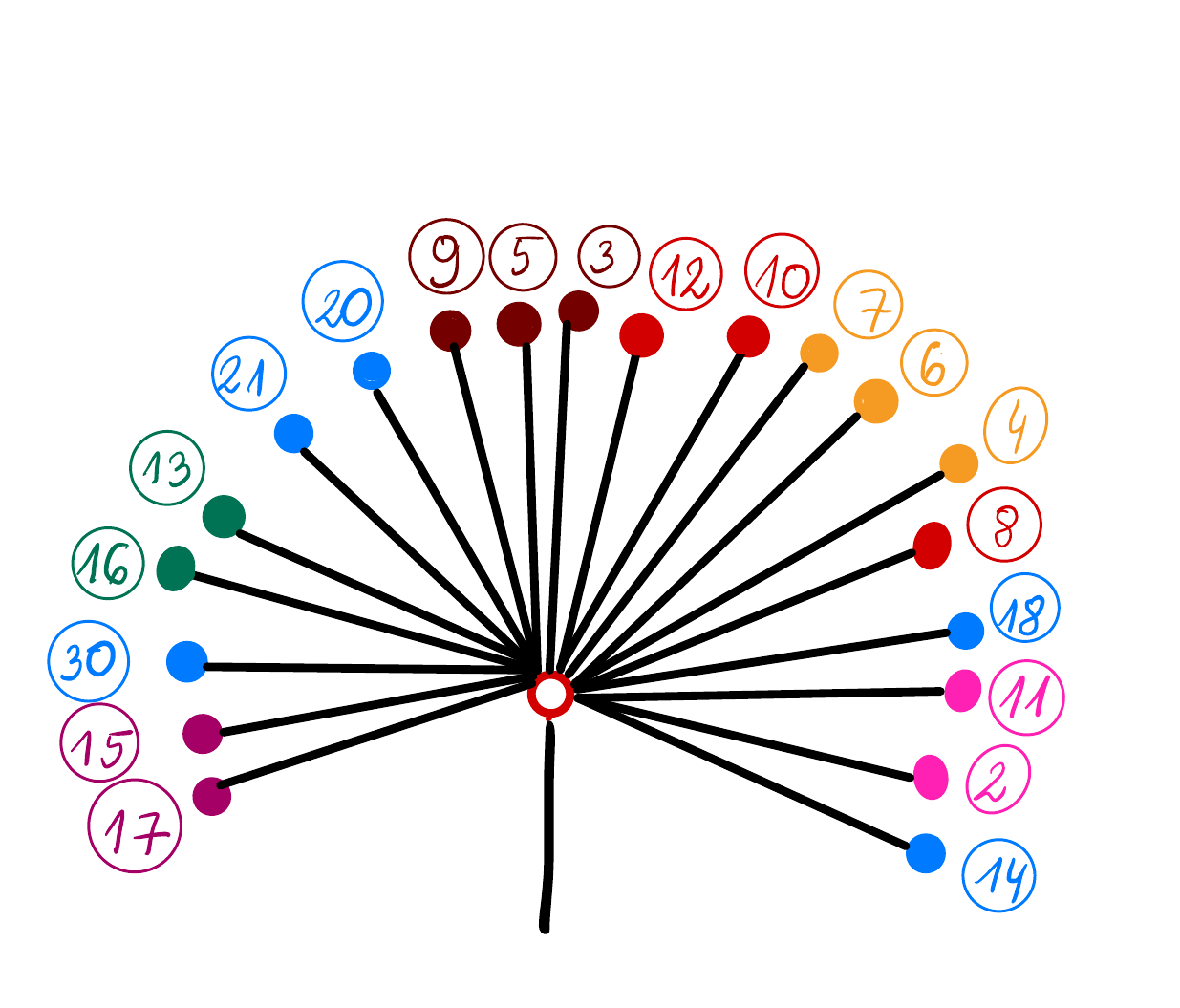}}}
    
\caption{\protect\subref{subfig:bendingin} Example of a connected component $T$
    that has a white root of the output of the two pruning procedures. For simplicity
    the edge labels are not shown. The black vertices having the same parent were
    drawn with the same color. The thin red line with the arrows indicates the
    beginning of the depth-first search traversing of the tree; the black vertices
    are visited in the order $14, 2, 11, 18, 8, 4, \dots$ (only the first visit in
    a given vertex is listed). \newline \protect\subref{subfig:bendingout} The
    output of the folding procedure applied to the tree $T$ from
    \protect\subref{subfig:bendingin}.} \label{fig:bending}
\end{figure}

The above folding procedure can be described alternatively as follows: we
traverse the tree $T$ by the depth-first search, starting at the root and
touching the edmges by the left hand. For example, the beginning of such a
depth-first search is indicated on \cref{subfig:bendingin} by the red line with
an arrow. We order the black vertices according to the time of the first visit
in a given vertex. This order coincides with the children of the root $v$
(listed in the counterclockwise order) in the output of folding applied to $T$.

\subsubsection{Conclusion}

To summarize the current subsection: there is a bijective correspondence
between (i) the white non-spine organic vertices in the output tree $T_2$, and
(ii) the connected non-spine components of the outcome of the two pruning
procedures. The children of such white organic non-spine vertices in $T_2$ can
be found by the above depth-first traversing of the connected component. This
observation will critical for the construction of the inverse map~$\CT^{-1}$.

\subsection{Direct neighborhood of white non-spine artificial vertices in the output tree $T_2$}
\label{sec:children-artificial}

Our goal in this section is to find all non-spine artificial white vertices in
$T_2$ and then for each such a vertex to find its direct neighborhood.

Each white artificial vertex $w$ is created during the execution of some jump
operation $\Jump_{x,y}$ so there is a bijective correspondence between the
artificial white vertices and the collection of certain pairs $(x,y)$, where
$x$ and $y$ are black vertices such that $x$ is a grandparent of $y$ and $x<y$,
cf.~\cref{fig:jump,fig:scjump}. Let us fix the values of $w$, $x$, and $y$; we
shall describe now the children of $w$.  For this reason we will disregard
tracking of the children of the black vertices as well as we will disregard these 
white vertices that will not be merged with $w$ by some bend operations.

After creation, the vertex $w$ may gain children only by the application
of some bend operations. It follows that we may restrict our attention only to
the descendants of the vertex~$w$ and disregard the remaining part of the tree.
This observation motivates the following procedure.

\begin{quotation} 
\emph{We apply the jump operation $\Jump_{x,y}$ to the initial tree $T_1$. The
    resulting tree has a unique artificial white vertex, which is denoted by $w$,
    as above. Then we keep only the vertex $w$, the edge adjacent to $w$ that is in
    the direction of the spine, and the descendants of $w$. We remove the remaining
    part of the tree.}
\end{quotation}

\medskip

A discussion analogous to the one from \cref{sec:children-of-organic} motivates
the following first pruning procedure: 
\begin{quotation}
    \emph{for each remaining white vertex $w'$ and each child $b$ of $w'$ that
    carries a bigger label than its grandparent (i.e.,~$b>\Root_{w'}$) we
    remove the vertex $b$ and all of its descendants, }
\end{quotation}
as well as the following second pruning procedure:
\begin{quotation} 
    \emph{for each remaining black vertex $b$ and each child $w'$ of $b$ that
    is not left-most, we remove the edge between $b$ and $w'$ as well as the
    vertex $w'$, all of its descendants, and all edges between them.}
\end{quotation}

The outcome is a tree, which has the white artificial vertex $w$ as the root.
Furthermore, each black vertex has degree $2$. Exactly as in
\cref{sec:folding}, we apply folding to this tree; as a result we obtain a tree, which
consists of a single white vertex $w$ as the root, which is connected to some
number of black vertices. Additionally, the root $w$ is connected to the parental
edge, which has only one endpoint. This tree is equal to the direct neighborhood
of the artificial white vertex $w$ in the output tree $T_2$.

\medskip

\begin{figure}
    \centering

\def\MarkLt{6pt}
\def\MarkSep{3pt}

\tikzset{
    MyMarks/.style={
        postaction={decorate,
            decoration={
                markings,
                mark=at position 0.5 with
                {
                    \begin{scope}[#1,thick,xslant=0.0]
                        \draw[-] (-0.5*\MarkSep,-\MarkLt) -- (-0.5*\MarkSep,\MarkLt) ;
                        \draw[-] (0.5*\MarkSep,-\MarkLt) -- (0.5*\MarkSep,\MarkLt) ;
                    \end{scope}
                }
            }
        }
    },
    BlueMarks/.default={blue},
}    

\begin{tikzpicture}[scale=1.5]
    
    \node[circle, draw=black, inner sep=2pt] (white2) at (0, 0) {$w$}; 
    
    \node[circle, fill=blue, inner sep=2pt,label={[label distance=0ex]180:$c$}] (c) at (180:2) {};
    \node[circle, fill=blue, inner sep=2pt] (junk1) at (-157.5:1) {};
    \node[circle, fill=blue, inner sep=2pt,label={[label distance=0ex]-135:$b$}] (b) at (-135:2) {};
    
    \node[circle, fill=blue, inner sep=2pt] (junk2) at (-105:1) {};
    \node[circle, fill=blue, inner sep=2pt] (junk3) at (-120:1) {};
    \node[circle, fill=blue, inner sep=2pt] (junk4) at (-60:1) {};
    \node[circle, fill=blue, inner sep=2pt] (junk5) at (-75:1) {};
    
    \node[circle, fill=blue, inner sep=2pt,label={[label distance=0ex]-90:$a$}] (a) at (-90:2) {};
    
    \node[circle, fill=black, inner sep=2pt,label={[label distance=0ex]-45:$y$}] (y) at (-45:2) {};
    
    \draw[blue] (white2) -- (a) (white2) -- (b) (white2) -- (c) (white2)--(junk1) (white2)--(junk2) (white2)--(junk3) (white2)--(junk4) (white2)--(junk5);
    
    \draw[MyMarks=red] (white2) -- (y);

    \node[circle, fill=red, inner sep=2pt,label={[label distance=0ex]0:$f$}] (c) at (0:2) {};
    \node[circle, fill=red, inner sep=2pt] (junk1) at (180-157.5:1) {};
    \node[circle, fill=red, inner sep=2pt,label={[label distance=0ex]180-135:$e$}] (b) at (180-135:2) {};
    
    \node[circle, fill=red, inner sep=2pt] (junk2) at (180-105:1) {};
    \node[circle, fill=red, inner sep=2pt] (junk3) at (180-120:1) {};
    \node[circle, fill=red, inner sep=2pt] (junk4) at (180-60:1) {};
    \node[circle, fill=red, inner sep=2pt] (junk5) at (180-75:1) {};
    
    \node[circle, fill=red, inner sep=2pt,label={[label distance=0ex]180-90:$d$}] (a) at (180-90:2) {};
    
    \node[circle, fill=black, inner sep=2pt,label={[label distance=0ex]45:$j$}] (y) at (180-45:2) {};
    \node (root) at (180-45:3) {spine};
    \draw[dashed,->] (y) -- (root);

    \draw[red] (white2) -- (a) (white2) -- (b) (white2) -- (c) (white2)--(junk1) (white2)--(junk2) (white2)--(junk3) (white2)--(junk4) (white2)--(junk5);
    
    \draw[MyMarks=blue] (white2) -- (y);

\end{tikzpicture}

    \caption{The neighbors of an artificial white vertex $w$ in the output
    tree~$T_2$; see \cref{subfig:jumpB,subfig:scjumpB}. The vertex $j$ is the
    parent of $w$. The blue vertices $a,b,c$, and the other adjacent blue
    vertices are the outcome of the pruning and folding of the blue tree on
    \cref{subfig:jumpA,subfig:scjumpA} that starts with the edge~$E_2$. The red
    vertices $d,e,f$, and the other adjacent red vertices are the outcome of
    the pruning and folding of the red tree on
    \cref{subfig:jumpA,subfig:scjumpA} that starts with the edge~$E_3$. By
    \cref{rem:niceorder} and \cref{lem:what-is-j} the vertex $y$ carries the
    biggest label among all neighbors of $w$. } \label{fig:artificial-anatomy}
\end{figure}

To summarize: the set of \emph{all} neighbors of an artificial vertex $w$
together with the information about their cyclic order looks like on
\cref{fig:artificial-anatomy}. A consequence \cref{rem:niceorder} and
\cref{lem:what-is-j} is that the vertex $y$ carries the biggest label among all
neighbors of $w$ in the output tree $T_2$.

\subsection{Children of black vertices: how to name the white vertices?}
\label{sec:naming-of-white}

For a given black non-spine vertex we intend to find the list of its children
in the output tree $T_2$. Each such child is a white non-spine vertex, and in
\cref{sec:children-of-organic,sec:children-artificial} we already found the
collection of such vertices. Now we need some naming convention that would
allow to match each child to some white vertex from
\cref{sec:children-of-organic,sec:children-artificial}. In this way a person
who has the access to the tree $T_1$ but has no access to the tree $T_2$ would
still be able to distinguish the white non-spine vertices of $T_2$. Our naming
convention will be defined separately for the white organic and for the white
artificial vertices.

In \cref{sec:children-of-organic} we showed that each white organic vertex $w$
in $T_2$ is an outcome of folding applied to a certain tree $T$;  in order to
name $w$ we will identify it with the root of the tree $T$ (which is just a
white vertex in $T_1$).

To each white artificial vertex $w$ of the tree $T_2$ we associate the largest
label of its neighbors; see \cref{fig:artificial-anatomy}. This largest label
$y$ is an element of the set $\{1,\dots,n\}$ and can be identified with a black
vertex in the tree $T_1$. The discussion from \cref{sec:children-artificial}
shows that this map is injective and allows us to pinpoint uniquely each
artificial white vertex. Furthermore, the label $y$ has the following natural
interpretation: the artificial vertex $w$ was created by some jump operation of
the form $\Jump_{\cdot,y}$.

With these conventions we will give the children of a non-spine black vertex in
$T_2$ in the form of a list of (black and white) vertices of the tree $T_1$.

\subsection{Children of black never-spine vertices}
\label{sec:children-of-black-neverspine}

Consider a black vertex $y$ that in the input tree $T_1$ was \emph{not} a
spine vertex. It turns out (we will prove it later in
\crefrange{sec:anatomy-spine}{sec:spine-neighborhood}) that the vertex $y$ in
the output tree $T_2$ is still not a spine vertex; in the following we will
describe the children of $y$ in $T_2$ as well as the labels on the edges
connecting $y$ with its children.

Recall that we oriented all non-spine edges in the tree $T_1$ towards the
spine. Now, we additionally orient some spine edges in $T_1$ in such a way that
for each white spine vertex~$w$ its parent is equal to the root $\Root_w$ of
the corresponding cluster. With this convention, we denote by $x$ the
grandparent of $y$. It follows that during the action of the algorithm $\CT$
exactly one of the following two operations was performed: either the bend
operation $\Bend_{x,y}$ (in the case when $x>y$) or the jump operation
$\Jump_{x,y}$ (in the case when $x<y$).

\subsection{Children of a black never-spine vertex, the case when $x>y$} 
\label{sec:never-spine-x>y}

\subsubsection{The tree $T$}

In this case the bend operation $\Bend_{x,y}$ was performed. As a result, the
edge connecting $y$ to its leftmost child in $T_1$ is \emph{not} among the
edges that connect $y$ to its children in $T_2$. The other  (i.e., the
non-leftmost) edges connecting $y$ to its children in $T_1$ will remain in
$T_2$; these edges will connect $y$ to its organic children. On the other hand,
the vertex $y$ may gain some new, artificial children as an outcome of some
jump operations. We will discuss them in the following.

\medskip

Recall that with the notations of \cref{fig:jump,fig:scjump} we denote by
$j=j(x',y')$ the black vertex that as a result of a jump operation
$\Jump_{x',y'}$ gains a new child, which is a white artificial vertex. This
vertex $j$ is given in terms of the input tree $T_1$ by the algorithm contained
in the proof of \cref{lem:what-is-j}. The problem that we currently encounter
is roughly the opposite: for a given black vertex $y$ of $T_1$ we should find
all its artificial children in the output tree $T_2$ or, equivalently, all jump
operations $\Jump_{x',y'}$ performed during the algorithm for which
$y=j(x',y')$. Our strategy is to construct a certain tree $T$ that is a
subtree of $T_1$. This plane tree will have the vertex $y$ as the root, and
it will have the following two properties:
\begin{itemize} 
    \item whenever $x',y'$ are black   %
vertices in $T_1$ such that $\Jump_{x',y'}$ is one of the operations performed
during the algorithm $\CT$ then
\[ j(x',y')= y  \quad \iff \quad  x',y' \in T, \]
    \item if $x',y'$ are black vertices in $T$ such that $x'$ is the grandparent of $y'$ then
    $\Jump_{x',y'}$ is one of the operations performed during the algorithm $\CT$
    (or, equivalently, $x'<y'$).
\end{itemize}
In this way there will be a bijection between the black non-root vertices of
the tree $T$ and the artificial children of the vertex $y$ in $T_2$, which maps
the black vertex $y'$ to the artificial white vertex created during the operation of
the form $\Jump_{\cdot,y'}$.

\subsubsection{The prunings}
\label{sec:black-prunings}

As the first step towards the construction of the tree $T$ we perform the
following \emph{zeroth pruning}:
\begin{quotation}
    \emph{in the tree $T_1$ we keep only the vertex $y$ and its offspring.}
\end{quotation}
Recall that the bend operation $\Bend_{x,y}$ was performed and the edge
connecting $y$ to its leftmost child in $T_1$ is \emph{not} among the edges
that connect $y$ to its children in $T_2$. This motivates the following
operation.
\begin{quotation}
    \emph{Additionally, we remove the label from the edge connecting $y$ to its
    leftmost child.}
\end{quotation}

\medskip

The condition \ref{entry:smaller} from the algorithm contained in the proof of
\cref{lem:what-is-j} motivates the following \emph{first pruning procedure}:
\begin{quotation}
    \emph{for each remaining white vertex $v_1$ and its child $y'$ that
    carries a \emph{smaller} label than its grandparent we remove the edge connecting
    $v_1$ with $y'$, as well as the vertex $y'$ and its offspring, as well as
    the edges connecting them.}
\end{quotation}
Compare this to \cref{sec:first-pruning} where we removed each vertex that is
\emph{bigger} than its grandparent. As a result, the following analogue of
\cref{rem:niceorder} holds true.
\begin{remark}
    \label{rem:niceorder2} 
Later on we will use the following observation: our
initial choice of the cyclic order of the edges around white vertices
(\cref{sec:tt0}) implies that after this pruning, for each  white non-spine
vertex $w$ the labels of its remaining children are arranged in a
\emph{decreasing way in the clockwise order}; furthermore, each of these labels
is \emph{bigger} than the label of the parent of $w$, i.e., $\Root_w$.
\end{remark}

\medskip

The other condition \ref{entry:leftist} from the algorithm contained in the proof of
\cref{lem:what-is-j} motivates the following \emph{second pruning procedure}:
\begin{quotation}
    \emph{for each remaining black vertex $y'$ such that $y'\neq y$ is not the
    root vertex we remove all non-leftist children of $y'$ as well as their
    offspring and all adjacent edges.}
\end{quotation}
We denote by $T$ the outcome of these prunings. This tree has the property that
the degree of each black vertex (with the exception of the root) is equal to
$2$.

\subsubsection{Jump operation as a replacement}
\label{sec:jump-as-replacement}

Let $w$ be a child of the root vertex $y$. 
We start with the case when $w$ is not the leftmost child of $y$. Let the black
vertices $V_1,\dots,V_d$ be the children of $w$, listed in the clockwise order.
After applying the jump operations that correspond to the cluster $w$ the whole
subtree of $T$ that has $w$ as the root is replaced by the following collection
of trees attached to the root (we list these trees in the clockwise order):
\begin{multline}
    \label{eq:list-of-verices}
 (\text{the single organic vertex $w$}), \\ 
\shoveleft{(\text{an artificial white vertex that carries the name $V_1$}} \\
\shoveright{\text{to which there is an attached tree $T_{V_1}$ that was formerly attached to $V_1$}),} \\ \dots, \\ 
\shoveleft{(\text{an artificial white vertex that carries the name $V_d$}} \\
{\text{to which there is an attached tree $T_{V_d}$ that was formerly attached to $V_d$});} 
\end{multline}
above we used the naming convention for white vertices from 
\cref{sec:naming-of-white}.

In the case when $w$ is the left-most child, the above discussion remains
valid, however one should remove the first entry of the list
\eqref{eq:list-of-verices}, i.e., the single organic vertex $w$.

\medskip

Each tree $T_{V_i}$ has a white vertex as the root, let the black vertices
$W_1,\dots,W_e$ be is children. Now, performing all jump operations in the
cluster defined by this root corresponds to replacing the following item from
the aforementioned list:
\begin{multline*}
   (\text{the artificial white vertex that carries the name $V_i$} \\
   {\text{to which there is attached tree the $T_{V_i}$})} 
\end{multline*}
by the following collection of trees attached to the root $y$:
\begin{multline*}
    (\text{the artificial white vertex that carries the name $V_i$}), \\ 
    \shoveleft{(\text{an artificial white vertex that carries the name $W_1$}} \\
    \shoveright{\text{to which there is an attached tree that was formerly attached to $W_1$}),} \\ \dots, \\ 
    \shoveleft{(\text{an artificial white vertex that carries the name $W_e$}} \\
    {\text{to which there is an attached tree that was formerly attached to $W_e$}).} 
\end{multline*}

\medskip

We iteratively apply these replacements until each child of the root $y$ is a
leaf. When the algorithm terminates, the ordered list of the  children of the
root $y$ in the output coincides with the ordered list of the children of the
vertex $y$ in the tree $T_2$ that we are looking for. With the convention from
\cref{sec:naming-of-white}, the children of $y$ (listed in the clockwise order)
can be identified with a list of (black and white) vertices of the input tree
$T_1$. In the following we will provide a more direct way of finding this list
based only on the tree $T$. In fact, the elements of the list that we construct
will provide more information; each entry of the list will be a pair of the
form
\begin{equation}
    \label{eq:pair}
     \big( (\text{name of a white vertex $w$}), \quad 
       (\text{the label of the edge connecting $v$ and $w$}) \big).
\end{equation}

\subsubsection{The depth-first search}
\label{sec:black-dfs}

The recursive algorithm from \cref{sec:jump-as-replacement} is just a
complicated way of performing the depth-first search on the tree $T$. We
traverse the tree $T$ by keeping it with the \emph{right} hand (note that
in the analogous depth-first search in \cref{sec:folding} we touched the tree
with the \emph{left} hand). When we enter a non-root vertex~$v$ for the
first time, we proceed as follows:
\begin{itemize}
    \item if $v$ is a white vertex that is a child of the root $y$ but not the
left-most child of the root (each such a vertex corresponds to an white
organic child of $y$ in $T_2$), we append the list with the pair
\[ \big( v,  \quad (\text{the label of the edge connecting $v$ and $y$})   \big), \]
[note that during the zeroth pruning we removed the label from the leftmost
edge of the root so that it is not listed here];

    \item if $v$ is a black vertex (each such a vertex corresponds to a white
artificial child of $y$ in $T_2$) we append the list with the pair
\[ \big( v, \quad (\text{the label of the edge connecting $v$ with its unique child in $T$}) \big). \]
\end{itemize}

This procedure can be regarded as an analogue of folding from
\cref{sec:folding}. This completes the description of the \emph{local
    information} about the children of a black never-spine vertex $y$ in the case
when $x>y$.

\subsection{Children of a black never-spine vertex, the case when $x<y$}
\label{sec:never-spine-x<y}

We plan to proceed in an analogous way as in \cref{sec:never-spine-x>y}. In
this case the jump operation $\Jump_{x,y}$ was performed. 
We  with the conventions of
\cref{fig:jump,fig:scjump})

As a result, the
leftmost child $v_2$ of $y$  and the whole subtree attached to $v_2$ will not
contribute to white artificial children of the vertex $y$. Indeed, if $x',y'$
are black vertices, and are among the offspring of $v_2$ then the algorithm
from the end of the proof of \cref{lem:what-is-j} for calculating $j(x',y')$
does not terminate in the vertex $y$ and hence $j(x',y')\neq y$.
For this reason our first step towards the construction of the tree $T$ is more radical:
\begin{quotation}
    \emph{in the tree $T_1$ we keep only the vertex $y$ and its offspring;
    additionally we remove the left-most child of $y$, its offspring and the
    adjacent edges.}
\end{quotation}

Another consequence of the aforementioned jump operation $\Jump_{x,y}$ is that
the second-leftmost child of the vertex $y$ in the tree $T_1$ (with the
notations of \cref{fig:jump,fig:scjump} it is the vertex $v_2$) will not give
rise to an organic child of $y$ in the output tree $T_2$. On the other hand,
the subtree rooted in this second-leftmost child $v_2$ may give rise to some
children of $y$ that are artificial white vertices. For this reason we may
proceed now with the first and the second pruning in the same way as in
\cref{sec:black-prunings}. Finally, the local information about the children of
the black vertex $y$ in the tree $T_2$ is provided by the depth-first search
algorithm from \cref{sec:black-dfs}.

\subsection{Detailed anatomy of the spine}
\label{sec:anatomy-spine}

\subsubsection{The backbone}

Recall that the spine in the input tree $T_1$ is the path connecting the two
black vertices with the labels $1$ and $n$. We define the \emph{backbone}
$\mathcal{B}$ as the graph that consists of the black spine vertices in $T_1$.
We declare that a pair of backbone vertices $v_1,v_2\in \mathcal{B}$ (with
$v_1\neq v_2$) is connected by an oriented edge $(v_1,v_2)$ pointing from $v_1$
towards $v_2$ if and only if (a) the distance between $v_1$ and $v_2$ in the
tree $T_1$ is equal to $2$ (in other words, $v_1$ and $v_2$ have a common white
neighbor), and (b) the labels of the vertices $v_1,v_2\in\{1,\dots,n\}$ fulfill
$v_1<v_2$. As a result, the backbone is a path graph with $1$ and $n$ as the
endpoints, together with the information about the orientation of the edges.

\subsubsection{Bald and hairy edges in the backbone}

Consider some oriented edge $(v_1,v_2)$ in the backbone, and assume that
$v_2\neq n$ does not carry the maximal label. In this case the vertex $v_2$ is
adjacent to another backbone vertex $v_3$ (with $v_3\neq v_1$). In the tree
$T_1$ the path between the vertices $v_1$ and $v_3$ is of the form
\[ (v_1,w_1,v_2,w_2,v_3) \]
for some white vertices $w_1$, $w_2$. Assume that in the tree $T_1$, going
clockwise around the black vertex $v_2$, the direct successor of the edge
connecting $v_2$ to $w_2$ is the edge connecting $v_2$ to $w_1$; see
\cref{subfig:bald-neighbors}. In this case we will say that the edge
$(v_1,v_2)$ in the backbone is \emph{bald}; see \cref{subfig:bald-backbone}.
The edges in the backbone that are not bald will be called \emph{hairy}.

\begin{figure}
    \subfloat[]{\label{subfig:bald-neighbors} \begin{tikzpicture}
            \node[circle, fill=black, inner sep=2pt,label={[label distance=0ex]225:$v_1$}] (v1) at (0, 0) {};        
            \node[circle, fill=black, inner sep=2pt,label={[label distance=0ex]225:$v_2$},label={90:{no edges here}}] (v2) at (4, 0) {};        
            \node[circle, fill=black, inner sep=2pt,label={[label distance=0ex]225:$v_3$}] (v3) at (8, 0) {};

            \node[circle, draw, inner sep=2pt,label={90:$w_1$}] (w1) at (2, 0) {};
            \node[circle, draw, inner sep=2pt,label={90:$w_2$}] (w2) at (6, 0) {};

            \draw[red,ultra thick] (-1.5,0) -- (v1);
            \draw[red,ultra thick] (v1) -- (w1);
            \draw[red,ultra thick] (w1) -- (v2);
            \draw[red,ultra thick] (v2) -- (w2);
            \draw[red,ultra thick] (w2) -- (v3);
            \draw[red,ultra thick] (v3) -- +(1.5,0);

            \draw (v1) -- +(0.5,1);
            \draw (v1) -- +(0,1);
            \draw (v1) -- +(-0.5,1);
            \draw (v1) -- +(0.5,-1);
            \draw (v1) -- +(0,-1);
            \draw (v1) -- +(-0.5,-1);
            
            \draw (w1) -- +(0.5,-1);
            \draw (w1) -- +(0,-1);
            \draw (w1) -- +(-0.5,-1);
            
            \draw (v2) -- +(0.5,-1);
            \draw (v2) -- +(0,-1);
            \draw (v2) -- +(-0.5,-1);
            
            \draw (v3) -- +(0.5,1);
            \draw (v3) -- +(0,1);
            \draw (v3) -- +(-0.5,1);
            \draw (v3) -- +(0.5,-1);
            \draw (v3) -- +(0,-1);
            \draw (v3) -- +(-0.5,-1);

    \end{tikzpicture}}

\bigskip

    \centering \subfloat[]{\label{subfig:bald-backbone}  \begin{tikzpicture}
            \node[circle, fill=black, inner sep=2pt,label={[label distance=0ex]270:$v_1$}] (v1) at (0, 0) {};        
            \node[circle, fill=black, inner sep=2pt,label={[label distance=0ex]270:$v_2$}] (v2) at (4, 0) {};        
            \node[circle, fill=black, inner sep=2pt,label={[label distance=0ex]270:$v_3$}] (v3) at (8, 0) {};

            \draw[red,ultra thick] (-1.5,0) -- (v1);
            \draw[->,red,ultra thick] (v1) -- (v2);
            \draw[red,ultra thick] (v2) -- (v3);
            \draw[red,ultra thick] (v3) -- +(1.5,0);
    \end{tikzpicture}}

    \caption{\protect\subref{subfig:bald-neighbors} A part of the initial tree
    $T_1$. The horizontal thick red edges are the spine, we assume that the
    labels of the black vertices fulfill $v_1<v_2$. We also assume that going
    clockwise around the black vertex $v_2$, the direct successor of the edge
    connecting $v_2$ to $w_2$ is the edge connecting $v_2$ to $w_1$. The neighbors of the white
    vertex $w_2$ are not shown.
    \newline
        \protect\subref{subfig:bald-backbone} The corresponding part of the backbone. The
oriented edge $(v_1,v_2)$ is bald.} 
\label{fig:bald}
\end{figure}
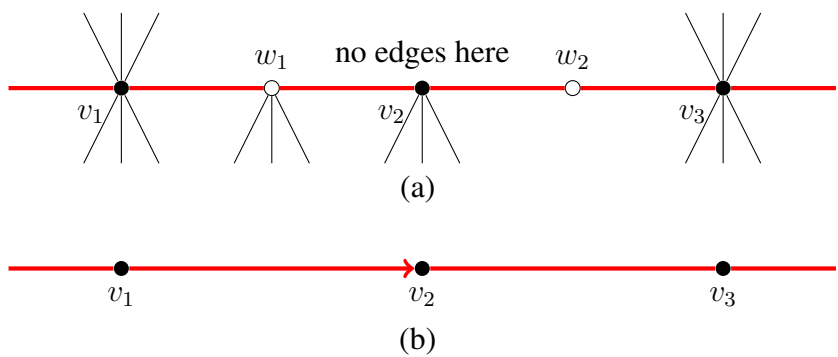

\subsubsection{Maximal backbone segments}
\label{sec:segments}

Let $A_r,A_{r-1},\dots,A_1,B_1,\dots,B_s\in\mathcal{B}$ (with $r,s\geq 1$) 
be backbone vertices such that:
\begin{itemize}
 \item $(A_{r},A_{r-1}), \ (A_{r-1},A_{r-2}),\ \dots, (A_2,A_1)$ are bald
edges in the backbone, \item $(A_1,B_1)$ is a hairy edge in the backbone,
\item  $(B_{s},B_{s-1}), \ (B_{s-1},B_{s-2}),\ \dots, (B_2,B_1)$ are bald
edges in the backbone,
\end{itemize}
see \cref{subfig:segment}. We will say that the above collection of edges,
which form a path connecting $A_r$ and $B_s$, constitutes a \emph{backbone
    segment}. In other words, a backbone segment consists of a single hairy edge
$(A_1,B_1)$ surrounded by two (possibly empty) oriented paths with the
endpoints $A_1$ and $B_1$ that consist of only bald edges.

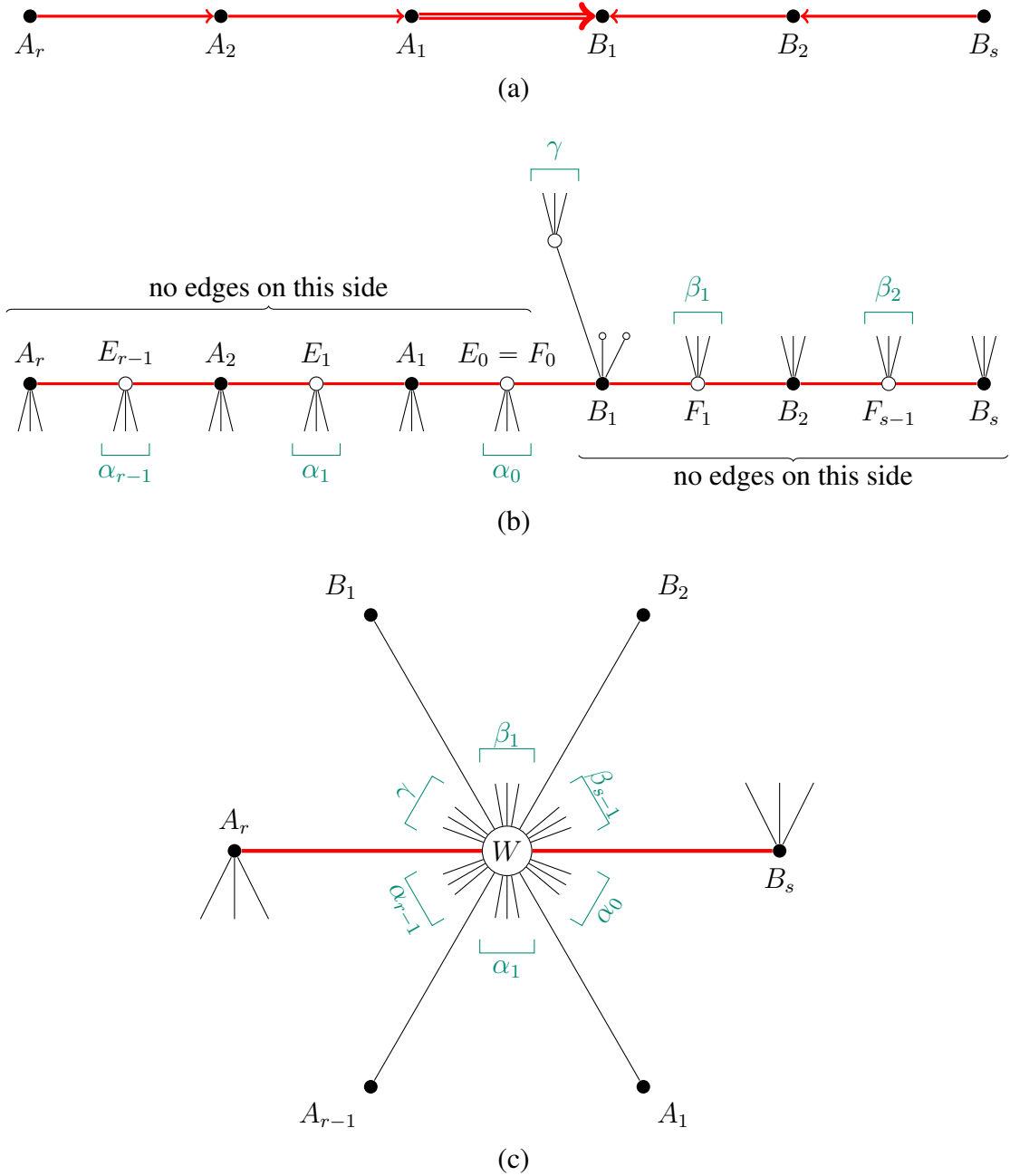
\begin{figure}
    \subfloat[]{\begin{tikzpicture}[scale=0.7]
            \node[circle, fill=black, inner sep=2pt,label={[label distance=0ex]270:$A_r$}] (a3) at (0, 0) {};        
            \node[circle, fill=black, inner sep=2pt,label={[label distance=0ex]270:$A_2$}] (a2) at (4, 0) {};        
            \node[circle, fill=black, inner sep=2pt,label={[label distance=0ex]270:$A_1$}] (a1) at (8, 0) {};
            
            \node[circle, fill=black, inner sep=2pt,label={[label distance=0ex]270:$B_1$}] (b1) at (12, 0) {};
            \node[circle, fill=black, inner sep=2pt,label={[label distance=0ex]270:$B_2$}] (b2) at (16, 0) {};
            \node[circle, fill=black, inner sep=2pt,label={[label distance=0ex]270:$B_s$}] (b3) at (20, 0) {};

            \draw[->,red,very thick] (a3) -- (a2);
            \draw[->,red,very thick] (a2) -- (a1);
            \draw[double,->,red,very thick] (a1) -- (b1);

            \draw[->,red,very thick] (b3) -- (b2);
            \draw[->,red,very thick] (b2) -- (b1);   
        \end{tikzpicture}
    \label{subfig:segment}
}
    
    \subfloat[]{\begin{tikzpicture}[scale=0.7]
            \node[circle, fill=black, inner sep=2pt,label={[label distance=0ex]90:$A_r$}] (a3) at (0, 0) {};        
            \node[circle, fill=black, inner sep=2pt,label={[label distance=0ex]90:$A_2$}] (a2) at (4, 0) {};        
            \node[circle, fill=black, inner sep=2pt,label={[label distance=0ex]90:$A_1$}] (a1) at (8, 0) {};
            
            \node[circle, fill=black, inner sep=2pt,label={[label distance=0ex]-90:$B_1$}] (b1) at (12, 0) {};
            \node[circle, fill=black, inner sep=2pt,label={[label distance=0ex]-90:$B_2$}] (b2) at (16, 0) {};
            \node[circle, fill=black, inner sep=2pt,label={[label distance=0ex]-90:$B_s$}] (b3) at (20, 0) {};

            \node[circle, draw=black, inner sep=2pt,label={[label distance=0ex]90:$E_{r-1}$}] (e2) at (2, 0) {};        
            \node[circle, draw=black, inner sep=2pt,label={[label distance=0ex]90:$E_1$}] (e1) at (6, 0) {};        
            \node[circle, draw=black, inner sep=2pt,label={[label distance=0ex]90:$E_0=F_0$}] (e0) at (10, 0) {};        
            
            \node[circle, draw=black, inner sep=2pt,label={[label distance=0ex]-90:$F_1$}] (f1) at (14, 0) {};        
            \node[circle, draw=black, inner sep=2pt,label={[label distance=0ex]-90:$F_{s-1}$}] (f2) at (18, 0) {};

            \draw (a3) -- +(-0.25,-1); 
            \draw (a3) -- +(0,-1); 
            \draw (a3) -- +(0.25,-1); 
            
            \draw (a2) -- +(-0.25,-1); 
            \draw (a2) -- +(0,-1); 
            \draw (a2) -- +(0.25,-1); 
            
            \draw (a1) -- +(-0.25,-1); 
            \draw (a1) -- +(0,-1); 
            \draw (a1) -- +(0.25,-1);

            \draw (e2) -- +(-0.25,-1); 
            \draw (e2) -- +(0,-1); 
            \draw (e2) -- +(0.25,-1); 
            
            \draw (e1) -- +(-0.25,-1); 
            \draw (e1) -- +(0,-1); 
            \draw (e1) -- +(0.25,-1); 
            
            \draw (e0) -- +(-0.25,-1); 
            \draw (e0) -- +(0,-1); 
            \draw (e0) -- +(0.25,-1);

            \node[circle, draw=black, inner sep=1pt] (X1) at (12,1) {};
            \node[circle, draw=black, inner sep=1pt] (X2) at (12.5,1) {};
            \node[circle, draw=black, inner sep=2pt] (X0) at (11,3) {};
            
            \draw (b1) -- (X0);
            \draw (b1) -- (X1);
            \draw (b1) -- (X2);
            
            \draw (X0) -- +(-0.25,1);
            \draw (X0) -- +(0,1);
            \draw (X0) -- +(0.25,1);
            
            \draw[PineGreen] (X0) +(-0.5,1.25) -- +(-0.5, 1.5) --  +(0.5, 1.5)  -- +(0.5, 1.25);
            \draw (X0) +(0,1.5) node[above] {\textcolor{PineGreen}{$\gamma$}};

            \draw (f1) -- +(-0.25,1); 
            \draw (f1) -- +(0,1); 
            \draw (f1) -- +(0.25,1); 
            
            \draw (f2) -- +(-0.25,1); 
            \draw (f2) -- +(0,1); 
            \draw (f2) -- +(0.25,1); 
            
            \draw (b2) -- +(-0.25,1); 
            \draw (b2) -- +(0,1); 
            \draw (b2) -- +(0.25,1); 
            
            \draw (b3) -- +(-0.25,1); 
            \draw (b3) -- +(0,1); 
            \draw (b3) -- +(0.25,1);

            \draw[red, very thick] (a3) -- (e2) -- (a2) -- (e1) -- (a1) -- (e0) -- (b1) -- (f1) -- (b2) -- (f2) -- (b3);
            
            \draw[PineGreen] (e0) +(-0.5,-1.25) -- +(-0.5, -1.5) --  +(0.5, -1.5)  -- +(0.5, -1.25);
            \draw (e0) +(0,-1.5) node[below] {\textcolor{PineGreen}{$\alpha_0$}};
            
            \draw[PineGreen] (e1) +(-0.5,-1.25) -- +(-0.5, -1.5) --  +(0.5, -1.5)  -- +(0.5, -1.25);
            \draw (e1) +(0,-1.5) node[below] {\textcolor{PineGreen}{$\alpha_1$}};
            
            \draw[PineGreen] (e2) +(-0.5,-1.25) -- +(-0.5, -1.5) --  +(0.5, -1.5)  -- +(0.5, -1.25);
            \draw (e2) +(0,-1.5) node[below] {\textcolor{PineGreen}{$\alpha_{r-1}$}};

            \draw[PineGreen] (f1) +(-0.5,1.25) -- +(-0.5, 1.5) --  +(0.5, 1.5)  -- +(0.5, 1.25);
            \draw (f1) +(0,1.5) node[above] {\textcolor{PineGreen}{$\beta_1$}};
            
            \draw[PineGreen] (f2) +(-0.5,1.25) -- +(-0.5, 1.5) --  +(0.5, 1.5)  -- +(0.5, 1.25);
            \draw (f2) +(0,1.5) node[above] {\textcolor{PineGreen}{$\beta_2$}};

            \draw [decorate, decoration = {brace}] (-0.5,1.5) --  node[above]{no edges on this side} (10.5,1.5);
            \draw [decorate, decoration = {brace}] (20.5,-1.5) --  node[below]{no edges on this side} (11.5,-1.5);

        \end{tikzpicture}
    \label{subfig:segment-neighbor}
}

\subfloat[]{\begin{tikzpicture}
        \node[circle, draw=black, inner sep=2pt] (w) at (0, 0) {$W$};

        \node[circle, fill=black, inner sep=2pt, label={90:$A_r$}] (Ar) at (-4, 0) {};  
        \draw (Ar) -- +(-0.5,-1);
        \draw (Ar) -- +(0,-1);
        \draw (Ar) -- +(+0.5,-1);
        
        \node[circle, fill=black, inner sep=2pt, label={-90:$B_s$}] (Bs) at (4, 0) {};  
        \draw (Bs) -- +(-0.5,1);
        \draw (Bs) -- +(0,1);
        \draw (Bs) -- +(+0.5,1);

        \node[circle, fill=black, inner sep=2pt, label={60:$B_2$}] (B2) at (60:4) {};  
        \node[circle, fill=black, inner sep=2pt, label={120:$B_1$}] (B1) at (120:4) {};  
        
        \node[circle, fill=black, inner sep=2pt, label={-60:$A_1$}] (A1) at (-60:4) {};  
        \node[circle, fill=black, inner sep=2pt, label={-120:$A_{r-1}$}] (A2) at (-120:4) {};

        \draw[ultra thick, red] (Ar) -- (w) -- (Bs);
        
        \draw (w) -- (B1);
        \draw (w) -- (B2);
        \draw (w) -- (A1);
        \draw (w) -- (A2);
        
        \draw (w) -- (20:1);
        \draw (w) -- (30:1);
        \draw (w) -- (40:1);
        
        \draw (w) -- (80:1);
        \draw (w) -- (90:1);
        \draw (w) -- (100:1);
        
        \draw (w) -- (140:1);
        \draw (w) -- (150:1);
        \draw (w) -- (160:1);
        
        \draw (w) -- (-20:1);
        \draw (w) -- (-30:1);
        \draw (w) -- (-40:1);
        
        \draw (w) -- (-80:1);
        \draw (w) -- (-90:1);
        \draw (w) -- (-100:1);
        
        \draw (w) -- (-140:1);
        \draw (w) -- (-150:1);
        \draw (w) -- (-160:1);

        \begin{scope}[shift={(30:1.5)}, scale=0.4, rotate=-60 ]
            \draw[PineGreen] (-1,-0.5) -- (-1,0) -- (1,0) -- (1,-0.5); 
            \draw (0,0.5) node[rotate=-60] {\textcolor{PineGreen}{$\beta_{s-1}$}};
        \end{scope}    
        
        \begin{scope}[shift={(90:1.5)}, scale=0.4, rotate=0 ]
            \draw[PineGreen] (-1,-0.5) -- (-1,0) -- (1,0) -- (1,-0.5); 
            \draw (0,0.5) node[rotate=0] {\textcolor{PineGreen}{$\beta_{1}$}};
        \end{scope}    
        
        \begin{scope}[shift={(150:1.5)}, scale=0.4, rotate=60 ]
            \draw[PineGreen] (-1,-0.5) -- (-1,0) -- (1,0) -- (1,-0.5); 
            \draw (0,0.5) node[rotate=60] {\textcolor{PineGreen}{$\gamma$}};
        \end{scope}    
        
        \begin{scope}[shift={(-30:1.5)}, scale=0.4, rotate=-120 ]
            \draw[PineGreen] (-1,-0.5) -- (-1,0) -- (1,0) -- (1,-0.5); 
            \draw (0,0.6) node[rotate= 60] {\textcolor{PineGreen}{$\alpha_0$}};
        \end{scope}    
        
        \begin{scope}[shift={(-90:1.5)}, scale=0.4, rotate=-180 ]
            \draw[PineGreen] (-1,-0.5) -- (-1,0) -- (1,0) -- (1,-0.5); 
            \draw (0,0.6) node[rotate=0] {\textcolor{PineGreen}{$\alpha_1$}};
        \end{scope}    
        
        \begin{scope}[shift={(-150:1.5)}, scale=0.4, rotate=-240 ]
            \draw[PineGreen] (-1,-0.5) -- (-1,0) -- (1,0) -- (1,-0.5); 
            \draw (0,0.6) node[rotate=-60] {\textcolor{PineGreen}{$\alpha_{r-1}$}};
        \end{scope}

    \end{tikzpicture}
    \label{subfig:segment-fold}
}

    \caption{\protect\subref{subfig:segment} A maximal backbone segment in the
    backbone. The oriented edge $(A_1,B_1)$ is the only hairy edge on this
    picture, for this reason it was decorated by a double line. The labels of
    the vertices fulfill $A_r < \cdots < A_2 < A_1 < B_1 > B_2 > \cdots > B_s$.
    \protect\subref{subfig:segment-neighbor} The corresponding tree segment;
    for simplicity only the direct neighborhood of the spine was shown. Note
    that in the exceptional case $r=1$ there is no restriction on the edges
    surrounding the vertex $A_1=A_r$, and in the exceptional case $s=1$  there
    is no restriction on the edges surrounding the vertex $B_1=B_s$.
    \protect\subref{subfig:segment-fold} The outcome of the algorithm $\CT$
    applied to this tree segment. Only the neighbors of the white spine vertex
    $W$ are shown.}
 
\end{figure}

\medskip

We say that a backbone segment is \emph{maximal} if cannot be extended by adding an
additional bald edge at either of its endpoints. It is easy to check that any
two maximal backbone segments are disjoint. 

\medskip

We claim that each oriented edge $e=(v_1,v_2)$ of the backbone belongs to some
maximal backbone segment. Indeed, if $e$ is hairy, it forms a very short
backbone segment with $r=s=1$; by extending this backbone segment we end up
with a maximal backbone segment. On the other hand, if $e$ is bald we can
follow the orientations of the edges and traverse the backbone as long as we
visit only bald edges $(v_2,v_3), \ (v_3,v_4),\dots, (v_{l-1},v_l)$. In a
finite number of steps our walk will terminate; there is a number of cases
that give a specific reason why the walk terminated.

Firstly, we could have encountered one of the endpoints of the backbone,
i.e., $v_l\in\{1,n\}$. The case $v_l=1$ is not possible because $v_l=1$ carries
the minimal label, so $v_l< v_{l-1}$, which contradicts the assumption that
$(v_{l-1}, v_l)$ is one of the oriented edges of the backbone. The other case
$v_l=n$ is also not possible because, by definition, the oriented edge
$(v_{l-1}, n)$ is not bald.

The second possibility is that the other edge attached to the vertex $v_l$ (it
is an oriented edge of the form $f=(v_l,v_{l+1})$ or $f=(v_{l+1}, v_l)$ for
some $v_{l+1}\neq v_{l-1})$) is hairy. In this case the maximal backbone segment
containing the hairy edge $f$ contains the edge $e$, as required.

The third case is that the other edge attached to the vertex $v_l$ is a bald
edge $f=(v_{l+1},v_l)$ with $v_{l+1}\neq v_l$ with the \emph{wrong} orientation
of the edge, which prevents traversing $f$. This would imply that the backbone
vertex $v_l$ has two incoming bald edges, and therefore that the black vertex
$v_l$ (regarded as a vertex in the original tree $T_1$) has degree $2$. On the
other hand, the degree of the vertex $v_l$ is equal to $a_{v_l}\geq 3$ by
\eqref{eq:a-and-b}, which leads to a contradiction and completes the proof.

\medskip

It follows that the maximal backbone segments provide a partition of the set of
backbone edges.

\subsubsection{Tree segments} 

The collection of all endpoints of the maximal backbone segments (for the
maximal backbone segment depicted on  \cref{subfig:segment} these endpoints are
denoted by $A_r$ and $B_s$) can be used to split the initial tree $T_1$ into a
number of connected components that will be called \emph{tree segments}. Each
such an endpoint (with the exception of the vertices $1$ and $n$) belongs to
two such tree segments; in order to split the neighbors of such boundary
vertices between the two tree segments we use the convention shown on
\cref{subfig:segment-neighbor}: each tree segment contains these non-spine edges
attached at the endpoint that are on \emph{``the left-hand side''} of the
spine (from the viewpoint of a person who looks along the spine, in the
direction of the respective endpoint).

The spine naturally splits each tree segment into two parts. We refer to them
as \emph{the upper part} and \emph{the lower part} according to the convention
from \cref{subfig:segment-neighbor}.

\subsection{Action of the algorithm on a tree segment}
\label{sec:spine-neighborhood}

In the following we will
investigate the action of the algorithm on some tree segment.
We will use the notations from \cref{subfig:segment-neighbor}.

We denote by $E_0=F_0$ the white vertex between $A_1$ and $B_1$, and for
$i\in\{1,\dots,r-1\}$ we denote by $E_i$ the white vertex between $A_i$ and
$A_{i+1}$. The assumption about the orientations of the edges in the backbone
segment implies that the vertex $A_{i+1}$ is the anchor of the cluster $E_i$;
this implies that the bend operation $\Bend_{A_{i+1},A_i}$ is one of the
operations performed in the iteration of the main loop for the cluster $C=E_i$.
As a result, the white spine vertices $E_i$ and $E_{i-1}$ are merged into a
single white spine vertex. On the other hand, after this bend operation is
performed, the black spine vertex $A_i$ no longer belongs to the spine.

For $i\in\{1,\dots,s-1\}$ we denote by $F_i$ the white vertex between $B_i$ and
$B_{i+1}$. Similarly as above, the bend operation $\Bend_{B_{i+1},B_i}$ is one
of the operations performed during the spine treatment part of the algorithm.
As a result the white spine vertices $F_i$ and $F_{i-1}$ are merged into a
single white spine vertex, and the black spine vertex $B_i$ no longer belongs
to the spine.

As a result of the above operations, all white spine vertices
$E_0,E_1,\dots,E_{r-1},F_1,\dots,F_{s-1}$ in the considered tree segment are
merged into a single white spine vertex that will be denoted by $W$; see
\cref{subfig:segment-fold}. In the following we will describe the neighbors of
$W$ in the output tree $T_2$.

\medskip

In the output tree $T_2$, the vertex $W$ is connected to two black spine
vertices: $A_r$ and $B_s$, which were the endpoints of the backbone segment
that we consider. With the notations of \cref{subfig:segment-fold} we will
first concentrate on the \emph{`bottom'} part of $W$. More specifically, going
counterclockwise around the vertex $W$, after the vertex $A_r$ and before the
vertex $B_s$, we encounter (among other black vertices) the vertices $A_{r-1},
A_{r-2}, \dots, A_2, A_1$, arranged in this exact order; see
\cref{subfig:segment-fold}. In order to find the remaining black vertices that
are interlaced in between we may revisit \cref{sec:children-of-organic}; a
large part of that section is applicable also in the current context.

More specifically, for $i\in\{0,\dots,r-1\}$ we denote by $\alpha_i$ the part
of the original tree $T_1$ that is attached to the spine at the white vertex
$E_i$; see \cref{subfig:segment-neighbor}. Just like in
\cref{sec:first-pruning} we perform the first pruning; for the purposes of this
first pruning we declare that the parent of the white vertex $E_{i}$ (which is
the root of the tree $\alpha_i$) is equal to $A_{i+1}$. Then, just like in
\cref{sec:second-pruning}, we perform the second pruning and then the folding
(see \cref{sec:folding}). The outcome of this procedure is a sequence of black
vertices: these are exactly the neighbors of the white vertex $W$ that appear
(in the clockwise cyclic order) after the vertex $A_{i+1}$ and before $A_i$; we
use the convention that $A_0=B_s$; see \cref{subfig:segment-fold}.
This completes the description of the \emph{bottom} part of the vertex $W$.

\medskip

The \emph{upper} part of the vertex $W$ is slightly more complicated. We denote
by $\gamma$ the part of the tree $T_1$ that has the leftmost non-spine child
of $B_1$ as the root; see \cref{subfig:segment-neighbor}. The key difference
between the bottom and the upper part lies in the fact that one of the
operations performed in the iteration of the main loop in the spine treatment
part of the algorithm for the cluster $C=E_0$ is the bend operation
$\Bend_{A_1,B_1}$. This operation plants the tree $\gamma$ in the vertex $W$,
in the counterclockwise order after the vertex $B_1$ and before the vertex
$A_r$; see \cref{subfig:segment-fold}. Then the usual pruning and folding
procedures are applied to $\gamma$. The remaining black vertices in the upper
part of $W$ correspond to the trees $\beta_1,\dots,\beta_{s-1}$; see
\cref{subfig:segment-fold}.

\medskip

As we can see, any operation performed for the clusters that form the given
tree segment does not affect the other segments. For this reason it is possible
to study the action of the algorithm $\CT$ on each tree segment separately.

\section{The inverse bijection}
\label{sec:inverse}

We will complete the proof of \cref{thm:thm1} by constructing explicitly the
inverse map $\CT^{-1}$. The starting point of the algorithm is a Stanley tree
$T_2$ of type $(b_1,\dots,b_n)$, cf.~\cref{sec:trees}. Our goal is to turn this
tree into a minimal factorization $(\sigma_1,\dots,\sigma_n)\in
\mathcal{C}_{a_1,\dots ,a_n}$, where the numbers $a_1,\dots,a_n$ are specified
in \cref{thm:thm1}.

\medskip

Similarly as in \cref{sec:algorithm}, the path in $T_2$ between the two black
vertices with the labels $1$ and $n$ will be called \emph{the spine}.

\subsection{Which white vertices are artificial?}
\label{subsec:artificial}

Recall that the notion of \emph{artificial} vertices was introduced in the
context of the jump operation at the beginning of \cref{sec:output-of-jump}. In
the first step of our algorithm $\CT^{-1}$, for a given a Stanley tree $T_2$ we
will guess which white vertices are artificial, i.e., which vertices are the
result of the jump operation. The remaining part of the current section is
devoted to the details of this issue.

\begin{lem}
    \label{lem:artificial-or-organic} 
    Let $T_2$ be a Stanley tree that is an output of the algorithm $\CT$
for some input data and let $v$ be one of its white non-spine vertices. The
vertex $v$ is organic if and only if its neighbor with the maximal label is
equal to the parent of $v$.
\end{lem}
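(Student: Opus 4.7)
The plan is to split the proof into the two directions and leverage the alternative structural description of $T_2$ developed in \cref{sec:alternative}.

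For the \emph{artificial} direction, I will appeal directly to the analysis of \cref{sec:children-artificial} summarised in \cref{fig:artificial-anatomy}. If $v$ is artificial then it was created by some jump operation $\Jump_{x,y}$; the figure shows that the parent of $v$ in $T_2$ is the vertex $j=j(x,y)$ from \ref{AJ2}, while $y$ persists as a neighbor of $v$ in $T_2$. Since \cref{lem:what-is-j} gives $j<y$, the parent $j$ is strictly smaller than at least one other neighbor, so it cannot be the neighbor of maximum label. This matches precisely the claim already recorded in the caption of \cref{fig:artificial-anatomy}.

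For the \emph{organic} direction, I let $T$ denote the connected component of the pruned tree (see \cref{sec:first-pruning,sec:second-pruning}) whose root is identified with $v$ via the naming convention of \cref{sec:naming-of-white}. The parental edge of this component is the root edge of the cluster $v$, i.e., the edge joining $v$ to $\Root_v$ in $T_1$; this root edge is preserved by every bend/jump operation, so the parent of $v$ in $T_2$ is $\Root_v$. By the folding procedure of \cref{sec:folding}, the remaining neighbors of $v$ in $T_2$ coincide with the black vertices of $T$. The core task is therefore to show that every black vertex $b$ of $T$ satisfies $b<\Root_v$, which I plan to prove by induction on the depth of $b$ in $T$. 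At depth one, $b$ is a child of $v$ in $T$ and \cref{rem:niceorder} gives $b<\Root_v$ directly. At any odd depth $\geq 3$, the parent $w'$ of $b$ in $T$ is a white vertex whose own parent $b''$ in $T$ is black and equals $\Root_{w'}$ (because the path from $w'$ towards the spine in $T_1$ is unique and its first edge, being the root edge of the cluster $w'$, survives both prunings); then \cref{rem:niceorder} applied to $w'$ yields $b<\Root_{w'}=b''$, while the inductive hypothesis gives $b''<\Root_v$, so $b<\Root_v$ as required.

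With this inequality in hand, every neighbor of $v$ in $T_2$ other than the parent has label strictly smaller than $\Root_v$, so the parent is the unique neighbor of maximum label. The main obstacle I foresee is bookkeeping that the parental edge of the component in the pruned tree is unambiguously preserved throughout the algorithm and still joins $v$ to $\Root_v$ in $T_2$; this rests on the persistence of cluster roots, which is guaranteed by \cref{invariant} together with \cref{lem:pomoc1,lem:pomoc2}.
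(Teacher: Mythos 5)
Your proposal is correct, but it takes a genuinely different route from the paper. The paper proves \cref{lem:artificial-or-organic} with the attraction bookkeeping introduced in the proof of \cref{lm:lem1}: for a child $y$ of $v$ one inspects the unique operation $\Bend_{\cdot,y}$ or $\Jump_{\cdot,y}$, checks (via \cref{lem:what-is-j} in the jump case) whether $v$ is attracted to $y$ immediately afterwards, and then uses the observation that the set of vertices attracted to $y$ never changes after that operation; since in $T_2$ every white vertex is attracted to exactly one neighbor, both directions follow in a few lines, using only \cref{sec:proof}. You instead argue through the alternative description of \cref{sec:alternative}: the artificial direction is read off from \cref{fig:artificial-anatomy} (parent $j$, neighbor $y$, and $j<y$ by \cref{lem:what-is-j}), while for the organic direction you identify $v$ with the root of a pruned component $T$, identify its parent in $T_2$ with the anchor $\Root_v$, and show by induction on depth, via \cref{rem:niceorder}, that every black vertex of $T$, hence every non-parent neighbor of $v$, has label below $\Root_v$. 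Your route is heavier (it relies on the correctness of the \cref{sec:alternative} analysis) but yields more structural information, in particular that the parent of an organic vertex is $\Root_v$, which is exactly what the inverse map exploits in \cref{sec:inverse-A}.

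Two points to tighten. The blanket claim that the root edge of the cluster $v$ ``is preserved by every bend/jump operation'' fails for leftist clusters---that is precisely the exceptional case of \cref{lem:pomoc2}; what saves you is that the root of a pruned component is never a leftmost child, so the cluster $v$ is non-leftist and no separation of root and anchor can occur. Likewise, in your induction the parental edge of an internal white vertex $w'$ of $T$ survives the second pruning because $w'$ is the leftmost child of its black parent; this is guaranteed by $w'$ being a non-root vertex of the component (the cleanest formulation is simply that $T$ is a subtree of $T_1$, so parents inside $T$ are inherited from $T_1$ and equal $\Root_{w'}$), rather than an independent fact about root edges.
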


With the terminology from page \pageref{page:belongs}, since in the output tree
$T_2$ all edges are dashed, each white vertex is attracted to exactly one neighbor,
and the above result can be rephrased as follows: $v$ is organic if and only if
$v$ is attracted to its parent.

\begin{proof}
Since artificial vertices arise only as a result of a jump operation, the
vertex $v$ of the tree $T_2$ is artificial if and only if there is a black
vertex $y$ in the tree $T_2$ that is a child of $v$ and such that during the
execution of the algorithm $\CT$ there was a jump operation of the form
$\Jump_{\cdot,y}$.

\medskip

Consider the case when $v$ is artificial; immediately after this jump operation
is performed, the newly created vertex $v$ (on \cref{subfig:jumpA2} it is the
vertex between $j$ and $y$) is attracted to $y$ because $j< y$ by
\cref{lem:what-is-j}. Moreover, from the proof of the \cref{lm:lem1} it follows
that after this jump operation $\Jump_{\cdot,y}$ is performed, the set of
vertices that are attracted to $y$ does not change until the very end of the
algorithm $\CT$. In this way $v$ is attracted to one of its children, hence not
to its parent, as required.

\medskip

Consider the opposite case when $v$ is organic; let $y$ be some child of $v$.
It follows that during the execution of the  algorithm $\CT$ there was a bend
operation $\Bend_{\cdot,y}$. By examining \cref{fig:exA} it follows that
immediately that after this bend operation was performed, the vertex $v$ is not
attracted to $y$. Again, from the proof of the \cref{lm:lem1} it follows that
after this bend operation $\Bend_{\cdot,y}$ is performed, the set of vertices
that are attracted to $y$ does not change until the very end of the algorithm
$\CT$. In this way we proved that $v$ is not attracted to any of its children,
hence it is attracted to its parent, as required.
\end{proof}

\subsection{Recovering the cycles away from the spine} 
\label{sec:away-begins}

\newcommand{\BB}{{B}}  
\newcommand{\WW}{{W}}
\newcommand{\JJ}{{J}}
\newcommand{\maks}{{Y}}

Each black vertex $\BB\in\{1,\dots,n\}$ in the initial tree $T_1$ corresponds
to the cycle $\sigma_\BB$. There is a canonical bijective correspondence
between the set of black vertices in the tree $T_1$ and the set of black
vertices in the output tree~$T_2$, therefore $\BB$ can be identified with a
black vertex in the output tree $T_2$. In order to find the inverse map
$\CT^{-1}$ we need to recover the cycle $\sigma_\BB$ based on the information
contained in the tree $T_2$.

We will start with the assumption that the black vertex $\BB$ of $T_2$ is away
at least by two edges from the spine; this assumption will be used also in
\crefrange{sec:away-begins}{sec:away-ends} below.

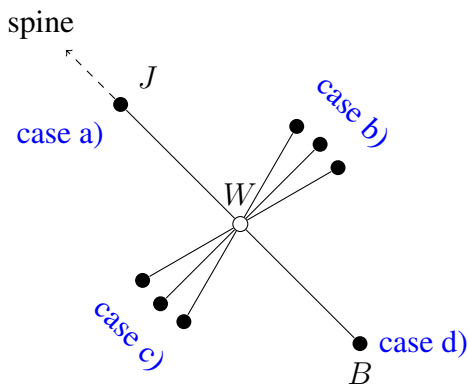
\begin{figure}
    \centering \begin{tikzpicture}[scale=1.5]
        \node[circle, draw=black, inner sep=2pt, label={92:$W$}] (w) at (0,0) {};
        
        \node[circle, fill=black, inner sep=2pt] (b1) at (45:1) {};
        \node[circle, fill=black, inner sep=2pt] (b2) at (30:1) {};
        \node[circle, fill=black, inner sep=2pt] (b3) at (60:1) {};
        
        \node[circle, fill=black, inner sep=2pt] (b4) at (180+45:1) {};
        \node[circle, fill=black, inner sep=2pt] (b5) at (180+30:1) {};
        \node[circle, fill=black, inner sep=2pt] (b6) at (180+60:1) {};
        
        \node[circle, fill=black, inner sep=2pt,label=45:$J$, label=225:{\textcolor{blue}{case a)}}] (b7) at (90+45:1.5) {};
        \node (b7prim) at (90+45:2.5) {spine};
        \draw[dashed,->] (b7) -- (b7prim);

        \node[circle, fill=black, inner sep=2pt,label=-90:$B$, label=0:{\textcolor{blue}{case d)}}] (b8) at (-45:1.5) {};
        
        \node[rotate=-45] at (45:1.4) {\textcolor{blue}{case b)}};
        \node[rotate=-45] at (180+45:1.4) {\textcolor{blue}{case c)}};
        
        \draw (w) -- (b1);
        \draw (w) -- (b2);
        \draw (w) -- (b3);
        \draw (w) -- (b4);
        \draw (w) -- (b5);
        \draw (w) -- (b6);
        \draw (w) -- (b7);
        \draw (w) -- (b8);
    \end{tikzpicture}
\caption{The four cases necessary for determining the cycle $\sigma_\BB$. The
    black vertex $B$ is assumed to be away from spine by at least two edges. The
    vertex $\WW$ is the parent of $\BB$. The vertex $\JJ$ is the parent of $\WW$. We
    define $\maks$ to be the neighbor of $\WW$ with the biggest label. The case a)
    (see \cref{sec:inverse-A}) corresponds to the case when $\maks=\JJ$. The case b)
    (see \cref{sec:inverse-B}) corresponds to the case when $\maks$ is located
    (going clockwise around the vertex~$\WW$) after the vertex $\BB$ and before
    $\JJ$, while the case c) (see \cref{sec:inverse-C}) corresponds to the case
    when $\maks$ is located after $\JJ$ and before $\BB$.     The case d) (see
    \cref{sec:inverse-D}) corresponds to the case when $\maks=\BB$. }
\label{fig:cases-abcd}
\end{figure}

Let $\WW$ denote the parent of $\BB$, and let $\JJ$ denote the parent of $\WW$.
We define $\maks$ to be the neighbor of $\WW$ with the biggest label. Our
prescription for finding $\sigma_\BB$ will depend on the position of $\maks$
with respect to $\JJ$ and $\BB$; see \cref{fig:cases-abcd}.

\subsection{Case (a): $\maks=\JJ$.}
\label{sec:inverse-A}

By \cref{lem:artificial-or-organic}, in this case $\WW$ is an organic white
vertex.

\subsubsection{The labels $E_2,\dots,E_d$}
\label{sec:labels-e2-ed}

The proof of \cref{lm:lem1} shows that the black vertex~$\BB$ lost exactly two
white neighbors that were attracted to $\BB$ when we performed the
operation~$\Bend_{\cdot,\BB}$ and it never gained new ones (we showed that
$b_\BB=\Numberb_\BB^{\text{final}}=\Numberb_\BB^{\text{initial}}-2=a_\BB-2$).
Using this information and basing on \cref{fig:ex} (with the notations of this
figure we have $y=\BB$, $v_1=\WW$, $x=\JJ$) we conclude that in the output tree
$T_2$ the labels of the edges attracted to $\BB$ (with the notations of
\cref{subfig:exB} these are the edges $E_3,\dots,E_d$) all belong to the cycle
$\sigma_\BB$. The label of the edge that is in the direction of the spine (this
edge is not attracted to $\BB$; with the notations of \cref{subfig:exB} this is
the edge $E_2$) also belongs to the cycle $\sigma_\BB$; see \cref{subfig:exB}.
In this way we identified so far all elements of the cycle
$\sigma_\BB=(E_1,\dots,E_d)$, with the exception of the label~$E_1$.

\subsubsection{How to find the missing label $E_1$?}
\label{sec:how-to-find} 

The information about the missing label $E_1$ is contained in one of the
connected components of the two pruning procedures considered in
\cref{sec:children-of-organic} (more precisely, in the tree $T$ that contains
the black vertex $\BB$). Clearly, $E_1$ is the label on the edge that connects
the vertex $\BB$ to its parent in $T$. Regretfully, we do not have (yet) the
access to this tree $T$.  On the bright side, we do have the access to the
outcome of the folding procedure (\cref{sec:folding}) applied to this tree~$T$.
The discussion from \cref{sec:children-of-organic} shows that this outcome is
just $\WW$ (which is a white organic vertex) in the output tree $T_2$ as well
as its all children, together with its parental edge. In the following we will
describe how to reverse the folding procedure and, in this way, to recover the
tree $T$ (in the example from \cref{sec:folding} it is the tree
\cref{subfig:bendingin}) from the outcome of folding
(\cref{subfig:bendingout}).

\subsubsection{The greedy algorithm} 

\label{sec:greedy} 

Our first step is to identify the children $V_1,\dots,V_m$ (listed in the
counterclockwise order) of the root of $T$;
on the example from \cref{subfig:bendingin} these  vertices \begin{equation}
    \label{eq:example-V}
    V=(V_1,\dots,V_5)=(14,
18, 20, 21, 30)
\end{equation} 
are drawn in blue. In the following we will treat the aforementioned outcome of the folding as
the list $L=(L_1,\dots,L_\ell)$ of the children of $\WW$ in $T_2$,
listed in the counterclockwise order. In the example from
\cref{subfig:bendingout} we have
\begin{equation}
    \label{eq:example-L}
    L=(\textbf{14},2,11,\textbf{18},8,4,6,7,10,12,3,5,9,\textbf{20},\textbf{21},13,16,\textbf{30},15,17).
\end{equation}
With boldface we indicated the entries of $V$; on \cref{subfig:bendingout}
these boldface vertices are also drawn in blue.

Since the list $L$ can be seen as an outcome
of the depth-first search, it has the following structure:
\begin{multline}
    \label{eq:recursive-structure}
     L=\big(  V_1, \text{(the black descendants of $V_1$)}, 
     \\ V_2, \text{(the black descendants of $V_2$)},\quad \dots,\\
     V_m, \text{(the black descendants of $V_m$)} \big). 
\end{multline}
By \cref{rem:niceorder} each black descendant of $V_i$ carries
a label that is smaller than $V_i$; furthermore $V=(V_1,\dots,V_m)$ is an
increasing sequence. It follows that $V=(L_{i_1},\dots,L_{i_m})$ is an increasing
subsequence of $L$, which can be found by the following \emph{greedy algorithm}:
\begin{quotation}
    \emph{We set $i_1=1$ so that $V_1:=L_1$.}

   \smallskip

    \emph{If $i_k$ was already calculated, we define $i_{k+1}$ as the smallest
    number $l\in\{i_k+1,\dots,\ell\}$ with the property that $L_l >
    V_k=L_{i_k}$; we set $V_{k+1}:=L_{i_{k+1}}$. If such a number $l$ with this
    property does not exist, the algorithm terminates.}
\end{quotation}
In other words, we start with the empty list $V=\emptyset$ and read the list $L$.
If the just read entry of $L$ is greater than the last entry of $V$ (or if $V$ is
empty), we append it to $V$. We leave it as an exercise to the Reader to verify
that this greedy algorithm applied to $L$ from \eqref{eq:example-L} indeed gives
\eqref{eq:example-V}.

\subsubsection{Recovering the black vertices in the tree $T$}
\label{sec:recovering}

The above greedy algorithm identifies just the children of the root of $T$. In
order to recover the full structure of the tree $T$ we use recursion, as follows.
Equation \eqref{eq:recursive-structure} shows that the elements of the sequence
$V$ act like separators between the list of the black descendants of $V_1$, the
list of the black descendants of $V_2$, \dots; in particular we are able to
find explicitly such a list of the black descendants of any vertex $V_i$. In
the example from \eqref{eq:example-L} we have
\begin{align*}
    \text{(the descendants of $V_1=14$)} &=    (2,11), \\
    \text{(the descendants of $V_2=18$)} &=     (8,4,6,7,10,12,3,5,9), \\
    \text{(the descendants of $V_3=20$)} &=     \emptyset,\\ 
       \text{(the descendants of $V_4=21$)} &=  (13,16),\\
        \text{(the descendants of $V_5=30$)} &= (15,17).
\end{align*}
Each such a list of the descendants of $V_i$ is again the outcome of the
depth-first-search, this time restricted to the black descendants of the vertex
$V_i$.

By applying the above greedy algorithm to the list of the descendants of the
vertex $V_i$ we identify the grandchildren of $V_i$, as well as the lists of
the descendants of each of these grandchildren. For example, the greedy
algorithm applied to the descendants of $V_2$
\[ (\textbf{8},4,6,7,\textbf{10},\textbf{12},3,5,9) \]
shows that the vertex $V_2=18$ has three grandchildren: $8$, $10$, and $12$, as well
as gives the list of the descendants for each of them.

By applying the above procedure recursively, we recover the structure of the
tree $T$ (with the edge labels removed).

\subsubsection{Labels come back}
\label{sec:labels}
\label{sec:how-to-find-end}

Our goal is to use the information about the outcome of the folding in order to
recover the edge labels in the original tree $T$.

Essentially, the above reconstruction of the tree $T$ implies that we know
which elementary folding operations from \cref{fig:bending2} were performed. We
will use the notations from this figure. The only missing component in order to
fully revert such an elementary folding operation at a black vertex $b$ is the
label of the edge $f$ that connected $b$ to its parent $w$ in the tree $T$. We
will denote by $v$ the parent of $b$ in the output tree $T_2$; the vertex $v$
corresponds to the root of the folded version of $T$. Note that both $w$ as
well as $v$ denote the parent of $b$; the difference lies in a different tree
being considered. In fact, in order to solve the problem from
\cref{sec:how-to-find} of finding the missing label $E_1$, we are interested in
the special case when $b=B$ and $w=W$; with these notations $f=E_1$ and $e=E_2$.
Fortunately, before the folding was applied, the edge $f$ and the edge $g$ that
connects $w$ to its parent belonged to the same cluster, so they carried the
same label. The problem is therefore reduced to finding explicitly the location
of the edge $g$ in the outcome of the folding.

\medskip

Firstly, consider the generic case (shown on \cref{fig:bending2}) when the
vertex $w$ is not the root of the tree $T$. We denote by $b_2$ the black vertex
that is the parent of $w$ in $T$. 
In this case, after the folding at the vertex $b_2$ is
performed, the label of the edge $g$ will be stored in the edge connecting the
vertex $b_2$ with its parent. This property will not be changed by further
foldings of the tree~$T$.

To summarize: in order to find the label $f$ of the edge in the initial tree
$T_1$ that connected the black vertex $b$ to its parent $w$ one should apply
the following procedure. We apply recursively the greedy algorithm from
\cref{sec:recovering} to the children (in the output tree $T_2$) of the white
vertex $v$ that is the parent of $b$; this vertex $v$  and its children
correspond to the folded version of $T$. In this way we find the vertex $b_2$
that in the input tree $T_1$ is the grandparent of $b$. The desired label $f$
is carried by the edge connecting $b_2$ with its parent $v$ in the output tree
$T_2$. 

\medskip

Consider now the exceptional case when the white vertex $w$ is the root of the
tree $T$. In fact, based on the information contained in the tree $T_2$ it is
easy to check whether this exceptional case holds true by applying the greedy
algorithm from \cref{sec:greedy} to the children of $v$ and checking if the
vertex $b$ belongs to the list $(V_1,\dots,V_m)$.

In this case the edge $h$ as well as its endpoints should be removed from
\cref{fig:bending2} since they do not belong to the tree~$T$. The vertex $w$ is
attached to its parent by the parental edge~$g$. This edge will not be modified
by further steps of the algorithm; as a result our desired label $f$ is stored
in the folded version of $T$ in the parental edge of the root, and hence in the
output tree $T_2$ it is still stored in the edge $g$ connecting $v=w$ with its
parent.

\subsection{Cases (b), (c), (d): $\maks\neq \JJ$}

By \cref{lem:artificial-or-organic}, in these three cases $\WW$ is an
artificial white vertex thus the discussion from \cref{sec:children-artificial}
and \cref{fig:artificial-anatomy} in particular are applicable here. We will
study each of the cases (b), (c) and (d) in more detail in the following.

\subsection{Case (b): going counterclockwise around $\WW$, the vertex $\maks$
    is after $\BB$ and before~$\JJ$} 

\label{sec:inverse-B}

With the notations of \cref{fig:artificial-anatomy}, our vertex $B$ is one of
the \emph{blue neighbors} of $w$, located after $j$ and before $y$ (going
counterclockwise around $w$); see \cref{fig:artificial-anatomy}.

Since $B\neq Y$ it follows that we \emph{did not} perform a jump operation of
the form $\Jump_{\cdot, B}$, hence we performed a bend operation of the form
$\Bend_{\cdot,B}$. As a consequence, the discussion from
\cref{sec:labels-e2-ed} is applicable also in our context; as a consequence we
have found the labels $E_2,\dots,E_d$ that contribute to the cycle
$\sigma_B=(\sigma_1,\dots,\sigma_d)$. As before, the remaining difficulty is to
find the label $E_1$.

If we keep only the vertex $w$ and the aforementioned blue neighbors of the
vertex $w$, and declare that the edge between $w$ and its parent $j$ is the
parental edge of the vertex $w$, we obtain a tree with $w$ as a root. This tree
$T'$ is the outcome of the folding of the blue tree on
\cref{subfig:jumpA,subfig:scjumpA}. It follows that the algorithm from
\crefrange{sec:how-to-find}{sec:how-to-find-end} applied to $T'$ gives the blue
tree on \cref{subfig:jumpA,subfig:scjumpA} and the desired edge $E_1$ can be
recovered.

\subsection{Case (c): going counterclockwise around $\WW$, the vertex $\maks$
    is after $\JJ$ and before~$\BB$}
\label{sec:inverse-C}

This case is fully analogous to the case b) considered in \cref{sec:inverse-B}.
The only difference is that instead of \emph{blue} one should keep only the
\emph{red} neighbors of $w$, and one should take the edge connecting $w$ with
$y=Y$ as the parental edge of $w$.

\subsection{Case (d): $\BB=\maks$}
\label{sec:inverse-D}
\label{sec:away-ends}

In the case $\BB=\maks$ we deduce that the vertex $\WW$ was created by a jump
operation of the form $\Jump_{\cdot,\BB}$.

\subsubsection{The labels $E_2,\dots,E_d$} 

The following discussion is fully analogous to the one from
\cref{sec:labels-e2-ed}. \cref{lm:lem1} and its proof imply that the black
vertex $\BB$ lost exactly two white neighbors from
$\Numberb_\JJ^{\text{initial}}$ when we performed the operation
$\Jump_{\cdot,\BB}$ and never gained new ones. Using this information and
\cref{fig:jump,fig:scjump} we conclude that in the output tree $T_2$ the
Stanley edge labels of the black vertex $\BB$ all belong to
cycle~$\sigma_\BB=(E_1,\dots,E_d)$ (with the notations of
\cref{subfig:jumpB,subfig:scjumpB} these are the edges $E_3,\dots,E_d$).

The label of the edge $E_2$ is easy to recover: by \cref{fig:jump,fig:scjump}
we conclude that in the output tree $T_2$ the label $E_2$ is carried by the
edge connecting $\WW$ and $\JJ$. The only remaining difficulty is to find the
label $E_1$.

\subsubsection{How to find the missing label $E_1$?}

The following discussion is somewhat analogous to the one from 
\crefrange{sec:how-to-find}{sec:how-to-find-end}.

With the notations from \cref{fig:jump,fig:scjump} the missing label $E_1$ was
also carried by the edge of the tree $T_1$ connecting $v_1$ with its parent
$x$. This edge belongs also to the subtree $T$; regretfully we do not have
(yet) access to this tree.

On the bright side, we do have the access to the outcome of the algorithm from
\cref{sec:black-dfs}. The organic children of $J$ can be treated as separators
that split the artificial children of $J$ into a number of lists. Each such a
list was generated by the depth-first search algorithm applied to the subtree
attached to some organic child of $J$. The discussion from \cref{sec:greedy} is
also applicable in our context with some minor modifications: the role of
\cref{rem:niceorder} is played now by \cref{rem:niceorder2}, we now list the
vertices in the \emph{clockwise} order, and the greedy algorithm aims to find a
\emph{decreasing} subsequence. This, together with the analogue of
\cref{sec:recovering} allows us to recover the structure of the tree $T$
(without the edge labels yet).

In this way, using only the information contained in the tree $T_2$, we can
find the vertex $x$ that in the tree $T$ was the grandparent of $Y$.  There
are the following two cases.

Firstly, if $x=J$ is the root of the tree $T$, the missing label $E_1$ is
carried in the tree $T_2$ by the edge connecting $J$ with its organic child
$W$.

Secondly, if $x\neq J$ is not the root of $T$, the desired label $E_1$ appears
in the pair \eqref{eq:pair} together with the vertex $x$. This means that the
label of $E_1$ is carried  by the first edge on the (very short) path that
connects the vertex $J$ with $x$ in $T_2$.

\subsection{Recovering the cycles near the spine} 

Our goal is to recover the cycle $\sigma_B$ in the special case when $B$ in the
output tree $T_2$ is either a spine vertex or a neighbor of a white spine
vertex. We will keep this assumption also in
\crefrange{sec:cases-begin}{sec:cases-end}.

In order to achieve this goal we need to recover the past of the white
spine vertex (or the two white spine vertices) that is a neighbor of $B$.

\subsubsection{The fake symmetry}

From the discussion in \cref{sec:anatomy-spine} it follows that each white
spine vertex of the output tree $T_2$ corresponds to some tree segment. At the
first sight it might seem that the definition of a tree segment (or a backbone
segment) has a \emph{rotational symmetry} that corresponds to a rotation by
$180^\degree$ of \cref{subfig:segment,subfig:segment-neighbor} and reversing
the roles played by the sequences $A_1,\dots,A_r$ and $B_1,\dots,B_s$. This
false impression may be reinforced by the apparent $180^\degree$ symmetry of
the output of the algorithm depicted on \cref{subfig:segment-fold}. In fact,
this false symmetry of \cref{subfig:segment-fold} is problematic for our
purposes because for a given white spine vertex $W$ of the output tree $T_2$ we
need to know which of its two black spine neighbors plays the role of $A_r$
and which one plays the role of $B_s$.

In order to resolve this ambiguity we split the set of neighbors of the white
§spine vertex $W$ into two \emph{halves}: each half consists of an endpoint of a
spine edge and (going counterclockwise) the subsequent non-spine neighbors of
$W$, up until the other spine neighbor; see \cref{subfig:segment-fold}. With
these notations the set $\{A_1, B_1\}$ is equal to the set of two neighbors of
$W$ with the maximal labels in each of the halves respectively. The requirement
that $A_1<B_1$ gives us the unique way to fit \cref{subfig:segment-fold} into
the the neighborhood of $W$.

\subsubsection{The first application of the greedy algorithm}

By applying the greedy algorithm from \cref{sec:greedy} we can reconstruct a
large part of the information depicted on \cref{subfig:segment-neighbor}:
recover the black vertices $A_1,\dots,A_r$ and $B_1,\dots,B_s$ as well as the
folded versions of the trees $\alpha_1,\dots,\alpha_{r-1},
\beta_1,\dots,\beta_{s-1}, \gamma$.

Our prescription for finding the cycle $\sigma_B$ will depend on the location
of the black vertex on \cref{subfig:segment-neighbor}.

\subsection{Case (i): never-spine vertex}
\label{sec:cases-begin}

Consider the case when $B$ is a non-spine black vertex that is adjacent to a
white spine vertex $W$ and $B\notin\{A_1,\dots,A_r,B_1,\dots,B_s\}$; in other
words $B$ in the input tree $T_1$ was not a spine vertex. This means that $B$
belongs to a folded version of one of the trees $\alpha_0,\dots,\alpha_{r-1},
\beta_1,\dots,\beta_{s-1},\gamma$ (\cref{subfig:segment-neighbor}) and with the
available information we can pinpoint this folded tree. The algorithm presented
in \crefrange{sec:labels-e2-ed}{sec:how-to-find-end} is also applicable in this
context. Note, however that this algorithm takes as an input a white vertex as
a root surrounded by black vertices, \emph{together with the parental edge of
    the root}, so we need to specify in each case this parental edge. For the tree
$\alpha_i$ (with $i\in\{1,\dots,r-1\}$) it is the edge connecting $W$ with
$A_{i+1}$; for the tree $\beta_i$ (with $i\in\{0,\dots,s-1\}$) it is the edge
connecting $W$ with $B_{i+1}$; for the tree $\gamma$ it is the edge connecting
$W$ with $B_{1}$.

\subsection{Case (ii): post-spine vertices}
\label{sec:case-ii}

Consider the case when $B\in\{A_1,\dots,A_{r-1}, \allowbreak B_1,\dots,
\allowbreak B_{s-1} \}$; in other words $B$ was a black spine vertex in the
tree $T_1$ but it is not a spine vertex in the tree~$T_2$. It follows that the
corresponding cycle can be written in the form $\sigma_B=(E_1,\dots,E_d)$,
where $E_1$ and $E_2$ are the spine edges surrounding the vertex $B$; see
\cref{subfig:segment-neighbor}. From the proof of \cref{lm:lem1} (Case 2) it
follows that the vertex $B$ lost two edges that are attracted to $B$, namely
the two spine edges $E_1$ and $E_2$. In this way we recovered the edges
$E_3,\dots,E_d$ (these are the edges that are attracted to $B$ in $T_2$) and
the remaining difficulty is to find $E_1$ and $E_2$.

The key observation is that all edges surrounding the vertex $E_i$ (with
$i\in\{0,\dots,r-1\}$) in the tree $T_1$ carried the same label; after
performing the algorithm $\CT$ this label is stored in the edge connecting $W$
with $A_{i+1}$. Similarly, all edges surrounding the vertex $F_i$ (with
$i\in\{1,\dots,s-1\}$) in the tree $T_1$ carried the same label; after
performing the algorithm $\CT$ this label is stored in the edge connecting $W$
with $B_{i+1}$. In this way we are able to recover the labels of all spine
edges in the segment and, as a result, to recover the labels $E_1$ and $E_2$.

\subsection{Case (iii): spine vertices}
\label{sec:cases-end}

Consider now the case when $B$ is a black spine vertex in the tree $T_2$. In
the generic case when $B\notin\{1,n\}$ is not one of the endpoints of the
spine, the vertex $B$ lies on the interface between two tree segments. The
cycle $\sigma_B$ consists of: the edge labels lying on one side of the spine,
followed by a single spine edge label, after which come the edge labels lying
the other side of the spine, and the other spine edge label. From the proof of
\cref{lm:lem1} (Case 2) it follows that the vertex $B$ lost precisely these two
spine edges; 
fortunately the discussion from
\cref{sec:case-ii} shows how to recover them.

\section{Conclusion. Proof of \cref{thm:thm1}}
\label{sec:conclusion}

In \cref{sec:inverse} we constructed a map 
\[ \CT^{-1}\colon 
  \operatorname{Image}(\CT) \to \mathcal{C}_{a_1,\dots ,a_n} \]
 that has the property that 
\[ \CT^{-1} \circ \CT  = 
\operatorname{id} \colon \mathcal{C}_{a_1,\dots ,a_n} \to \mathcal{C}_{a_1,\dots ,a_n} \]
is the identity map. In particular, it follows that the map
\[ \CT\colon \mathcal{C}_{a_1,\dots ,a_n} \to \mathcal{T}_{b_1,\dots ,b_n} \]
is injective. It remains to show that it is also surjective.

\medskip

A fast way to do this is to compare the cardinalities of the respective sets by
\cref{cor:number-of-stanley-trees} and \eqref{eq:formula-for-c}. This proof has
a minor disadvantage of being not sufficiently bijective.

\medskip

An alternative but more challenging strategy is to notice that the map $
\CT^{-1}$ from \cref{sec:inverse} is well defined on $\mathcal{T}_{b_1,\dots
    ,b_n}$. This time, however, one has to check that the image of $\CT^{-1}$ on
this larger domain is still a subset of $\mathcal{C}_{a_1,\dots ,a_n}$. The next
step is to verify that
\[  \CT \circ \CT^{-1}  = 
\operatorname{id} \colon  \mathcal{T}_{b_1,\dots ,b_n}\to \mathcal{T}_{b_1,\dots ,b_n} \]
is the identity map.
This method of proof does not create real difficulties, but it is somewhat
lengthy. In order to keep this paper not excessively long we decided to omit
this more bijective approach.

\section*{Acknowledgments} 

Research of \trokowska was supported by Grant 2017/26/A/ST1/00189 of Narodowe
Centrum Nauki. \Sniady is grateful to Max Planck Institute for Mathematics in
Bonn for its hospitality and financial support.

\printbibliography

\end{document}